\newtheorem{theorem}{Theorem}
\newtheorem*{theorem*}{Theorem}
\newtheorem*{remark*}{Remark}
\newtheorem*{example*}{Example}
\newtheorem*{examples*}{Examples}
\newtheorem*{exercise*}{Exercise}
\newtheorem{definition}[theorem]{Definition}
\newtheorem*{definition*}{Definition}
\newtheorem{lemma}[theorem]{Lemma}
\newtheorem{proposition}[theorem]{Proposition}
\newtheorem{remark}[theorem]{Remark}
\newtheorem{cor}[theorem]{Corollary}
\newtheorem*{cor*}{Corollary}
\newtheorem*{conjecture*}{Conjecture}
\numberwithin{theorem}{section}
\renewcommand\iff{%
\ifmmode\text{ if and only if }%
\else if and only if \fi}
\renewcommand{\and}{\wedge}
\renewcommand{\phi}{\varphi}
\newcommand{\Mod}{\textnormal{Mod}}
\renewcommand{\mod}{\text{mod}}
\newcommand{\Ab}{\text{Ab}}
\newcommand{\rad}{\textnormal{rad}}
\newcommand{\Ass}{\textnormal{Ass}}
\newcommand{\Div}{\textnormal{Div}}
\newcommand{\mdim}{\textnormal{mdim}}
\newcommand{\Att}{\textnormal{Att}}
\newcommand{\Hom}{\textnormal{Hom}}
\newcommand{\spec}{\textnormal{Spec}}
\newcommand{\Spec}{\textnormal{Spec}}
\newcommand{\zg}{\textnormal{Zg}}
\newcommand{\Zg}{\textnormal{Zg}}
\newcommand{\pinj}{\textnormal{pinj}}
\newcommand{\mcal}[1]{\mathcal{#1}}
\newcommand{\mfrak}[1]{\mathfrak{#1}}
\newcommand{\st}{\ \vert \ }
\newcommand{\pp}{\textnormal{pp}}
\newcommand{\R}{\mathbb{R}}
\newcommand{\N}{\mathbb{N}}
\newcommand{\Z}{\mathbb{Z}}
\newcommand{\Ideals}{\texttt{Ideals}}
\newcommand{\Filters}{\texttt{Filters}}
\newcommand{\val}{\textnormal{v}}
\newcommand{\fdrk}{\textnormal{fdrk\,}}
\newcommand{\rk}{\textnormal{rk\,}}
\newcommand{\br}{\textnormal{br\,}}
\newcommand{\CBrank}{\textnormal{CBrank}}
\newcommand{\Ord}{\textnormal{Ord}}
\newcommand{\Supp}{\textnormal{Supp}}
\newcommand{\supp}{\textnormal{Supp}}
\renewcommand{\rk}{\textnormal{rk}}
\newcommand{\nf}{\nicefrac}
\newcommand{\two}{\mathbbm{2}}
\newcommand{\Ch}{\mathbbm{Ch}}
\newif\iflorna\lornafalse
\title{Dimensions on Lattice Ordered Abelian Groups and Model Theory of Modules over Pr\"ufer Domains.}
\author{Lorna Gregory}
\address{Dipartimento di Matematica e Fisica, Universit\`a degli Studi della Campania ``Luigi
Vanvitelli'', Viale Lincoln 5, 81100 Caserta, Italy}
\email{Lorna.Gregory@gmail.com}
\thanks{The author was supported by PRIN 2017-Mathematical Logic: models, sets, computability.}
\subjclass[2020]{03C60 (primary), 06F20, 13F05}
\keywords{Pr\"ufer domains, Arithmetical rings, Lattice Ordered Abelian Groups, Ziegler spectrum, Cantor-Bendixson Rank, m-dimension, Krull-Gabriel dimension, Superdecomposable module,  Width, Breadth}
\begin{document}

\begin{abstract}
We prove a transfer theorem which, when combined with the Jaffard-Kaplansky-Ohm Theorem, allows results in model theory of modules over B\'ezout domains to be translated into results over Pr\"ufer domains via their value groups. Extending work of Puninski and Toffalori, we show that the extended positive cone of the value group of a Pr\"ufer domain has m-dimension if and only if its lattice of pp-$1$-formulae has breadth (equivalently width) and that these dimensions are equal. Further, we show that the existence of these dimensions is equivalent to the lattice of pp-$1$-formulae having m-dimension (and hence to its Ziegler spectrum having Cantor-Bendixson rank) and the non-existence of superdecomposable pure-injective modules. Finally, we give a best possible upper bound for the m-dimension of the pp-$1$-lattice of a Pr\"ufer domain in terms of the m-dimension of the extended positive cone of its value group and show that all ordinals which are not of the form $\lambda+1$ for $\lambda$ a limit ordinal occur as the m-dimension of the pp-$1$-lattice of a B\'ezout domain.
\end{abstract}

\maketitle

\tableofcontents

\noindent
It is often the case that results in model theory of modules over B\'ezout domains can be proved, with sometimes significantly more effort, for Pr\"ufer domains. The first main contribution of this paper is to prove a transfer theorem which allows results in model theory of modules over B\'ezout domains to be translated into results over Pr\"ufer domains.

A B\'ezout domain is an integral domain whose finitely generated ideals are principal and a Pr\"ufer domain is an integral domain such that all its localisations at maximal ideals are valuation domains. B\'ezout domains are the non-Noetherian analogues of principal ideal domains and Pr\"ufer domains are non-Noetherian analogues of Dedekind domains.
Many classically important rings are B\'ezout or Pr\"ufer domains. They include Dedekind domains and hence rings of integers of number fields; the ring of complex entire functions \cite[Thm. 9]{Helmer} and the ring of algebraic integers \cite[Thm. 102]{Kapcomm}; the ring of integer valued polynomials with rational coefficients \cite[VI.1.7]{IntPoly} and the real holomorphy rings of a formally real fields \cite[2.16]{Becker}.

The central objects of study in the model theory of modules are lattices of pp formulae and Ziegler spectra. A pp formulae over a ring $R$ is a conjunction of linear equations over $R$ with some existential quantifiers in front;
see section \ref{SPreI}. Every definable set in a fixed module is the solution set of a boolean combination of pp formulae. For each natural number $n$, the set of pp-$n$-formulae i.e. pp formulae in $n$ free variables, up to equivalence, is a lattice under implication or equivalently inclusions of solution sets. Note that the lattice of pp-$n$-formulae of a ring $R$ is isomorphic to the lattice of finitely presented subfunctors of $\Hom_R(R^n,-)$ in the category of additive functors from the category of finitely presented $R$-modules to abelian groups.  The Ziegler spectrum, $\Zg(R)$, of a ring $R$ is a topological space attached to its module category. The points of $\Zg(R)$ are isomorphism classes of indecomposable pure-injective modules and a basis of compact open sets, $\mcal{O}(\Zg(R))$, is given by the sets
\[\left(\nf{\phi}{\psi}\right):=\{N\in\Zg(R) \st \phi(N)\supsetneq\phi(N)\cap\psi(N)\}\] where $\nf{\phi}{\psi}$ are pairs of pp-$1$-formulae.
This space, together with the lattices of pp formulae, captures most of the model theoretic information about the category of $R$- modules.

It was shown in \cite{bezwidth}, that both the lattice of pp-$1$-formulae, $\pp_R^1$, and the Ziegler spectrum, $\Zg(R)$ of a B\'ezout domain $R$ are encoded in its value group $\Gamma(R)$. The value group of a B\'ezout domain is simply the lattice ordered abelian group $Q^\times/R^\times$ where $Q$ is the field of fractions of $R$ ordered such that $a\leq b$ if and only if $bR\subseteq aR$. Replacing elements of $R$ by finitely generated fractional ideals, \`a la Dedekind, we define the value group $\Gamma(R)$ of a Pr\"ufer domain $R$ to be the group of non-zero fractional ideals of $R$ ordered by reverse ideal inclusion. This allows us to prove the following theorem.

{
    \renewcommand{\thetheorem}{\ref{BeztoPruf}}
    \begin{theorem}
     Let $R,S$ be Pr\"ufer domains. If $\Gamma(R)\cong\Gamma(S)$ then there exists a lattice isomorphism $\lambda:\pp_S^1\rightarrow \pp_R^1$ and a homeomorphism of topological spaces $\rho:\Zg(R)\rightarrow \Zg(S)$ such that $(\lambda\phi/\lambda\psi)=\rho^{-1}(\phi/\psi)$.
    \end{theorem}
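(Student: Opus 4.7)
The plan is to factor through the Bézout case. By the Jaffard--Kaplansky--Ohm theorem, every lattice ordered abelian group arises as the value group of some Bézout domain, so one may choose a Bézout domain $B$ with $\Gamma(B)\cong\Gamma(R)\cong\Gamma(S)$. Since the results of \cite{bezwidth} encode $\pp_B^1$ and $\Zg(B)$ entirely in $\Gamma(B)$ in a way compatible with the pp-pair/open-set correspondence, it suffices to establish the following intermediate mixed-transfer statement: whenever $R$ is Prüfer, $B$ is Bézout and $\Gamma(R)\cong\Gamma(B)$, there are compatible maps $\lambda_R\colon\pp_B^1\to\pp_R^1$ and $\rho_R\colon\Zg(R)\to\Zg(B)$. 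The desired $\lambda$ and $\rho$ of the theorem are then obtained by composition, namely $\lambda=\lambda_R\circ\lambda_S^{-1}$ and $\rho=\rho_S^{-1}\circ\rho_R$.

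For the intermediate statement, I would give a direct description of $\pp_R^1$ for a Prüfer domain $R$ in terms of finitely generated fractional ideals. Over a Bézout domain every pp-1-formula is equivalent to a finite join of basic formulae of the form $a\,|\,x \wedge bx = 0$ for $a,b\in B$; the analogue over $R$ would be $x\in I \wedge Jx=0$ for finitely generated fractional ideals $I,J$, so that the basic formulae are indexed by pairs in (the positive cone of) $\Gamma(R)$. An isomorphism $\Gamma(R)\cong\Gamma(B)$ then yields a canonical bijection between generating sets of $\pp_R^1$ and $\pp_B^1$, and the substantive step is to show that this bijection preserves lattice relations. For this I would appeal to the local-global principle for Prüfer domains: an implication between pp-1-formulae over $R$ holds iff it holds after localising at every maximal ideal $\mfrak{m}$, and $R_\mfrak{m}$ is then a valuation domain whose value group is $\Gamma(R)$ modulo a convex subgroup. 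Matching convex subgroups of $\Gamma(R)$ with those of $\Gamma(B)$ lines up the local pp-lattices, which for valuation domains are well known to be controlled by the value group.

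Once $\lambda_R$ is established, the homeomorphism $\rho_R$ is essentially formal: a lattice isomorphism of pp-1-lattices induces a bijection between the bases of compact opens $(\phi/\psi)$ and $(\lambda\phi/\lambda\psi)$, and since points of the Ziegler spectrum are determined by the collection of pp-1-pairs open on them, this extends uniquely to a homeomorphism satisfying the compatibility $(\lambda\phi/\lambda\psi)=\rho^{-1}(\phi/\psi)$. The main obstacle I anticipate is the local-global reduction for $\pp_R^1$: one must carry out the elementary manipulations of pp-1-formulae over $R$ using finitely generated (invertible but possibly nonprincipal) ideals in place of single elements, and verify that the reduction to basic formulae as well as the computation of meets and joins are governed solely by the ideal structure encoded in $\Gamma(R)$. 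Navigating this obstacle is precisely the technical heart of the transfer theorem advertised in the abstract.
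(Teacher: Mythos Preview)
Your plan for constructing $\lambda$ is essentially what the paper does: reduce every pp-$1$-formula to a finite join of basic formulae $C|x\wedge xD=0$ indexed by pairs of finitely generated ideals (Proposition~\ref{prespecialform}, Corollary~\ref{Specialform}), then show that the ordering between such formulae is expressed entirely in terms of ideal operations---sum, intersection, product, ideal quotient---which an isomorphism of value groups preserves (Proposition~\ref{orderppflaarith}, Lemma~\ref{Zgbasicmanip}). The local--global reduction you anticipate is exactly how the paper proves \ref{orderppflaarith}. The detour through an auxiliary B\'ezout domain is unnecessary (the paper works directly with two Pr\"ufer domains), but it does no harm.

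The genuine gap is your treatment of $\rho$. You assert that once $\lambda$ is in hand the homeomorphism is ``essentially formal'' because points of $\Zg$ are determined by the pp-pairs open on them. This is not enough. What you have is a bijection of pp-\emph{pairs}, not of basic open sets: many pairs give the same open set, and whether $(\phi/\psi)=(\phi'/\psi')$ or $(\phi/\psi)\subseteq(\phi'/\psi')$ is a statement about all indecomposable pure-injectives, not a lattice-theoretic statement about $\pp_R^1$. Even granting sobriety, you would need $\lambda$ to induce an isomorphism of the entire frame of opens, and that is not automatic from a lattice isomorphism of pp-$1$-formulae alone.

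The paper handles this by doing substantial additional work on the point side. It parametrises $\Zg(R)$ by admissible pairs $(I,J)$ of weakly prime ideals (Lemma~\ref{addpairlemma}, Corollary~\ref{ALLNIJ}), determines exactly when $N(I_1,J_1)\cong N(I_2,J_2)$ via an equivalence relation $\sim$ defined using ideal quotients (Lemma~\ref{equpairs}), and describes membership in basic opens in these terms (Lemma~\ref{toponpoint}). Crucially, $\sim$ involves the \emph{group} operation on $\Gamma(R)$ (through ideal quotients and the construction $J_K$ of Lemma~\ref{InverseIdealQuotient}), not merely its lattice structure; this is why the hypothesis is $\Gamma(R)\cong\Gamma(S)$ as $\ell$-groups rather than $\pp_R^1\cong\pp_S^1$ as lattices. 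Your plan needs a replacement for this step: either reproduce the admissible-pair analysis, or supply an independent argument that the particular lattice isomorphism $\lambda$ you construct (coming from a value-group isomorphism, not an arbitrary one) respects the frame of Ziegler-opens.
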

    %\addtocounter{theorem}{-1}
}

By the Jaffard-Kaplansky-Ohm theorem, all lattice ordered abelian groups occur as the value group of a B\'ezout domain. Thus, any result about the triple $(\pp_R^1,\Zg(R),\pi_R)$, where \[\pi_R:\pp_R^1\times\pp_R^1\rightarrow \mcal{O}(\Zg(R)), \ \ \ (\phi,\psi)\mapsto \left(\nf{\phi}{\psi}\right),\] which is true for all B\'ezout domains (with value group $G$) is also true for all Pr\"ufer domains (with value group $G$). The strength of this theorem is that many results in model theory of modules, for an arbitrary ring $S$, can be rephrased in terms of the triple $(\pp_R^1,\Zg(R),\pi_R)$.

The lattice of pp-$1$-formulae of a ring is often too coarse an invariant on its own. For instance, very different valuation domains have isomorphic lattices of pp-$1$-formulae: All valuation domains with countable dense value groups have isomorphic lattices of pp-$1$-formulae. On the other hand, for $n\geq 2$, the lattice of pp-$n$-formulae of $R$ is often a much less tame (or more descriptive) object. For instance, for $K$ and $F$ fields, if $\pp_K^3$ is isomorphic to $\pp_F^3$ as a lattice then $K$ is isomorphic to $F$ as a field. This is because the lattice $\pp_K^3$ is isomorphic to the lattice of subspaces of $K^3$; a classical result, \cite{GeoAlg}, recovers $K$ from the lattice of subspaces of $K^3$.

Part of our original motivation for proving this transfer theorem was that it would allow us to transfer results of Toffalori and Puninski from \cite{bezwidth} about the width of pp-1-lattices over B\'ezout domains to Pr\"ufer domains. Whilst preparing to do this we realised that by working more extensively with value groups we could extend those results significantly. The second main contribution of this article is the following theorem.

\begin{theorem*}
Let $R$ be a Pr\"ufer domain. The following are equivalent.
\begin{enumerate}
\item The lattice $\pp^1_R$ of pp-$1$-formulae has breadth (equivalently width).
\item The lattice $\pp^1_R$ of pp-$1$-formulae has m-dimension (equivalently $\Zg(R)$ has Cantor-Bendixson rank).
\item There does not exist a superdecomposable pure-injective $R$-module.
\item The extended positive cone $\Gamma(R)_\infty^+$ of the value group of $R$ has m-dimension.
\end{enumerate}
Moreover, the m-dimension of the extended positive cone of the value group of $R$ is equal to the breadth of $\pp_R^1$.
\end{theorem*}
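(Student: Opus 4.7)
The plan is to reduce to the B\'ezout case via Theorem~\ref{BeztoPruf} and then compute everything inside the value group. Conditions (1) and (2) are lattice/topological invariants of the triple $(\pp_R^1,\Zg(R),\pi_R)$, while (4) depends only on $\Gamma(R)$. By Theorem~\ref{BeztoPruf} combined with the Jaffard-Kaplansky-Ohm theorem, for any Pr\"ufer $R$ one can choose a B\'ezout domain $B$ with $\Gamma(B)\cong\Gamma(R)$ and an isomorphism $(\pp_B^1,\Zg(B),\pi_B)\cong(\pp_R^1,\Zg(R),\pi_R)$, so it suffices to prove the equivalences among (1), (2), (4) and the moreover statement when $R$ is B\'ezout. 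Condition (3) is not visibly an invariant of the triple and will be handled separately.

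Working over a B\'ezout domain $B$, I would use the explicit description of $\pp_B^1$ in terms of $\Gamma(B)$ from \cite{bezwidth} to set up a dictionary between pp-pairs in $\pp_B^1$ and pairs of cuts (or ideals/filters) of $\Gamma(B)_\infty^+$. The implication (2) $\Rightarrow$ (1) is then the standard fact that m-dimension forces breadth/width in a modular lattice, and the equivalence of breadth with width on $\pp_B^1$ follows from the (essentially distributive, locally totally ordered) shape of the lattice exhibited by the dictionary.

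The core of the argument is the equivalence (1) $\Leftrightarrow$ (4) together with the ordinal equality between the breadth of $\pp_R^1$ and the m-dimension of $\Gamma(R)_\infty^+$. I would prove this by ordinal induction through the dictionary: on the one hand, if $\Gamma(B)_\infty^+$ has m-dim $\alpha$, the dictionary stratifies $\pp_B^1$ into pieces whose breadth is controlled by the corresponding rank in $\Gamma(B)_\infty^+$, yielding breadth $\leq \alpha$; on the other, a densely ordered obstruction of rank $\beta$ in $\Gamma(B)_\infty^+$ lifts to a densely ordered family of pp-pairs obstructing breadth at the same rank. For (1) $\Leftrightarrow$ (3), I would invoke and, where necessary, extend the Ziegler-type theorem of \cite{bezwidth} identifying existence of a superdecomposable pure-injective with absence of width of the pp-$1$-lattice; the Pr\"ufer case follows by transferring from $B$ once one checks that existence of superdecomposables is invariant under the datum of the triple, or alternatively by a direct local-to-global argument using valuation localisations.

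The main obstacle I expect is making the dictionary tight enough to capture the breadth ordinal exactly rather than only its existence. Since the m-dimension of $\pp_R^1$ is in general strictly larger than the m-dim of $\Gamma(R)_\infty^+$ (compare the upper bound result mentioned in the abstract), the translation must collapse exactly those chains of pp-pairs which contribute to m-dimension but not to breadth. The second delicate point is condition (3) at the Pr\"ufer level, since existence of superdecomposable pure-injectives is not a priori preserved by Theorem~\ref{BeztoPruf} and so must be shown to be determined by the value group.
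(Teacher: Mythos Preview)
Your reduction to the B\'ezout case via Theorem~\ref{BeztoPruf} is correct and matches the paper. Your worry about condition~(3) is misplaced: existence of a superdecomposable pure-injective is equivalent, over any ring, to existence of a superdecomposable pp-$1$-type (Theorem~\ref{sdtypetopi}), and the latter is purely a property of the lattice $\pp_R^1$. So (3) transfers through the lattice isomorphism of Theorem~\ref{BeztoPruf} without further work; the paper uses exactly this in the proof of Theorem~\ref{vgroupmdimsuperdec}. What does require real content is the construction, given that $\Gamma(R)_\infty^+$ lacks m-dimension, of an explicit superdecomposable pp-type: the paper splits into the case where $\Gamma(R)_\infty^+$ lacks breadth (then the pp-type of $1$ in $R_{T_\infty(R)}$ works, Proposition~\ref{widthvalimp1superdec}) and the case where it has breadth but not m-dimension (then one finds a valuation-ring quotient with dense principal ideals, Lemma~\ref{widthnotmdimvalgroup}). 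Your ``extend the Ziegler-type theorem of \cite{bezwidth}'' does not name this case split.

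The serious gap is in your route to the ordinal equality $\br\pp_R^1=\mdim\Gamma(R)_\infty^+$. A dictionary between pp-pairs and cuts of $\Gamma(R)_\infty^+$ does not by itself tell you which intervals of $\pp_R^1$ are totally ordered, and that is precisely what controls breadth. The paper's mechanism is different and sharper: Proposition~\ref{breadthcollapseloc} identifies the congruence $\sim_\Ch$ on $\pp_R^1$ with the congruence kernel of the localisation map $\pp_R^1\to\pp_{R_S}^1$, where $S$ is generated by the irreducible elements. The hard direction---that a chain interval in $\pp_R^1$ forces an irreducible element to become a unit---is the computation in Lemma~\ref{trivialorchain}, which has no analogue in your outline. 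Once this is established, an ordinal induction shows $\pp_R^1/\sim_{\Ch,\alpha}\cong\pp^1_{R_{S_\alpha(R)}}$, and since $\Gamma(R_{S_\alpha(R)})\cong\Gamma(R)/C_{\two,\alpha}$ the breadth of $\pp_R^1$ and the m-dimension of $\Gamma(R)_\infty^+$ are computed by the same filtration. Your proposed ``lifting a densely ordered obstruction'' would at best give inequalities, not the exact identification of the two collapsing processes. Finally, (4)$\Rightarrow$(2) is not addressed in your plan; the paper handles it separately (Theorem~\ref{weakmdim}) by showing that every point of $\Zg(R)\setminus\Zg(R_S)$ lies in an open set $\left(\nf{x=x}{c|x}\right)$ or $\left(\nf{xc=0}{x=0}\right)$ whose associated interval is a chain isomorphic to $\Gamma(R_\mfrak{m})_\infty^+$, hence of finite m-dimension.
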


Breadth and m-dimension are dimensions on modular lattices defined by iteratively collapsing intervals which belong to a chosen class of lattices $\mcal{L}$ closed under sublattices and quotients; total orders in the case of breadth and finite lattices in the case of m-dimension. Roughly speaking, see section \ref{PreII} for a proper definition, they measure the number of times we need to collapse the intervals in $\mcal{L}$ before we reach the trivial lattice.

The m-dimension of the lattice of pp-$n$-formulae is equal to the Krull-Gabriel dimension, in the sense of \cite{Geigle}, of the category $(\mod\text{-}R,\Ab)^{fp}$ of finitely presented functors from $\mod\text{-}R$, the category of finitely presented right $R$-modules, to $\Ab$, the category of abelian group. Krull-Gabriel dimension measures the complexity of the factorisations of homomorphisms in the category of finitely presented modules.

Breadth coexists with another dimension, the ``width'' of a modular lattice. Ziegler showed, \cite[7.1]{Zieglermodules}, that if there is a superdecomposable pure-injective $R$-module then $\pp_R^1$ does not have width and, that, when $R$ is countable, the converse holds. Recall that the interest in the existence of superdecomposable pure-injective modules comes from the fact that, over any ring, every pure-injective module is isomorphic to the pure-injective hull of a direct sum of indecomposable pure-injective modules and a superdecomposable pure-injective module.

Most proofs of the existence of superdecomposable pure-injective modules are proved by showing that $\pp_R^1$ (or one of its analogues) does not have width; a notable exception is \cite{PunconcreteSDstring}. However, it is not known in general if the assumption that $R$ is countable in Ziegler's result can be dropped for the converse to hold. It is known for serial rings \cite[6.4]{SuperdecompSerial} and von Neumann regular rings \cite[1.2,1.3]{Trlifaj2prob}. The implication (3)$\Rightarrow$(1) of our theorem extends it to Pr\"ufer domains.

%\smallskip
The proof of the theorem is spread over sections \ref{SDimLgroup}, \ref{SSuperDecPIM} and \ref{SCalcBreadth}. In particular
(3)$\Rightarrow$(4) is \ref{vgroupmdimsuperdec}, (2)$\Leftrightarrow$(4) is \ref{weakmdim} and
(1)$\Leftrightarrow$(4) plus the statement about the value of breadth is \ref{calcbreadth}.
Note that, as we have already mentioned, (1)$\Rightarrow$(3) is true for all rings.
Most parts of the theorem are proved for B\'ezout domains and then transferred to Pr\"ufer domains using our transfer theorem \ref{BeztoPruf}.

%{\color{blue}That $(1)$ and $(4)$ are equivalent for B\'ezout domains is due to Puninski and Toffalori in \cite[7.1]{bezwidth}. The difficult direction is $(4)$ implies $(1)$. Their ingenious proof has $3$ steps. They show that, \cite[7.4 \& 7.7]{bezwidth}, if a B\'ezout domain $R$ has a superdecomposable pp-type then $\Gamma(R)^+_\infty$ does not have m-dimension. So, in particular, they show $(3)\Rightarrow (4)$. Hence, if $R$ is countable then, by Ziegler's result, $\pp_R^1$ has width if and only if $\pp_R^1$ has a superdecomposable pp-type. This proves $(4)$ implies $(1)$ when $R$ is countable. If $\pp_R^1$ does not have width then, using the downwards L\"owenheim-Skolem theorem, one can produce an elementary subring $S$ of $R$ such that $\pp_S^1$ does not have width. Being B\'ezout is an elementary property of rings and $\Gamma(S)_\infty^+$ is a sublattice of $\Gamma(R)_\infty^+$. Therefore $\Gamma(S)_\infty^+$ and hence $\Gamma(R)_\infty^+$ does not have m-dimension.}

That $(1)$ and $(4)$ are equivalent for B\'ezout domains is due to Puninski and Toffalori in \cite[7.1]{bezwidth}. The difficult direction is $(4)$ implies $(1)$.
Their ingenious proof first proves the equivalence in the countable case and then derives the equivalence in the general case by using the downwards L\"owenheim-Skolem theorem.
They first show, \cite[7.4 \& 7.7]{bezwidth}, that if a B\'ezout domain $R$ has a superdecomposable pp-type then $\Gamma(R)^+_\infty$ does not have m-dimension. So, in particular, they show $(3)\Rightarrow (4)$. Hence, if $R$ is countable then, by Ziegler's result, $\pp_R^1$ has width if and only if $\pp_R^1$ has a superdecomposable pp-type. This proves $(4)$ implies $(1)$ when $R$ is countable.
%If $\pp_R^1$ does not have width then, using the downwards L\"owenheim-Skolem theorem, one can produce an elementary subring $S$ of $R$ such that $\pp_S^1$ does not have width. Being B\'ezout is an elementary property  of rings and $\Gamma(S)_\infty^+$ is a sublattice of $\Gamma(R)_\infty^+$. Therefore $\Gamma(S)_\infty^+$ and hence $\Gamma(R)_\infty^+$ does not have m-dimension.

We give a very different proof of $(4)$ implies $(1)$ which allows us to calculate the breadth of $\pp^1_R$. We show, \ref{breadthcollapseloc}, that, when $R$ is a B\'ezout domain which is not a field, collapsing totally ordered intervals in $\pp_R^1$ corresponds to localising at the multiplicatively closed subset generated by the irreducible elements of $R$. At the level of value groups this corresponds to collapsing the intervals of finite length in the extended positive cone, or, equivalently, factoring out the convex $\ell$-subgroup generated by the minimal positive elements, see \ref{dimforgroup1} and \ref{dimintermsofmult}.
%The tools to prove (4) implies (1) also allow us to prove (4) implies (2) but does not give any information about the exact value of m-dimension.
The tools to prove (4) implies (1) have applications. They allow us to show, \ref{locsuperdecomp}, that for all B\'ezout domains $R$ there exists a mutiplicatively closed subset $S_\infty$ of $R$ such that no non-trivial interval in $\pp^1_{R_{S_\infty}}$ is totally ordered and all superdecomposable pure-injective $R$-modules are $R_{S_\infty}$-modules.
They also allow us to prove (4) implies (2). However they do not give any information about the exact value of m-dimension.

For any ring $R$, when $\pp_R^1$ has m-dimension, it is equal to the Cantor-Bendixson (CB) rank of $\Zg(R)$. For Pr\"ufer domains, Puninski shows that these dimension are always equal.
 The final contribution of this article is an investigation of the CB rank of $\Zg(R)$ when $R$ is a Pr\"ufer domain. This is the content of section \ref{SCBrank}. We show that if $\alpha$ is the m-dimension of $\Gamma(R)_\infty^+$ then the CB rank of $\Zg(R)$ is bounded below by the ordinal $\alpha$ and above by $\alpha \cdot 2$.
 By reinterpreting a result of Puninski we see that for a valuation domain $V$, the m-dimension of $\pp_V^1$ is equal to $\alpha\cdot 2$ where $\alpha$ is the m-dimension of $\Gamma(V)_\infty^+$.
 Hence our upper bound is best possible. In the opposite extreme case, we consider Pr\"ufer domains of Krull dimension $1$, i.e. all non-zero prime ideals are maximal.
 In this situation, we show that if the m-dimension of $\Gamma(R)_\infty^+$ is $\alpha$ then the m-dimension of $\pp_R^1$ is equal to $\alpha$ if $\alpha$ is a limit ordinal and $\alpha+1$ otherwise.

 That a Pr\"ufer domain has Krull dimension $1$ is a property of its value group. For each ordinal $\alpha$, we construct,
 in section \ref{Sctsfn},
 a lattice ordered abelian group $\Gamma$ of ``Krull dimension $1$'' such that the m-dimension of $\Gamma_\infty^+$ is $\alpha$.
 This allows us to show that all ordinals which are not of the form $\lambda+1$ for $\lambda$ a limit ordinal occur as the CB rank of the Ziegler spectrum of a B\'ezout domain.

In order to construct a lattice ordered abelian group $\Gamma$ of ``Krull dimension $1$'' with $\mdim\Gamma_\infty^+=\alpha$ for each ordinal $\alpha$, we first consider groups of continuous functions $C(X,\Z)$ where $X$ is a boolean space and $\Z$ is equipped with the discrete topology. We show that if $X$ has CB rank $\beta$ then $C(X,\Z)_\infty^+$ has m-dimension $\beta+1$. Thus we miss all limit ordinals. To remedy this, we take a boolean space of CB rank $\alpha$ and let $C^-(X,\Z)$ be the convex lattice ordered subgroup of $C(X,\Z)$ of those $f\in C(X,\Z)$ with $f(x)=0$ for all $x\in X$ of CB rank $\alpha$. Like $C(X,\Z)$ this group has ``Krull dimension $1$'' but $\mdim C^-(X,\Z)=\alpha$ when $X$ has CB rank $\alpha$ as required.

It is not hard to see, \ref{mdim1}, that the CB rank of the Ziegler spectrum of a Pr\"ufer domain is never equal to 1. We conjecture the following.

\begin{conjecture*}
Let $\lambda$ be a limit ordinal. There does not exist a Pr\"ufer domain whose Ziegler spectrum has CB rank $\lambda+1$.
\end{conjecture*}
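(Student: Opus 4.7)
The approach I would take is to leverage the transfer theorem \ref{BeztoPruf} together with the Jaffard-Kaplansky-Ohm theorem to reduce the problem to B\'ezout domains, and then argue purely in terms of the value group. Suppose for contradiction that $R$ is a Pr\"ufer domain (hence, after transfer, B\'ezout) with $\mdim(\pp_R^1) = \lambda + 1$ for some limit ordinal $\lambda$. By the bounds announced for section \ref{SCBrank}, $\alpha \le \lambda + 1 \le \alpha \cdot 2$, where $\alpha = \mdim(\Gamma(R)_\infty^+)$. The two extreme constructions described in the paper (valuation domains achieve $\alpha \cdot 2$, Krull-dimension-$1$ Pr\"ufer domains achieve $\alpha$ when $\alpha$ is limit and $\alpha + 1$ only when $\alpha$ is a successor) both avoid the forbidden form; the goal is to rule out every intermediate case.

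The central step I would attempt is the following dichotomy: the non-trivial interval in $\pp_R^1$ surviving the first $\lambda$ stages of the m-dim collapse (which must exist since $\mdim = \lambda+1 > \lambda$) has m-dimension at least $2$, not exactly $1$. Heuristically this should hold because survivors at a limit stage are built from \emph{valuation-like} fragments, which contribute in blocks of size $\beta \cdot 2$ and so never produce a lone successor step above a limit, together with \emph{spectrum-like} fragments which, as in the Krull-$1$ case, only add a successor step when something above them is already a successor. Ruling out $\lambda+1$ therefore reduces to showing that these two kinds of contributions cannot combine to yield an ordinal of the form $\lambda+1$ with $\lambda$ limit.

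To make this rigorous, the technical work would be a local-global principle for m-dim collapse on $\pp_R^1$: every interval of m-dim $\ge \lambda$ is controlled either by localisation at some maximal ideal $\mathfrak{m}$, where the analysis reduces to the valuation case and gives an even-ordinal contribution, or by a non-trivial chain in $\spec(R)$ producing a Krull-$1$-type successor step. One would then argue by transfinite induction, using the suprema at limit stages of the m-dim process and the multiplicative combination at successor stages, that the aggregate m-dim is always expressible as $\mu + n$ with $\mu$ a limit or zero and $n\ne 1$ whenever $\mu$ is a limit.

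The main obstacle, and plausibly why the conjecture is open, is that the convex $\ell$-subgroup structure of $\Gamma(R)$ can be arbitrarily intricate when the Krull dimension is infinite, so the way valuation-type and spectrum-type contributions combine is not governed by any simple ordinal arithmetic. A sensible first milestone would be to verify the conjecture for the classes of value groups built in section \ref{Sctsfn} from $C(X,\Z)$ and $C^-(X,\Z)$, where the Cantor-Bendixson rank of the underlying boolean space $X$ provides a concrete combinatorial handle on the collapse process. Success there would likely point to the correct general decomposition, while failure would probably locate a counterexample and thus refute the conjecture in a useful way.
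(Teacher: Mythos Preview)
The statement you are attempting to prove is explicitly labelled a \emph{conjecture} in the paper; no proof is offered there, only the remark that the case $\lambda=0$ follows from \ref{mdim1}. So there is no paper proof to compare against.

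Your proposal is not a proof but a strategy sketch, and you acknowledge as much when you identify the ``main obstacle'' yourself. The reduction to B\'ezout domains via \ref{BeztoPruf} is correct and the ordinal bounds $\alpha \le \mdim\pp_R^1 \le \alpha\cdot 2$ from \ref{CBrankbounds} are accurately quoted. However, the heart of your argument---the claimed dichotomy that intervals surviving to stage $\lambda$ must have m-dimension at least $2$, and the assertion that all contributions decompose into ``valuation-like'' pieces giving even ordinals and ``spectrum-like'' pieces giving successor steps only above successors---is not established. These are precisely the unproved claims that would constitute a solution to the conjecture. The paper's computations in \ref{upperbound} and \ref{Kdim1} handle only the two extreme cases (valuation domains and Krull dimension $1$), and the general interaction between the rank of $\Ass N$ and $\Div N$ across intermediate Krull dimensions is left open. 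Your suggestion to test the conjecture first on the $C(X,\Z)$ and $C^-(X,\Z)$ value groups from \S\ref{Sctsfn} is reasonable as a research step, but note that those groups all have Krull dimension $1$, so they are already covered by \ref{Kdim1} and will not probe the difficult intermediate regime.
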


This paper has two preliminary sections \ref{SPreI} and \ref{PreII}. The first one contains a brief introduction to the notions we need from model theory of modules, highlighting what happens for arithmetical rings;
 a discussion of  weakly prime ideals of Pr\"ufer domains;  and a summary of the connections between ideal theory of Pr\"ufer domains and multiplicatively closed subsets of B\'ezout domains and their value groups as lattice ordered abelian groups.
 The second preliminary section, which is not needed for the transfer theorem, is about dimensions on modular lattices, particularly pp-lattices and superdecomposable pp-types.

\medskip

%We hope that the work in this paper will be a step towards studying more detailed questions about the existence of superdecomposable pure-injective modules in the style of PUNINSKI [Marcus says this needs to be connected to the previous text or moved later].
%
%++++++++++

%
%====
%
% This is an algebraic measure of how complicated the homomorphisms of the category of finitely presented modules is with respect to factorisation.
%
%====

\section{Preliminaries I}\label{SPreI}
\noindent
We will write $\Ord$ for the class of ordinals. For $R$ a ring, $\Mod\text{-}R$ denotes the category of (right) $R$-modules.

A commutative ring $R$ is \textbf{B\'ezout} if all finitely generated ideals of $R$ are principal and \textbf{arithmetical} if $R_\mfrak{m}$ is a valuation ring for all maximal ideals $\mfrak{m}$. Note that B\'ezout rings are arithmetical. An integral domain which is arithmetical is called a \textbf{Pr\"ufer domain}.

We list some useful properties which characterise arithmetical rings. We will see some more later in this section.
%Lemma
\begin{theorem}\label{arithdef}Let $R$ be a commutative ring. The following are equivalent.
\begin{enumerate}
\item $R$ is arithmetical.
\item The lattice of ideals of $R$ is distributive.
\item For all ideals $A,B$ and finitely generated ideal $C$\[(A+B:C)=(A:C)+(B:C)\]\label{idealquotientarith}
\item For all $a,b\in R$, there exists $r,s,\alpha\in R$ such that $a\alpha=br$ and $b(1-\alpha)=as$.\label{arithdefalpha}
\end{enumerate}
\end{theorem}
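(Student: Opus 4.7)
The plan is to prove all four equivalences by reducing to the local case, where each of (1)--(4) collapses to the statement that the ideal lattice of a local ring is totally ordered. The key technical input is that the identities in (2) and (3) involve only finite sums, intersections and colons with finitely generated ideals, all of which commute with localization, so those identities hold in $R$ if and only if they hold in every $R_\mathfrak{m}$.

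For $(1)\Leftrightarrow(4)$ I would argue directly. Given $a,b\in R$ with $R$ arithmetical, I set $I=(aR:b)$ and $J=(bR:a)$; for each maximal $\mathfrak{m}$ of $R$, one of $a\mid b$ or $b\mid a$ holds in $R_\mathfrak{m}$, so one of $I_\mathfrak{m}$, $J_\mathfrak{m}$ equals $R_\mathfrak{m}$, and thus $I+J$ is contained in no maximal ideal, giving $1=\alpha+\beta$ with $\alpha\in I$ and $\beta\in J$; unpacking the relations $b\alpha\in aR$ and $a\beta\in bR$ yields the witnesses for (4). Conversely, if (4) holds, then localizing at each maximal $\mathfrak{m}$, one of $\alpha$ or $1-\alpha$ is a unit in $R_\mathfrak{m}$, so either $b\in aR_\mathfrak{m}$ or $a\in bR_\mathfrak{m}$, making $R_\mathfrak{m}$ a valuation ring.

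For $(1)\Leftrightarrow(2)$ and $(1)\Leftrightarrow(3)$, the localization principle reduces the problem to showing, in a local ring, that (2) or (3) implies the ideals form a chain; the reverse implications are trivial since any total order is distributive and any finitely generated ideal of a valuation ring is principal, making (3) an obvious identity. For (2)$\Rightarrow$(1) locally, I would apply distributivity to $(a)=(a)\cap((b)+(a+b))=((a)\cap(b))+((a)\cap(a+b))$ to write $a=br+(a+b)s$, equivalently $a(1-s)=b(r+s)$; locality of $R$ then gives either $a\in bR$ (if $1-s$ is a unit) or $b\in aR$ (if $s$ is a unit, since then $(a+b)s\in aR$ forces $a+b\in aR$ and hence $b\in aR$). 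For (3)$\Rightarrow$(1) locally, I would take $A=aR$, $B=bR$, $C=(a+b)R$ in the colon identity to obtain $1=r+s$ with $rb\in aR$ and $sa\in bR$, and conclude by the same unit/locality argument.

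The main obstacle I expect is the local (2)$\Rightarrow$(1) step, which requires one to pick just the right instance of distributivity; naive choices like $(a)=(a)\cap((a)+(b))$ collapse to tautologies. Everything else is classical bookkeeping with localization and colon ideals.
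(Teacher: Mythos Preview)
Your argument is correct. The paper itself does not give a proof here; it simply cites Jensen \cite[Thm 1 \& 2]{JensenArith} for the equivalence of (1), (2), (3) and Stephenson \cite[Thm 1.6]{Stephenson} for the equivalence of (2) and (4). Your direct proof is essentially the classical one found in those references: reduce (2) and (3) to the local case using that sums, intersections, and colons by finitely generated ideals commute with localization, and then handle the local case by a suitable instance of distributivity (your choice $(a)=(a)\cap((b)+(a+b))$ is exactly the standard trick, and your use of $(a+b)s\in(a)$ when $s$ is a unit is the point that makes it work). The argument for $(1)\Leftrightarrow(4)$ via $(aR:b)+(bR:a)=R$ is also standard and correct; note only that the $\alpha$ you produce satisfies $b\alpha\in aR$ and $a(1-\alpha)\in bR$, which matches the statement of (4) after relabelling $\alpha\leftrightarrow 1-\alpha$. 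So what you have written is a self-contained proof that the paper elects to outsource to the literature; the content is the same, just made explicit.
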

\begin{proof}
See \cite[Thm 1 \& 2]{JensenArith} for the equivalence of (1),(2) and (3). That (2) and (4) are equivalent follows from \cite[Thm 1.6]{Stephenson}.
\end{proof}

\begin{remark}\label{genint}
Let $R$ be an arithmetical ring. Let $a,b\in R$ and $\alpha,r,s\in R$ be such that $a\alpha=br$ and $b(1-\alpha)=as$. If $c\in aR\cap bR$ then $c=a\lambda=b\mu$ for some $\lambda,\mu\in R$ and hence \[c=c\alpha+c(1-\alpha)=br\lambda+as\mu.\] In particular, $aR\cap bR$ is generated by $\{br, as\}=\{a\alpha,b(\alpha-1)\}$.
\end{remark}

Since the lattice of ideals of an arithmetical ring is distributive, it follows from the above remark that the intersection of two finitely generated ideals is finitely generated. Therefore the set of finitely generated ideals of an arithmetical ring is a sublattice of its lattice of ideals.

%\begin{lemma}\label{maxel}
%Let $R$ be a B\'ezout domain. A non-zero element $w\in R$ is irreducible if and only if $wR$ is a maximal ideal.
%\end{lemma}

\subsection*{Model Theory of Modules}\label{SPreIModTh}
We now give a summary of the notions from model theory of modules that will
be used in this paper, highlighting special features for arithmetical rings. For a more detailed introduction to general model theory of modules the reader is referred to \cite{PrestBluebook} and  \cite{PSL}.

A (right) pp-$n$-formula $\phi$ is a formula of the form \[\exists \overline{y} \ \overline{y}A=\overline{x}B\]
where $\overline{x}$ is an $n$-tuple of variables, $\overline{y}$ is a tuple of variables and $A, B$ are appropriately sized matrices with entries from $R$. We write $\phi(M)$ for the solution set of $\phi$ in $M$. Note that $\phi(M)$ is a subgroup of $M^n$ with the addition inherited from $M$.

We identify two pp formulae if they define the same subgroup in all $R$-modules. Once we have made this identification, the set of (right) pp-$n$-formulae becomes a lattice under inclusion of solution sets, i.e. $\psi\leq \phi$ if and only if $\psi(M)\subseteq \phi(M)$ for all $M\in\Mod\text{-}R$. We denote this lattice by $\pp_R^n$ and the left module version by ${_R}\pp^n$. In \textit{this} lattice, we write $\phi\wedge \psi$ for the meet and $\phi+\psi$ for the join of $\phi,\psi\in \pp_R^n$. Note that for all $M\in \Mod\text{-}R$, $\phi\wedge\psi(M)=\phi(M)\cap\psi(M)$ and $(\phi+\psi)(M)=\phi(M)+\psi(M)$. %Readers who are not so keen on pp formulae should note that $\pp_R^n$ is isomorphic to the lattice of finitely presented subfunctors of $\Hom(R^n,-)$ as an object in the category of additive functors from $\mod\text{-}R$ to the category $\Ab$ of abelian groups \cite[10.2.17]{PSL}.

For each $n\in\N$, Prest introduced a lattice anti-isomorphism $D:\pp_R^n\rightarrow {_R}\pp^n$ (see \cite[section 1.3.1]{PSL} and \cite[8.21]{PrestBluebook}). As is standard, we denote its inverse also by $D$. We won't give the definition of $D$ because everything we need about it can be derived from the fact that for all $a\in R$, $D(xa=0)$ is $a|x$ and $D(a|x)$ is $ax=0$ where $a|x$ denotes the pp formula $\exists y \ x=ya$. Since all the rings we are interested in here are commutative we identify $\pp_R^n$ and ${}_R\pp^n$, so $D$ becomes an anti-isomorphism $D:\pp_R^1\rightarrow \pp_R^1$.

If $M$ is an $R$-module and $m\in M$ then the pp-type, $\pp_M(m)$ of $m$ (in $M$) is the collection of pp-$1$-formulae $\phi$ such that $m\in\phi(M)$. Note that $\pp_M(m)$ is a filter in $\pp^1_R$ and all filters in $\pp_R^1$ occur as the pp-types of elements of $R$-modules. If $M$ is finitely presented and $m\in M$ then $\pp_M(m)$ is generated as a filter by a single pp formula. Moreover, if $\phi$ is a pp formula then there exists a finitely presented module $M$ and $m\in M$ such that $\phi$ generates $\pp_M(m)$. The pair $(M,m)$ is called a \textbf{free realisation} of $\phi$.
\begin{theorem}\cite[Theorem 3]{Warfieldarith}\label{Warfield}
A commutative ring $R$ is arithmetical if and only if every finitely presented $R$-module is a direct summand of a (finite) direct sum of modules of the form $R/rR$ where $r\in R$.
\end{theorem}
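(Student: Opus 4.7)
The plan is to bridge both directions via characterisation (4) of Theorem \ref{arithdef}: $R$ is arithmetical iff for every $a, b \in R$ there exist $\alpha, r, s \in R$ with $a\alpha = br$ and $b(1-\alpha) = as$.

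For the direction ``decomposition $\Rightarrow$ arithmetical'', my plan is to test the hypothesis on f.p.\ modules built from pairs $a, b \in R$. A natural candidate is $M = R^2/(a, -b)R$, whose generators $\bar e_1, \bar e_2$ satisfy the sole relation $a\bar e_1 = b\bar e_2$. Realising $M$ as a direct summand of some $\bigoplus_{i=1}^n R/r_i R$ produces an idempotent endomorphism of this sum projecting onto $M$, and lifting through projectivity of $R^2$ yields an endomorphism of $R^2$. Its matrix entries, constrained by the relation $a\bar e_1 = b\bar e_2$, can then be massaged to produce the desired $\alpha, r, s$ satisfying $a\alpha = br$ and $b(1-\alpha) = as$.

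For the direction ``arithmetical $\Rightarrow$ decomposition'', I would first recall the classical case of a valuation domain $V$: every f.p.\ $V$-module is a finite direct sum of cyclic modules $V/rV$, proved by reducing any presentation matrix to diagonal form via elementary row and column operations, using that any two elements of $V$ are comparable in divisibility. For a general arithmetical $R$, the identity $a\alpha = br$, $b(1-\alpha) = as$ substitutes for the missing divisibility comparison: the pair $(\alpha, 1-\alpha)$ acts as a ``partition of unity'' splitting any column $\binom{a}{b}$ of a presentation matrix into two pieces whose entries are comparable in divisibility. Iterating this operation on the columns of a presentation of an f.p.\ $R$-module $M$ brings it to a block-diagonal form corresponding to a direct sum of cyclic quotients, at the cost of possibly adjoining auxiliary cyclic summands along the way.

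The main obstacle is precisely that the forward direction yields only a direct \emph{summand} of a direct sum of cyclics rather than a direct sum decomposition itself. This reflects a genuine global-versus-local obstruction: rank-one projective modules over a Dedekind domain, for instance, are locally free of rank one but need not be globally cyclic, so no amount of local simplification can collapse them to cyclic $R$-modules. Controlling the auxiliary cyclic summands that must be inserted during the reduction procedure, while keeping their total number finite, is the technical heart of Warfield's original argument, and the reason the result has to be phrased in terms of direct summands rather than direct sums.
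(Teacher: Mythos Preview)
The paper does not prove this statement at all: it is quoted as \cite[Theorem 3]{Warfieldarith} and used as an input to derive the special form of pp-$1$-formulae in Proposition~\ref{arithspecpp}. There is therefore no proof in the paper to compare your proposal against.

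That said, your outline is broadly the right shape for Warfield's argument. The forward direction via the ``partition of unity'' $(\alpha,1-\alpha)$ to simplify a presentation matrix column by column is standard, and your diagnosis of why only ``direct summand'' can be expected (non-principal ideal classes over Dedekind domains) is exactly right. For the backward direction, testing on $M=R^2/(a,-b)R$ is the correct module, but the phrase ``lifting through projectivity of $R^2$ yields an endomorphism of $R^2$'' is doing a lot of unexplained work. What one actually does is write the section $\iota:M\hookrightarrow\bigoplus_i R/r_iR$ and the retraction $\pi:\bigoplus_i R/r_iR\to M$ in coordinates: the images $\iota(\bar e_1),\iota(\bar e_2)$ give elements $x_{1i},x_{2i}\in R$ with $ax_{1i}-bx_{2i}\in r_iR$, and the condition $\pi\iota=\mathrm{id}_M$ produces equations in $R$ from which the $\alpha,r,s$ of Theorem~\ref{arithdef}(4) can be read off. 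The idempotent on $\bigoplus_i R/r_iR$ does not by itself lift to an endomorphism of $R^2$ in any obvious way, since $\bigoplus_i R/r_iR$ is not projective.
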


Using free realisations and Prest's duality one can derive the following special forms for pp-$1$-formulae over arithmetical rings.

\begin{proposition}\cite[4.5]{ringdesc}\label{arithspecpp}
Let $R$ be an arithmetical ring. Every pp-$1$-formula over $R$ is equivalent to a sum of pp-$1$-formula of the form $\exists y\, (ya=x\wedge yb=0)$ and a conjunction of pp-$1$-formula of the form $\exists y \, yc=xd$.
\end{proposition}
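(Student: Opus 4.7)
The plan is to establish the two normal forms separately, deriving the conjunctive form (b) from the disjunctive form (a) via Prest's duality $D$.

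For form (a), I would take a pp-$1$-formula $\phi$ and a free realisation $(M,m)$. By Warfield's theorem~\ref{Warfield}, $M$ is a direct summand, hence a pure submodule, of $N=\bigoplus_{i=1}^n R/r_iR$ for some $r_i\in R$; under this embedding write $m=(s_1+r_1R,\dots,s_n+r_nR)$. Purity gives $\pp_M(m)=\pp_N(m)$, so $\phi$ also generates the pp-type of $m$ in $N$, and since pp-types in direct sums decompose one has $\pp_N(m)=\bigcap_i \pp_{R/r_iR}(s_i+r_iR)$. A direct check (morphisms $R/rR\to L$ are determined by the image of $1+rR$, which must be annihilated by $r$) shows that $(R/rR,\,s+rR)$ is a free realisation of $\phi_{s,r}:=\exists y\,(ya=x\wedge yb=0)$ with $a=s$, $b=r$, so $\phi_{s_i,r_i}$ generates $\pp_{R/r_iR}(s_i+r_iR)$. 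In the pp-lattice, the intersection of the principal filters $\la\phi_{s_i,r_i}\ra$ is the principal filter generated by their join, hence $\phi\equiv\phi_{s_1,r_1}+\cdots+\phi_{s_n,r_n}$, yielding form (a).

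For form (b), I would apply the anti-isomorphism $D$ to the form (a) representation of $D\phi$. Since $D$ is involutive and turns joins into meets, $\phi=DD\phi\equiv\bigwedge_i D\phi'_i$ where the $\phi'_i$ are form (a) atoms. The key computation is $D(\exists y\,(ya=x\wedge yb=0))\equiv\exists z\,bz=ax$: writing the atom in matrix form $\exists y\,(y,x)\bigl(\begin{smallmatrix}a&b\\-1&0\end{smallmatrix}\bigr)=0$ and invoking the standard syntactic formula for Prest duality, which sends a formula $\exists \bar y\,(\bar y A+x\bar b=0)$ to $\exists\bar z\,(x=\bar b\bar z\wedge A\bar z=0)$, one obtains (after eliminating the variable determined by $x=-z_1$) the formula $\exists z\,bz=ax$, exactly of the form $\exists y\,yc=xd$. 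Taking the meet over $i$ then produces the desired conjunctive representation of $\phi$.

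The main obstacle is Step~2: the excerpt only records the values of $D$ on the atoms $xa=0$ and $a|x$, so to compute $D$ on the form (a) atoms one either has to invoke the full syntactic formula for Prest duality from~\cite{PrestBluebook,PSL} or re-derive it in the relevant case. Step~1 is the conceptual heart of the argument, but once Warfield's theorem is in hand it reduces to the elementary behaviour of pp-types and principal filters in direct sums of cyclics, and it is precisely the structure theorem for finitely presented modules over arithmetical rings that allows the passage from the valuation case to the general arithmetical case.
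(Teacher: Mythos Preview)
Your proposal is correct and follows precisely the route the paper indicates: the sentence preceding the proposition says ``Using free realisations and Prest's duality one can derive the following special forms,'' and the result itself is cited from \cite{ringdesc} rather than proved in the paper. Your argument for form (a) via Warfield's theorem and the decomposition of pp-types in direct sums, followed by dualising to obtain form (b), is exactly this; the duality computation $D(\exists y\,(ya=x\wedge yb=0))\equiv \exists z\,(zb=xa)$ is correct and can be checked against the special cases $b=0$ (giving $D(a|x)=xa{=}0$) and $a=1$ (giving $D(xb{=}0)=b|x$) already recorded in the paper.
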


We will use the forward direction of the next characterisation of arithmetical rings frequently.

\begin{theorem}\cite[3.1]{EH}
A commutative ring is arithmetical if and only if $\pp_R^1$ is distributive.
\end{theorem}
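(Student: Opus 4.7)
The statement is an equivalence; one direction is elementary and the other is the substantive content.

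For $\pp_R^1$ distributive $\Rightarrow R$ arithmetical: for a finitely generated ideal $I=(a_1,\ldots,a_n)$ of $R$, set $\phi_I(x):=\sum_i (a_i|x)\in\pp_R^1$. Then $\phi_I(R)=I$ and $\phi_I+\phi_J=\phi_{I+J}$ in $\pp_R^1$. Applying the lattice homomorphism $\pp_R^1\to\mathrm{Sub}(R)$, $\phi\mapsto\phi(R)$, to the distributive identity $\phi_I\wedge(\phi_J+\phi_K)=(\phi_I\wedge\phi_J)+(\phi_I\wedge\phi_K)$ yields $I\cap(J+K)=(I\cap J)+(I\cap K)$ for all finitely generated $I,J,K$. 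A standard finitary argument extends this to arbitrary ideals, and so $R$ is arithmetical by Theorem \ref{arithdef}.

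For $R$ arithmetical $\Rightarrow \pp_R^1$ distributive, the plan is to use the normal form supplied by Proposition \ref{arithspecpp}: every pp-1-formula is a finite sum of formulas of type $\alpha_{a,b}(x):=\exists y\,(ya=x\wedge yb=0)$, and, equivalently via Prest's duality $D$, a finite meet of formulas of type $\beta_{c,d}(x):=\exists y\,yc=xd$. Given $\phi,\psi_1,\psi_2\in\pp_R^1$, I would write $\phi=\bigwedge_k\beta_{c_k,d_k}$ and each $\psi_i=\sum_j\alpha_{a_{i,j},b_{i,j}}$, and then use general lattice identities to move the outer meets and joins through. This reduces the distributive law $\phi\wedge(\psi_1+\psi_2)=(\phi\wedge\psi_1)+(\phi\wedge\psi_2)$ to the single atomic identity
\[\beta_{c,d}\wedge(\alpha_{a_1,b_1}+\alpha_{a_2,b_2})=(\beta_{c,d}\wedge\alpha_{a_1,b_1})+(\beta_{c,d}\wedge\alpha_{a_2,b_2}).\]

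The main obstacle is computing $\beta_{c,d}\wedge\alpha_{a,b}$ explicitly in an arbitrary module and then establishing the displayed identity. The only ring-theoretic input available is characterization (4) of Theorem \ref{arithdef}: for all $a,b\in R$ there exist $\alpha,r,s\in R$ with $a\alpha=br$ and $b(1-\alpha)=as$, which by Remark \ref{genint} provides the generators $\{a\alpha,b(1-\alpha)\}$ of $aR\cap bR$. Combining this with repeated use of distributivity of the finitely generated ideal lattice (Theorem \ref{arithdef}(2)), the meet $\beta_{c,d}\wedge\alpha_{a,b}$ should rewrite as a sum of type A formulas in a form compatible with the summation of the $\alpha_{a_i,b_i}$, matching the right-hand side up to lattice identities. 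The delicate bookkeeping of this rewriting, and in particular verifying that no extra terms are produced or lost, is where the real work lies; once the single identity above is verified, full distributivity follows by expansion of the chosen normal forms.
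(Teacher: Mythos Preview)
The paper does not prove this theorem; it is quoted from \cite[3.1]{EH}. So there is no in-paper argument to compare against, and I comment only on the soundness of your proposal.

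Your argument for ``$\pp_R^1$ distributive $\Rightarrow$ $R$ arithmetical'' is correct.

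For the converse, the reduction to the ``atomic identity'' is circular. Writing $\phi=\bigwedge_k\beta_k$ and assuming the atomic identity for each $\beta_k$ gives
\[
\phi\wedge(\psi_1+\psi_2)=\bigwedge_k\bigl[(\beta_k\wedge\psi_1)+(\beta_k\wedge\psi_2)\bigr],
\]
but passing from this to $(\phi\wedge\psi_1)+(\phi\wedge\psi_2)=\bigl(\bigwedge_k\beta_k\wedge\psi_1\bigr)+\bigl(\bigwedge_k\beta_k\wedge\psi_2\bigr)$ is itself an instance of the (dual) distributive law. Similarly, your atomic identity treats only two single $\alpha$'s; extending it to $\beta\wedge(\alpha_1+\cdots+\alpha_n)=\sum_i(\beta\wedge\alpha_i)$ by induction requires applying the binary case with one summand equal to $\alpha_1+\cdots+\alpha_{n-1}$, which is not a single $\alpha$. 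The ``general lattice identities'' you invoke are exactly the distributive identities you are trying to establish. And even the atomic identity itself is left unverified.

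A route that avoids this and fits the machinery of the paper: two pp-formulae are equal in $\pp_R^1$ iff they agree on every $N\in\Zg(R)$, and by Lemma~\ref{Zglocglob} each such $N$ is an $R_{\mfrak m}$-module for some maximal ideal $\mfrak m$. Since the localisation map $\pp_R^1\to\pp_{R_{\mfrak m}}^1$ is a lattice homomorphism, the distributive identity in $\pp_R^1$ follows once it is known in $\pp_{R_{\mfrak m}}^1$ for every maximal $\mfrak m$. This reduces the problem to valuation rings, where the computation is tractable and is what \cite{EH} actually carries out.
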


An embedding $f:A\rightarrow B$ is \textbf{pure} if for all $\phi\in\pp_R^1$, $f(\phi(A))=\phi(B)\cap f(A)$. An $R$-module $M$ is \textbf{pure-injective} if it is injective over all pure-embeddings i.e. for all $f:A\rightarrow B$ pure and $g:A\rightarrow M$, there exists $h:B\rightarrow M$ such that $h\circ f=g$.

For any pp-type $p$ there exists a pure-injective module $M$ and an element $m\in M$ with $p=\pp_M(m)$ such that for any pure-injective pure submodule $M'$ of $M$, if $m\in M'$ then $M=M'$. The module $M$ is determined up to isomorphism by $p$. We call it the \textbf{pure-injective hull} of $p$ and write $N(p)$. See \cite[3.6]{Zieglermodules}, \cite[Ch 4]{PrestBluebook} or \cite[\S 4.3.5]{PSL}.

A pp-type $p$ is said to be \textbf{indecomposable} if $N(p)$ is indecomposable. Note that all indecomposable pure-injective modules are of the form $N(p)$ for some indecomposable pp-$1$-type $p$. By \cite[4.4]{Zieglermodules}, pp-type $p$ is indecomposable if and only if for all $\phi_1,\phi_2\notin p$, there exists $\sigma\in p$ such that $\phi_1\wedge \sigma+\phi_2\wedge \sigma\notin p$. When $\pp_R^1$ is distributive, so in particular for arithmetical rings, this condition simplifies to give that a pp-$1$-type is indecomposable if and only if $p$ is prime as a filter in $\pp_R^1$ i.e. for all $\phi_1,\phi_2\notin p$, $\phi_1+\phi_2\notin p$.

We say that an $R$-module $M$ is \textbf{pp-uniserial} if for all $\phi,\psi\in \pp^1_R$, $\phi(M)$ and $\psi(M)$ are comparable. An $R$-module is \textbf{endo-uniserial} if it is uniserial as a module over its endomorphism rings. The next result, which is a simplified version of a result of Puninski, gives two more characterisations of arithmetical rings amongst commutative rings.

\begin{theorem}\label{endouni}\cite[3.3]{KGdimSer}
For any ring $R$, the following are equivalent.
\begin{enumerate}
\item The lattice $\pp_R^1$ is distributive.
\item Every indecomposable pure-injective $R$-module is pp-uniserial.
\item Every indecomposable pure-injective $R$-module is endo-uniserial.
\end{enumerate}
\end{theorem}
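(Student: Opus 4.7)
The plan is to dispose of $(2) \Leftrightarrow (3)$ and $(2) \Rightarrow (1)$ quickly, and devote the bulk of the effort to $(1) \Rightarrow (2)$ by combining distributivity of $\pp_R^1$ with the Ziegler criterion for indecomposable pp-types recalled just before the theorem.

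For $(2) \Leftrightarrow (3)$ I would use the standard fact from model theory of pure-injective modules that, for a pure-injective $N$ and $a,b \in N$, one has $a \in \End(N)\cdot b$ if and only if $\pp_N(b) \subseteq \pp_N(a)$ (the forward direction is automatic, while the converse extends the assignment $b \mapsto a$ to an endomorphism of $N$ using pure-injectivity). Hence $N$ is endo-uniserial exactly when $\{\pp_N(a) : a \in N\}$ is totally ordered by inclusion. It remains to observe that this total ordering is equivalent to the total ordering of $\{\phi(N) : \phi \in \pp_R^1\}$: incomparable pp-types produce incomparable pp-definable subgroups via formulae witnessing the non-containments, and incomparable pp-definable subgroups produce incomparable pp-types via elements witnessing the non-containments. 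The two directions of this translation are entirely symmetric.

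For $(2) \Rightarrow (1)$, let $L(N)$ denote the lattice of pp-definable subgroups of $N$. Under $(2)$ each $L(N)$ with $N \in \Zg(R)$ is a chain, hence distributive. The lattice homomorphism $\pp_R^1 \to \prod_{N \in \Zg(R)} L(N)$, $\phi \mapsto (\phi(N))_N$, is injective, since $\phi \leq \psi$ in $\pp_R^1$ is equivalent to the basic open $(\phi/\psi)$ being empty in the Ziegler spectrum, which by definition is equivalent to $\phi(N) \subseteq \psi(N)$ for every $N \in \Zg(R)$. A lattice embedding into a product of chains forces $\pp_R^1$ to be distributive.

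For the hard direction $(1) \Rightarrow (2)$, fix an indecomposable pure-injective $N$. The key preliminary observation is that for every non-zero $a \in N$ the pp-type $p = \pp_N(a)$ is itself indecomposable: its pure-injective hull $N(p)$ embeds purely into $N$ via the map sending a generator to $a$ and, being pure-injective, splits off as a non-zero direct summand of $N$, which by indecomposability must be all of $N$. The distributive version of the Ziegler criterion recalled just before the theorem then forces $p$ to be a prime filter in $\pp_R^1$. Now suppose, aiming at a contradiction, that $\phi_1(N)$ and $\phi_2(N)$ are incomparable. Then $\phi_1(N) \cup \phi_2(N)$ is not a subgroup, so it is strictly contained in $(\phi_1 + \phi_2)(N) = \phi_1(N) + \phi_2(N)$, yielding a non-zero element $a$ which satisfies $\phi_1 + \phi_2$ in $N$ but satisfies neither $\phi_1$ nor $\phi_2$; this contradicts primeness of $\pp_N(a)$. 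The main subtlety will be the preliminary identification of $\pp_N(a)$ as an indecomposable pp-type, since that is precisely what unlocks the prime-filter characterisation of indecomposability under distributivity.
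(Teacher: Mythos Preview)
The paper does not prove this theorem: it is quoted from Puninski \cite[3.3]{KGdimSer} with no argument given, so there is no ``paper's own proof'' to compare against. Your proposal is therefore being assessed on its own merits.

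Your argument is correct. The equivalence $(2)\Leftrightarrow(3)$ via the characterisation $a\in\End(N)b\iff \pp_N(b)\subseteq\pp_N(a)$ for pure-injective $N$ is standard (see e.g.\ \cite[4.3.9]{PSL}), and your translation between total ordering of pp-types of elements and total ordering of pp-definable subgroups is clean. The implication $(2)\Rightarrow(1)$ via the embedding $\pp_R^1\hookrightarrow\prod_{N\in\Zg(R)}L(N)$ into a product of chains is exactly right.

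For $(1)\Rightarrow(2)$ your strategy is the natural one and it works. The one place worth tightening is the sentence ``its pure-injective hull $N(p)$ embeds purely into $N$ via the map sending a generator to $a$''. As phrased this sounds like you are constructing the map from $N(p)$ into $N$ directly, which is not entirely trivial. The cleaner way to say it is: since $N$ is pure-injective and $a\in N$ realises $p$, the hull $H(a)$ of $a$ in $N$ (the minimal direct summand of $N$ containing $a$) is isomorphic to $N(p)$; this is the standard fact recorded for instance in \cite[4.3.44]{PSL}. Indecomposability of $N$ then forces $H(a)=N$, so $N\cong N(p)$ and $p$ is indecomposable by definition. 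With that in hand, your contradiction argument using primeness of $\pp_N(a)$ under distributivity is correct (and you are right that the chosen $a$ is automatically non-zero, since $0\in\phi_1(N)$).
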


The (right) \textbf{Ziegler spectrum} of a ring $R$, denoted $\zg(R)$, is a topological space whose points are isomorphism classes of indecomposable pure-injective (right) $R$-modules and which has a basis of open sets given by
\[(\phi / \psi)=\{M\in\pinj_R \st \phi(M)\supsetneq\psi(M)\and\phi(M)\},\]
where $\varphi,\psi$ range over (right) pp-$1$-formulae. The sets $(\phi / \psi)$ are compact, in particular, $\Zg(R)$ is compact.
For countable rings \cite[4.7]{Herzogduality} and arithmetical rings \cite[2.9]{SerialSob}\footnote{In that paper, arithmetical rings are referred to a Pr\"ufer rings.}, $\Zg(R)$ is known to be a sober space.

\smallskip

Localisation of rings at multiplicatively closed sets will be important in this paper.
Let $R$ be a commutative ring and $S$ a multiplicatively closed subset of $R$. Let $\iota_S:R\rightarrow R_S$ be the localisation map. Restriction of scalars along $\iota$ induces a full embedding of $\Mod\text{-}R_S$ into $\Mod\text{-}R$ and applying $\iota$ to the matrices defining $\phi\in \pp_R^1$ gives a surjective lattice homomorphism $\pi_S:\pp_R^1\rightarrow \pp_{R_S}^1$. One easily sees that if $M\in \Mod\text{-}R_{S}$ and $\phi\in \pp_R^1$ then $\phi(M_R)=\pi_S(\phi)(M_{R_S})$. In particular, if $M$ is an $R_S$-module and $m\in M$ then $\phi\in \pp_{M_R}(m)$ if and only if $\pi_S(\phi)\in \pp_{M_{R_S}}(m)$.

Since $\iota_S$ is an epimorphism, by \cite[5.5.3]{PSL}, restriction of scalars along $\iota_S$ preserves both indecomposability and pure-injectivity. Moreover, the map from $\Zg(R_S)$ to $\Zg(R)$ induced by restriction of scalars is a homeomorphism onto its image, which is a closed set. As a closed subset of $\Zg(R)$, we may identify $\Zg(R_S)$ with the complement of the union of the open sets of the form $\left(\nf{x=x}{s|x}\right)$ and $\left(\nf{xs=0}{x=0}\right)$ where $s\in S$. We will often identify $\Zg(R_S)$ with this closed set.\looseness=-1

Part (i) of the next lemma follows from the fact, \cite[4.3.43]{PSL}, that the endomorphism rings of indecomposable pure-injective modules are local. A proof of (ii) can be found in \cite[6.4]{LornaSob}.\looseness=-1

\begin{lemma}\label{Zglocglob}
Let $R$ be a commutative ring $R$.
\begin{enumerate}[(i)]
\item Let $N$ be an indecomposable pure-injective $R$-module. The set $\Att N$ of elements of $R$ whose action on $N$ by multiplication is not bijective is a prime ideal of $R$.
\item For each prime ideal $\mfrak{p}\lhd R$, we can identify $\Zg(R_\mfrak{p})$ with the closed subset of $\Zg(R)$ consisting of those $N\in\Zg(R)$ with $\Att N\subseteq \mfrak{p}$. Moreover, $\Zg(R)$ is equal to the union of $\Zg(R_\mfrak{p})$ where $\mfrak{p}\lhd R$ ranges over maximal ideals of $R$.
\end{enumerate}
\end{lemma}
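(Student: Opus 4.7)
For part (i), my plan is to exploit the cited fact that $E := \End(N)$ is a local ring. Sending $r \in R$ to multiplication by $r$ on $N$ defines a ring homomorphism $\mu : R \to E$, and because $R$ is commutative the image of $\mu$ lies in the centre of $E$. An element $r$ fails to act bijectively on $N$ precisely when $\mu(r)$ is not a unit in $E$, so $\Att N = \mu^{-1}(\Jac(E))$, the preimage of the unique maximal two-sided ideal of $E$. Composing $\mu$ with the quotient $E \to E/\Jac(E)$ yields a ring map from the commutative ring $R$ into the division ring $E/\Jac(E)$, landing in its centre, so its image is a commutative subring of a division ring and therefore an integral domain. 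The kernel of this composite is exactly $\Att N$, which is consequently a prime ideal of $R$.

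For part (ii), I would combine (i) with the description of $\Zg(R_S)$ as a closed subspace of $\Zg(R)$ recalled in the paragraph preceding the lemma. A point $N \in \Zg(R)$ avoids the union of the basic open sets $\left(\nf{x=x}{s|x}\right)$ and $\left(\nf{xs=0}{x=0}\right)$ for $s \in S$ precisely when, for every $s \in S$, multiplication by $s$ on $N$ is both surjective and injective; equivalently, $\Att N \cap S = \emptyset$. Specialising to $S = R \setminus \mfrak{p}$ yields the identification $\Zg(R_\mfrak{p}) = \{N \in \Zg(R) : \Att N \subseteq \mfrak{p}\}$. The final claim is then immediate from (i): for any $N \in \Zg(R)$ the ideal $\Att N$ is prime, hence contained in some maximal ideal $\mfrak{m}$, so $N \in \Zg(R_\mfrak{m})$.

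There is no serious obstacle here; both parts are essentially formal consequences of cited results. The one subtlety worth flagging is that $E$ need not be commutative, so one must invoke centrality of $\mu(R)$ in $E$ (and hence of its image in $E/\Jac(E)$) to conclude that $\Att N$ is genuinely prime rather than merely a two-sided ideal pulled back from a maximal one; the fact that $E/\Jac(E)$ is a division ring, together with commutativity of $R$, is exactly what makes the quotient $R/\Att N$ a domain.
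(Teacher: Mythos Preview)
Your argument is correct and follows exactly the route the paper indicates: part (i) is deduced from the locality of $\End(N)$ via the central homomorphism $\mu:R\to\End(N)$, and part (ii) is read off from the description of $\Zg(R_S)$ as a closed subspace given in the paragraph preceding the lemma. The paper itself does not spell out these steps but simply cites \cite[4.3.43]{PSL} and \cite[6.4]{LornaSob}; your write-up is the standard unpacking of those references.
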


%{\color{red}TEXT}
%
%\begin{fact}\cite[6.4]{LornaSob}\label{Zglocglob}
%Let $R$ be a commutative ring. For each prime ideal $\mfrak{p}\lhd R$, we can identify $\Zg(R_\mfrak{p})$ with the closed subset of $\Zg(R)$ consisting of those $N\in\Zg(R)$ with $\Att N\subseteq \mfrak{p}$. Moreover, $\Zg(R)$ is equal to the union of $\Zg(R_\mfrak{p})$ where $\mfrak{p}\lhd R$ ranges over maximal ideals of $R$.
%\end{fact}

\subsection*{Weakly prime ideals}
In \cite{EntBez}, an ideal $I$ of a B\'ezout domain $R$ was defined to be weakly prime if for all $a,b\in R$, $a,b\notin I$ implies the least common multiple of $a$ and $b$ (i.e. a generator of $aR\cap bR$) is not in $I$. We modify this definition so that it is more appropriate for arithmetical rings and our purposes.

\begin{definition}
An ideal $I\lhd R$ is \textbf{weakly prime} if $I$ is proper and for all finitely generated ideals $A,B\lhd R$, if $A\cap B\subseteq I$ then $A\subseteq I$ or $B\subseteq I$.
\end{definition}

Over arithmetical rings, weakly prime ideals coincide with various other classes of ideals.

\begin{proposition}\label{eqwkprime}
Let $R$ be an arithmetical ring and $I\lhd R$ be a proper ideal. The following are equivalent:
\begin{enumerate}
\item $I$ is weakly prime.
\item $I$ is irreducible, i.e. for all $A,B\lhd R$, $A\cap B=I$ implies $A=I$ or $B=I$.
\item $I$ is primal i.e. $\bigcup_{r\notin I}(I:r)$ is an ideal.
%\item $\Gamma(I)$ is a prime filter of the lattice $\Gamma(R)^+_\infty$.
\end{enumerate}
\end{proposition}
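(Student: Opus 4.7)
The plan is to prove (1) $\Leftrightarrow$ (2) using the distributivity of the ideal lattice (Theorem \ref{arithdef}(2)), and (1) $\Leftrightarrow$ (3) using the explicit description of the generators of $aR \cap bR$ given in Remark \ref{genint}.

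For (1) $\Rightarrow$ (2), I would argue by contradiction: if $A \cap B = I$ with $a \in A \setminus I$ and $b \in B \setminus I$, then the principal ideals $aR$ and $bR$ are finitely generated with $aR \cap bR \subseteq A \cap B = I$, so weakly prime forces $a \in I$ or $b \in I$. Conversely, for (2) $\Rightarrow$ (1), given finitely generated $A, B$ with $A \cap B \subseteq I$, distributivity yields $(A + I) \cap (B + I) = (A \cap B) + I = I$, and irreducibility then gives $A + I = I$ or $B + I = I$. Note that irreducibility is used here on ideals which need not be finitely generated, so the hypothesis in (2) is genuinely used in full strength.

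For (1) $\Rightarrow$ (3), set $J := \bigcup_{r \notin I}(I : r)$. Closure of $J$ under scalar multiplication is immediate. For closure under addition, take $x \in (I : r)$ and $y \in (I : s)$ with $r, s \notin I$. Weakly prime applied to the finitely generated ideals $rR$ and $sR$ shows $rR \cap sR \not\subseteq I$, so I can pick $c = r \lambda = s \mu$ in $(rR \cap sR) \setminus I$. Then $(x + y)c = \lambda(xr) + \mu(ys) \in I$, placing $x + y \in (I : c) \subseteq J$. For (3) $\Rightarrow$ (1), a short splitting argument on generators reduces the problem to showing $aR \cap bR \subseteq I$ implies $a \in I$ or $b \in I$: indeed, if $A = \sum a_i R$ and $B = \sum b_j R$ with $A \cap B \subseteq I$, then each $a_i R \cap b_j R \subseteq I$, and if some $a_{i_0} \notin I$ we conclude every $b_j \in I$. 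Remark \ref{genint} then gives $\alpha \in R$ with $a\alpha, b(1 - \alpha) \in aR \cap bR \subseteq I$, so $\alpha \in (I : a)$ and $1 - \alpha \in (I : b)$. Assuming $a, b \notin I$ puts $\alpha, 1 - \alpha \in J$, so $1 = \alpha + (1 - \alpha) \in J$, meaning $1 \in (I : t)$ for some $t \notin I$; but then $t \in I$, a contradiction.

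The main obstacle in both directions of (1) $\Leftrightarrow$ (3) is locating the right witness inside an intersection of two principal ideals: for (1) $\Rightarrow$ (3) one must secure some element of $rR \cap sR$ outside $I$, while for (3) $\Rightarrow$ (1) one needs the particular pair of generators $a\alpha$ and $b(1 - \alpha)$ whose ``multipliers'' $\alpha$ and $1 - \alpha$ sum to $1$, so that membership of both in the adjoint ideal $J$ forces $1 \in J$. Both uses of Remark \ref{genint} ultimately trade on characterisation (4) of arithmetical rings from Theorem \ref{arithdef}.
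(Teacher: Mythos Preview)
Your proof is correct. The paper takes a different route for the equivalence with primality: it establishes the cycle $(1)\Rightarrow(2)\Rightarrow(3)$ and $(3)\Rightarrow(2)\Rightarrow(1)$, noting that $(1)\Rightarrow(2)$ and $(2)\Rightarrow(3)$ hold for arbitrary commutative rings (the latter by a result of Fuchs on primal ideals), while $(3)\Rightarrow(2)$ is quoted from the literature as a characterisation of arithmetical rings, and $(2)\Rightarrow(1)$ is the distributivity argument you give. Your $(1)\Leftrightarrow(2)$ matches the paper, but your treatment of $(1)\Leftrightarrow(3)$ is genuinely different: you bypass $(2)$ entirely and argue both directions elementarily via the $\alpha$-element from Theorem~\ref{arithdef}(4) and Remark~\ref{genint}. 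This makes the argument self-contained and exposes exactly where the arithmetical hypothesis enters --- in $(1)\Rightarrow(3)$ it is not needed at all (only weak primeness is used to find $c\in rR\cap sR\setminus I$), while in $(3)\Rightarrow(1)$ it supplies the pair of generators $a\alpha,\,b(1-\alpha)$ of $aR\cap bR$ whose coefficients sum to $1$. The paper's approach is terser but leans on two external citations; yours is longer but stands alone.
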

\begin{proof}
The implications $(1)\Rightarrow(2)$ and $(2)\Rightarrow(3)$ hold for all commutative rings. The proof of $(1)\Rightarrow (2)$ is straightforward. See \cite[Thm 1]{Fuchsprimal} for $(2)\Rightarrow (3)$.
According to \cite[1.8]{primal2}, $(3)\Rightarrow (2)$ characterises arithmetical rings. The implication $(2)\Rightarrow (1)$ follows easily from the fact that the ideal lattice of an arithmetical ring is distributive. %So it remains to prove $(2)\Rightarrow (1)$. Suppose $R$ is an arithmetical ring, $I\lhd R$ is irreducible and $A,B\lhd R$ are finitely generated ideals such that $A\cap B\subseteq I$. Since the lattice of ideals of $R$ is distributive, $I=A\cap B+I=(A+I)\cap (B+I)$. So, since $I$ is irreducible, either $A+I=I$ or $B+I=I$. Hence, either $A\subseteq I$ or $B\subseteq I$ as required. {\color{red}Reduce}
\end{proof}

For $I\lhd R$ a proper primal ideal, denote the ideal $\bigcup_{r\notin I}(I:r)$ by $I^\#$. Note that $I^\#$ is a prime ideal.
One can easily check directly\footnote{For irreducibility, note that if $A\cap B=(I:r)$ then $(Ar+I)\cap (Br+I)=I$. For the second claim, note that if $r,s\notin I$ then $(rR+I)\cap (sR+I)\subsetneq I$. If $t=r\delta+\lambda\in (rR+I)\cap (sR+I)$ with $\lambda\in I$ and $t\notin I$ then $(I:t)=((I:r):\delta)\supseteq (I:s)$} that, for any commutative ring $R$, if $I\lhd R$ is irreducible and $r\notin I$ then $(I:r)$ is irreducible and $(I:r)^\#=I^\#$.
%
%Just as a curiosity, note that if $I\lhd R$ is irreducible then $I^\#$ is equal to the set of elements which annihilate some non-zero element of the injective hull $E(R/I)$ of $R/I$.
%
\begin{lemma}
Let $I\lhd R$ be primal. Then $I^\#$ is the union of all ideals $(I:K)$ where $K\lhd R$ is a finitely generated ideal with $K\nsubseteq I$.
\end{lemma}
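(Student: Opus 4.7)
The plan is to prove the two inclusions directly from the definitions; the primality hypothesis on $I$ plays no real role in the set-theoretic equality (it ensures only that the left-hand side is an ideal, which is discussed elsewhere).

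For the inclusion $I^\# \subseteq \bigcup_K (I:K)$, I will simply observe that for each $r \notin I$ the principal ideal $K := rR$ is finitely generated, satisfies $K \nsubseteq I$ (because $r \notin I$), and gives $(I:K) = (I:r)$. Thus every set $(I:r)$ appearing in the defining union of $I^\#$ is one of the $(I:K)$ on the right-hand side, and taking the union of all such $(I:r)$ yields $I^\# \subseteq \bigcup_K (I:K)$.

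For the reverse inclusion $\bigcup_K (I:K) \subseteq I^\#$, I will fix a finitely generated ideal $K = (k_1,\dots,k_n) \lhd R$ with $K \nsubseteq I$. Finite generation forces at least one generator, say $k_j$, to lie outside $I$, for otherwise $K = \sum_i k_i R \subseteq I$. Since $(I:K) = \bigcap_{i=1}^n (I:k_i)$, any $x \in (I:K)$ in particular satisfies $xk_j \in I$, whence $x \in (I:k_j) \subseteq I^\#$ by the definition of $I^\#$. This gives $(I:K) \subseteq I^\#$, and taking the union over all admissible $K$ closes the inclusion.

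There is no genuine obstacle: the lemma just unpacks the fact that, for a finitely generated ideal, divisibility into $I$ reduces to simultaneous divisibility of each generator into $I$, combined with the pigeonhole observation that if the ideal is not contained in $I$ then at least one generator fails to be either. The finite generation hypothesis is essential in the harder direction — without it one could not single out one generator outside $I$ to produce the required witness for membership in $I^\#$.
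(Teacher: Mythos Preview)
Your proof is correct and follows essentially the same approach as the paper: write $K=a_1R+\cdots+a_nR$, use $(I:K)=\bigcap_i(I:a_i)$, and pick a generator $a_j\notin I$ to get $(I:K)\subseteq(I:a_j)\subseteq I^\#$. The paper only spells out this harder inclusion, treating the inclusion $I^\#\subseteq\bigcup_K(I:K)$ as immediate from the definition, just as you do.
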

\begin{proof}
Suppose $K=a_1R+\ldots+a_nR$. Then $(I:K)=\bigcap_{i=1}^n (I:a_i)$. If $K\nsubseteq I$ then $a_i\notin I$ for some $1\leq i\leq n$. So $(I:K)\subseteq (I:a_i)\subseteq I^\#$.
\end{proof}

We will frequently use the following standard result from commutative ring theory which allows us to check certain ideal equations and inclusions locally.

\begin{lemma}[]\label{locidealeq}
Let $R$ be a commutative ring and $I,J\lhd R$ be ideals. Then $I\subseteq J$ if and only if $IR_{\mfrak{p}}\subseteq JR_{\mfrak{p}}$ for all prime ideals $\mfrak{p}\lhd R$. Moreover, if $\mfrak{p}\lhd R$ is a prime ideal then
\begin{enumerate}

\item $(I\cap J)R_{\mfrak{p}}=IR_{\mfrak{p}}\cap JR_{\mfrak{p}}$,
\item $(I+J)R_{\mfrak{p}}=IR_{\mfrak{p}}+JR_{\mfrak{p}}$,
\item $(I\cdot J)R_{\mfrak{p}}=IR_{\mfrak{p}}\cdot JR_{\mfrak{p}}$ and,
\item when $J$ is finitely generated, $(I:J)R_{\mfrak{p}}=(IR_{\mfrak{p}}:JR_{\mfrak{p}})$.
\end{enumerate}
\end{lemma}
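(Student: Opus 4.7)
The plan is to treat the two parts of the lemma separately: the local--global inclusion principle at the top, and the four algebraic identities at a fixed prime $\mfrak{p}$. All four identities are routine exercises in localisation, so I would handle them first and then use (1) to deduce the local--global statement.

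For the four identities I would argue as follows. The forward inclusions in (1)--(4) all follow immediately from the fact that localisation is a ring homomorphism $R\to R_{\mfrak{p}}$ (so it respects sums and products) and that it preserves inclusions. For the non-trivial reverse inclusions: in (1), an element in $IR_{\mfrak{p}}\cap JR_{\mfrak{p}}$ can be written as $a/s=b/t$ with $a\in I$, $b\in J$; clearing denominators gives $u\in R\setminus \mfrak{p}$ with $uta=usb$, and the element $uta$ lies in $I\cap J$ and represents the original fraction. Statements (2) and (3) reduce at once to the generator level. For (4), write $J=j_1R+\dots+j_nR$ and take $a/s\in (IR_{\mfrak{p}}:JR_{\mfrak{p}})$. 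For each $i$ there is $u_i\in R\setminus \mfrak{p}$ with $u_i a j_i\in I$; let $u=u_1\cdots u_n\in R\setminus \mfrak{p}$. Then $ua\cdot J\subseteq I$, so $ua\in (I:J)$ and $a/s=ua/(us)\in (I:J)R_{\mfrak{p}}$. The finite-generation assumption is essential precisely to replace the family $\{u_i\}$ by a single element outside $\mfrak{p}$.

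For the local--global principle $I\subseteq J\iff IR_{\mfrak{p}}\subseteq JR_{\mfrak{p}}$ for every prime (equivalently every maximal) $\mfrak{p}$, the direction $(\Rightarrow)$ is immediate from functoriality. For $(\Leftarrow)$ I would use the standard conductor argument: fix $x\in I$ and consider the ideal $(J:xR)=\{r\in R\mid rx\in J\}$. If $(J:xR)\neq R$, pick a maximal ideal $\mfrak{m}\supseteq (J:xR)$. The image of $x$ in $R_{\mfrak{m}}$ lies in $IR_{\mfrak{m}}\subseteq JR_{\mfrak{m}}$, so there is $s\in R\setminus\mfrak{m}$ with $sx\in J$, i.e.\ $s\in (J:xR)\subseteq\mfrak{m}$, a contradiction. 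Hence $(J:xR)=R$ and $x\in J$.

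No real obstacle is expected; the only place requiring any care is (4), where one must remember that commuting localisation with the colon ideal requires $J$ finitely generated so that finitely many denominator-clearing factors can be combined into one element outside $\mfrak{p}$. Everything else is bookkeeping with fractions.
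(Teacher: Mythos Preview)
Your proof is correct and complete. The paper itself does not give a proof of this lemma at all; it simply records it as a ``standard result from commutative ring theory'' and uses it freely thereafter. So there is nothing to compare against, and your argument is exactly the standard one that any reader would supply.
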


Let $\mfrak{p}\lhd R$ be a prime ideal and let $\iota:R\rightarrow R_\mfrak{p}$ be the localisation map. For $X\subseteq R_\mfrak{p}$, we write $X\cap R$ for the set $\iota^{-1}(X)$.

\begin{lemma}\label{formwkprime}
Let $R$ be an arithmetical ring and $I\lhd R$ a proper ideal. Then $I$ is weakly prime if and only if $I=IR_{\mfrak{p}}\cap R$ for some prime ideal $\mfrak{p}$. Moreover, for $I\lhd R$ weakly prime, $I=IR_{\mfrak{p}}\cap R$ for any prime ideal $\mfrak{p}\supseteq I^\#$.
\end{lemma}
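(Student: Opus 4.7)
The plan is to prove the ``moreover'' clause first (which subsumes the forward direction of the equivalence) and then handle the converse using that localisations of arithmetical rings are valuation rings.

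\textbf{Forward direction and moreover clause.} Assume $I$ is weakly prime and let $\mfrak{p}\supseteq I^\#$ be any prime ideal; note $I^\#$ is itself prime since $I$ is primal by \ref{eqwkprime}. The inclusion $I\subseteq IR_\mfrak{p}\cap R$ is automatic. For the reverse inclusion, take $a\in IR_\mfrak{p}\cap R$. By the definition of localisation, there exist $b\in I$ and $s,t\in R\setminus\mfrak{p}$ with $t(sa-b)=0$, so $tsa=tb\in I$. Setting $u=ts\in R\setminus\mfrak{p}$, we get $u\in(I:a)$. If $a\notin I$ then $(I:a)\subseteq\bigcup_{r\notin I}(I:r)=I^\#\subseteq\mfrak{p}$, contradicting $u\notin\mfrak{p}$. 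Hence $a\in I$. Taking $\mfrak{p}=I^\#$ gives the existence asserted in the forward direction.

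\textbf{Converse.} Assume $I=IR_\mfrak{p}\cap R$ for some prime $\mfrak{p}\lhd R$. By \ref{eqwkprime} it suffices to show that $I$ is irreducible. Suppose $A\cap B=I$ for ideals $A,B\lhd R$. Since $R$ is arithmetical, $R_\mfrak{p}$ is a valuation ring, so its ideals are totally ordered; without loss of generality $AR_\mfrak{p}\subseteq BR_\mfrak{p}$. By \ref{locidealeq},
\[IR_\mfrak{p}=(A\cap B)R_\mfrak{p}=AR_\mfrak{p}\cap BR_\mfrak{p}=AR_\mfrak{p}.\]
Hence $A\subseteq AR_\mfrak{p}\cap R=IR_\mfrak{p}\cap R=I$, and together with $I=A\cap B\subseteq A$ this yields $A=I$.

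\textbf{Expected obstacles.} There is no real technical obstacle: both directions amount to standard localisation manipulations together with the characterisations already recorded in \ref{eqwkprime} and \ref{locidealeq}. The only point one needs to be careful about is the definition of weakly prime (which is stated only for \emph{finitely generated} $A,B$): this is handled implicitly by routing through irreducibility via \ref{eqwkprime}, which allows the converse argument to treat arbitrary ideals $A,B$. One should also note that the argument in the forward direction uses only that $\mfrak{p}$ contains $I^\#$, not that $\mfrak{p}=I^\#$, which is precisely what gives the ``moreover'' statement.
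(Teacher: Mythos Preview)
Your proof is correct and follows essentially the same approach as the paper. The forward direction is identical in spirit (if $a\in IR_\mfrak{p}\cap R$ then some $u\notin\mfrak{p}$ lies in $(I:a)$, forcing $a\in I$ since otherwise $u\in I^\#\subseteq\mfrak{p}$); for the converse, the paper checks weak primeness directly using that all ideals of the valuation ring $R_\mfrak{p}$ are weakly prime, whereas you verify irreducibility via the total order on ideals of $R_\mfrak{p}$ and then invoke \ref{eqwkprime} --- this is the same localisation computation phrased slightly differently.
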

\begin{proof}
The reverse direction follows easily from \ref{locidealeq} and the fact that all ideals of a valuation ring are weakly prime. For the forward direction, let $\mfrak{p}\supseteq I^\#$.
It is always true that $I\subseteq IR_{\mfrak{p}}\cap R$. Suppose $a\in IR_{\mfrak{p}}\cap R$. There exists $s\notin \mfrak{p}$ such that $as\in I$ for some $s\notin \mfrak{p}$. So $s\in (I:a)$. If $a\notin I$ then $s\in (I:a)\subseteq \mfrak{p}$. Therefore, $s\notin\mfrak{p}$ implies $a\in I$. So we have shown that $I=IR_{\mfrak{p}}\cap R$.
\end{proof}

%
%\begin{lemma}\label{formwkprime}
%Let $R$ be a Pr\"ufer domain and $I\lhd R$ a proper ideal. Then $I$ is weakly prime if and only if $I=IR_{\mfrak{p}}\cap R$ for some prime ideal $\mfrak{p}$. Moreover, for $I\lhd R$ weakly prime, $I=IR_{\mfrak{p}}\cap R$ for any prime ideal $\mfrak{p}\supseteq I^\#$.
%\end{lemma}
%\begin{proof}
%The reverse direction follows easily from \ref{locidealeq} and the fact that all ideals of a valuation domain are weakly prime. For the forward direction, let $\mfrak{p}\supseteq I^\#$.
%It is always true that $I\subseteq IR_{\mfrak{p}}\cap R$. Suppose $a\in IR_{\mfrak{p}}\cap R$. There exists $s\notin \mfrak{p}$ such that $as\in I$ for some $s\notin \mfrak{p}$. So $s\in (I:a)$. If $a\notin I$ then $s\in (I:a)\subseteq \mfrak{p}$. Therefore, $s\notin\mfrak{p}$ implies $a\in I$. So we have shown that $I=IR_{\mfrak{p}}\cap R$.
%\end{proof}

As a consequence, for each prime ideal  $\mfrak{p}\lhd R$
of an arithmetical ring $R$, the set of weakly prime ideals $I\lhd R$ such that $I^\#\subseteq \mfrak{p}$ is totally ordered by inclusion.

%\begin{cor}
%Let $R$ be an arithmetical ring and $\mfrak{p}\lhd R$ a prime ideal. The set of weakly prime ideals $I\lhd R$ such that $I^\#\subseteq \mfrak{p}$ is totally ordered by inclusion.
%\end{cor}

\smallskip
For any commutative ring $R$, ideal $I\lhd R$, prime ideal $\mfrak{p}\lhd R$ and finitely generated ideal $K$,
\[((I:K)R_\mfrak{p})\cap R=(IR_\mfrak{p}:KR_\mfrak{p})\cap R=(IR_\mfrak{p}\cap R:K).\]
Combined with \ref{formwkprime} this gives the the first item in the following remark. The second item follows from the definition of $J^\#$.

\begin{remark}\label{hashandloc}
Let $R$ be an arithmetical ring.
\begin{enumerate}[(1)]
\item % was {idealquotwkprime}
Let $K\lhd R$ be a finitely generated ideal and $I\lhd R$ a weakly prime ideal. If $\mfrak{p}\lhd R$ is a prime ideal with $\mfrak{p}\supseteq I^\#$ then $(IR_\mfrak{p}:KR_\mfrak{p})\cap R=(I:K)$. %In particular, $(I:K)$ is weakly prime.
\item Let $\mfrak{p}\lhd R$ a prime ideal. For any proper ideal $J\lhd R_\mfrak{p}$, $(J\cap R)^\#=J^\#\cap R\subseteq \mfrak{p}$.
\end{enumerate}
\end{remark}

When $R$ is a valuation domain, the ideal quotient operation has an inverse: If $I\lhd R$ an ideal, $s\in R$ and $r\notin I$, then $(I:r)\cdot r=I$ and $(I\cdot s:s)=I$. This does not generalise naively to Pr\"ufer domains, even for weakly prime ideals. Moreover, for $I\lhd R$ weakly prime and $r\in R$, the ideal $Ir$ may not be weakly prime.  An operation inverse to the ideal quotient on weakly prime ideals was introduced in \cite{bezwidth} for B\'ezout domains. We give a  version of this definition which works for Pr\"ufer domains.

For $K\lhd R$ a non-zero finitely generated ideal and $J\lhd R$ a weakly prime ideal, define
\[J_K:=\{a\in R \st (aR:K)\subseteq J\}.\]
Note that, by \ref{arithdef}, when $R$ is arithmetical, $J_K$ is an ideal of $R$ because for all $a,b\in R$, $((a+b)R:K)\subseteq (aR+bR:K)=(aR:K)+(bR:K)$.

\begin{lemma}\label{InverseIdealQuotient} Let $R$ be a Pr\"ufer domain, $J\lhd R$ a weakly prime ideal and $K\lhd R$ a non-zero finitely generated ideal.
\begin{enumerate}
\item For all prime ideals $\mfrak{p}$ with $\mfrak{p}\supseteq J^\#$, $J_K=JKR_\mfrak{p}\cap R$. Hence, $J_K$ is weakly prime.
\item $(J_K)^\#=J^\#$.
\item $(J_K:K)=J$.
\item If $K\nsubseteq J$ then $(J:K)_K=J$.
\end{enumerate}
\end{lemma}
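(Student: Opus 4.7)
The plan is to reduce all four statements to computations in a valuation domain. Fix a prime ideal $\mfrak{p}\supseteq J^\#$. By \ref{formwkprime}, $J=JR_\mfrak{p}\cap R$ and $JR_\mfrak{p}\lhd R_\mfrak{p}$ is proper. Since $R_\mfrak{p}$ is a valuation domain and $K$ is non-zero and finitely generated, I can pick a generator $k\in K$ of $KR_\mfrak{p}$ (an element of minimum value in $R_\mfrak{p}$), so that $KR_\mfrak{p}=kR_\mfrak{p}$ is principal and non-zero. Everything below is derived from this.

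\textbf{Part (1).} The crucial identity is $J_K=JKR_\mfrak{p}\cap R$. By \ref{locidealeq}, $(aR:K)\sseq J$ is equivalent to $(aR_\mfrak{p}:kR_\mfrak{p})\sseq JR_\mfrak{p}$. In the valuation domain $R_\mfrak{p}$, if $a/k\in R_\mfrak{p}$ then $(aR_\mfrak{p}:kR_\mfrak{p})=(a/k)R_\mfrak{p}$, and the inclusion becomes $a\in kJR_\mfrak{p}=JKR_\mfrak{p}$; if instead $a/k\notin R_\mfrak{p}$ then $(aR_\mfrak{p}:kR_\mfrak{p})=R_\mfrak{p}$, which is not contained in the proper ideal $JR_\mfrak{p}$, and simultaneously $a\notin kR_\mfrak{p}\supseteq JKR_\mfrak{p}$, so both conditions fail. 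This proves the identity. Using $JK\sseq J_K$ one obtains $J_KR_\mfrak{p}=JKR_\mfrak{p}$, so a second application of \ref{formwkprime} gives that $J_K$ is weakly prime (it is proper because $1\notin J_K$ would force $J=R$).

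\textbf{Parts (2)--(4).} Once (1) is in hand, these are short local computations. For (2), \ref{hashandloc}(2) with $\mfrak{p}=J^\#$ yields $(J_K)^\#\sseq J^\#$; conversely, given $x\in J^\#$ with $xr\in J$ and $r\notin J$, the element $rk$ is not in $J_K$ (otherwise $rk\in JKR_\mfrak{p}=kJR_\mfrak{p}$ and cancellation by $k$ in the domain $R_\mfrak{p}$ forces $r\in JR_\mfrak{p}\cap R=J$), whereas $x(rk)=(xr)k\in JK\sseq J_K$, so $x\in (J_K)^\#$. For (3), \ref{locidealeq}(4) gives $(J_K:K)R_\mfrak{p}=(JkR_\mfrak{p}:kR_\mfrak{p})=JR_\mfrak{p}$, and \ref{hashandloc}(1) applied to the weakly prime ideal $J_K$ (valid by (2)) lets me intersect back to obtain $(J_K:K)=JR_\mfrak{p}\cap R=J$. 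For (4), $K\not\sseq J$ combined with total ordering of the ideals of $R_\mfrak{p}$ forces $JR_\mfrak{p}\subsetneq kR_\mfrak{p}$, so $(J:K)R_\mfrak{p}=(JR_\mfrak{p}:kR_\mfrak{p})=(1/k)JR_\mfrak{p}$; then (1) applied to the weakly prime ideal $(J:K)$ yields $(J:K)_KR_\mfrak{p}=(J:K)R_\mfrak{p}\cdot KR_\mfrak{p}=JR_\mfrak{p}$, whence $(J:K)_K=JR_\mfrak{p}\cap R=J$.

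\textbf{Main obstacle.} The only non-routine step is (1), specifically the case analysis in the valuation domain $R_\mfrak{p}$. The delicate point is that the alternative $kR_\mfrak{p}\sseq aR_\mfrak{p}$ must be ruled out simultaneously on both sides of the claimed identity: on the left by the properness of $JR_\mfrak{p}$ (which relies on $\mfrak{p}\supseteq J^\#$ together with \ref{formwkprime}), and on the right by $JKR_\mfrak{p}\sseq kR_\mfrak{p}\subsetneq aR_\mfrak{p}$. Once this identity is secured, parts (2)--(4) are essentially formal: they amount to cancelling or multiplying through by the principal generator $k$ in $R_\mfrak{p}$ and then intersecting back with $R$ via the weakly-prime bookkeeping of \ref{formwkprime} and \ref{hashandloc}.
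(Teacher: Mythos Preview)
Your proof is correct and follows essentially the same strategy as the paper's: reduce to the valuation domain $R_\mfrak{p}$, exploit that $KR_\mfrak{p}$ is principal, and use \ref{formwkprime} and \ref{hashandloc} to transfer conclusions back to $R$. The only differences are cosmetic: you pick an explicit generator $k$ and compute with $a/k$, whereas the paper writes the same case analysis as $aR_\mfrak{p}=(aR_\mfrak{p}:KR_\mfrak{p})\cdot KR_\mfrak{p}$; and for (2) you give an explicit element witness for $J^\#\subseteq (J_K)^\#$, while the paper simply cites (1) together with \ref{hashandloc}. One small point: the equivalence you state at the start of (1), ``$(aR:K)\subseteq J$ iff $(aR_\mfrak{p}:kR_\mfrak{p})\subseteq JR_\mfrak{p}$'', does not follow from \ref{locidealeq} alone for a single prime $\mfrak{p}$; the backward direction also uses $J=JR_\mfrak{p}\cap R$, which you did record via \ref{formwkprime} but should cite here as well.
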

\begin{proof}(1) Suppose $a\in J_K$. Then $(aR:K)\subseteq J$. So $(aR_\mfrak{p}:KR_\mfrak{p})\subseteq JR_\mfrak{p}$. Since $JR_\mfrak{p}$ is a proper ideal, $KR_\mfrak{p}\nsubseteq aR_\mfrak{p}$. Therefore, since $R_\mfrak{p}$ is a valuation domain,
\[aR_\mfrak{p}=(aR_\mfrak{p}:KR_\mfrak{p})\cdot KR_\mfrak{p}\subseteq JKR_\mfrak{p}.\] Hence $a\in JKR_\mfrak{p}\cap R$. Now suppose that $a\in JKR_\mfrak{p}\cap R$. Since $R_\mfrak{p}$ is a valuation domain, $(aR_\mfrak{p}:KR_\mfrak{p})\subseteq JR_\mfrak{p}$. Therefore
\[(aR:K)\subseteq (aR:K)R_\mfrak{p}\cap R=(aR_\mfrak{p}:KR_\mfrak{p})\cap R\subseteq JR_\mfrak{p}\cap R=J.\] So $a\in J_K$.

\smallskip

\noindent
(2) This follows from (1) and \ref{hashandloc}(1).

\smallskip

\noindent
(3) Let $\mfrak{p}:=J^\#$. Then
\[(J_K:K)R_\mfrak{p}=(J_KR_\mfrak{p}:KR_\mfrak{p})=(JKR_\mfrak{p}:KR_\mfrak{p})=JR_\mfrak{p}.\] The second equality is a consequence of (1) and the third holds because $R_\mfrak{p}$ is a valuation domain. By (1), $J_K$ is weakly prime and since $J$ is proper, $K\nsubseteq J_K$. So $(J_K:K)$ is weakly prime. By (2) and (3), $(J_K:K)^\#=J^\#$. Therefore $(J_K:K)=(J_K:K)R_\mfrak{p}\cap R$. Thus, since $JR_\mfrak{p}\cap R=J$, the displayed equalities imply $(J_K:K)=J$. A similar argument, which we leave to the reader, proves (4).
%
%\noindent
%(4) Since $K\nsubseteq J$, $(J:K)$ is weakly prime (with $(J:K)^\#=J^#$) and hence, so is $(J:K)_K$ (with $(J:K)_K^\#=J^\#$. Since $J$ is weakly prime $K\nsubseteq J$ implies $KR_\mfrak{p}\nsubseteq JR_\mfrak{p}$. Then
%\[(J:K)_KR_\mfrak{p}=(J:K)KR_\mfrak{p}=(JR_\mfrak{p}:KR_\mfrak{p})KR_\mfrak{p}=JR_\mfrak{p}.\] NOT FINISHED
\end{proof}

Note that as a consequence of \ref{InverseIdealQuotient}(1),
for all non-zero finitely generated ideals $K\lhd R$ and weakly prime ideals $J\lhd R$ of a Pr\"ufer domain $R$, there exists $a\in R\backslash \{0\}$ such that $J_K=J_{aR}$; take $a\in R$ with $aR_{J^\#}=KR_{J^\#}$.

\subsection*{Pr\"ufer domains and their value groups}\label{valuegroups}

Let $R$ be an integral domain with field of fractions $Q$. Recall that a \textbf{fractional ideal} of $R$ is a submodule $J$ of $Q$ such that there exists $r\in R\backslash\{0\}$ with $Jr\subseteq R$. The set of fractional ideals of a ring form a monoid, $\mathfrak{F}(R)$, under ideal multiplication with identity given by $R$. A fractional ideal $J$ is said to be \textbf{invertible} if it is invertible in $\mathfrak{F}(R)$. Recall that all invertible ideals are finitely generated. Let $\Gamma(R)$ denote the group of invertible fractional ideals of $R$ equipped with the order $A\leq B$ if and only if $B\subseteq A$.
We will generally denote finitely generated (fractional) ideals by the letters $A,B,C,D,K$ whereas we use $I,J$ for arbitrary ideals.

Recall, \cite[ChIII 1.1]{MONND}, that $R$ is a Pr\"ufer domain if and only if all its non-zero finitely generated ideals are invertible. We call $\Gamma(R)$ the \textbf{value group} of $R$. Note that for B\'ezout domains, and in particular valuation domains, $\Gamma(R)$ is isomorphic as an ordered group to the usual notion of value group, i.e., to the group $Q^\times/U(R)$, where $U(R)$ denotes the group of units of $R$, ordered by $aU(R)\leq bU(R)$ if and only if $a^{-1}b\in R$.

As already mentioned, \ref{genint}, the intersection of two finitely generated ideals of a Pr\"ufer domain is finitely generated. From this, it easily follows that the intersection of two finitely generated fractional ideals is finitely generated. Thus, when $R$ is a Pr\"ufer domain, $\Gamma(R)$ is a lattice and so, since for all $A,B,K\in \Gamma(R)$, $A\leq B$ implies $AK\leq BK$, $\Gamma(R)$ is a lattice ordered abelian group ($\ell$-group).

\medskip

\noindent
\textbf{Warning:} The supremum of $A,B\in\Gamma(R)$ is the ideal $A\cap B$ and the infimum is the ideal $A+B$.

\medskip

We will follow the convention in valuation theory and write value groups additively. For $\Gamma$ an $\ell$-group, we will write $\Gamma^+$ for the positive cone of $\Gamma$. So $\Gamma(R)^+$ consists of $A\in \Gamma(R)$ such that $A\geq R$, equivalently $A\subseteq R$, i.e. $A$ is a non-zero finitely generated ideal of $R$. We write $\Gamma_\infty$ for the set $\Gamma\cup\{\infty\}$ where $\infty\geq a$ for all $a\in \Gamma$ and $\Gamma^+_\infty$ for $\Gamma^+\cup\{\infty\}$.

There is a bijective correspondence between the ideals of $R$ and the filters of $\Gamma(R)^+_{\infty}$ given by
\[I\lhd R\mapsto \val(I):=\{A\in\Gamma(R)^+ \st A\subseteq I\}\cup\{\infty\}.\]
The inverse is given by sending a filter $\mcal{F}$ of $\Gamma(R)_\infty^+$ to $\sum_{A\in\mcal{F}\backslash\{\infty\}} A$.

In this paper, a prime filter of a lattice $L$ is a \textit{proper} filter $\mcal{F}$  of $L$ such that $a\vee b\in \mcal{F}$ implies $a\in \mcal{F}$ or $b\in\mcal{F}$. We call a \textit{proper} filter $\mcal{F}$ of the extended positive cone $\Gamma_\infty^+$ of an $\ell$-group $\Gamma$ a \textbf{multiplication prime filter} if $a+b\in \mcal{F}$ implies $a\in \mcal{F}$ or $b\in \mcal{F}$. Note that since $a+b\geq a\vee b$ for all $a,b\in \Gamma^+_\infty$, if $\mcal{F}$ is a multiplication prime filter then $\mcal{F}$ is a prime filter.

\begin{remark}
Let $R$ be a Pr\"ufer domain. An ideal $I\lhd R$ is weakly prime (prime) if and only if $\val(I)$ is a prime (multiplication prime) filter  of $\Gamma(R)^+_\infty$.
\end{remark}

For $\mcal{F}$ a prime filter of $\Gamma_\infty^+$, define
\[\mcal{F}^\#:=\{A\in \Gamma^+ \st A+K\in \mcal{F} \text{ for some }K\notin \mcal{F}\}.\] Then, for $I\lhd R$ a weakly prime ideal,  $v(I^\#)=v(I)^\#$.
%
%Let $I\lhd R$ be weakly prime. Then \[\val(I^\#)=\{A\in \Gamma(R)_\infty^+ \st A+K\in \val(I) \text{ for some }K\notin \val(I)\}.\]

\begin{theorem}[Jaffard-Kaplansky-Ohm Theorem]\cite[5.3 Ch III]{MONND}
Let $\Gamma$ be an $\ell$-group. There exists a B\'ezout domain $R$ such that $\Gamma(R)$ is isomorphic to $\Gamma$.
\end{theorem}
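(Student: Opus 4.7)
The plan is to construct a B\'ezout domain $R$ directly from $\Gamma$ via a group-algebra construction. Fix a field $k$; since every $\ell$-group is torsion-free abelian, the group algebra $k[\Gamma]$ is an integral domain, with field of fractions $K$. For nonzero $f = \sum_{i=1}^n a_i g_i \in k[\Gamma]$ with distinct $g_i \in \Gamma$ and $a_i \in k^\times$, I would define the \emph{support infimum} $v(f) := g_1 \wedge \cdots \wedge g_n \in \Gamma$, and extend to $K^\times$ by $v(f/h) = v(f) - v(h)$. The first task is to verify that $v$ is a well-defined abelian group homomorphism $K^\times \to \Gamma$ satisfying $v(x+y) \geq v(x) \wedge v(y)$. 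The multiplicative property rests on the $\ell$-group identity $\bigwedge_i a_i + \bigwedge_j b_j = \bigwedge_{i,j}(a_i + b_j)$, which gives $v(fh) \geq v(f) + v(h)$ directly; equality is obtained by fixing a representation $\Gamma \hookrightarrow \prod_i \Gamma_i$ into totally ordered groups and checking componentwise that no cancellation at the leading term can occur in a totally ordered valuation setting.

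Set $R := \{x \in K^\times : v(x) \geq 0\} \cup \{0\}$. Then $R$ is a subring of $K$ with units $R^\times = v^{-1}(0)$, so $v$ descends to an injective order-preserving group homomorphism $K^\times/R^\times \hookrightarrow \Gamma$. Surjectivity is immediate since for each $g \in \Gamma$ the corresponding basis element of $k[\Gamma] \subseteq K$ has value $g$, yielding $K^\times/R^\times \cong \Gamma$ as ordered groups. If we can further show that $R$ is B\'ezout, then $\Gamma(R)$ is canonically identified with $K^\times/R^\times$, and the required isomorphism $\Gamma(R) \cong \Gamma$ follows.

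The main obstacle is the B\'ezout property: given $x, y \in R$, one must exhibit a single generator $z \in xR + yR$, which by the identification above must satisfy $v(z) = v(x) \wedge v(y)$. Equivalently, combining Theorem \ref{arithdef}(\ref{arithdefalpha}) with the extra requirement that the resulting combination realises the value-infimum exactly, one seeks $\alpha, r, s \in R$ with $x\alpha = yr$ and $y(1-\alpha) = xs$ for which additionally $v(\alpha) = v(1-\alpha) = 0$. The construction of $\alpha$ uses the lattice identity $a + b = (a \wedge b) + (a \vee b)$ to decompose $x$ and $y$ inside $k[\Gamma]$ into summands whose supports lie in complementary portions of $\Gamma$ above $v(x) \wedge v(y)$, and then recombines these pieces so that both $\alpha$ and $1-\alpha$ have support infimum zero. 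This combinatorial step---promoting the almost-automatic arithmetical property of $R$ to the B\'ezout property---is the technical heart of the argument and is where the lattice structure of $\Gamma$ is fully exploited.
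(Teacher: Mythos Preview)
The paper does not prove this theorem; it is quoted from \cite{MONND} and used as a black box. Your proposal follows the standard group-algebra construction due to Jaffard and Ohm, which is indeed how the result is established in the cited reference.

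Two remarks on your sketch. First, the multiplicativity of $v$: passing to totally ordered quotients $\Gamma/P$ is the right idea, but ``no cancellation at the leading term'' understates what is needed. Several support elements of $f$ (resp.\ $h$) may map to the minimum in $\Gamma/P$; one must group those terms as $f_0$ (resp.\ $h_0$), observe that $f_0 h_0 \neq 0$ because $k[\Gamma]$ is a domain, and conclude that $fh$ retains a term whose exponent has image $0$ in $\Gamma/P$. Your one-line description would not survive an example with ties.

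Second, the B\'ezout step is considerably simpler than your final paragraph suggests. Once $v$ is multiplicative, the units of $R$ are exactly the elements of value $0$; in particular $x/X^{v(x)}$ is a unit for every nonzero $x \in K$, so $xR = X^{v(x)}R$. Hence $xR + yR = X^a R + X^b R$ with $a = v(x)$, $b = v(y)$, and after dividing by $X^{a \wedge b}$ one may assume $a \wedge b = 0$. Then (for $a \neq b$, the case $a = b = 0$ being trivial) $X^a + X^b$ has support $\{a,b\}$, hence value $a \wedge b = 0$, and is therefore a unit lying in $X^a R + X^b R$. No decomposition of $x$ and $y$ into pieces with complementary supports is needed; the identity $a + b = (a \wedge b) + (a \vee b)$ plays no role.
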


An \textbf{$\ell$-subgroup} of an $\ell$-group $\Gamma$ is a subgroup $S$ of $\Gamma$ such that for all $a,b\in S$, $a\vee b\in S$ (equivalently for all $a,b\in S$, $a\wedge b\in S$). It will often be advantageous to work with the
positive cone of an $\ell$-group rather than the $\ell$-group itself.
There is a bijective correspondence between the convex $\ell$-subgroups of $\Gamma$ and the convex submonoids of $\Gamma^+$ given by sending a convex $\ell$-subgroup $C\subseteq \Gamma$ to $C^+:=C\cap\Gamma^+$ and a convex submonoid $B$ of $\Gamma^+$ to the convex $\ell$-subgroup of elements $a\in \Gamma$ such that $-a\wedge 0, a\vee 0\in B$, or, in this situation equivalently the subgroup generated by $B$, see \cite[7.9]{Darnel}.
The quotient $\Gamma/C$ of an $\ell$-group by a convex $\ell$-subgroup $C$ is an $\ell$-group and the inclusion of $\Gamma^+$ in $\Gamma$ induces an isomorphism from $\Gamma^+/C^+$ to $(\Gamma/C)^+$.

There is a bijective correspondence between the convex $\ell$-subgroups of $\Gamma(R)$ and the saturated multiplicatively closed subsets of a B\'ezout domain $R$ given by sending a convex $\ell$-subgroup $C$ to the set of $a\in R$ with $aR\in C$, see \cite[11.3]{AndFeil}. The inverse map is given by sending a saturated multiplicatively closed subset $S$ to the subgroup generated by $\{sR\st s\in S\}$, or equivalently sending $S$ to the kernel of the surjective homomorphism of $\ell$-groups from $\Gamma(R)$ to $\Gamma(R_S)$ defined by sending $aR\in \Gamma(R)$ to $aR_S\in\Gamma(R_S)$.

\section{A Transfer Theorem}

\noindent
In this section we prove Theorem \ref{BeztoPruf}, which gives a method of transferring results about the model theory of modules of B\'ezout domains to results about the model theory of modules of Pr\"ufer domains via their value groups.

%A commutative ring $R$ is \textbf{coherent} if and only if for any $a\in R$, the annihilator, $\ann_R a$, of $a$ is finitely generated and for any $I,J\lhd R$ finitely generated ideals, $I\cap J$ is finitely generated.

For $R$ a commutative ring  and $A,B\lhd R$ finitely generated ideals, let $xA=0$ denote the pp formula $\bigwedge_{i=1}^nxa_i=0$ where $A=\sum_{i=1}^na_iR$ and let $B|x$ denote the pp formula $\sum_{i=1}^nb_i|x$ where $B=\sum_{i=1}^nb_iR$. Note that, as elements of $\pp_R^1$, these formulae do not depend on the choice of generators of $A$ and $B$.
%
%{\color{blue}For each $n\in\N$, Prest introduced a lattice anti-isomorphism $D:\pp_R^n\rightarrow {_R}\pp^n$ between the lattice of right pp-$n$-formulae and the lattice of left pp-$n$-formulae (see \cite[section 1.3.1]{PSL} and \cite[8.21]{PrestBluebook}). As is standard, we denote its inverse also by $D$. We won't give the definition of $D$ because everything we need about it can be derived from the fact that for all $a\in R$, $D(xa=0)$ is $a|x$ and $D(a|x)$ is $ax=0$. This and that $D$ is an anti-isomorphism of lattices implies that for all finitely generated ideals $A\lhd R$, $D(A|x)$ is $Ax=0$ and $D(xA=0)$ is $A|x$. Since all the rings we are interested in here are commutative we identify $\pp_R^n$ and ${}_R\pp^n$, so $D$ becomes an anti-isomorphism $D:\pp_R^1\rightarrow \pp_R^1$.}

Since Prest's duality $D:\pp_R^1\rightarrow \pp_R^1$ is an anti-isomorphism of lattices, the fact that $D(a|x)$ is $xa=0$ and $D(xa=0)$ is $a|x$ for all $a\in R$
implies that for all finitely generated ideals $A\lhd R$, $D(A|x)$ is $Ax=0$ and $D(xA=0)$ is $A|x$.

\begin{remark}\label{commringsumintpp}
Let $R$ be a commutative ring and $A,B\lhd R$ finitely generated ideals. The following hold:
\begin{enumerate}
\item  $A\subseteq B$ if and only if $A|x\leq B|x$.
\item $A\subseteq B$ if and only if  $xB=0\leq xA=0$.
\item $A|x+B|x$ is equivalent to $(A+B)|x$.
\item $xA=0\wedge xB=0$ is equivalent to $x(A+B)=0$.
\end{enumerate}
\end{remark}

%\begin{remark}
%Let $R$ be a commutative ring such that the intersection of two finitely generated ideals is finitely generated.
%\begin{enumerate}
%\item The map which sends a finitely generated ideal $I\lhd R$ to $I|x\in\pp_R^1$ induces a join semi-lattice embedding $\lambda$ from the lattice of finitely generated ideals of $R$ to $\pp_R^1$.
%\item The map which sends a finitely generated ideal $I\lhd R$ to $xI=0\in\pp_R^1$ induces a meet semi-lattice embedding from the opposite of the lattice of finitely generated ideals of $R$ to $\pp_R^1$ i.e. the formula $x(I+J)=0$ is equivalent to $xI=0\land xJ=0$. In particular, this map is equal to $D\lambda$.
%\end{enumerate}
%\end{remark}
%\begin{proof}
%(1) It is clear that for finitely generated ideals $I,J\lhd R$, $I\subseteq J$ implies $I|x\leq J|x$ and $I|x+J|x$ is equivalent to $(I+J)|x$. To see that the map is an embedding simply note that for any $\phi,\psi\in \pp_R^1$, if $\phi$ is equivalent to $\psi$ then $\phi(R)=\psi(R)$ and for any finitely generated ideal $I\lhd R$, $I|x(R)=I$.
%
%(2) Composing $\lambda: \text{fgIdeals}(R) \rightarrow \pp_R^1$ with Prest's duality $D$ gives $D\lambda(I)$ is $xI=0$.
%\end{proof}

Let $\Ideals_{fg}(R)$ denote the set of finitely generated ideals of $R$ ordered by inclusion. The above remark shows that that the map \[\lambda:\Ideals_{fg}(R)\rightarrow \pp_R^1 \ \ \ A\in\Ideals_{fg}(R) \mapsto A|x\in\pp_R^1 \] is a join semi-lattice embedding and the map
\[D\lambda:\Ideals_{fg}(R)^{op}\rightarrow \pp_R^1 \ \ \ A\in\Ideals_{fg}(R)^{op} \mapsto xA=0\in\pp_R^1 \] is a meet semi-lattice embedding.

The set of finitely generated ideals of a commutative ring is a sublattice of the lattice of ideals if and only if the intersection of any two finitely generated ideals is finitely generated. Although arithmetical rings are not always coherent, they do have the property, see \ref{genint}, that the intersection of two finitely generated ideals is always finitely generated.

\begin{lemma}\label{idealslattemb}
Let $R$ be a commutative ring such that the intersection of two finitely generated ideals is finitely generated.

\begin{enumerate}
\item The join semi-lattice embedding $\lambda:\Ideals_{fg}(R)\rightarrow \pp_R^1$ is a lattice embedding if and only if $R$ is an arithmetical ring.
\item The meet semi-lattice embedding $D\lambda:\Ideals_{fg}(R)^{op}\rightarrow \pp_R^1$ is a lattice embedding if and only if $R$ is an arithmetical ring.
\end{enumerate}

\end{lemma}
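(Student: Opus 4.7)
The plan is to prove (1) and deduce (2) from it via Prest's duality. Because $D\colon \pp_R^1 \to \pp_R^1$ is a lattice anti-isomorphism sending $A|x$ to $xA=0$ and vice versa, and because $D$ swaps joins with meets, preservation of meets by $\lambda$ on $\Ideals_{fg}(R)$ is the same condition as preservation of joins by $D\lambda$ on $\Ideals_{fg}(R)^{op}$. Combined with the semi-lattice embedding properties already established before the statement, this yields that $\lambda$ is a lattice embedding iff $D\lambda$ is, so it suffices to prove (1).

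Since $\lambda$ is already known to be a join semi-lattice embedding, the remaining question is whether it preserves binary meets, that is, whether $(A\cap B)|x$ is equivalent as a pp-$1$-formula to $(A|x)\wedge (B|x)$ for all finitely generated ideals $A,B\lhd R$. As these formulae define $(A\cap B)M$ and $AM\cap BM$ respectively in any $R$-module $M$, this amounts to the identity $AM\cap BM = (A\cap B)M$ holding universally in $M$; the inclusion $\supseteq$ is automatic.

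For the forward direction, suppose $R$ is arithmetical. I would reduce the reverse inclusion to checking it on cyclically presented modules $M=R/rR$, $r\in R$. This is legitimate because, by Warfield's Theorem \ref{Warfield}, every finitely presented module over an arithmetical ring is a direct summand of a finite direct sum of such $R/rR$; pp-subgroups commute with direct sums and restrict correctly to summands; and two pp-formulae agreeing on all finitely presented modules are equivalent (via the identification of $\pp_R^1$ with finitely presented subfunctors, or via commutation with filtered colimits). In $R/rR$ the required equation reduces to $(A+rR)\cap (B+rR) = (A\cap B)+rR$, which is an instance of the distributive law for the ideal lattice and so holds by Theorem \ref{arithdef}. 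For the backward direction, suppose $\lambda$ preserves binary meets. Evaluating the equivalence $(A|x)\wedge (B|x)\equiv (A\cap B)|x$ in the module $R/C$, for an arbitrary ideal $C\lhd R$, gives $(A+C)\cap (B+C) = (A\cap B)+C$ for all finitely generated $A,B$ and all $C\lhd R$. To promote this to full distributivity of $\Ideals(R)$, I would write arbitrary ideals $A',B'$ as directed unions of their finitely generated subideals and use that both $\cap$ and $+$ commute with directed unions in $\Ideals(R)$; distributivity of $\Ideals(R)$ then identifies $R$ as arithmetical via Theorem \ref{arithdef}.

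The main obstacle is the reduction to cyclic modules in the forward direction, which is exactly where Warfield's structure theorem enters. A direct proof of $AM\cap BM = (A\cap B)M$ for a general module $M$, working from the distributivity of $\Ideals(R)$ alone, is not straightforward: the submodule lattice of $M$ can fail to be distributive even when $\Ideals(R)$ is, so a substantive module-theoretic input such as Warfield's decomposition is genuinely needed. Once that reduction is granted, both directions are short applications of the distributive law on $\Ideals(R)$.
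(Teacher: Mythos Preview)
Your proof is correct, but both directions take a different route from the paper's.

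For the forward direction, the paper does not invoke Warfield's theorem. Instead it works directly with principal ideals: given $a,b\in R$, Theorem~\ref{arithdef}(\ref{arithdefalpha}) supplies $\alpha,r,s$ with $a\alpha=br$ and $b(1-\alpha)=as$, and Remark~\ref{genint} gives $aR\cap bR=a\alpha R+b(1-\alpha)R$. Then for any $m=m_1a=m_2b$ in any module $M$ one writes $m=m\alpha+m(1-\alpha)=m_1(a\alpha)+m_2(b(1-\alpha))$, which shows $a|x\wedge b|x\leq (aR\cap bR)|x$ outright. The passage to arbitrary finitely generated $A,B$ then uses distributivity of both $\pp_R^1$ and $\Ideals_{fg}(R)$. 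So your remark that ``a substantive module-theoretic input such as Warfield's decomposition is genuinely needed'' is not accurate: the paper's elementary $\alpha$-trick handles the general module directly. Your Warfield route is perfectly valid, just heavier machinery than necessary.

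For the backward direction, the paper is also more direct: it dualises $a|x\wedge b|x\equiv (aR\cap bR)|x$ to $xa=0+xb=0\equiv x(aR\cap bR)=0$, then evaluates at the single element $1+aR\cap bR\in R/(aR\cap bR)$ to extract an $\alpha$ with $\alpha a,(1-\alpha)b\in aR\cap bR$, landing immediately on condition~(\ref{arithdefalpha}) of Theorem~\ref{arithdef}. Your approach---evaluating on all $R/C$ to obtain $(A+C)\cap(B+C)=(A\cap B)+C$ for finitely generated $A,B$, then pushing to arbitrary ideals by directed unions---is correct (this identity is one of the standard equivalent forms of distributivity), but involves an extra limiting step that the paper avoids by targeting condition~(\ref{arithdefalpha}) rather than condition~(2) of Theorem~\ref{arithdef}.
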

\begin{proof}
The second statement follows from the first.

\noindent
($\Leftarrow$): Suppose that $R$ is an arithmetical ring. Let $a,b\in R$. By \ref{arithdef}(\ref{arithdefalpha}), there exists $\alpha,r,s\in R$ such that $a\alpha=br$ and $b(1-\alpha)=as$. Then, \ref{genint} $aR\cap bR=a\alpha R+b(1-\alpha)R$. Clearly, $a\alpha|x, b(1-\alpha)|x\leq a|x\land b|x$. Suppose $M\in\Mod\text{-}R$ and $m\in (a|x\land b|x)(M)$. Let $m_1,m_2\in M$ be such that $m=m_1a=m_2b$. Then $m=m\alpha+m(1-\alpha)=m_1a\alpha+m_2b(1-\alpha)$. So $m\in a\alpha|x+ b(1-\alpha)|x$. Thus $a|x\land b|x$ is equivalent to $aR\cap bR| x$. The result now follows for arbitrary finitely generated ideals because both $\Ideals_{fg}(R)$ and $\pp_R^1$ are distributive.

\noindent
($\Rightarrow$): For all $a,b\in R$, $a|x\wedge b|x$ is equivalent to $(aR\cap bR)|x$. Using Prest's duality, this implies $xa=0+xb=0$ is equivalent to $x(aR\cap bR)=0$. The element $1+aR\cap bR\in R/aR\cap bR$ satisfies $x(aR\cap bR)=0$. Thus there exists $\alpha\in  R$ such that $\alpha a\in aR\cap bR$ and $(1-\alpha)b\in aR\cap bR$. By \ref{arithdef}(\ref{arithdefalpha}), $R$ is an arithmetical ring.
\end{proof}

\begin{proposition}\label{prespecialform}
Let $R$ be an arithmetical ring. Every pp-$1$-formula over $R$ is equivalent to a sum of formulae of the form $xa_1=0\wedge xa_2=0\wedge b|x$.
\end{proposition}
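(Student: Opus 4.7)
Plan: The strategy is a free-realization argument combined with Warfield's structure theorem for finitely presented modules over arithmetical rings (\ref{Warfield}) and the arithmetical identity \ref{arithdef}(\ref{arithdefalpha}).

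First I would pick a free realization $(N, n)$ of $\phi$, so $N$ is finitely presented and $\phi$ generates the pp-type $\pp_N(n)$. By \ref{Warfield}, $N$ is a direct summand of a finite direct sum $N' = \bigoplus_{i=1}^k R/r_iR$. Since direct summands are pure submodules, the pp-type of $n$ in $N$ coincides with its pp-type inside $N'$; in particular $(N', n)$ is also a free realization of $\phi$. Writing $n = (a_i + r_iR)_{i=1}^k$ and using that pp-subgroups distribute over finite direct sums,
\[\pp_{N'}(n) \;=\; \bigcap_{i=1}^{k} \pp_{R/r_iR}(a_i + r_iR).\]
For each $i$, the pair $(R/r_iR, a_i + r_iR)$ is the canonical free realization of $\phi_i := \exists y\,(ya_i = x \wedge y r_i = 0)$, so the corresponding factor is the filter generated by $\phi_i$. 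An intersection of filters generated by pp formulae $\phi_i$ is the filter generated by their join $\sum_i \phi_i$, and hence $\phi \equiv \sum_{i=1}^k \phi_i$.

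Next I would rewrite each $\phi_i$ in the required form. Applying \ref{arithdef}(\ref{arithdefalpha}) to the pair $(a_i, r_i)$ yields elements $\alpha_i, t_i, s_i \in R$ with $a_i \alpha_i = r_i t_i$ and $r_i(1-\alpha_i) = a_i s_i$. I would then verify the identity
\[\phi_i \;\equiv\; (a_i \mid x) \wedge (x\alpha_i = 0) \wedge (x s_i = 0).\]
The forward direction is immediate: if $x = y a_i$ with $y r_i = 0$, then $a_i \mid x$, $x\alpha_i = y a_i \alpha_i = y r_i t_i = 0$, and $x s_i = y a_i s_i = y r_i (1-\alpha_i) = 0$. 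For the converse, suppose $x = y' a_i$ with $x\alpha_i = 0$ and $x s_i = 0$, i.e.\ $y' r_i t_i = 0$ and $y' r_i(1-\alpha_i) = 0$. Then the witness $y := y'(1-\alpha_i)$ satisfies $y r_i = y'(1-\alpha_i) r_i = y' a_i s_i = x s_i = 0$ and $y a_i = y'(1-\alpha_i) a_i = y' a_i - y' \alpha_i a_i = x - y' r_i t_i = x$.

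Combining the two steps gives $\phi \equiv \sum_{i=1}^{k} (a_i \mid x) \wedge (x\alpha_i = 0) \wedge (x s_i = 0)$, a sum of formulae of the form $x a_1 = 0 \wedge x a_2 = 0 \wedge b \mid x$. The only delicate point is the reverse direction of the local equivalence above: the choice $y = y'(1-\alpha_i)$ is not obvious, and its correctness uses both arithmetical identities in a balanced way---$r_i(1-\alpha_i) = a_i s_i$ to force $y r_i = 0$ and $a_i \alpha_i = r_i t_i$ to recover $y a_i = x$. The rest of the argument is standard assembly of pp-types of elements of direct sums via their free realizations.
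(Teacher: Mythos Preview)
Your proof is correct and follows essentially the same approach as the paper's: the paper cites \ref{arithspecpp} to reduce to sums of formulae $\exists y\,(yc=x\wedge yd=0)$ (which you instead re-derive inline via Warfield's theorem and free realisations), and then uses exactly the same arithmetical identity and the same witness $y'(1-\alpha)$ to rewrite each such summand as $x\alpha=0\wedge xs=0\wedge c|x$. The two arguments are identical apart from whether the first reduction is quoted or reproved.
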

\begin{proof}
We know, \ref{arithspecpp}, that every pp-$1$-formula over $R$ is equivalent to a sum of formulae of the form $\exists y \ yc=x\wedge yd=0$. Take $c,d\in R$ and let $\alpha,r,s\in R$ be such that $c\alpha=dr$ and $d(1-\alpha)=cs$. We will show that $\exists y \ yc=x\wedge yd=0$ is equivalent to $x\alpha=0\wedge xs=0\wedge c|x$.

Suppose $M\in\Mod\text{-}R$ and $m\in M$ is such that $mc\alpha=mcs=0$. Then $mc=(m-m\alpha)c$ and $(m-m\alpha)d=mcs=0$. So $mc$ satisfies $\exists y \ yc=x\wedge yd=0$.
Suppose $M\in\Mod\text{-}R$ and $m\in M$ is such that $md=0$. Then $mc\alpha=mdr=0$ and $mcs=md(1-\alpha)=0$. So $mc$ satisfies $x\alpha=0\wedge xs=0\wedge c|x$.
\end{proof}

The above proposition implies that, for $R$ arithmetical, as in the case of B\'ezout rings, $\pp_R^1$ is generated by formulae of the form $xa=0$ and $b|x$. The following corollary may look like a minor improvement on the special forms given in \ref{arithspecpp} but it is rather useful.

\begin{cor}\label{Specialform}
Let $R$ be an arithmetical ring. Every pp-$1$-formula over $R$ is equivalent to a sum of formulae of the form
\[C|x\wedge xD=0\] where $C,D\lhd R$ are finitely generated ideals and equivalent to a conjunction of formulae of the form
\[xA=0+B|x\] where $A,B\lhd R$ are finitely generated ideals.
\end{cor}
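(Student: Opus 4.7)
The plan is to obtain both forms as essentially cosmetic consequences of Proposition~\ref{prespecialform} combined with Remark~\ref{commringsumintpp} and Prest's duality; no genuinely new combinatorics is required beyond bookkeeping.

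For the first form, I would begin from Proposition~\ref{prespecialform}, which already expresses every pp-$1$-formula over an arithmetical ring as a sum of formulae of the shape $xa_1=0\wedge xa_2=0\wedge b|x$. Using Remark~\ref{commringsumintpp}(4), the conjunction $xa_1=0\wedge xa_2=0$ collapses to $x(a_1R+a_2R)=0$, so setting $C:=bR$ and $D:=a_1R+a_2R$ each summand acquires exactly the shape $C|x\wedge xD=0$ with $C,D$ finitely generated ideals. This gives the first half.

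For the conjunctive dual form, I would apply Prest's anti-isomorphism $D:\pp_R^1\rightarrow\pp_R^1$, which, as recorded in the paragraph immediately before Remark~\ref{commringsumintpp}, satisfies $D(A|x)=xA=0$ and $D(xA=0)=A|x$ for every finitely generated ideal $A\lhd R$. Any pp-$1$-formula $\psi$ can be written as $D\varphi$ for some $\varphi\in\pp_R^1$; applying the first half of the corollary to $\varphi$ gives $\varphi\equiv\sum_i(C_i|x\wedge xD_i=0)$. Because $D$ exchanges sums with meets, we get
\[ \psi\ \equiv\ D\varphi\ \equiv\ \bigwedge_i D\bigl(C_i|x\wedge xD_i=0\bigr)\ \equiv\ \bigwedge_i \bigl(xC_i=0\,+\,D_i|x\bigr), \]
which is a conjunction of formulae of the required shape $xA=0+B|x$.

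I do not expect any real obstacle: the substantive work has been done in \ref{arithspecpp} and \ref{prespecialform}, and the only point to keep track of is that dualising correctly sends each conjunct into a sum and vice versa, which is precisely the content of $D$ being an anti-isomorphism together with the explicit effect of $D$ on the generating formulae $A|x$ and $xA=0$.
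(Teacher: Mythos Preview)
Your proposal is correct and follows exactly the same approach as the paper's proof: derive the first form from Proposition~\ref{prespecialform} by collapsing $xa_1=0\wedge xa_2=0$ to $x(a_1R+a_2R)=0$ via Remark~\ref{commringsumintpp}(4), and then obtain the second form from the first by applying Prest's duality. Your write-up simply makes explicit the bookkeeping that the paper leaves to the reader.
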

\begin{proof}
The first follows from \ref{prespecialform} since, by \ref{commringsumintpp}, $xa_1=0\wedge xa_2=0$ is equivalent to $x(a_1R+a_2R)=0$. The second follows from the first using Prest's duality.
\end{proof}

\iflorna
The next remark shows how to move between the two special forms of pp-$1$-formulae given in \ref{Specialform}. It follows from \ref{idealslattemb} and the fact that $\pp_R^1$ is distributive when $R$ is an arithmetical ring.
\begin{remark}
Let $R$ be an arithmetical ring and, for $1\leq i\leq n$, let $A_i,B_i\lhd R$ be finitely generated ideals. For $U\subseteq \{1,\ldots,n\}$, define $A_U:=\sum_{i\in U}A_i$ and $B_U:=\bigcap_{i\notin U} B_i$. By convention, $A_{\emptyset}:=0$ and $B_{\{1,\ldots,n\}}:=R$. Then
\[\bigwedge_{i=1}^n(xA_i=0+B_i|x)=\sum_{U\subseteq \{1,\ldots,n\}}xA_U=0\wedge B_U|x.\]
and
\[\sum_{i=1}^nA_i|x\wedge xB_i=0=\bigwedge_{U\subseteq \{1,\ldots,n\}}A_U|x+xB_U=0.\]
\end{remark}
\fi
%\begin{proof}
%Suppose $IR_{\mfrak{p}}\subseteq JR_{\mfrak{p}}$ for all prime ideals $\mfrak{p}\lhd R$. Take $a\notin J$. Then $(J:a)$ is a proper ideal of $R$ and hence there exists a prime ideal $\mfrak{p}\lhd R$ such that $(J:a)\subseteq \mfrak{p}$. Now $a\notin JR_{\mfrak{p}}$ since otherwise there exists $s\notin \mfrak{p}$ with $as\in J$. Therefore $a\notin IR_{\mfrak{p}}$ and hence $a\notin I$ as required.
%\end{proof}

\noindent
Statements similar to the following appear in \cite[3.2]{bezwidth} for B\'ezout domains and in \cite[12.1, 12.4]{Serialrings} for valuation rings. However, our statement does not directly follow from either of those results.

\begin{proposition}\label{orderppflaarith}
Let $R$ be an arithmetical ring and for $1\leq i\leq n$, $A_i,B_i,C,D\lhd R$ be finitely generated ideals.  The following are equivalent:
\begin{enumerate}
\item $ C|x\wedge xD=0\leq \sum_{i=1}^n A_i|x\wedge xB_i=0$
\item $\bigwedge_{i=1}xA_i=0+B_i|x\leq xC=0+D|x$
\item $C\subseteq \sum_{i=1}^n(CD:B_i)\cap A_i +CD$
\end{enumerate}
\end{proposition}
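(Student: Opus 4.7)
The plan is to treat (1) $\Leftrightarrow$ (2) as a direct consequence of Prest's duality, to obtain (1) $\Rightarrow$ (3) by probing with a single carefully chosen test module, and to reduce the harder direction (3) $\Rightarrow$ (1) to the principal case via localisation at maximal ideals.

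For (1) $\Leftrightarrow$ (2) I would just apply the anti-isomorphism $D:\pp_R^1\to\pp_R^1$ to both sides of (1). Because $D$ swaps $A|x$ with $xA=0$ on finitely generated ideals $A$ (extending from the principal case using \ref{commringsumintpp} and the fact that $D$ exchanges $\wedge$ and $+$), one computes $D(C|x\wedge xD=0)=xC=0+D|x$ and $D(\sum_i A_i|x\wedge xB_i=0)=\bigwedge_i(xA_i=0+B_i|x)$. Since $D$ reverses the order, this yields (2).

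For (1) $\Rightarrow$ (3) I would test (1) on the module $M:=R/CD$ using the element $c+CD$ for an arbitrary $c\in C$. This element lies in $(R/CD)\cdot C=C/CD$, so satisfies $C|x$, and is annihilated by $D$. Hypothesis (1) forces $c+CD$ to satisfy $\sum_i A_i|x\wedge xB_i=0$, giving $c+CD=\sum_i(u_i+CD)$ with $u_i\in A_i+CD$ and $u_iB_i\subseteq CD$. Using distributivity of $\pp_R^1$ (equivalently of the ideal lattice) one may replace $u_i$ by an element of $A_i$, whence $u_i\in A_i\cap(CD:B_i)$ and $c\in\sum_i A_i\cap(CD:B_i)+CD$; varying $c\in C$ gives (3).

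The main obstacle is the converse (3) $\Rightarrow$ (1). The naive approach---writing $m=\sum_\mu m_\mu c_\mu$ in a general realisation and decomposing each $c_\mu$ via (3)---fails, because the would-be annihilation conditions $u_iB_i=0$ involve cross-terms $m_\mu c_\nu d_k$ with $\mu\neq\nu$, which are not controlled by $md_k=0$. I would instead localise. By \ref{locidealeq}, (3) holds iff it holds after localising at each maximal ideal $\mfrak{p}$, and by \ref{Zglocglob}(ii) combined with the fact that order in $\pp_R^1$ is detected on indecomposable pure-injectives, the inequality in (1) is equivalent to its image inequality in $\pp_{R_\mfrak{p}}^1$ for every maximal $\mfrak{p}$. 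This reduces matters to the principal case $C=cR$, $D=dR$, $A_i=a_iR$, $B_i=b_iR$ over a valuation ring, in which $(R/cdR,\,c+cdR)$ is a genuine free realisation of $cR|x\wedge xdR=0$; the argument of (1) $\Rightarrow$ (3) then reverses, since the principal instance of (3) forces $c+cdR$ to satisfy the RHS in $R/cdR$, and universality of the free realisation propagates this to every module, yielding (1).
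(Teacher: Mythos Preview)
Your proposal is correct and follows essentially the same route as the paper: Prest's duality for (1)$\Leftrightarrow$(2), and reduction to valuation rings via localisation together with the free realisation $(R/CD,\,c+CD)$ for (1)$\Leftrightarrow$(3). The only cosmetic difference is that you establish (1)$\Rightarrow$(3) globally by testing on $R/CD$ before localising, whereas the paper localises at once and runs both directions in the valuation case; your direct argument works, and incidentally the appeal to distributivity in that step is not needed---the substitution $u_i=a_i+v_i$ with $v_i\in CD$ already gives $a_iB_i\subseteq CD$ since $v_iB_i\subseteq CD$.
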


%\begin{proposition}\label{orderppflaarith}
%Let $R$ be an arithmetical ring and for $1\leq i\leq n$, $A,B,I_i,J_i\lhd R$ be finitely generated ideals.  The following are equivalent:
%\begin{enumerate}
%\item $xA=0\wedge B|x\leq \sum_{i=1}^nxI_i=0\wedge J_i|x$
%\item $\bigwedge_{i=1}I_i|x+xJ_i=0\leq A|x+xB=0$
%\item $B\subseteq \sum_{i=1}^n(AB:I_i)\cap J_i +AB$
%\end{enumerate}
%\end{proposition}
\begin{proof}
Prest's duality implies that $(1)$ and $(2)$ are equivalent. We now show that $(1)$ and $(3)$ are equivalent. Let $\mfrak{p}\lhd R$ be a prime ideal and $N$ an $R_{\mfrak{p}}$-module. For any finitely generated ideal $K$, $xK=0$ is equivalent to $xKR_{\mfrak{p}}=0$ and $K|x$ is equivalent to $KR_{\mfrak{p}}|x$. So, since $\phi\leq \psi$ if and only if $\phi(N)\subseteq  \psi(N)$ for all indecomposable pure-injective modules and all indecomposable pure-injective $R$-modules are $R_{\mfrak{p}}$-modules for some prime ideal $\mfrak{p}$, (1) holds if and only if $ CR_{\mfrak{p}}|x\wedge xDR_{\mfrak{p}}=0\leq \sum_{i=1}^n A_iR_{\mfrak{p}}|x\wedge xB_iR_{\mfrak{p}}=0$ in $\pp_{R_{\mfrak{p}}}^1$ for all prime ideals $\mfrak{p}\lhd R$.

By \ref{locidealeq}, $(3)$ holds if and only if $CR_{\mfrak{p}}\subseteq \sum_{i=1}^n(CR_{\mfrak{p}}DR_{\mfrak{p}}:B_iR_{\mfrak{p}})\cap A_iR_{\mfrak{p}} +CR_{\mfrak{p}}DR_{\mfrak{p}}$ for a prime ideals $\mfrak{p}$. Therefore, it is enough to check that $(1)\Leftrightarrow(3)$ locally, that is for valuation rings. Thus we may assume that $D=dR, C=cR$. Then $c+CD\in R/CD$ is a free realisation of $xD=0\wedge C|x$. So $(1)$ holds if and only if $c+CD$ satisfies $\sum_{i=1}^nA_i|x\wedge xB_i=0$. For $A,B\lhd R$ finitely generated, $xB=0$ defines $(CD:B)+CD$ in $R/CD$ and $A|x$ defines $A+CD$ in $R/CD$. Thus $c\in R/CD$ satisfies $\sum_{i=1}^n A_i|x\wedge xB_i=0$ if and only if (3) holds.
\end{proof}

The following remark is easy to show for valuation rings and hence follows for arithmetical rings using \ref{locidealeq}.
\begin{remark}\label{idealquotientarith}
Let $R$ be an arithmetical ring and $A,B\lhd R$ finitely generated ideals. Then $(AB:B)=A+(0:B)$.
\end{remark}

%{\color{red}\begin{lemma}\label{Zgbasicmanip}
%Let $R$ be an arithmetical ring and let $A,B,C,D\lhd R$ be finitely generated ideals. Then
%%\[\left(\nf{C|x\wedge xD=0}{A|x+xB=0}\right)=\emptyset\]
%\[C|x\wedge xD=0\ \leq\ A|x+xB=0\]
%if and only if
%$CB\subseteq AB+CD$. This is further equivalent to $(D:B)+(A:C)=R$ when $R$ is a Pr\"ufer domain.
%\end{lemma}}

\begin{lemma}\label{Zgbasicmanip}
Let $R$ be an arithmetical ring and let $A,B,C,D\lhd R$ be finitely generated ideals. Then
%\[\left(\nf{C|x\wedge xD=0}{A|x+xB=0}\right)=\emptyset\]
\[C|x\wedge xD=0\ \leq\ xA=0+B|x\]
if and only if
$AC\subseteq AB+CD$. This is further equivalent to $(B:C)+(D:A)=R$ when $R$ is a Pr\"ufer domain.
\end{lemma}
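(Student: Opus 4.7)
My plan is to reduce the pp-inequality to an ideal-theoretic condition by appealing to Proposition \ref{orderppflaarith}, identify that condition as $AC\subseteq AB+CD$, and then handle the Pr\"ufer case by localising to valuation domains.

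For the first step, the formula $xA=0+B|x$ can be written in the form $\sum_{i=1}^{2}A_i|x\wedge xB_i=0$ by taking $A_1=R$, $B_1=A$, $A_2=B$, $B_2=0$: both $R|x$ and $x\cdot 0=0$ are trivially satisfied, so the two summands collapse to $xA=0$ and $B|x$ respectively. Substituting into the equivalence (1)$\Leftrightarrow$(3) of Proposition \ref{orderppflaarith}, and using $(CD:A)\cap R=(CD:A)$ and $(CD:0)\cap B=B$, we obtain
\[C|x\wedge xD=0\ \leq\ xA=0+B|x \iff C\subseteq (CD:A)+B+CD.\]

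The next step is to show this ideal inclusion is equivalent to $AC\subseteq AB+CD$. The forward direction is immediate: multiplying by $A$ and using $A\cdot(CD:A)\subseteq CD$ gives $AC\subseteq AB+CD$. The converse, which I expect to be the principal obstacle, I would handle by localising at each maximal ideal $\mfrak{m}$. By \ref{locidealeq} (with finite generation of $A$ allowing the ideal quotient to commute with localisation), it suffices to verify the converse in each $R_\mfrak{m}$. In the valuation ring $R_\mfrak{m}$ all finitely generated ideals are principal; writing $a,b,c,d$ for generators of $AR_\mfrak{m},BR_\mfrak{m},CR_\mfrak{m},DR_\mfrak{m}$ respectively, the hypothesis becomes $ac=abx+cdy$ for some $x,y\in R_\mfrak{m}$. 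Then $a(c-bx)=cdy\in CDR_\mfrak{m}$, so $c-bx\in(CD:A)R_\mfrak{m}$, and hence $c\in (CD:A)R_\mfrak{m}+BR_\mfrak{m}+CDR_\mfrak{m}$, which is what is needed.

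For the Pr\"ufer case, I would again localise, using \ref{locidealeq}(4) (with $A,C$ finitely generated) to see that $(B:C)+(D:A)=R$ if and only if $(BR_\mfrak{m}:CR_\mfrak{m})+(DR_\mfrak{m}:AR_\mfrak{m})=R_\mfrak{m}$ for every maximal $\mfrak{m}$. Since $R_\mfrak{m}$ is local, this holds iff one of the two summands is $R_\mfrak{m}$, i.e.\ iff $CR_\mfrak{m}\subseteq BR_\mfrak{m}$ or $AR_\mfrak{m}\subseteq DR_\mfrak{m}$. A short comparison of valuations on the principal ideals shows that $AC\subseteq AB+CD$ localises to exactly the same disjunction, so the two conditions coincide globally.
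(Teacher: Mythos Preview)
Your proposal is correct. Both you and the paper begin by applying Proposition~\ref{orderppflaarith} to the decomposition of $xA=0+B|x$ into two summands of the form $A_i|x\wedge xB_i=0$, arriving at the condition $C\subseteq (CD:A)+B+CD$.

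The difference lies in how the ideal-theoretic equivalences are then established. For the implication $AC\subseteq AB+CD \Rightarrow C\subseteq (CD:A)+B+CD$, the paper works globally: it uses $C\subseteq (AC:A)\subseteq (AB+CD:A)$, then applies the arithmetical identity $(AB+CD:A)=(AB:A)+(CD:A)$ from Theorem~\ref{arithdef}(\ref{idealquotientarith}) together with $(AB:A)=B+(0:A)$ from Remark~\ref{idealquotientarith}. You instead localise at each maximal ideal and argue with principal generators in the valuation ring $R_\mfrak{m}$. For the Pr\"ufer equivalence, the paper again argues globally (computing $(AB:AC)=(B:C)$ via Remark~\ref{idealquotientarith} in a domain, and exhibiting an explicit $u\in (B:C)$ with $1-u\in (D:A)$ for the converse), whereas you reduce to the local disjunction $CR_\mfrak{m}\subseteq BR_\mfrak{m}$ or $AR_\mfrak{m}\subseteq DR_\mfrak{m}$.

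Your route is arguably more elementary in that it sidesteps Remark~\ref{idealquotientarith} and the quotient-distributivity identity, at the cost of a small case analysis in each valuation localisation (and the trivial edge cases $AR_\mfrak{m}=0$ or $CR_\mfrak{m}=0$, which you should mention). The paper's global argument is cleaner and in fact yields the slightly sharper inclusion $C\subseteq B+(CD:A)$ directly. Since localisation is already the engine behind Proposition~\ref{orderppflaarith}, either approach is natural here.
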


\begin{proof}
Let $B=\sum_{i=1}^nb_iR$. Then $xA=0+B|x$ is equivalent to \[\sum_{i=1}^nb_iR|x\wedge x0=0+R|x\wedge xA=0.\] So, by \ref{orderppflaarith}, \[C|x\wedge xD=0\ \leq\ xA=0+B|x\] is equivalent to
\[C\subseteq \sum_{i=1}^n(CD:0)\cap b_iR+(CD:A)+CD=B+(CD:A).\] For any ring $R$, it is easy to see that $C\subseteq B+(CD:A)$ implies $AC\subseteq AB+CD$. Suppose $AC\subseteq AB+CD$. Then, since $C\subseteq (AC:A)$, \[C\subseteq (AB+CD:A)=(AB:A)+(CD:A)=(B:A)+(0:A)+(CD:A)=(B:A)+(CD:A).\] The first equality follows from \ref{arithdef} and the second two follow from \ref{idealquotientarith}.

Now suppose that $R$ is a Pr\"ufer domain and $AC\subseteq AB+CD$. Then, by \ref{arithdef}, $R=(AC:AC)\subseteq (AB:AC)+(CD:AC)$. If $A=0$ or $C=0$ then $(D:A)+(B:C)=R$. So suppose that $A,C\neq 0$. By \ref{idealquotientarith}, $(AB:AC)=((AB:A):C)=(B:C)$ and $(CD:AC)=(D:A)$. Therefore $(B:C)+(D:A)=R$.

Now suppose that $(B:C)+(D:A)=R$. Take $u\in (B:C)$ such that $1-u\in (D:A)$. Let $c\in C$ and $a\in A$. Then $ac=uac+(1-u)ac\in AB+CD$. So, when $R$ is a Pr\"ufer domain,  $AC\subseteq AB+CD$ is equivalent to $(B:C)+(D:A)=R$ as required.
\end{proof}

We are now able to give a nice basis of open sets for the Ziegler spectra of arithmetical rings similar to the bases available for B\'ezout domains.

\begin{lemma}
Let $R$ be an arithmetical ring. The sets
\[\left(\nf{C|x\wedge xD=0}{xA=0+B|x}\right)\] where $A,B,C,D\lhd R$ are finitely generated ideals are a basis of open sets for $\Zg(R)$. %{\color{red}Moreover
%\[\left(\nf{C|x\wedge xD=0}{A|x+xB=0}\right)=\left(\nf{C|x\wedge x(B\cap D)=0}{(A\cap C)|x+xB=}0\right)\] and
%\[\left(\nf{C|x\wedge xD=0}{A|x+xB=0}\right)=\left(\nf{(A+C)|x\wedge xD=0}{A|x+x(B+D)=0}\right).\]
%
%Might be needed in ``Calculating Breadth''}
\end{lemma}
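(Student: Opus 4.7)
The plan is to reduce an arbitrary basic open set of $\Zg(R)$ to a union of sets of the claimed special form, using Corollary \ref{Specialform} together with the standard distributive behaviour of the Ziegler basic opens with respect to sums and meets of pp formulae.

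First I would recall that a basis for $\Zg(R)$ is given by sets $(\phi/\psi)$ with $\phi, \psi \in \pp_R^1$ arbitrary. Given such a pair, I would apply Corollary \ref{Specialform} to write
\[\phi \ \equiv\ \sum_{i=1}^m \phi_i, \qquad \psi \ \equiv\ \bigwedge_{j=1}^n \psi_j,\]
where each $\phi_i$ has the form $C_i | x \wedge xD_i = 0$ and each $\psi_j$ has the form $xA_j = 0 + B_j | x$, with all ideals involved finitely generated.

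Next I would establish the two elementary identities
\[\left(\sum_{i=1}^m \phi_i\ /\ \psi\right) \ =\ \bigcup_{i=1}^m (\phi_i / \psi), \qquad \left(\phi\ /\ \bigwedge_{j=1}^n \psi_j\right)\ =\ \bigcup_{j=1}^n (\phi/\psi_j).\]
Both are standard: in the first, any $x \in (\sum_i \phi_i)(N)$ witnessing membership in the left-hand side decomposes as $x = \sum_i x_i$ with $x_i \in \phi_i(N)$, and since $\psi(N)$ is a subgroup not all $x_i$ can lie in $\psi(N)$; the reverse inclusion is immediate from $\phi_i \leq \sum_i \phi_i$. In the second, $(\bigwedge_j \psi_j)(N) = \bigcap_j \psi_j(N)$, so a witness $x \in \phi(N)$ outside this intersection must be outside some $\psi_j(N)$, and conversely.

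Combining these gives
\[(\phi/\psi)\ =\ \bigcup_{i,j} \bigl(\phi_i / \psi_j\bigr)\ =\ \bigcup_{i,j}\left(\nf{C_i | x \wedge xD_i = 0}{xA_j = 0 + B_j | x}\right),\]
which exhibits an arbitrary basic open as a finite union of sets of the stated form, finishing the proof. I do not expect any serious obstacle here: all the work has already been done in Corollary \ref{Specialform}, and the remaining step is a purely formal manipulation of the definition of the Ziegler basic opens using that $\phi(N)$ is always a subgroup of $N$.
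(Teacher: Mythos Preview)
Your proposal is correct and follows essentially the same approach as the paper: the paper simply asserts the identity $\left(\nf{\sum_i\phi_i}{\bigwedge_j\psi_j}\right)=\bigcup_{i,j}\left(\nf{\phi_i}{\psi_j}\right)$ for arbitrary pp-$1$-formulae and then invokes Corollary~\ref{Specialform}, while you spell out the two halves of that identity with a short justification. There is no substantive difference.
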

\begin{proof}
Let $S$ be a ring and $\phi_i,\psi_j$ pp-$1$-formulae for $1\leq i\leq n$ and $1\leq j\leq m$. Then
\[\left(\nf{\sum_{i=1}^n\phi_i}{\bigwedge_{j=1}^m\psi_j}\right)=\bigcup_{\substack{1\leq i\leq n \\ 1\leq j\leq m}} \left(\nf{\phi_i}{\psi_j}\right).\] So the result follows from \ref{Specialform}. %{\color{red}MOREOVER}
\end{proof}

We now switch to working with domains. Various parts of the theory below will go through with modifications of the statements and proofs for non-domains if one replaces $\Gamma(R)_\infty^+$ with the monoid of finitely generated ideals of $R$. However, our final goal is to use the Jaffard-Kaplansky-Ohm Theorem  to transfer results from B\'ezout domains to Pr\"ufer domains and this theorem is only available in the domain case.

%
%We now describe the points of the Ziegler spectrum of a Pr\"ufer domain. {\color{red}Our ultimate purpose is to be able to use $\ell$-groups to study model theory of modules over Pr\"ufer domains and to transfer results from the B\'ezout case to the Pr\"ufer case.}

%
%A pp-$1$-type is a filter in $\pp_R^1$. It follows from \ref{Specialform}, that a pp-$1$-type over a Pr\"ufer domain is determined by the formulae of the form $\psi_{A,B}$ that it contains. Since $\pp_R^1$ is distributive when $R$ is a Pr\"ufer domain, a pp-$1$-type $p$ is irreducible if and only if for all $\phi_1,\phi_2\notin p$, $\phi_1+\phi_2\notin p$.

It follows from \ref{Specialform}, that a pp-$1$-type over a Pr\"ufer domain is determined by the formulae of the form
\[xA=0+B|x,\]
where $A,B\lhd R$ are finitely generated ideals, that it contains.

Since $\pp_R^1$ is distributive when $R$ is a Pr\"ufer domain, a pp-$1$-type $p$ is irreducible if and only if for all $\phi_1,\phi_2\notin p$, $\phi_1+\phi_2\notin p$. Therefore, an irreducible pp-$1$-type is determined by the formulae of the form $xA=0$ and $B|x$, where $A,B\lhd R$ are finitely generated, that it contains.

Let $p$ be an irreducible pp-$1$-type, define \[I(p):=\{a\in R \st xa=0\in p\}\ \ \ \text{ and } \ \ \ J(p):=\{b\in R \st b|x\notin p\}.\]

\begin{definition}
\noindent
\begin{enumerate}[(i)]
\item We say a pair of weakly prime ideals $(I,J)$ is \textbf{admissible} if either $I^\#\subseteq J^\#$ or $J^\#\subseteq I^\#$.
\item For $(I,J)$ an admissible pair of weakly prime ideals, let $p(I,J)$ be the set of pp-$1$-formulae
\[\bigwedge_{i=1}^n(xA_i=0+B_i|x)\] where $A_i,B_i\lhd R$ are finitely generated ideals such that for each $1\leq i\leq n$ either $A_i\subseteq I$ or $B_i\nsubseteq J$.
\end{enumerate}
\end{definition}

When $R$ is a valuation domain, all proper ideals are weakly prime and all pairs of proper ideals are admissible.
We derive the next lemma from the valuation domain case.

\begin{lemma}\label{addpairlemma}
Let $R$ be a Pr\"ufer domain.
\begin{enumerate}
\item Let $p$ be an irreducible pp-$1$-type. Then $(I(p),J(p))$ is an admissible pair of weakly prime ideals. Moreover, for $A,B\lhd R$ finitely generated ideals, $xA=0\in p$ if only if $A\subseteq I(p)$ and $B|x\notin p$ if and if $B\subseteq J(p)$. Hence $p(I(p),J(p))=p$.

\item Let $(I,J)$ be an admissible pair of weakly prime ideals. Then $p(I,J)$ is an irreducible pp-$1$-type. Moreover, $I(p(I,J))=I$ and $J(p(I,J))=J$.
\end{enumerate}
\end{lemma}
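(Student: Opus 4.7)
The argument relies on the normal form of \ref{Specialform}, which writes every pp-$1$-formula over an arithmetical ring as a conjunction $\bigwedge_i(xA_i=0+B_i|x)$ of formulae for finitely generated ideals, together with distributivity of $\pp_R^1$ (\ref{EH}), under which irreducibility of a pp-$1$-type is equivalent to being a prime filter. The plan is to prove part (1) by reading off $I(p)$ and $J(p)$ directly from the prime filter $p$, and to derive part (2) from the valuation domain case via the localisation map $\pi_{\mfrak{p}}\colon\pp_R^1\to\pp_{R_{\mfrak{p}}}^1$.

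For part (1), writing $A=\sum_i a_iR$ as usual, $xA=0\equiv\bigwedge_i xa_i=0$ and $A|x\equiv\sum_i a_i|x$; the filter and prime-filter axioms for $p$ then immediately yield, for finitely generated $A,B$,
\[xA=0\in p\iff A\subseteq I(p),\qquad B|x\in p\iff B\not\subseteq J(p).\]
From these, $I(p)$ and $J(p)$ are at once ideals. Weak primality of $I(p)$ follows from $x(A\cap B)=0\equiv xA=0+xB=0$ (a consequence of \ref{idealslattemb} and Prest's duality) together with primeness of $p$; weak primality of $J(p)$ follows from $(A\cap B)|x\equiv A|x\wedge B|x$ (\ref{idealslattemb}) together with closure of $p$ under meets. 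The identity $p(I(p),J(p))=p$ then falls out of the normal form: a conjunction $\bigwedge_i(xA_i=0+B_i|x)$ lies in $p$ iff each conjunct does (filter), iff each conjunct has $xA_i=0\in p$ or $B_i|x\in p$ (prime), iff each satisfies $A_i\subseteq I(p)$ or $B_i\not\subseteq J(p)$.

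The step expected to require the most care is admissibility. The plan is to use the attached prime. Let $N=N(p)$, let $m\in N$ realise $p$, and let $\mfrak{p}=\Att N$, which is prime by \ref{Zglocglob}(i). If $a\in I(p)^\#$ then there is $r\notin I(p)$ with $ar\in I(p)$, so $mr\neq 0$ while $(mr)a=0$; thus $a$ acts non-injectively on $N$ and $a\in\mfrak{p}$. If $b\in J(p)^\#$ then there is $s\notin J(p)$ with $bs\in J(p)$; so $m=ns$ for some $n\in N$ but $m\notin bsN$, and surjectivity of $b$ on $N$ would allow $n=n'b$, giving $m=n'bs\in bsN$, a contradiction; thus $b\in\mfrak{p}$. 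Hence $I(p)^\#\cup J(p)^\#\subseteq\mfrak{p}$. By \ref{hashandloc}(2), $I(p)^\#=(I(p)R_{\mfrak{p}})^\#\cap R$ and similarly for $J(p)^\#$; but $(I(p)R_{\mfrak{p}})^\#$ and $(J(p)R_{\mfrak{p}})^\#$ are two prime ideals of the valuation domain $R_{\mfrak{p}}$, hence comparable, and contracting to $R$ preserves the comparison, giving admissibility.

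For part (2), let $\mfrak{p}$ be whichever of $I^\#,J^\#$ is larger. Then $IR_{\mfrak{p}}$ and $JR_{\mfrak{p}}$ are proper ideals of the valuation domain $R_{\mfrak{p}}$, and by \ref{formwkprime} we have $I=IR_{\mfrak{p}}\cap R$ and $J=JR_{\mfrak{p}}\cap R$. In the valuation case---where every pair of proper ideals is admissible---the classical theory provides an irreducible pp-$1$-type $q$ over $R_{\mfrak{p}}$ with $I(q)=IR_{\mfrak{p}}$ and $J(q)=JR_{\mfrak{p}}$. The pullback $p:=\pi_{\mfrak{p}}^{-1}(q)$ is a prime filter, since $\pi_{\mfrak{p}}$ is a surjective lattice homomorphism, hence an irreducible pp-$1$-type over $R$. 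Since $A\subseteq I\iff AR_{\mfrak{p}}\subseteq IR_{\mfrak{p}}$ and $B\not\subseteq J\iff BR_{\mfrak{p}}\not\subseteq JR_{\mfrak{p}}$ (using $I=IR_{\mfrak{p}}\cap R$, $J=JR_{\mfrak{p}}\cap R$), the defining condition for $p(I,J)$ on a conjunction $\bigwedge_i(xA_i=0+B_i|x)$ translates term-by-term into membership of $\pi_{\mfrak{p}}$ of that conjunction in $q$; hence $p=p(I,J)$. A direct application of the equivalences from part (1) (applied to $q$) and the identifications $I=IR_{\mfrak{p}}\cap R$, $J=JR_{\mfrak{p}}\cap R$ then gives $I(p)=I$ and $J(p)=J$.
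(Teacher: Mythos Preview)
Your proof is correct and follows essentially the same strategy as the paper: both parts are reduced to the valuation domain case via localisation at a prime $\mfrak{p}$ containing $I^\#$ and $J^\#$ (taken as $\Att N(p)$ in part (1)), invoking \ref{formwkprime} and \ref{hashandloc}(2) to transfer information back to $R$, and citing the known valuation-domain result (\cite[3.4]{EH} in the paper) for part (2). The one minor difference worth noting is that you establish weak primality of $I(p)$ and $J(p)$ directly from the lattice identities $x(A\cap B)=0\equiv xA=0+xB=0$ and $(A\cap B)|x\equiv A|x\wedge B|x$ of \ref{idealslattemb}, whereas the paper instead writes $I(p)=I(p')\cap R$ and $J(p)=J(p')\cap R$ for $p'$ the type over $R_\mfrak{p}$ and then quotes \ref{formwkprime}; your route is slightly more self-contained at that step but otherwise the arguments coincide.
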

\begin{proof}
\noindent
(1) There exists an indecomposable pure-injective $R$-module $N$ and $m\in N\backslash\{0\}$ such that $m$ has pp-type $p$ in $N$. By \ref{Zglocglob}, $N$ is the restriction of an $R_{\mfrak{p}}$-module for some prime ideal $\mfrak{p}\lhd R$. Let $p'$ be the pp-type of $m$ in $N$ as an $R_{\mfrak{p}}$-module. Then $I(p)=I(p')\cap R$ and $J(p)=J(p')\cap R$. Hence, by \ref{formwkprime}, $I(p)$ and $J(p)$ are weakly prime.  By \ref{hashandloc}(2), $I^\#,J^\#\subseteq \mfrak{p}$. Therefore, since $R$ is a Pr\"ufer domain, $I^\#$ and $J^\#$ are comparable by inclusion. So $(I(p),J(p))$ is an admissible pair of weakly prime ideals.

Let $A=\sum_{i=1}^na_iR$. Then $xA=0\in p$ if and only if $\bigwedge_{i=1}^nxa_i=0\in p$ if and only if $xa_i=0\in p$ for all $1\leq i\leq n$. Hence $xA=0\in p$ if and only if $a_i\in I(p)$ for all $1\leq i\leq n$.

Let $B=\sum_{i=1}^nb_iR$. Then $B|x\notin p$ if and only if $\sum_{i=1}^n b_i|x\notin p$. Since $p$ is irreducible, this is true exactly when $b_i|x\notin p$ for all $1\leq i\leq n$. Therefore $B|x\notin p$ if and only if $B\subseteq J(p)$.

\noindent
(2) Let $\mfrak{p}\lhd R$ be a prime ideal such that $\mfrak{p}\supseteq I^\#, J^\#$. By \ref{formwkprime}, $I=IR_{\mfrak{p}}\cap R$ and $J=JR_{\mfrak{p}}\cap R$. By \cite[3.4]{EH}, noting that the consistency conditions in \cite[3.2]{EH} are empty for valuation domains, there exists an indecomposable pure-injective $R_\mfrak{p}$-module $N$ and $m\in N\backslash\{0\}$ such that $ma=0$ if and only if $a\in IR_{\mfrak{p}}$, and, $a|m$ if and only if $a\notin JR_{\mfrak{p}}$. So $I(\pp_N(m))=IR_{\mfrak{p}}\cap R=I$ and $J(\pp_N(m))=JR_{\mfrak{p}}\cap R = J$. Since $\pp_N(m)$ is irreducible, it now follows from \ref{Specialform} and the definition of $p(I,J)$ that $\pp_N(m)=p(I,J)$.
\end{proof}

\begin{definition}
Let $(I,J)$ be an admissible pair of weakly prime ideals of $R$. Let $N(I,J)$ denote the pure-injective hull of the irreducible pp-$1$-type $p(I,J)$.
\end{definition}

\begin{cor}\label{ALLNIJ}
Let $R$ be a Pr\"ufer domain. All indecomposable pure-injective $R$-modules are of the form $N(I,J)$ for some admissible pair of weakly prime ideals $(I,J)$.
\end{cor}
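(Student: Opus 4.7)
The plan is to combine the machinery already developed with the standard bijection between indecomposable pure-injective modules and indecomposable pp-$1$-types. The only non-trivial step is to verify that in the Pr\"ufer setting every indecomposable pp-$1$-type $p$ is in fact irreducible in the sense used in the definition of $p(I,J)$, so that Lemma \ref{addpairlemma}(1) applies.

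First, let $N$ be any indecomposable pure-injective $R$-module. Then $N\cong N(p)$ for some indecomposable pp-$1$-type $p$; this uses only the general fact, recalled just before Theorem \ref{endouni}, that every indecomposable pure-injective is the pure-injective hull of an indecomposable pp-$1$-type. Next, because $R$ is a Pr\"ufer domain, the lattice $\pp_R^1$ is distributive (Theorem of \cite{EH} quoted in the excerpt). Under distributivity Ziegler's criterion simplifies, as noted before Theorem \ref{endouni}, so that indecomposability of $p$ coincides with $p$ being a prime filter in $\pp_R^1$, i.e.\ with irreducibility: for all $\phi_1,\phi_2\notin p$, $\phi_1+\phi_2\notin p$.

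Now I can invoke Lemma \ref{addpairlemma}(1): the irreducible pp-$1$-type $p$ produces an admissible pair of weakly prime ideals $(I(p),J(p))$ and furthermore satisfies $p=p(I(p),J(p))$. Taking pure-injective hulls and using that $N(p)$ is determined up to isomorphism by $p$, one gets
\[
N\;\cong\;N(p)\;=\;N\bigl(p(I(p),J(p))\bigr)\;=\;N(I(p),J(p)),
\]
where the last equality is the definition of $N(I,J)$. This exhibits $N$ in the required form, so the corollary follows.

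The main (but minor) obstacle is simply making clear that the Puninski/Ziegler simplification of indecomposability to primality of the filter is available, since the definition of $p(I,J)$ and the hypotheses of Lemma \ref{addpairlemma} implicitly require working with \emph{irreducible} pp-types rather than general indecomposable ones; this is handled by the distributivity of $\pp_R^1$ for arithmetical rings, which is already recorded. No further computation is needed.
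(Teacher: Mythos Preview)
Your proof is correct and follows exactly the route the paper intends: the corollary is stated in the paper without proof because it is immediate from Lemma~\ref{addpairlemma}(1) together with the fact, already recorded in the preliminaries, that over an arithmetical ring indecomposable pp-$1$-types are precisely the prime (irreducible) filters of the distributive lattice $\pp_R^1$. Your only addition is to make explicit why ``indecomposable'' and ``irreducible'' coincide here, which is a helpful clarification but not a different argument.
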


We get the following as a corollary to the proof of \ref{addpairlemma}.

\begin{cor} \label{AttIJ}
Let $R$ be a Pr\"ufer domain, $\mfrak{p}\lhd R$ a prime ideal and $(I,J)$ an admissible pair of weakly prime ideals.
\begin{enumerate}
\item Then $N(I,J)\in\Zg(R_\mfrak{p})$ if and only if $\mfrak{p}\supseteq I^\#,J^\#$. In particular, $\Att N=I^\#\cup J^\#$.
\item For all prime ideals $\mfrak{p}\supseteq I^\#\cup J^\#$, $N(I,J)$ is the restriction to $R$ of the $R_\mfrak{p}$-module $N(IR_\mfrak{p},JR_\mfrak{p})$.
\end{enumerate}
\end{cor}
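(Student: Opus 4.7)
The plan is to extract both assertions from the proof of Lemma \ref{addpairlemma}(2), combined with Lemma \ref{Zglocglob}(ii), which identifies $\Zg(R_\mfrak{p})$ with $\{N\in\Zg(R) \mid \Att N\subseteq \mfrak{p}\}$, and Remark \ref{hashandloc}(2), which governs the behaviour of $(-)^\#$ under contraction along the localisation map.

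For part (1), the direction $(\Leftarrow)$ is essentially by construction: the proof of \ref{addpairlemma}(2) builds $N(I,J)$ starting from any prime $\mfrak{p}\supseteq I^\#\cup J^\#$, and produces it as the restriction to $R$ of an indecomposable pure-injective $R_\mfrak{p}$-module, so $N(I,J)\in\Zg(R_\mfrak{p})$ by Lemma \ref{Zglocglob}(ii). For $(\Rightarrow)$, assume $N(I,J)\in\Zg(R_\mfrak{p})$ and identify it with an $R_\mfrak{p}$-module $N'$; any generator $m\in N'$ realising $p(I,J)$ as an $R$-pp-type has some $R_\mfrak{p}$-pp-type $p'$, for which $I=I(p')\cap R$ and $J=J(p')\cap R$. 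Remark \ref{hashandloc}(2) then yields $I^\#=I(p')^\#\cap R\subseteq\mfrak{p}$, and likewise $J^\#\subseteq\mfrak{p}$.

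The ``in particular'' clause is a formal consequence. By Lemma \ref{Zglocglob}(i), $\Att N(I,J)$ is a prime ideal, and by \ref{Zglocglob}(ii) it is the smallest $\mfrak{p}\lhd R$ with $N(I,J)\in\Zg(R_\mfrak{p})$. Having just characterised these $\mfrak{p}$ as exactly the primes containing $I^\#\cup J^\#$, and using admissibility of $(I,J)$ to observe that $I^\#\cup J^\#$ coincides with $\max\{I^\#,J^\#\}$ and is therefore itself prime, we conclude $\Att N(I,J)=I^\#\cup J^\#$.

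For part (2), given $\mfrak{p}\supseteq I^\#\cup J^\#$, the pair $(IR_\mfrak{p},JR_\mfrak{p})$ consists of weakly prime ideals in the valuation domain $R_\mfrak{p}$ (where every pair is trivially admissible), so $N(IR_\mfrak{p},JR_\mfrak{p})$ is a well-defined indecomposable pure-injective $R_\mfrak{p}$-module. Restriction of scalars along the localisation map $R\to R_\mfrak{p}$ preserves indecomposability and pure-injectivity, and a generator $m$ realising $p(IR_\mfrak{p},JR_\mfrak{p})$ has pp-type $p(I,J)$ once viewed over $R$, because an $R$-formula $xA=0+B|x$ lies in the $R_\mfrak{p}$-pp-type of $m$ iff $AR_\mfrak{p}\subseteq IR_\mfrak{p}$ or $BR_\mfrak{p}\nsubseteq JR_\mfrak{p}$, which after contraction to $R$ matches the definition of $p(I,J)$. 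Since any non-zero element of an indecomposable pure-injective module realises a pp-type whose pure-injective hull is the whole module, the $R$-restriction of $N(IR_\mfrak{p},JR_\mfrak{p})$ is forced to be $N(I,J)$. The one genuinely delicate point is the compatibility of the assignment $(I,J)\mapsto p(I,J)$ with extension and contraction along the localisation map; everything else is bookkeeping with \ref{Zglocglob} and \ref{hashandloc}.
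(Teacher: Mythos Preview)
Your proposal is correct and follows essentially the same route as the paper, which simply records the corollary as extracted from the proof of Lemma~\ref{addpairlemma}. You have spelled out in more detail how the two directions of (1) and the identification in (2) fall out of that proof together with \ref{Zglocglob} and \ref{hashandloc}(2), and your derivation of $\Att N(I,J)=I^\#\cup J^\#$ via minimality among primes $\mfrak{p}$ with $N(I,J)\in\Zg(R_\mfrak{p})$ is a clean way to package the ``in particular'' clause.
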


Next we consider when two admissible pairs of weakly prime ideals are realised in the same indecomposable pure-injective module. For $(I_1,J_1)$, $(I_2,J_2)$ pairs of weakly prime ideals, we say $(I_1,J_1)\sim (I_2,J_2)$ if there exists a finitely generated ideal $K\lhd R$ such that one of the following holds:
\begin{enumerate}[(i)]
\item $(I_1:K)=I_2$ and $J_1=(J_2:K)$; or
\item $I_1=(I_2:K)$ and $J_2=(J_1:K)$.
\end{enumerate}

\begin{lemma}\label{equpairs}
Let $R$ be a Pr\"ufer domain and let $(I_1,J_1), (I_2,J_2)$ be admissible pairs of weakly prime ideals. Then $(I_1,J_1)\sim (I_2,J_2)$ if and only if $N(I_1,J_1)\cong N(I_2,J_2)$.
\end{lemma}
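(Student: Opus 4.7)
The plan is to reduce the statement to the case where $R$ is a valuation domain by localising at a suitable prime, and then to treat the two implications separately over the valuation domain. First I would check that $\Att N(I_i,J_i)=I_i^\#\cup J_i^\#$ (by \ref{AttIJ}(1)) is preserved under either hypothesis: isomorphism of the hulls gives equality of the $\Att$ sets directly, and $(I_1,J_1)\sim (I_2,J_2)$ also forces equality because \ref{InverseIdealQuotient}(2) and the analogous fact for ordinary ideal quotients both preserve the $\#$ ideal. Fixing a prime $\mfrak{p}$ containing this common union, \ref{AttIJ}(2) identifies each $N(I_i,J_i)$ with the restriction of the $R_\mfrak{p}$-module $N(I_iR_\mfrak{p},J_iR_\mfrak{p})$. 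Since restriction of scalars along the ring epimorphism $R\to R_\mfrak{p}$ is fully faithful on pure-injectives, the isomorphism statement transfers to $R_\mfrak{p}$; and the relation $\sim$ transfers both ways, using \ref{locidealeq} in the forward direction and, in the reverse direction, choosing $a\in R$ with $aR_\mfrak{p}$ equal to the (principal, by valuation) ideal witnessing equivalence over $R_\mfrak{p}$ and then pulling ideal quotients back via \ref{hashandloc}(1) and \ref{formwkprime}.

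In the valuation case, write $V=R_\mfrak{p}$ and $K=cV$ with $c\in V$, and for the forward implication assume WLOG case (i) of the definition of $\sim$. Picking $m\in N(I_1,J_1)$ realising $p(I_1,J_1)$, we have $c\notin I_1$ (otherwise $I_2=(I_1:c)$ would not be proper) so $mc\ne 0$. A direct calculation using the total order on ideals of $V$ shows that the annihilator of $mc$ equals $(I_1:c)=I_2$, and that $J_1=(J_2:c)$ forces $J_2=cJ_1\subseteq cV$, from which one verifies by distinguishing $a\in cV$ and $a\notin cV$ that $a\mid mc$ if and only if $a\notin J_2$. Hence $mc$ realises $p(I_2,J_2)$ in $N(I_1,J_1)$, so its pure-injective hull is isomorphic to $N(I_2,J_2)$. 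Being a pure-injective pure submodule of the indecomposable $N(I_1,J_1)$, this hull is a direct summand and so equals the whole of $N(I_1,J_1)$.

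The reverse direction in the valuation case is the main obstacle. The most economical option is to invoke the classification of indecomposable pure-injectives over valuation domains from \cite{EH}, where isomorphism classes are parametrised by pairs of weakly prime ideals modulo exactly our relation $\sim$. A direct argument would proceed by letting $m_1,m_2\in N:=N(I_1,J_1)=N(I_2,J_2)$ realise the respective pp-types; by \ref{endouni}, $N$ is endo-uniserial, so after possibly interchanging $m_1$ and $m_2$ we may write $m_2=e(m_1)$ for some $e\in\mathrm{End}(N)$. One then shows there is a $c\in V$ for which $m_1 c$ has the same pp-type in $N$ as $m_2$ by analysing the pp-2-type of $(m_1,m_2)$: over a valuation domain, pp-2-types are generated by pp-1-formulae on each coordinate together with basic cross-relations of the form $xa=yb$, and from such a relation the chain of ideals produces the required scalar. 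The identities $(I_1:c)=I_2$ and $J_2=cJ_1$ then follow from the pp-type computation performed in the forward direction.
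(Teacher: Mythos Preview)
Your reduction to the valuation domain case via localisation at $\mfrak{p}=I_1^\#\cup J_1^\#$ is exactly the paper's argument: the paper checks that both hypotheses force the same $\Att$ set, passes to $R_\mfrak{p}$ via \ref{AttIJ}(2), and transfers $\sim$ back and forth using \ref{formwkprime} and \ref{hashandloc}(1) (the paper's citation of \ref{idealquotientarith} at that point is almost certainly a slip for \ref{hashandloc}). The only difference is in the valuation case: the paper disposes of it in one line by citing \cite[11.11]{Serialrings}, whereas you give a direct computation for the forward implication (which is correct) and then for the reverse propose either a citation or an endo-uniseriality argument. Your sketch of the direct reverse argument is plausible but not complete as stated---getting from ``$m_2=e(m_1)$ for some endomorphism'' to ``$m_2$ has the same pp-type as $m_1c$ for some scalar $c$'' needs more than what you have written, and the reference you give (\cite{EH}) is used in the paper only for existence, not for the isomorphism criterion; the correct citation is Puninski's \cite[11.11]{Serialrings}.
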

\begin{proof}
Suppose $R$ is a valuation domain. Note that for proper ideals $I_1,I_2\lhd R$ and $K\lhd R$ a finitely generated ideal, $(I_1:K)=I_2$ if and only if $I_1=I_2K$ and $K\neq 0$. So, since all finitely generated ideals of a valuation domain are principal, the lemma follows for valuation domains from \cite[11.11]{Serialrings}.

Now let $R$ be a Pr\"ufer domain. Suppose  $N(I_1,J_1)\cong N(I_2,J_2)$. Then, by \ref{AttIJ},
\[I_1^\#\cup J_1^\#=\Att N(I_1,J_1)=\Att N(I_2,J_2)=I_2^\#\cup J_2^\#.\]
It is easy to see that $(I_1,J_1)\sim (I_2,J_2)$ implies $I_1^\#=I_2^\#$ and $J_1^\#=J_2^\#$. Hence if either side of the equivalence holds then $I_1^\#\cup J_1^\#=I_2^\#\cup J_2^\#$. Let $\mfrak{p}:=I_1^\#\cup J_1^\#=I_2^\#\cup J_2^\#$. For $i=1,2$, $N(I_i,J_i)$ is the restriction of the $R_\mfrak{p}$-module $N(I_iR_\mfrak{p},J_iR_\mfrak{p})$ to $R$. Hence $N(I_1,J_1)\cong N(I_2,J_2)$ if and only if $N(I_1R_\mfrak{p},J_1R_\mfrak{p})\cong N(I_2R_\mfrak{p},J_2R_\mfrak{p})$. By \ref{formwkprime}, for $i=1,2$, $I_i=I_iR_\mfrak{p}\cap R$ and $J_i=J_iR_\mfrak{p}\cap R$. Thus, by \ref{idealquotientarith}, $(I_1,J_1)\sim (I_2,J_2)$ if and only if $(I_1R_\mfrak{p},J_1R_\mfrak{p})\sim (I_2R_\mfrak{p},J_2R_\mfrak{p})$. Hence, the claimed equivalence follows from the valuation domain case.
%
%
%
%++++++++++++++++++++++++++++++++++++++
%
%Suppose $(I_1:K)=I_2$ and $J_1=(J_2:K)$ then $I_1^\#=I_2^\#$ and $J_1^\#=J_2^\#$ by REF. Hence, since $(I_1,I_2)$ is admissible there exists a prime ideal $\mfrak{p}\lhd R$ such that $\mfrak{p}\supseteq I_1^\#=I_2^\#,J_1^\#=J_2^\#$. Then $N(I_1,J_1)$ (resp. $N(I_2,J_2)$) is the restriction of the $R_\mfrak{p}$-module $N(I_1R_\mfrak{p},J_1R_{\mfrak{p}})$ (resp. $N(I_2R_\mfrak{p},J_2R_{\mfrak{p}})$) to $R$. By REF, $(I_1R_\mfrak{p}:KR_\mfrak{p})=I_2R_\mfrak{p}$ and $J_1R_\mfrak{p}=(J_2R_{\mfrak{p}}:KR_{\mfrak{p}})$. Thus, by the valuation domain case, $N(I_1R_\mfrak{p},J_1R_{\mfrak{p}})\cong N(I_2R_\mfrak{p},J_2R_{\mfrak{p}})$. Hence $N(I_1,J_1)\cong N(I_2,J_2)$.
%
%Suppose $N(I_1,J_1)\cong N(I_2,J_2)$. Let $\mfrak{p}=\Att N(I_1,J_1)$. By REF, $\mfrak{p}\supseteq I_1^\#,I_2^\#,J_1^\#,J_2^\#$ and hence $I_1$
%
\end{proof}

In \cite{bezwidth}, in the B\'ezout domain case, the equivalence relation $\sim$ is defined in terms of $J_a:=J_{aR}$ where $J\lhd R$ is a weakly prime ideal and $a\in R$. Our definition of $\sim$ side steps the need for this construction. %The next lemma the definition of $\sim$ as in REF for Pr\"ufer domains. For a fixed admissible pair of weakly prime ideals, this gives us a way of manufacturing all the other pairs which are $\sim$-equivalent to it:

Given an admissible pair of weakly prime ideals,
the next lemma shows how to manufacture all the other pairs which are $\sim$-equivalent to it. Moreover it shows how to define $\sim$ in the style of \cite{bezwidth} for Pr\"ufer domains.

%V1:  The next lemma shows how to manufacture, for a fixed admissible pair of weakly prime ideals, all the other pairs which are $\sim$-equivalent to it:

%V2: Given an admissible pair of weakly prime ideals, the next lemma shows how to manufacture all the other pairs which are $\sim$-equivalent to it:

\begin{lemma}\label{admissiblepairshift}
If $(I,J)$ is an admissible pair of weakly prime ideals and $K\nsubseteq I$ (respectively $K\nsubseteq J$) then so is $((I:K),J_K)$ (respectively $(I_K,(J:K))$). Moreover, if  $K\nsubseteq I$ (respectively $K\nsubseteq J$) then $(I,J)\sim ((I:K),J_K)$ (respectively $(I,J)\sim (I_K:(J:K))$).
\end{lemma}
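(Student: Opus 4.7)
The lemma is symmetric in its two clauses, so I will treat only the first (assuming $K \nsubseteq I$) and indicate at the end how the second follows by a mirror argument. Two things need to be shown: that $((I:K), J_K)$ is again an admissible pair of weakly prime ideals, and that $(I,J)$ and $((I:K), J_K)$ are $\sim$-equivalent.

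For the first, Lemma \ref{InverseIdealQuotient}(1)--(2) already gives that $J_K$ is weakly prime with $J_K^\# = J^\#$. For $(I:K)$, note first that $K \nsubseteq I$ makes $(I:K)$ a proper ideal; combining \ref{locidealeq}(4) with \ref{hashandloc}(1) then shows that $(I:K)R_\mfrak{p} \cap R = (I:K)$ for any prime $\mfrak{p} \supseteq I^\#$, whence Lemma \ref{formwkprime} yields weak-primeness. Admissibility of the new pair reduces to verifying $(I:K)^\# = I^\#$, since the comparability of $(I:K)^\#$ and $J_K^\#$ will then be inherited from that of $I^\#$ and $J^\#$. The inclusion $(I:K)^\# \subseteq I^\#$ is elementary: for a finitely generated $L \nsubseteq (I:K)$, the identity $((I:K):L) = (I:LK)$ together with $LK \nsubseteq I$ places $((I:K):L)$ inside $I^\#$ by the description of $I^\#$ as a union of such $(I:M)$. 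The reverse inclusion is handled by localizing at $I^\#$, where $KR_{I^\#}$ is principal in the valuation domain $R_{I^\#}$ (and still not contained in $IR_{I^\#}$), so that the principal-quotient fact $(I:r)^\# = I^\#$ recorded after Proposition \ref{eqwkprime} applies inside $R_{I^\#}$, and then \ref{hashandloc} descends the conclusion to $R$.

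For the equivalence, the witnessing ideal in the definition of $\sim$ is simply $K$ itself: apply clause (i) with $(I_1, J_1) = (I,J)$ and $(I_2, J_2) = ((I:K), J_K)$. The requirement $(I_1:K) = I_2$ is tautological, and the requirement $J_1 = (J_2:K)$ is exactly Lemma \ref{InverseIdealQuotient}(3). The parenthetical case $(I_K, (J:K))$ is handled symmetrically: weak-primeness of $(J:K)$ is obtained by the same localization-at-$J^\#$ argument used above for $(I:K)$, the equality $(J:K)^\# = J^\#$ comes from the same reasoning, and $I_K^\# = I^\#$ is \ref{InverseIdealQuotient}(2); the equivalence $(I,J) \sim (I_K, (J:K))$ is then witnessed by $K$ under clause (ii), since $I = (I_K:K)$ is once more \ref{InverseIdealQuotient}(3). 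The only delicate step throughout is the identity $(I:K)^\# = I^\#$ for genuinely non-principal finitely generated $K$, and this is resolved by the standard reduction to the valuation-domain localization $R_{I^\#}$ already exploited in Section \ref{SPreI}.
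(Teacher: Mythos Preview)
Your proof is correct and follows essentially the same route as the paper: both use Lemma \ref{InverseIdealQuotient} for the $J_K$ side and for the $\sim$-equivalence, and both invoke the observation recorded just after Proposition \ref{eqwkprime} for the $(I:K)$ side. The paper's proof is terser, simply citing that observation to obtain that $(I:K)$ is weakly prime with $(I:K)^\#=I^\#$; strictly speaking that observation is stated only for elements $r\notin I$ rather than finitely generated ideals $K\nsubseteq I$, and your explicit reduction to the principal case via localization at $I^\#$ (where $KR_{I^\#}$ becomes principal in the valuation domain) is exactly the step the paper leaves implicit. So your argument is the same in spirit but fills in a detail the paper elides.
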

\begin{proof}
By the comment just below \ref{eqwkprime}, $(I:K)$ is weakly prime and  $(I:K)^\#=I^\#$. By \ref{InverseIdealQuotient}, $J_K$ is weakly prime and $(J_K)^\#=J^\#$. Thus if $(I,J)$ is admissible then so is $((I:K),J_K)$. By \ref{InverseIdealQuotient}, $(J_K:K)=J$, so $(I,J)\sim ((I:K), J_K)$.
\end{proof}

\begin{lemma}\label{toponpoint}
Let $R$ be a Pr\"ufer domains and let $(I,J)$ be an admissible pair of weakly prime ideals. Then \[N(I,J)\in \left(\nf{C|x\wedge xD=0}{xA=0+B|x}\right)\] if and only if there exists an admissible pair $(I',J')$ with $(I,J)\sim (I',J')$ such that $C\nsubseteq J'$, $B\subseteq J'$, $D\subseteq I'$ and $A\nsubseteq I'$.
\end{lemma}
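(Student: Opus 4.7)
The plan is to translate membership in the open set into the existence of a suitable element of $N(I,J)$, and then translate properties of that element into properties of an admissible pair via the correspondence developed in Lemmas \ref{addpairlemma} and \ref{equpairs}.

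First, observe that $N(I,J)\in\left(\nf{C|x\wedge xD=0}{xA=0+B|x}\right)$ if and only if there exists $m\in N(I,J)$ satisfying $C|x\wedge xD=0$ but not $xA=0+B|x$. Any such $m$ is necessarily non-zero, since $0$ belongs to $B|x(N)$ and hence to $(xA=0+B|x)(N)$. This reduces both directions of the lemma to producing or analysing such an element.

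For the forward direction, suppose such an $m$ exists, and let $p:=\pp_{N(I,J)}(m)$. Because $N(I,J)$ is pure-injective indecomposable and $m\neq 0$, the pure-injective hull $N(p)$ is a non-zero direct summand of $N(I,J)$, hence equals $N(I,J)$; so $p$ is indecomposable. Since $R$ is arithmetical, $\pp_R^1$ is distributive, so $p$ is a prime filter and therefore irreducible. Setting $I':=I(p)$ and $J':=J(p)$, Lemma \ref{addpairlemma}(1) yields that $(I',J')$ is an admissible pair of weakly prime ideals and $p=p(I',J')$; Lemma \ref{equpairs} then gives $(I,J)\sim(I',J')$ because $N(I',J')=N(p)=N(I,J)$. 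Now translate the membership conditions on $m$: $C|x\in p$ and $xD=0\in p$ give, by Lemma \ref{addpairlemma}(1), $C\nsubseteq J'$ and $D\subseteq I'$; while $xA=0+B|x\notin p$, combined with the primality of $p$, gives $xA=0\notin p$ and $B|x\notin p$, i.e.\ $A\nsubseteq I'$ and $B\subseteq J'$, as required.

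For the backward direction, given an admissible pair $(I',J')\sim(I,J)$ satisfying the four containment conditions, Lemma \ref{equpairs} produces an isomorphism $N(I',J')\cong N(I,J)$. Transport the canonical realiser of $p(I',J')$ in $N(I',J')$ to an element $m\in N(I,J)$ with $\pp_{N(I,J)}(m)=p(I',J')$. The conditions $D\subseteq I'$ and $C\nsubseteq J'$ give $xD=0, C|x\in p(I',J')$ via Lemma \ref{addpairlemma}(1), so $m$ satisfies $C|x\wedge xD=0$; the conditions $A\nsubseteq I'$ and $B\subseteq J'$ say that neither $xA=0$ nor $B|x$ lies in $p(I',J')$, and primality of $p(I',J')$ then gives $xA=0+B|x\notin p(I',J')$, so $m$ does not satisfy $xA=0+B|x$. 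Hence $N(I,J)$ lies in the stated open set.

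The main obstacle, such as it is, lies in the forward direction: one needs the fact that the pp-type of a non-zero element of an indecomposable pure-injective module is indecomposable, and in the distributive setting actually irreducible (prime as a filter). Once this is in hand, the bijection between $\sim$-classes of admissible pairs and indecomposable pure-injective modules, together with Lemma \ref{addpairlemma}(1), does all the work.
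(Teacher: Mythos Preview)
Your proof is correct and is precisely the unpacking of the paper's one-line proof, which just cites Lemmas~\ref{addpairlemma} and~\ref{equpairs}. One small slip in your justification: in the forward direction, deducing $xA=0\notin p$ and $B|x\notin p$ from $xA=0+B|x\notin p$ uses only that $p$ is upward closed (a filter), not primality; primality is what you need (and correctly invoke) in the backward direction to go the other way.
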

\begin{proof}
This follows from \ref{addpairlemma} and \ref{equpairs}.
\end{proof}

We are ready to prove our transfer theorem.
The theorem has essentially already been established. We just need to translate what we have shown into the value group.

\begin{theorem}\label{BeztoPruf}
Let $R,S$ be Pr\"ufer domains. If $t:\Gamma(S)\rightarrow \Gamma(R)$ is an isomorphism of $\ell$-groups then there exists a lattice isomorphism $\lambda:\pp_S^1\rightarrow \pp_R^1$, such that $\lambda(xA=0)$ is $xt(A)=0$ and $\lambda(A|x)$ is $t(A)|x$, and a homeomorphism of topological spaces $\rho:\Zg(R)\rightarrow \Zg(S)$ such that for all $\phi,\psi\in\pp_S^1$, $(\lambda\phi/\lambda\psi)=\rho^{-1}(\phi/\psi)$.
\end{theorem}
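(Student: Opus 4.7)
The plan is to reduce everything about $\pp^1$ and $\Zg$ to data on finitely generated ideals, which is encoded in $\Gamma(R)_\infty^+$, and hence transported by the $\ell$-group isomorphism $t$.

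First I would observe that $t$ restricts to an order-preserving monoid isomorphism $\Gamma(S)_\infty^+\to\Gamma(R)_\infty^+$ (sending $\infty$ to $\infty$), which, via the bijection between non-zero finitely generated fractional ideals and $\Gamma^+$, yields a bijection $\tilde t$ between the finitely generated ideals of $S$ and of $R$. Every operation on finitely generated ideals of a Pr\"ufer domain used later — sum, intersection, product, and ideal quotient $(A:B)$ for $B$ finitely generated and non-zero — is expressible in the $\ell$-group as, respectively, $\wedge$, $\vee$, $+$, and $(t(A)-t(B))\vee 0$; for the last, use that $(A:B)=AB^{-1}\cap R$ in a Pr\"ufer domain since $B$ is invertible. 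Thus $\tilde t$ preserves all these operations and $\subseteq$. Via the correspondence $I\mapsto \val(I)$, $\tilde t$ extends to a bijection of ideal lattices restricting to a $\subseteq$- and $^\#$-preserving bijection on weakly prime ideals.

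Next, for $\lambda$: by Corollary \ref{Specialform}, every pp-$1$-formula over $S$ is equivalent to a sum of conjunctions $C|x\wedge xD=0$ with $C,D\lhd S$ finitely generated. Set
\[
\lambda\Bigl(\sum_i C_i|x\wedge xD_i=0\Bigr):=\sum_i t(C_i)|x\wedge xt(D_i)=0.
\]
Proposition \ref{orderppflaarith} characterises the ordering of such sums via a finitary ideal-theoretic inclusion built from $+$, $\cap$, $\cdot$, $(-:-)$ and $\subseteq$ on the $A_i,B_i,C,D$; since $\tilde t$ preserves all of these, $\lambda$ descends to a well-defined order embedding. Applying the same construction with $t^{-1}$ gives a two-sided inverse, so $\lambda$ is a lattice isomorphism. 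On the generators $xA=0$ and $A|x$ the map $\lambda$ commutes with Prest's duality $D$, hence on all of $\pp_S^1$, which yields both the prescribed formulae $\lambda(xA=0)=xt(A)=0$, $\lambda(A|x)=t(A)|x$ and the analogous description on the dual generators.

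For $\rho$: Corollary \ref{ALLNIJ} together with Lemma \ref{equpairs} say that the points of $\Zg(R)$ are exactly the $\sim$-classes of admissible pairs of weakly prime ideals. Because $\tilde t$ preserves $^\#$ it carries admissible pairs to admissible pairs; because it preserves $(-:K)$ and, via Lemma \ref{InverseIdealQuotient}(1), $(-)_K$, it respects the generating moves of $\sim$ described in Lemma \ref{admissiblepairshift}. Define $\rho\bigl(N(\tilde t(I),\tilde t(J))\bigr):=N(I,J)$. To check that $\rho$ is a homeomorphism satisfying $(\lambda\phi/\lambda\psi)=\rho^{-1}(\phi/\psi)$, it suffices to test this on the basis of open sets $\left(C|x\wedge xD=0/xA=0+B|x\right)$; by Lemma \ref{toponpoint} membership in such a basic open is described purely by inclusions and non-inclusions between finitely generated ideals and weakly prime ideals, all preserved by $\tilde t$.

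The main obstacle is the well-definedness of $\lambda$: one has to be sure that the right-hand side of Proposition \ref{orderppflaarith}(3) really involves no operation beyond those preserved by $\tilde t$. The key, slightly hidden, point is that on finitely generated ideals of a Pr\"ufer domain the ideal quotient coincides with the $\ell$-group operation $(t(A)-t(B))\vee 0$; once this is noted the whole theorem unfolds mechanically from Propositions \ref{orderppflaarith} and \ref{Specialform} together with Lemmas \ref{Zgbasicmanip} and \ref{toponpoint}.
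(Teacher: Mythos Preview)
Your proposal is correct and follows essentially the same route as the paper: define $\lambda$ on the special forms of \ref{Specialform}, verify well-definedness and order-preservation via the ideal-theoretic criterion \ref{orderppflaarith}(3), transport weakly prime ideals and $^\#$ through the filter correspondence, define $\rho$ on $\sim$-classes of admissible pairs, and check compatibility on the basic opens via \ref{toponpoint}. The only cosmetic differences are that you make the $\ell$-group expression for $(A:B)$ explicit and invoke \ref{admissiblepairshift} for $\sim$, whereas the paper works directly from the definition of $\sim$ in terms of $(-:K)$; neither changes the argument.
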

\begin{proof}
Let $t:\Gamma(S)\rightarrow \Gamma(R)$ be an isomorphism of $\ell$-groups. We extend $t$ by mapping the zero ideal to the zero ideal.
By \ref{Specialform}, every $\phi\in \pp_S^1$ is equivalent to one of the form $\sum_{i=1}^nxA_i=0\wedge B_i|x$ where $A_i,B_i$ are finitely generated ideals. Define $\lambda(\sum_{i=1}^nxA_i=0\wedge B_i|x)$ to be $\sum_{i=1}^nxt(A_i)=0\wedge t(B_i)|x$. To see that this map is well-defined and order preserving, it is enough to note that the operations on finitely generated ideals involved in \ref{orderppflaarith} (3) are preserved by $t$.

Let
\[\widehat{t}:\Filters(\Gamma(R)_\infty^+)\rightarrow \Filters(\Gamma(S)_\infty^+), \ \ \ \mcal{F}\mapsto \{A\in \Gamma(S)^+\st t(A)\in \mcal{F}\}\cup\{\infty\}\] where for $\Gamma$ an $\ell$-group, $\Filters(\Gamma_\infty^+)$ denotes the set of filters of $\Gamma_\infty^+$. The bijection $\widehat{t}$ restricts to a bijection between prime filters. Clearly, for $\mcal{F}\in \Filters(\Gamma(R))$ and $K\in \Gamma(R)^+$, $\widehat{t}(\mcal{F}:K)=(\widehat{t}(\mcal{F}):t^{-1}K)$ and if $\mcal{F}$ is a prime filter then $\widehat{t}(\mcal{F}^\#)=\widehat{t}(\mcal{F})^\#$.

As remarked in section \ref{SPreI}, ideals of $R$ correspond to filters of $\Gamma(R)_\infty^+$ and weakly prime ideals of $R$ correspond to prime filters of $\Gamma(R)_\infty^+$ via the map $I\mapsto v(I)$. Hence $v^{-1}\widehat{t}v:\Ideals(R)\rightarrow \Ideals(S)$ is bijective and restricts to a bijective map between weakly prime ideals. For $I\lhd R$, a weakly prime ideal $v(I^\#)=v(I)^\#$. Therefore $v^{-1}\widehat{t}v(I^\#)=v^{-1}\widehat{t}v(I)^\#$. Therefore $v^{-1}\widehat{t}v$ induces a bijective map between the admissible pairs of weakly prime ideals of $R$ and the admissible pairs of weakly prime ideals of $S$. For $I\lhd R$, $v^{-1}\widehat{t}v(I:K)=v^{-1}\widehat{t}(vI:K)=v^{-1}(\widehat{t}vI:t^{-1}K)=(v^{-1}\widehat{t}v I:t^{-1}K)$. Hence $v^{-1}\widehat{t}v$ respects and reflects the $\sim$-equivalence classes of admissible pairs of weakly prime ideals. For $N\in\Zg(R)$, define $\rho(N)=N(v^{-1}\widehat{t}vI,v^{-1}\widehat{t}vJ)$ where $N=N(I,J)$; recall, \ref{ALLNIJ}, such an $(I,J)$ always exists. By \ref{equpairs}, this map is well-defined and applying the same arguments using $t^{-1}$ in place of $t$ we see that $\rho$ is bijective.

We now just need to show that $\rho$ is compatible with $\lambda$. By symmetry, it is enough to show that for all $\phi,\psi\in \pp_S^1$, $N\in \left(\nf{\lambda\phi}{\lambda\psi}\right)$ implies $\rho(N)\in\left(\nf{\phi}{\psi}\right)$. Let $A,B,C,D\lhd S$ be finitely generated ideals and $N\in\left(\nf{\lambda(C|x\wedge xD=0)}{\lambda(xA=0+B|x)}\right)$.  By definition $\left(\nf{\lambda(C|x\wedge xD=0)}{\lambda(xA=0+B|x)}\right)=\left(\nf{t(C)|x\wedge xt(D)=0}{xt(A)=0+t(B)|x}\right)$. So, by \ref{ALLNIJ} and \ref{toponpoint}, there exists $(I,J)$ an admissible pair of weakly prime ideals of $R$ with $N=N(I,J)$ such that $t(A)\nsubseteq I$, $t(D)\subseteq I$, $t(B)\subseteq J$ and $t(C)\nsubseteq J$. Then $A\nsubseteq v^{-1}\widehat{t}vI$, $D\subseteq v^{-1}\widehat{t}vI$, $B\subseteq v^{-1}\widehat{t}vJ$ and $C\nsubseteq v^{-1}\widehat{t}vJ$. Hence $\rho(N)=N(v^{-1}\widehat{t}vI,v^{-1}\widehat{t}vJ)\in\left(\nf{\lambda(C|x\wedge xD=0)}{\lambda(xA=0+B|x)}\right)$.
Now let $\phi,\psi\in \pp_S^1$. By \ref{Specialform}, we may assume $\phi=\sum_{i=1}^n\phi_i$ where each $\phi_i$ is of the form $C|x\wedge xD=0$ and $\psi=\bigwedge_{j=1}^m\psi_j$ where each $\psi_j$ is of the form $xA=0+B|x$.  Since $\lambda$ is a lattice homomorphism,
\[\left(\nf{\lambda(\sum_i\phi_i)}{\lambda(\bigwedge_j\psi_j)}\right)=\left(\nf{\sum_i\lambda(\phi_i)}{\bigwedge_j\lambda(\psi_j)}\right)=\bigcup_{i,j}\left(\nf{\lambda(\phi_i)}{\lambda(\psi_j)}\right).\]
Thus, it follows from what we have already proved that
\[(\lambda\phi/\lambda\psi)=\bigcup_{i,j}\left(\nf{\lambda(\phi_i)}{\lambda(\psi_j)}\right)=\bigcup_{i,j}\rho^{-1}\left(\nf{\phi_i}{\psi_j}\right)=\rho^{-1}\left(\nf{\phi}{\psi}\right).\qedhere\]
\end{proof}

\begin{remark}
The results in section $5$ and $7$ of \cite{bezwidth} about pp-types over B\'ezout domains, appropriately translated, also hold for Pr\"ufer domains.
\end{remark}

We record the translation of \cite[5.2]{bezwidth} and \cite[5.4]{bezwidth} as \ref{typesintofun} below.

For $\Gamma$ an $\ell$-group, write $\widehat{\Gamma^+_\infty}$ for the set of lattice ideals\footnote{In \cite{bezwidth}, lattice ideals were referred to as cofilters to avoid confusion with ideals of a ring.} of $\Gamma^+_\infty$. Recall that a \textbf{lattice ideal} is a downwards closed non-empty subset of $\Gamma_\infty^+$ closed under taking finite joins.

Let $p\subseteq \pp_R^1$ be a pp-type i.e. a filter in $\pp_R^1$. For each $B\in \Gamma(R)^+_\infty$ define
\[F_p(B):=\{A\in \Gamma(R)_\infty^+ \st A|x+xB=0\in p\}\] where $\infty|x$ is identified with the formula $x=0$ and $x\infty=0$ is identified with the formula $x=x$.

We use small letters $a,b,b'$ in this theorem to emphasise that we are working in $\Gamma(R)$ as an $\ell$-group i.e. when we write $b+b'$ we do not mean the sum of two ideals of the ring $R$ but the addition in $\Gamma(R)$.
\begin{theorem}\label{typesintofun}
Let $R$ be a Pr\"ufer domain. For all pp-types $p\subseteq \pp_R^1$ and $b\in\Gamma(R)^+_\infty$, $F_p(b)$ is a lattice ideal of $\Gamma(R)_\infty^+$. The function
$F_p:\Gamma(R)_\infty^+\rightarrow \widehat{\Gamma(R)^+_\infty}$ has the following properties.
\begin{enumerate}
\item $F(0)=\Gamma(R)_\infty^+$ if and only if $x=0\in p$.
\item $F(\infty)=\Gamma(R)_\infty^+$.
\item For all $a,b,b'\in \Gamma(R)_\infty^+$, if $a\in F(b)$ then $a+b'\in F(b+b')$.
\item For all $a,b,b'\in \Gamma(R)_\infty^+$, if $a\in F(b+b')$ and $a\wedge b'=R$ then $a\in F(b)$.
\item For all $a,b\in \Gamma(R)_\infty^+$, $F(a\wedge b)=F(a)\cap F(b)$.
\end{enumerate}
Conversely, if $F:\Gamma(R)_\infty^+\rightarrow \widehat{\Gamma(R)^+_\infty}$ satisfies (2)-(5) then there is a pp-type $p$ such that $F_p=F$.
\end{theorem}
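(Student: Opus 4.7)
\medskip

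\noindent\textbf{Proof proposal.} The plan is to verify the forward direction axiom-by-axiom using the pp-manipulations already established in this section, and then to build the converse by generating a filter from the given $F$ and showing it is a legitimate pp-type whose $F_p$ recovers $F$. Throughout, the key structural inputs are: the lattice embedding $A \mapsto A|x$ from $\Gamma(R)^+_{\infty}$ into $\pp_R^1$ (\ref{idealslattemb}), distributivity of $\pp_R^1$ (since $R$ is arithmetical), the special forms in \ref{Specialform}, and the ideal-theoretic criterion for pp-implication in \ref{Zgbasicmanip}.

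For the lattice-ideal claim: downward closure of $F_p(b)$ in $\Gamma_{\infty}^+$ follows because $A' \leq_{\Gamma} A$ means $A \subseteq A'$, hence $A|x \leq A'|x$ in $\pp_R^1$, so $A|x + xB=0 \leq A'|x + xB=0$ and upward closure of the filter $p$ passes $A$ to $A'$. Closure under finite joins uses \ref{idealslattemb} and distributivity:
\[
(A_1|x + xB=0) \wedge (A_2|x + xB=0) = (A_1|x \wedge A_2|x) + xB=0 = (A_1 \cap A_2)|x + xB=0,
\]
and $A_1 \cap A_2 = A_1 \vee_{\Gamma} A_2$. Properties (1) and (2) are then direct: $\infty|x$ is $x=0$ and $x\infty=0$ is $x=x$, which make $F(0)=\Gamma_{\infty}^+$ equivalent to $x=0 \in p$, and $F(\infty)=\Gamma_{\infty}^+$ automatic. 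Property (5) is the same distributivity applied to the other argument: $x(A+B)=0$ is $xA=0 \wedge xB=0$, so $C|x + x(A+B)=0 = (C|x + xA=0) \wedge (C|x + xB=0)$, and $A + B$ is the meet $a \wedge b$ in $\Gamma^+$.

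Properties (3) and (4) are the substantive content. For both, the strategy is to reduce the required pp-implication to an ideal inclusion via \ref{Zgbasicmanip} (or \ref{orderppflaarith}) and then to check it locally at the prime ideals of $R$, where $R_{\mfrak{p}}$ is a valuation domain and ideals are totally ordered. For (3), the task is to show $A|x + xB=0 \leq AB'|x + xBB'=0$ holds as pp-formulae, which by \ref{Zgbasicmanip} is equivalent to a manipulable ideal containment that is immediate in valuation domains (since after localisation each pair of ideals is comparable, one can solve it by a one-line calculation). For (4), the hypothesis $a \wedge b'=R$ translates to $A + B' = R$, i.e.\ coprimality of $A$ and $B'$, which, combined with $A|x + xBB'=0 \in p$ and the Pr\"ufer characterisation in \ref{Zgbasicmanip}, lets one split off the $B'$-part and conclude $A|x + xB=0 \in p$.

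For the converse, given $F$ satisfying (2)-(5), define $p_F$ to be the filter in $\pp_R^1$ generated by the set $\{A|x + xB=0 \mid A \in F(B)\}$. The hard part is to prove $p_F$ is closed downward (i.e.\ that arbitrary pp-implications among the generators already sit inside $p_F$), and that $F_{p_F}(B)=F(B)$ for every $B$. Using \ref{Specialform}, every pp-formula is equivalent to a sum of $A|x \wedge xB=0$ and dually to a conjunction of $xA=0 + B|x$, so membership in $p_F$ is controlled by membership on the generators. The key step is to translate each implication among formulae of the form $A|x + xB=0$ via \ref{Zgbasicmanip} into an ideal condition, and to verify that the axioms (3)-(5) (together with (2)) force $F$ to respect exactly those conditions. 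This is the main obstacle: one must check that every ideal-theoretic implication between these shape formulae is a consequence of (3)-(5), which is where the coprime splitting in (4), the shift in (3), and the meet compatibility in (5) do all the work. Closure of $p_F$ under conjunction is immediate from (5) and the distributive calculation above, while the inclusion $F(B) \subseteq F_{p_F}(B)$ is by construction and the reverse inclusion reduces, after using the two special forms, to the axioms (3)-(5) just verified.
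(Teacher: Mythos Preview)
The paper does not prove this theorem directly: it records it as the translation of \cite[5.2, 5.4]{bezwidth} from B\'ezout domains to Pr\"ufer domains, obtained via the transfer theorem \ref{BeztoPruf} and the Jaffard--Kaplansky--Ohm theorem. So the paper's argument is: the B\'ezout case is already known, and \ref{BeztoPruf} provides the lattice isomorphism $\pp_S^1\cong\pp_R^1$ carrying $A|x$ to $t(A)|x$ and $xB=0$ to $xt(B)=0$, which transports both the definition of $F_p$ and each of the axioms across. Your proposal instead attempts a direct verification over an arbitrary Pr\"ufer domain; this is a genuinely different route.

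Most of your forward direction is fine. The lattice-ideal claim and (1), (2), (5) go through exactly as you say, and your sketch for (4) can be made precise: if $A+B'=R$, pick $\alpha\in A$, $\beta\in B'$ with $\alpha+\beta=1$; then for $m$ with $mBB'=0$ one has $m=m\alpha+m\beta$ with $m\alpha\in AM$ and $(m\beta)B\subseteq mBB'=0$, so $xBB'=0\leq A|x+xB=0$ and hence $A|x+xBB'=0\leq A|x+xB=0$, which is the required implication.

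The gap is in (3). You assert that ``$A|x + xB=0 \leq AB'|x + xBB'=0$ holds as pp-formulae'' and that this is ``immediate in valuation domains''. It is false. Take $R=\mathbb{Z}_{(p)}$, $A=B=R$, $B'=pR$: the left-hand side is $x=x$, the right-hand side is $p|x+xp=0$, and $1\in R$ does not satisfy the latter. So property (3) is \emph{not} a consequence of a single pp-implication between these two formulae, and your reduction to \ref{Zgbasicmanip} cannot succeed as written. Whatever argument establishes (3) must use more than upward closure of $p$ on a single inequality; in the paper's treatment this is absorbed into the citation of \cite{bezwidth} plus transfer, so you do not see the mechanism here. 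Your converse sketch inherits the same problem: you say axioms (3)--(5) ``do all the work'' in matching the ideal-theoretic implications from \ref{Zgbasicmanip}, but without a correct account of what (3) actually encodes at the level of pp-formulae, that closure check is not justified.
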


\section{Preliminaries II}\label{PreII}

\noindent
Throughout, let $R$ be an arbitrary ring. Various ordinal valued dimensions on lattices of pp formulae are considered in model theory of modules and in representation theory. A uniform way of defining many such dimensions on bounded modular lattices is given in \cite{PrestBluebook}. Let $\mcal{L}$ be a non-empty\footnote{That the class is non-empty ensures that it contains the lattice of size $1$.} class of lattices closed under sublattices and quotient lattices. For $L$ a bounded modular lattice, let $\sim_\mcal{L}$ be the smallest congruence relation on $L$ such that for all $a\leq b\in L$, if $[a,b]$ is in $\mcal{L}$ then $a\sim_\mcal{L} b$. For $a,b\in L$, $a\sim_\mcal{L} b$ if and only if there exists $a\wedge b =:c_0\leq c_1\leq \cdots\leq c_n:=a\vee b$ such that each interval $[c_i,c_{i+1}]$ belongs to $\mcal{L}$ \cite[10.3]{PrestBluebook}.

Following \cite{PrestBluebook}, we define, by ordinal induction, a sequence of lattices $L_{\mcal{L},\alpha}$, congruence relations $\sim_{\mcal{L},\alpha}$ and quotient maps $\pi_{\mcal{L},\alpha}:L\rightarrow L_{\mcal{L},\alpha}$ for $\alpha$ an ordinal. Let $L_{\mcal{L},0}:=L$ and let $\sim_0$ be the trivial congruence relation. Define $L_{\mcal{L},\alpha+1}:=L_{\mcal{L},\alpha}/\sim_{\mcal{L}}$ and $\pi_{\mcal{L},\alpha+1}$ to be the composition of $\pi_\alpha$ with the quotient map $L_{\mcal{L},\alpha}\rightarrow L_{\mcal{L},\alpha+1}$. For $\alpha$ a successor ordinal, define $\sim_{\mcal{L},\alpha}$ to be the congruence kernel of $\pi_\alpha$, i.e. $a\sim_{\mcal{L},\alpha}b$ if and only if $\pi_{\mcal{L},\alpha}(a)=\pi_{\mcal{L},\alpha}(b)$. For $\lambda$ a limit ordinal, define $\sim_{\mcal{L},\lambda}:=\bigcup_{\alpha<\lambda}\sim_{\mcal{L},\alpha}$, $L_{\mcal{L},\lambda}:=L/\sim_{\mcal{L},\lambda}$ and $\pi_{\mcal{L},\lambda}:=L\rightarrow L_{\mcal{L},\lambda}$ the quotient map. Let $\alpha$ be the least ordinal such that $L_{\mcal{L},\alpha}$ is the one-point lattice. It follows from the definition that $\alpha$ is always a successor ordinal (unless $L$ is the one-point lattice). Define the $\mcal{L}$-dimension of $L$ to be $\mcal{L}\text{-}\dim(L):=\alpha-1$.

\begin{remark}
Let $\mcal{L}$ be a non-empty class of lattices closed under sublattices and quotients and let $L$ be a modular lattice. An interval $[a,b]$ in $L$ has $\mcal{L}\text{-}\dim\leq \alpha$ if and only if $a\sim_{\mcal{L},\alpha+1} b$.
\end{remark}

Every closed subset $\mcal{C}$ of $\Zg(R)$ gives rise to a congruence relation $\approx_\mcal{C}$ on $\pp_R^n$ for each $n\in\N$ by setting $\phi\approx_\mcal{C}\psi$ whenever $\phi(N)=\psi(N)$ for all $N\in \mcal{C}$. In general, there are many  congruence relations on $\pp_R^n$ not induced by closed subsets of $\Zg(R)$.
The next proposition follows easily from the results in \cite[\S13]{PSL}. However, we weren't able to find it explicitly stated anywhere so we give some indication of its proof.

\begin{proposition}\label{closedsetforLdim}
Let $\mcal{L}$ be a non-empty class of lattices closed under sublattices and quotient lattices. The closed subset \[\mcal{C}_\mcal{L}:=\bigcap_{\substack{\phi,\psi\in \pp_R^1 \\ \phi\sim_\mcal{L}\psi}} \Zg(R)\backslash\left(\nf{\phi}{\psi}\right)\] is such that $\sim_\mcal{L}$ is equal to $\approx_{\mcal{C}_\mcal{L}}$.
\end{proposition}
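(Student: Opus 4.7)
My plan is to prove the two inclusions of congruences separately, with the forward one being immediate and the reverse requiring the bulk of the work.

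The easy inclusion $\sim_\mcal{L}\,\subseteq\,\approx_{\mcal{C}_\mcal{L}}$ is a direct unpacking of definitions. If $\phi \sim_\mcal{L} \psi$ and $N \in \mcal{C}_\mcal{L}$, then the defining intersection applied to the pair $(\phi,\psi)$ and, by symmetry of $\sim_\mcal{L}$, to $(\psi,\phi)$ forces $N \notin (\phi/\psi)$ and $N \notin (\psi/\phi)$; the definitions of these basic opens then give $\phi(N) \subseteq \psi(N)$ and $\psi(N) \subseteq \phi(N)$, whence $\phi(N) = \psi(N)$.

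For the reverse inclusion, I would first reduce to intervals. Given $\phi \approx_{\mcal{C}_\mcal{L}} \psi$, set $\phi^+ := \phi + \psi$ and $\phi^- := \phi \wedge \psi$. Using $(\phi^+/\phi^-) = (\phi/\psi) \cup (\psi/\phi)$, the hypothesis becomes $(\phi^+/\phi^-) \cap \mcal{C}_\mcal{L} = \emptyset$. Moreover, once $\phi^- \sim_\mcal{L} \phi^+$ is known, every element $a$ of the interval $[\phi^-,\phi^+]$ satisfies $a = a \vee \phi^- \sim_\mcal{L} a \vee \phi^+ = \phi^+$; since $\phi,\psi \in [\phi^-,\phi^+]$, this gives $\phi \sim_\mcal{L} \psi$. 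So it is enough to treat the case $\psi \leq \phi$ with $(\phi/\psi) \cap \mcal{C}_\mcal{L} = \emptyset$. In this setting the complement of $\mcal{C}_\mcal{L}$ is $\bigcup_{\phi' \sim_\mcal{L} \psi'} (\phi'/\psi')$, and we may restrict the union to comparable pairs $\psi' \leq \phi'$: every $(\phi'/\psi')$ with $\phi' \sim_\mcal{L} \psi'$ is contained in $(\phi'+\psi'/\phi'\wedge\psi')$, and $\phi'+\psi' \sim_\mcal{L} \phi'\wedge\psi'$ because $\sim_\mcal{L}$ is a lattice congruence. Compactness of the basic open $(\phi/\psi)$ in $\Zg(R)$ then produces finitely many pairs $\psi_i \leq \phi_i$ with $\phi_i \sim_\mcal{L} \psi_i$ such that
\[(\phi/\psi) \;\subseteq\; \bigcup_{i=1}^k (\phi_i/\psi_i).\]

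The main obstacle is the passage from this topological covering to a lattice-theoretic refinement of $[\psi,\phi]$; this is exactly the point at which the functor-categorical machinery does the work. I would appeal to the correspondence developed in \S13 of \cite{PSL} between closed subsets of $\Zg(R)$ and Serre subcategories (of finite type) of the category $(\mod\text{-}R,\Ab)^{fp}$ of finitely presented functors, under which a pp-pair $\phi/\psi$ (with $\psi \leq \phi$) is sent to its interval functor $F_{\phi/\psi}$. Under this correspondence, the inclusion of basic opens above is equivalent to $F_{\phi/\psi}$ lying in the Serre subcategory generated by $\{F_{\phi_i/\psi_i} : 1 \leq i \leq k\}$, which unpacks into a finite chain $\psi = \eta_0 \leq \eta_1 \leq \cdots \leq \eta_m = \phi$ in which each interval $[\eta_j,\eta_{j+1}]$ is isomorphic (as a lattice) to a subquotient of some $[\psi_{i(j)},\phi_{i(j)}]$. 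Since $\phi_i \sim_\mcal{L} \psi_i$, the interval $[\psi_i,\phi_i]$ admits a refinement whose subintervals all lie in $\mcal{L}$; and since $\mcal{L}$ is closed under sublattices and quotients, the same refinability passes to subquotients, hence to each $[\eta_j,\eta_{j+1}]$. Concatenating the refinements yields $\psi \sim_\mcal{L} \phi$, as required. Everything apart from this translation step (the reduction to comparable pairs, compactness, and the subsequent lattice bookkeeping) is formal; the heart of the argument is the Serre-subcategory unpacking, which is where I expect to rely most heavily on \cite{PSL}.
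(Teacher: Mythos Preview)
Your proof is correct and follows essentially the same route as the paper's: both rely on the functor-category/Serre-subcategory machinery of \cite[\S12--13]{PSL}. The paper simply packages your argument more tightly by citing two black-box results: \cite[13.1.2]{PSL}, which says that the Serre subcategory $\mcal{S}_\mcal{L}$ generated by functors with subobject lattice in $\mcal{L}$ consists exactly of those with $\mcal{L}$-dimension $0$ (this absorbs your filtration/subquotient step), and \cite[12.4.1]{PSL}, which gives the bijection between Serre subcategories of finite type and Ziegler-closed sets (this absorbs your compactness step). Your explicit compactness-plus-refinement argument is precisely what underlies those citations.
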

\begin{proof}
In this proof we take our notation concerning functor categories from \cite{PSL}. Following \cite[\S13]{PSL}, let $\mcal{S}_\mcal{L}$ be the Serre subcategory of $(\mod\text{-}R, \Ab)^{fp}$ generated by those $F\in (\mod\text{-}R, \Ab)^{fp}$ whose lattice of finitely presented subobjects are in $\mcal{L}$. By \cite[13.1.2]{PSL}, $S_\mcal{L}$ consists exactly of those $F\in (\mod\text{-}R, \Ab)^{fp}$ whose lattice of finitely presented subobjects have $\mcal{L}$-dimension $0$. Let \[\mcal{C}_{\mcal{L}}:=\{N\in \Zg(R) \st \overrightarrow{F}N=0 \text{ for all }F\in\mcal{S}_\mcal{L}\}. \] By \cite[12.4.1]{PSL}, $F\in \mcal{S}_\mcal{L}$ if and only if $\overrightarrow{F}N=0$ for all $N\in \mcal{C}_{\mcal{L}}$.

For $\phi\geq \psi \in \pp_R^1$, the lattice of finitely presented subobjects of $F_{\nf{\phi}{\psi}}$, the functor which sends $M\in\mod\text{-}R$ to $\phi(M)/\psi(M)$, is isomorphic to the interval $[\psi,\phi]$. Moreover, for $N\in\Mod\text{-}R$, $\phi(N)=\psi(N)$ if and only if $\overrightarrow{F_{\nf{\phi}{\psi}}}N=0$. Thus $\mcal{C}_\mcal{L}$ is a Ziegler closed subset with the required properties. Standard arguments for reducing to pp formulae in one variable now show that $N\in \mcal{C}_\mcal{L}$ if and only if $N\notin \left(\nf{\phi}{\psi}\right)$ for all $\phi,\psi\in \pp_R^1$ with $\phi\sim_\mcal{L}\psi$.
%
%
%
%
%Suppose $\phi,\psi\in \pp_R^1$ and $\phi\sim_{\mcal{L}} \psi$. We want to show that $\phi(N)=\psi(N)$ for all $N\in C_{\mcal{L}}$. The definition of $\sim_{\mcal{L}}$ means that there is no harm in assuming that $\phi\geq \psi$. The lattice of finitely presented subobjects of $F_{\phi/\psi}$ is isomorphic to the interval $[\psi,\phi]$ in $\pp_R^1$. Hence $F_{\phi/\psi}\in \mcal{S}_\mcal{L}$. So $\phi(N)/\psi(N)=F_{\phi/\psi}N=0$ as required.
%
%Now suppose that $\phi,\psi\in \pp_R^1$ and $\phi(N)=\psi(N)$ for all $N\in\mcal{C}_\mcal{L}$.
%
%
%
%PSL13.1.2 the Serre subcategory $\mcal{S}_\mcal{L}$ of $(\mod\text{-}R, \Ab)^{fp}$ generated by those $F\in (R\text{-}\mod, \Ab)^{fp}$ whose lattice of finitely presented subobjects is in $\mcal{L}$ consists of those $F\in (R\text{-}\mod, \Ab)^{fp}$ such that the lattice of finitely presented subobjects of $F$ have $\mcal{L}$-dimension $0$. Let $\mcal{C}_{\mcal{L}}$ be the set of $N\in \Zg(R)$ such that $\Hom(F,N\otimes -)=0$ for all $F\in\mcal{S}_\mcal{L}$.
%
%Suppose $\phi(N)=\psi(N)$ for all $N\in\mcal{C}_{\mcal{L}}$. CLAIM $F_{\nf{D\psi}{D\phi}}\in \mcal{S}_{\mcal{L}}$ GIVES REQUIRED CONCLUSION
\end{proof}

We are principally interested in the two most prominent of these dimensions: m-dimension and breadth. We write $\two$ for the class of lattices of size $\leq 2$ and $\Ch$ for the class of total orders. The dimension associated to $\two$, i.e. $\two$-dimension\footnote{Note that this is not equal to ``$2$-dimension'' in the sense of \cite{PunSDstring}.}, is called \textbf{m-dimension}. We will write $\mdim L$ for this dimension rather than $\two\text{-}\dim L$. The dimension associated to $\Ch$ is called \textbf{breadth}. We will write $\br L$ for this dimension rather than $\Ch\text{-}\dim L$.

We will not use the definition of the width of a modular lattice in this paper, see \cite[\S 7]{Zieglermodules} for its definition. Since many of the results in the literature are given in terms of existence of width, we record here that width and breadth coexist.

\begin{theorem}\cite[10.7]{PrestBluebook}
The width of a modular lattice $L$ is defined if and only if the breadth of $L$ is defined.
\end{theorem}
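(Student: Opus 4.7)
The plan is to prove the equivalence by transfinite induction on both sides, with the crucial ingredient being a modular-lattice propagation lemma linking chain-collapses on sub-intervals to chain-collapses on their joins.

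For the forward direction, assume $\br(L) = \alpha$ exists. I proceed by transfinite induction on $\alpha$. The base case, $L$ a non-trivial chain, has width $0$ trivially. For the successor step, the surjection $L_{\Ch,\beta} \twoheadrightarrow L_{\Ch,\beta+1}$ identifies only chain intervals, and a standard lemma asserts that if the quotient has width defined and the congruence kernels are chains, then the source does too (with a controlled increase). Limit stages follow from the definition of $\sim_{\Ch,\lambda}$ as a union: any sub-interval of $L_{\Ch,\lambda}$ with width $\geq \gamma$ would already have width $\geq \gamma$ in some $L_{\Ch,\beta}$ with $\beta < \lambda$. Iterating yields width defined for $L$.

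For the reverse direction, I argue the contrapositive: if $\br(L)$ is undefined, I produce a witness that width is also undefined. Breadth undefined means there is a \emph{breadth-rigid} interval $[a,b] \subseteq L$ with $a \not\sim_{\Ch,\alpha} b$ for every ordinal $\alpha$. Such an interval cannot be a chain, so it contains incomparable elements; passing to their meet and join, I obtain $d_1, d_2 \in [a,b]$ with $d_1 \wedge d_2 = a$ and $d_1 \vee d_2 = b$. The key claim is that at least one of $[a, d_1], [a, d_2]$ is again breadth-rigid; otherwise both collapse at some $\alpha_i$, and a modular-lattice computation transports the collapses onto $[a,b]$ at stage $\max(\alpha_1, \alpha_2) + 1$, contradicting rigidity. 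Iterating this dichotomy produces an infinite binary tree of breadth-rigid sub-intervals each equipped with a non-trivial sum-decomposition, which is precisely Ziegler's configuration witnessing that width is undefined.

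The main obstacle is establishing the propagation lemma behind the transport argument: if $[a, d_1]$ and $[a, d_2]$ are $\sim_{\Ch, \alpha}$-trivial, and $d_1 \wedge d_2 = a$ with $d_1 \vee d_2 = b$, then $[a, b]$ is $\sim_{\Ch, \alpha + 1}$-trivial. The proof rests on the modular isomorphism $[a, d_1] \cong [d_2, b]$ given by $x \mapsto x \vee d_2$, which lifts chain-collapses on $[a, d_1]$ to $[d_2, b]$; composing with chain-collapses on $[a, d_2]$ yields a filtration $a = c_0 \leq c_1 \leq \cdots \leq c_n = b$ whose successive intervals become chains after $\alpha$ steps, so that $[a,b]$ collapses at stage $\alpha + 1$. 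Handling this carefully at limit ordinals, where the induction hypothesis must be reformulated in terms of the union of congruences rather than a single congruence, is the most delicate point; once this propagation lemma is in hand, both implications fall into place as transfinite inductions.
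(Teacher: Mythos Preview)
The paper does not prove this theorem; it is simply quoted with a citation to \cite[10.7]{PrestBluebook}, so there is no in-paper argument to compare against.

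Your contrapositive direction has a genuine gap at the step ``passing to their meet and join, I obtain $d_1,d_2\in[a,b]$ with $d_1\wedge d_2=a$ and $d_1\vee d_2=b$''. Finding incomparable $c_1,c_2$ in a breadth-rigid interval $[a,b]$ and then replacing $[a,b]$ by $[c_1\wedge c_2,c_1\vee c_2]$ need not preserve breadth-rigidity: nothing you have said rules out $c_1\sim_{\Ch,\alpha}c_2$ for some $\alpha$, in which case the smaller interval collapses and your propagation lemma gives no contradiction. Without this, the iteration cannot be continued and no binary tree is produced. The standard repair is to pass to the stable quotient $L_\infty:=L/{\sim_{\Ch,\gamma}}$, where $\gamma$ is large enough that $\sim_{\Ch,\gamma}={\sim_{\Ch,\gamma+1}}$. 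If breadth is undefined then $L_\infty$ is non-trivial and, by stability, has no non-trivial chain interval; hence every non-trivial interval of $L_\infty$ contains incomparable elements, your tree can be built entirely inside $L_\infty$, and the surjection $L\twoheadrightarrow L_\infty$ transfers undefined width back to $L$. Equivalently, staying in your framework, you must choose the incomparable pair so that their images in $L_\infty$ remain incomparable, which forces $[c_1\wedge c_2,c_1\vee c_2]$ to be breadth-rigid. Your forward direction is correct in outline, but the ``standard lemma'' should be made explicit: for incomparable $d_1,d_2\in L$, either their images in $L/{\sim_\Ch}$ are still incomparable, or one of $[d_1\wedge d_2,d_i]$ has trivial image and is therefore a chain (hence has width $0$); this immediately bounds the width of $L$ in terms of that of $L/{\sim_\Ch}$ and lets the induction run.
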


Unlike breadth, the actual value of width is not particularly meaningful.
The purpose of width (and hence breadth) is to detect the existence of superdecomposable pure-injective modules. Recall that a module is \textbf{superdecomposable} if it has no non-zero indecomposable direct summands. There are currently no known examples of rings without superdecomposable pure-injective modules such that $\pp_R^1$ does not have width (and hence breadth).
From here on we will write all results in terms of breadth rather than width, independently
of how they were originally stated.

\begin{theorem}\cite[7.1]{Zieglermodules}
If there exists a superdecomposable pure-injective $R$-module then $\pp_R^1$ does not have breadth. Moreover, if $R$ is countable, then the converse holds.
\end{theorem}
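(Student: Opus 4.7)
The plan is to analyse both directions through the quotient lattice $L(N):=\pp_R^1/\!\approx_N$ of pp-definable subgroups of a pure-injective $N$, where $\phi\approx_N\psi$ iff $\phi(N)=\psi(N)$; breadth passes to quotients, so if $\pp_R^1$ has breadth then so does $L(N)$, and it suffices to work in $L(N)$.

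For the forward direction, first I would establish a splitting lemma: if $N$ is superdecomposable pure-injective and $(\phi/\psi)$ is a pp-pair with $\phi(N)\supsetneq\psi(N)$, then there exist $\phi_1,\phi_2\in[\psi,\phi]$ with $\phi_i(N)$ strictly between $\psi(N)$ and $\phi(N)$ and $\phi_1(N)\cap\phi_2(N)=\psi(N)$. The proof is by contradiction: if no such splitting exists in $[\psi,\phi]$, then a nonzero element of $\phi(N)/\psi(N)$ would realise a meet-prime pp-type, which by Ziegler's criterion (recorded in Section~\ref{SPreI}) is indecomposable; its pure-injective hull is indecomposable and, by pure-injectivity of $N$ and the universal property of pure-injective hulls, embeds as a direct summand of $N$, contradicting superdecomposability. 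Iterating the splitting lemma produces a binary tree $\{\phi_s:s\in 2^{<\omega}\}$ of pp-formulae, open on $N$, with incomparable children at every node. I would then show by transfinite induction on $\alpha$ that the two children $\phi_{s0},\phi_{s1}$ of any node are never identified by $\sim_{\Ch,\alpha}$: any chain witnessing such an identification would, by modularity, force the collapse of deeper branchings below $s$, contradicting the induction hypothesis. Hence $L_{\Ch,\alpha}\ne\{*\}$ for all $\alpha$, so breadth of $\pp_R^1$ fails.

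For the countable converse I would argue the contrapositive and construct a superdecomposable pure-injective under the assumption that $\pp_R^1$ lacks breadth. Failure of breadth supplies, at every stage, a pp-pair that cannot be collapsed by iterated chain identifications, and by iterating the failure inside such pairs I would build a binary tree of pp-formulae in which no branching ever collapses. Using countability of $R$, and hence of $\pp_R^1$, a Henkin-style diagonalisation enumerating all pp-pairs lets me arrange the splittings so that along every branch of the tree the resulting pp-type is not meet-prime, and hence not indecomposable, by always invoking the non-breadth assumption to split further whenever an interval threatens irreducibility. I would then realise the entire tree of consistent pp-types in a single pure-injective $N$ using compactness in the language of $R$-modules together with the existence of pure-injective hulls; by construction $L(N)$ contains no meet-prime pp-types, so $N$ has no indecomposable summand.

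The main obstacle is this diagonalisation in the converse: simultaneously realising the whole tree in one pure-injective module and ruling out indecomposability along every branch. Countability of $R$ is essential because it reduces the construction to an $\omega$-length recursion with controllable limit steps; without it one would have to juggle uncountably many conditions along uncountably many branches and the construction breaks down, which is precisely why the converse remains open for uncountable rings in general.
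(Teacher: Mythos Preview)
The paper does not prove this theorem: it is quoted from Ziegler \cite[7.1]{Zieglermodules} as background in Section~\ref{PreII}, with no argument given. So there is no proof in the paper to compare your proposal against.

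That said, your forward-direction argument has a genuine gap. Your splitting lemma asserts that in any open pair $\phi/\psi$ on a superdecomposable $N$ one can find $\phi_1,\phi_2\in[\psi,\phi]$ with $\phi_1(N)\cap\phi_2(N)=\psi(N)$ and both $\phi_i(N)$ strictly intermediate. Your proposed proof is that failure of such a splitting in a single interval would force some element of $\phi(N)\setminus\psi(N)$ to realise an indecomposable pp-type. But indecomposability of a pp-type is a global condition on all of $\pp_R^1$ (for every pair $\theta_1,\theta_2\notin p$ there is $\sigma\in p$ with $\sigma\wedge\theta_1+\sigma\wedge\theta_2\notin p$), not a local condition on one interval of $L(N)$; non-splittability of $[\psi,\phi]$ in $L(N)$ does not produce such a type. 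The route that actually works goes through Theorem~\ref{sdtypetopi}: pick any nonzero $m\in N$, so $p=\pp_N(m)$ is a superdecomposable type, and then use directly that \emph{no} $\psi\notin p$ is large in $p$. Non-largeness of $\psi$ yields $\theta_1,\theta_2\geq\psi$ with $\theta_1,\theta_2\notin p$ and $\sigma\wedge\theta_1+\sigma\wedge\theta_2\in p$ for all $\sigma\in p$; this is weaker than $\theta_1\wedge\theta_2=\psi$ in $L(N)$, but it is exactly what is needed to witness failure of width (and hence of breadth). Your transfinite induction on $\sim_{\Ch,\alpha}$ is also not the standard route: once one has the tree coming from non-largeness, failure of width follows directly from the definition, and then one invokes the coexistence of width and breadth \cite[10.7]{PrestBluebook}.

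For the countable converse your outline is in the right spirit but too schematic to assess: the substance of Ziegler's argument lies precisely in how the enumeration of pp-pairs is interleaved with the tree construction so that the resulting type is superdecomposable, and your description does not pin this down.
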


We say $\psi\notin p$ is \textbf{large} in $p$ if, for all $\theta_1,\theta_2\geq \psi$, if $\theta_1,\theta_2\notin p$, then there is $\sigma\in p$ such that $\sigma\wedge \theta_1+\sigma\wedge\theta_2\notin p$. A pp-type $p$ is \textbf{superdecomposable} if and only if no $\psi\notin p$ is large in $p$.

\begin{theorem}\cite[7.6]{Zieglermodules}\label{sdtypetopi}
If $p$ is a superdecomposable pp-type then $N(p)$ is superdecomposable. Moreover, if $M$ is superdecomposable and $m\in M$ is non-zero then $p=\pp_M(m)$ is superdecomposable. In particular, there exists a superdecomposable pure-injective $R$-module if and only if there exists a superdecomposable pure-injective pp-$1$-type.
\end{theorem}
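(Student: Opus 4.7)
The plan is to prove both displayed implications by establishing a single structural correspondence between largeness of formulas over $p$ and indecomposable direct summands of $N(p)$: informally, the largeness of $\psi \notin p$ is the combinatorial witness in the pp-lattice that $N(p)$ contains an indecomposable direct summand whose projection of the realiser $m$ of $p$ has a pp-type containing $\psi$. Once this correspondence is established, $p$ is superdecomposable if and only if $N(p)$ is superdecomposable, since the absence of a large $\psi$ matches the absence of an indecomposable direct summand.

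For the first implication (superdecomposable $p$ implies superdecomposable $N(p)$), I argue by contraposition. Assume $N(p) = A \oplus B$ with $A \neq 0$ indecomposable. Let $m$ realise $p$ and write $m = a+b$ with $a \in A$, $b \in B$. The identity $\phi(A \oplus B) = \phi(A) \oplus \phi(B)$ gives $p = p_A \cap p_B$ where $p_A := \pp_A(a)$ and $p_B := \pp_B(b)$. Minimality of pure-injective hulls forces $a \neq 0$ (else $A = 0$), so $A = N(p_A)$ and $p_A$ is indecomposable; and $p_A \supsetneq p$ (else $B = 0$). The claim is that any $\psi \in p_A \setminus p$ is large in $p$: for $\theta_1, \theta_2 \geq \psi$ with $\theta_i \notin p$, the constraints $\theta_i \in p_A$ and $\theta_i \notin p_B$ hold automatically, and the required witness $\sigma \in p_A \cap p_B$ with $\sigma \wedge \theta_1 + \sigma \wedge \theta_2 \notin p_B$ must be manufactured using the indecomposability of $p_A$ together with lattice-theoretic manipulations in $\pp_R^1$.

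The second implication ($M$ pure-injective superdecomposable, $m \in M \setminus \{0\}$, then $\pp_M(m)$ superdecomposable) reduces cleanly to the special case $M = N(p)$: $N(p)$ is a direct summand of any pure-injective containing an element of pp-type $p$ (by the universal property of pure-injective hulls), and a direct summand of a superdecomposable pure-injective is superdecomposable, so $N(p)$ itself is superdecomposable. It then suffices to prove that $N(p)$ superdecomposable implies $p$ superdecomposable. Contrapositively, given $\psi \notin p$ large in $p$, I would use Zorn's lemma to extend the filter generated by $p \cup \{\psi\}$ to a maximal element $q$ of a class of filters preserving the largeness condition, show that $q$ is an indecomposable pp-$1$-type realised by some element of $N(p)$, and conclude that $N(q)$ is an indecomposable direct summand of $N(p)$, contradicting its superdecomposability.

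The ``in particular'' conclusion is then immediate once both implications are in place. The main obstacle is the delicate combinatorics of the pp-lattice $\pp_R^1$, which is modular but generally non-distributive. In the first direction, the subtlety is producing the witness $\sigma$ for largeness of $\psi$ from the indecomposability of $p_A$ and the relative position of the $\theta_i$ with respect to $p_B$; I expect this to require a case analysis on whether $\theta_1 + \theta_2$ lies in $p_B$, together with a modular-law manipulation to stay inside $p_A \cap p_B$. In the second direction, the Zorn's lemma step has to be set up so that the maximal filter is both a genuine indecomposable pp-type (verified via Ziegler's characterisation \cite[4.4]{Zieglermodules}) and actually realised by an element of $N(p)$ rather than merely abstractly consistent with $p \cup \{\psi\}$; this threading of the largeness condition through the transfinite construction is the real technical content of the theorem.
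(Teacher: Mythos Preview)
The paper does not give its own proof of this theorem: it is stated with the citation \cite[7.6]{Zieglermodules} and no argument, as a background result imported from Ziegler's paper. There is therefore nothing in the paper to compare your proposal against.

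Your outline is broadly the standard one (and is essentially how Ziegler and later Prest argue), but as written it is a plan rather than a proof. In the first direction you correctly reduce to showing that some $\psi\in p_A\setminus p$ is large in $p$, but you then defer the actual construction of the witness $\sigma$ to ``a case analysis \dots\ together with a modular-law manipulation'' without carrying it out; this is exactly the content of the lemma, and the modular (non-distributive) setting makes it genuinely delicate. In the second direction the Zorn's lemma step is likewise only gestured at: you need to specify the poset (filters $q\supseteq p\cup\{\psi\}$ in which $\psi$ remains large, or equivalently types realised in $N(p)$ above $m$ in which $\psi$ remains large), check that chains have upper bounds preserving largeness, and then verify that a maximal such $q$ is indecomposable via \cite[4.4]{Zieglermodules}. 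None of this is wrong, but none of it is done either; if you want a self-contained proof you should consult Ziegler's original or the exposition in \cite[\S4.4, \S7.3]{PSL}.
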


If $\pp_R^1$ is distributive, so in particular, if $R$ is an arithmetical ring, then a pp-$1$-type $p$ is superdecomposable if and only if for all $\psi\notin p$, there exists $\psi\leq \theta_1,\theta_2\notin p$ such that $\theta_1+\theta_2\notin p$.

The \textbf{Cantor-Bendixson derivative} of a topological space $X$, denoted $X'$, is the set of non-isolated points of $X$. For $\alpha$ an ordinal, we define the $\alpha$th Cantor-Bendixson derivative $X^{(\alpha)}$ of $X$ by ordinal induction as follows:
\begin{itemize}
\item $X^{(0)}:=X$;
\item $X^{(\alpha+1)}:=(X^{(\alpha)})'$; and
\item if $\lambda$ is a limit ordinal then $X^{(\lambda)}:=\bigcap_{\alpha<\lambda}X^{(\alpha)}$.
\end{itemize}
The \textbf{Cantor-Bendixson rank} of a point $x\in X$ is the supremum of the ordinals $\alpha$ such that $x\in X^{(\alpha)}$ if such an ordinal exists and $\infty$ otherwise. The \textbf{Cantor-Bendixson rank} (\textbf{CB rank}) of a non-empty space $X$, denoted $\CBrank(X)$, is the supremum of all $\alpha$ such that $X^{(\alpha)}$ is non-empty if it exists and $\infty$ otherwise. If $X$ is compact with CB rank $\alpha$ then $X^{(\alpha)}$ is non-empty and finite.

A ring $R$ satisfies the \textbf{isolation condition} if for every closed subset $X\subseteq \Zg(R)$ and every $N\in X$ which is isolated in $X$ there exists $\phi\geq\psi\in \pp_R^1$ such that $X\cap\left(\nf{\phi}{\psi}\right)=\{N\}$ and $\phi/\psi$ is $X$-minimal i.e. for all $\phi\geq \sigma\geq \psi$ either $\phi(M)=\sigma(M)$ for all $M\in X$ or $\psi(M)=\sigma(M)$ for all $M\in X$. Countable rings satisfy the isolation condition, \cite[10.15]{PrestBluebook}.

\begin{theorem}
If $\pp_R^1$ has m-dimension $\alpha$ then $\Zg(R)$ has CB rank $\alpha$. The converse holds when $R$ satisfies the isolation condition.
\end{theorem}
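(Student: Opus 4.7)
The plan is to establish by transfinite induction on $\alpha$ that the closed subset $\mcal{C}_{\two,\alpha} \subseteq \Zg(R)$ provided by Proposition \ref{closedsetforLdim} coincides with the Cantor--Bendixson derivative $\Zg(R)^{(\alpha)}$. From this the theorem follows: $\mdim \pp_R^1 = \alpha$ iff $\sim_{\two,\alpha+1}$ is trivial on $\pp_R^1$ while $\sim_{\two,\alpha}$ is not, iff $\mcal{C}_{\two,\alpha+1} = \emptyset \neq \mcal{C}_{\two,\alpha}$, and similarly $\CBrank \Zg(R) = \alpha$ iff $\Zg(R)^{(\alpha+1)} = \emptyset \neq \Zg(R)^{(\alpha)}$.

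I would prove the inclusion $\Zg(R)^{(\alpha)} \subseteq \mcal{C}_{\two,\alpha}$ unconditionally. The base case and limit ordinals are immediate since both sides commute with the appropriate intersections. For the successor step, suppose $N \in \Zg(R)^{(\alpha+1)}$ and $\phi \sim_{\two,\alpha+1} \psi$. Unfolding the congruence yields a chain $\phi \wedge \psi = c_0 \leq c_1 \leq \cdots \leq c_n = \phi \vee \psi$ whose consecutive intervals become of size at most two in the quotient $L_{\two,\alpha}$. By Proposition \ref{closedsetforLdim}, pp formulae equivalent modulo $\sim_{\two,\alpha}$ define the same subgroup on $\mcal{C}_{\two,\alpha}$, so each basic open $(c_{i+1}/c_i) \cap \mcal{C}_{\two,\alpha}$ is \emph{irreducible} in the sense that no pp formula strictly between $c_i$ and $c_{i+1}$ subdivides it. By the induction hypothesis, $\Zg(R)^{(\alpha)} \subseteq \mcal{C}_{\two,\alpha}$, and a non-isolated point of $\Zg(R)^{(\alpha)}$ lying in such an irreducible open would force a contradictory refinement; hence $N \notin (\phi/\psi) = \bigcup_i (c_{i+1}/c_i)$ and $N \in \mcal{C}_{\two,\alpha+1}$.

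For the reverse inclusion $\mcal{C}_{\two,\alpha} \subseteq \Zg(R)^{(\alpha)}$, I would use the isolation condition. Induct on $\alpha$: fix $N \in \mcal{C}_{\two,\alpha}$ and suppose $N \notin \Zg(R)^{(\alpha)}$, so at some least $\beta < \alpha$ the point $N$ is isolated in $\Zg(R)^{(\beta)}$, which by the induction hypothesis (and the first inclusion) equals $\mcal{C}_{\two,\beta}$. The isolation condition then furnishes $\phi \geq \psi$ with $\mcal{C}_{\two,\beta} \cap (\phi/\psi) = \{N\}$ and $[\psi,\phi]$ being $\mcal{C}_{\two,\beta}$-minimal; via Proposition \ref{closedsetforLdim}, this minimality translates to the image of $[\psi,\phi]$ in $L_{\two,\beta}$ having exactly two elements, so $\phi \sim_{\two,\beta+1} \psi$. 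But $N \in (\phi/\psi)$ then contradicts $N \in \mcal{C}_{\two,\beta+1} \supseteq \mcal{C}_{\two,\alpha}$. For the first statement of the theorem one additionally notes that if $\mdim \pp_R^1$ is defined then the isolation condition holds automatically, so the equality $\mcal{C}_{\two,\alpha} = \Zg(R)^{(\alpha)}$ applies. The main obstacle lies in the delicate translation between ``interval of size $\leq 2$ in $L_{\two,\alpha}$'' and ``irreducible basic open in $\mcal{C}_{\two,\alpha}$''---used in the forward step to rule out non-isolated inhabitants, and in the reverse step to extract a two-element interval from an isolating pp-pair; the isolation condition is precisely what supplies the pp witnesses required in the second direction.
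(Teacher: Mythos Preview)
The paper does not prove this theorem; it is quoted as a known result (due to Ziegler, with the treatment in \cite[\S10]{PrestBluebook}). Your approach---proving $\mcal{C}_{\two,\alpha}=\Zg(R)^{(\alpha)}$ by transfinite induction---is essentially the standard one, but two steps are not adequately justified.

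In the forward successor step, the heart of the matter is not that $(c_{i+1}/c_i)\cap\mcal{C}_{\two,\alpha}$ is ``irreducible'' in the sense you describe (no intermediate pp-formula subdivides it), but that it contains \emph{at most one point}. That is a genuine theorem: an $X$-minimal pair corresponds to a simple object in the localised functor category, which therefore has a unique indecomposable injective hull, so the pair opens exactly one point of $X$. Your phrase ``would force a contradictory refinement'' does not supply this; knowing only that the open cannot be pp-subdivided does not by itself exclude a non-isolated inhabitant. Relatedly, Proposition~\ref{closedsetforLdim} only handles the first collapse $\sim_\mcal{L}$; iterating it to obtain closed sets $\mcal{C}_{\two,\alpha}$ with $\approx_{\mcal{C}_{\two,\alpha}}=\sim_{\two,\alpha}$ requires the relativised version for definable subcategories, which you invoke without comment.

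Your assertion that ``if $\mdim\pp_R^1$ is defined then the isolation condition holds automatically'' is correct and is precisely what makes the first sentence of the theorem unconditional, but it also needs an argument: if $N$ is isolated in a closed set $X$ by some $(\phi/\psi)$, then the image of $[\psi\wedge\phi,\phi]$ in $\pp_R^1/\approx_X$ is nontrivial and, since this quotient lattice inherits m-dimension from $\pp_R^1$, contains a simple subinterval; that subinterval gives the required $X$-minimal pair isolating $N$.
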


In this paper, we are in the fortunate position of working with rings whose lattices of pp-$1$-formulae are distributive. Hence we may use the following useful theorem of Puninski.

\begin{theorem}\cite[3.4]{KGdimSer}\label{CBmdimdist}
If $\pp_R^1$ is distributive then $R$ satisfies the isolation condition. Therefore the CB rank of $\Zg(R)$ is equal to the m-dimension of $\pp_R^1$.
\end{theorem}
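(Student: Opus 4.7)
The second sentence is immediate from the preceding theorem, so the task is to show that distributivity of $\pp_R^1$ implies the isolation condition. Let $X \subseteq \Zg(R)$ be closed and $N \in X$ be isolated. Since the sets $(\nf{\phi}{\psi})$ form a basis of $\Zg(R)$, pick $\phi_0 \geq \psi_0$ in $\pp_R^1$ with $X \cap (\nf{\phi_0}{\psi_0}) = \{N\}$. By Theorem~\ref{endouni}, $N$ is pp-uniserial, so its pp-definable subgroups form a chain. For every $M \in X \setminus \{N\}$, $\phi_0(M) = \psi_0(M)$, so $\sigma(M)$ takes this common value for every $\sigma \in [\psi_0, \phi_0]$. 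Consequently the restriction of $\approx_X$ to $[\psi_0, \phi_0]$ coincides with ``equality at $N$'', and the quotient $[\psi_0, \phi_0]/{\approx_X}$ is isomorphic to the chain $C := \{\sigma(N) \mid \sigma \in [\psi_0, \phi_0]\}$.

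Under this identification, an $X$-minimal subpair $\phi_1 \geq \psi_1$ of $[\psi_0, \phi_0]$ still isolating $N$ in $X$ corresponds exactly to a covering pair $\psi_1(N) \lessdot \phi_1(N)$ in $C$. So the problem reduces to exhibiting such a cover between $\psi_0(N)$ and $\phi_0(N)$. To locate one, I would pick $a \in \phi_0(N) \setminus \psi_0(N)$ and set $p := \pp_N(a)$; by distributivity, $p$ is a prime filter of $\pp_R^1$. This filter partitions $[\psi_0, \phi_0]$ into $p \cap [\psi_0, \phi_0]$ (closed under meets) and its complement (closed under joins, by primality), and any cover of $C$ must sit across this cut. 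The plan is to take $\phi_1$ of smallest $\approx_X$-class in $p \cap [\psi_0, \phi_0]$ and $\psi_1$ of largest $\approx_X$-class in $[\psi_0, \phi_0] \setminus p$; primality of $p$ then forces there to be no intermediate class, yielding the cover.

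The main obstacle is the existence of these extrema, since a direct Zorn argument in $\pp_R^1$ fails in general: \emph{a priori} the chain $C$ could be dense and the filter $p$ could have no principal representative in $[\psi_0, \phi_0]$. To handle this I would follow the functor-category strategy behind Proposition~\ref{closedsetforLdim}, passing to the Serre quotient of $(\mod\text{-}R,\Ab)^{fp}$ by the subcategory $\mcal{S}_X$ of functors vanishing on $X$. The functor $F_{\nf{\phi_0}{\psi_0}}$ has lattice of finitely generated subobjects equal to $[\psi_0, \phi_0]$, and its image in the Serre quotient has subobject lattice $[\psi_0, \phi_0]/{\approx_X} \cong C$. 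The isolation of $N$ in $X$ combined with the distributivity of $\pp_R^1$ is then used to show that this image is a finite-length object in the quotient, in particular it admits a simple subquotient. Translating a simple subquotient back into pp-language provides the required cover in $C$ and hence pp-formulae $\phi_1 \geq \psi_1$ witnessing the isolation condition.
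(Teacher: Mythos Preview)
The paper does not prove this theorem; it is quoted from Puninski \cite[3.4]{KGdimSer} as an external input and no argument is given. So there is no in-paper proof to compare against, and your sketch can only be assessed on its own merits.

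Your reduction is correct and is the natural first move: distributivity makes $N$ pp-uniserial via Theorem~\ref{endouni}, so $[\psi_0,\phi_0]/{\approx_X}$ is a chain $C$, and an $X$-minimal pair isolating $N$ inside $[\psi_0,\phi_0]$ is exactly a covering pair in $C$. You also correctly identify that the whole difficulty is ruling out density of $C$.

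The gap is that your proposed resolution is a restatement, not an argument. Passing to the Serre quotient $(\mod\text{-}R,\Ab)^{fp}/\mathcal{S}_X$ and looking at the image of $F_{\phi_0/\psi_0}$ gains nothing: the lattice of finitely presented subobjects of that image \emph{is} $C$, so asserting that the image has finite length is just asserting that $C$ is finite --- which is stronger than what you need and is not implied by anything you have written. The phrase ``isolation of $N$ in $X$ combined with distributivity'' is the hypothesis of the theorem, not a mechanism. Worse, the existence of a simple subquotient of $F_{\phi_0/\psi_0}$ in the Serre quotient is \emph{by definition} the existence of an $X$-minimal pair inside $[\psi_0,\phi_0]$, so invoking it is circular. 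To close the argument you would need to supply the actual content of Puninski's proof (which exploits the structure of endo-uniserial pure-injectives more concretely), and you have not done so.
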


\section{Dimensions on lattice ordered abelian groups}\label{SDimLgroup}
\noindent
Let $\mcal{L}$ be a non-empty class of lattices closed under sublattices, quotients and \textit{taking opposites}. Let $\Gamma$ be an $\ell$-group. Define $C_\mcal{L}(\Gamma)$ to be the set of $a\in \Gamma$ such that $a\sim_\mcal{L} 0$. Note that $a\sim_\mcal{L} 0$ is equivalent to $[0,a\vee 0]$ and $[0,-(a\wedge 0)]$ having $\mcal{L}\text{-}$dimension $0$ because $[0,-(a\wedge 0)]$ is anti-isomorphic to $[a\wedge 0,0]$ and $\mcal{L}$ is closed under taking opposites. Whenever it won't cause confusion, we will write $C_\mcal{L}$ rather than $C_\mcal{L}(\Gamma)$.

%Define $C_\mcal{L}$ to be the set of $a\in \Gamma$ such that $[0,a\vee 0]$ and $[0,-(a\wedge 0)]$ have $\mcal{L}\text{-}$dimension $0$. Note that we could have equivalently defined $C_\mcal{L}$ to be the set of $a\in \Gamma$ such that $a\sim_{\mcal{L}}0$.

\begin{lemma}\label{Ldimlgroup}
The set $C_\mcal{L}:=C_\mcal{L}(\Gamma)$ is a convex $\ell$-subgroup of $\Gamma$. Moveover, $C_\mcal{L}^+$, the positive cone of $C_\mcal{L}$, is generated as a monoid by the elements $a\in \Gamma^+$ such that $[0,a]\in\mcal{L}$. Hence $C_\mcal{L}$ is generated as a group by these elements.
\end{lemma}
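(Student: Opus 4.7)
The plan is to describe $C_\mcal{L}^+$ explicitly, show it is a convex submonoid of $\Gamma^+$, and then invoke the correspondence recalled in section~\ref{SPreI} between convex submonoids of $\Gamma^+$ and convex $\ell$-subgroups of $\Gamma$.

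First I would note that by the observation preceding the lemma, $C_\mcal{L}^+=\{a\in\Gamma^+ : [0,a]\text{ has }\mcal{L}\text{-dimension }0\}$. By the chain characterization of $\sim_\mcal{L}$ recalled in section~\ref{PreII}, this is the set of $a\in\Gamma^+$ admitting a chain $0=c_0\leq c_1\leq\cdots\leq c_n=a$ with each $[c_i,c_{i+1}]\in\mcal{L}$. Translation by $-c_i$ is a lattice isomorphism $[c_i,c_{i+1}]\to[0,c_{i+1}-c_i]$, so the decomposition $a=\sum_i(c_{i+1}-c_i)$ expresses $a$ as a monoid sum of elements of $\Gamma^+$ each of whose principal interval $[0,a_i]$ lies in $\mcal{L}$. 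Conversely, given any such decomposition, the partial sums form a chain witnessing $a\in C_\mcal{L}^+$. This yields the monoid generation claim.

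Next I would verify that $C_\mcal{L}^+$ is a convex submonoid of $\Gamma^+$. Closure under addition follows by splicing chains: given $a,b\in C_\mcal{L}^+$ with chains from $0$ to $a$ and $0=d_0\leq d_1\leq\cdots\leq d_n=b$, the chain $0\leq\cdots\leq a=a+d_0\leq a+d_1\leq\cdots\leq a+b$ has the intervals above $a$ translates of $[d_j,d_{j+1}]\in\mcal{L}$, hence in $\mcal{L}$. For convexity, given $a\in C_\mcal{L}^+$ with chain $0=c_0\leq\cdots\leq c_n=a$ and $0\leq c\leq a$, meet each $c_i$ with $c$ to form $0=c_0\wedge c\leq c_1\wedge c\leq\cdots\leq c_n\wedge c=c$. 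By the diamond isomorphism in the (distributive, hence modular) lattice $\Gamma$, $[c_i\wedge c,c_{i+1}\wedge c]\cong[c_i,c_i\vee(c_{i+1}\wedge c)]$, which is a subinterval of $[c_i,c_{i+1}]\in\mcal{L}$; since $\mcal{L}$ is closed under sublattices, this interval is in $\mcal{L}$, and so $c\in C_\mcal{L}^+$.

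Finally, the bijective correspondence recalled in section~\ref{SPreI} assigns to the convex submonoid $C_\mcal{L}^+$ the convex $\ell$-subgroup $\{a\in\Gamma : a\vee 0,\ -(a\wedge 0)\in C_\mcal{L}^+\}$. By the remark preceding the lemma, this set coincides with $C_\mcal{L}$. Hence $C_\mcal{L}$ is a convex $\ell$-subgroup of $\Gamma$, and since every $a\in C_\mcal{L}$ decomposes as $(a\vee 0)-(-(a\wedge 0))$ with both summands in $C_\mcal{L}^+$, the group-generation statement follows from the monoid generation of $C_\mcal{L}^+$. The main technical obstacle is the convexity step, where one must verify that meeting a chain with a smaller element yields intervals still in $\mcal{L}$; this is where the diamond isomorphism is essential.
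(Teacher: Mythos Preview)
Your proof is correct and reaches the same conclusion, but the organisation differs from the paper's. The paper works directly in $\Gamma$: it shows $C_\mcal{L}$ is closed under inverses (using $(-a)\vee 0=-(a\wedge 0)$), closed under addition (via the inequalities $a\vee 0+b\vee 0\geq (a+b)\vee 0$ and the translation isomorphism $[a\vee 0,a\vee 0+b\vee 0]\cong[0,b\vee 0]$), is an $\ell$-subgroup by \cite[6.1]{Darnel}, and is convex by translating and noting that a subinterval of an interval of $\mcal{L}$-dimension $0$ again has $\mcal{L}$-dimension $0$. You instead work in the positive cone, verify that $C_\mcal{L}^+$ is a convex submonoid, and then invoke the correspondence from section~\ref{SPreI} to recover $C_\mcal{L}$ as a convex $\ell$-subgroup. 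This is a clean reduction and is fully justified by the material quoted from \cite[7.9]{Darnel}. Your convexity step, meeting the chain with $c$ and applying the diamond isomorphism, is correct but more elaborate than necessary: since $[0,c]$ is a sublattice of $[0,a]$ and $\mcal{L}$-dimension is monotone under passing to sublattices (indeed to quotients), $[0,c]$ has $\mcal{L}$-dimension $0$ immediately. The monoid and group generation claims are handled identically in both proofs.
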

\begin{proof}
Since $(-a)\wedge 0=-(a\vee 0)$ and $(-a)\vee 0=-(a\wedge 0)$, $C_\mcal{L}$ is closed under taking inverses.

Recall that in any modular lattice $L$, if $a\geq a'\geq b'\geq b$ and $[a,b]$ has $\mcal{L}$-dimension $0$ then $[a',b']$ has $\mcal{L}$-dimension $0$. Note that, for all $a,b\in \Gamma$,
\[a\vee 0+b\vee 0\geq (a+b)\vee 0 \ \ \ \text{ and } \ \ \ a\wedge 0+b\wedge 0\leq (a+b)\wedge 0.\]
%\begin{enumerate}
%\item $a\vee 0+b\vee 0\geq (a+b)\vee 0$ and
%\item $a\wedge 0+b\wedge 0\leq (a+b)\wedge 0$.
%\end{enumerate}
So, in order to show that $C_{\mcal{L}}$ is a subgroup it is enough to show that if $a,b\in C_{\mcal{L}}$ then the intervals $[0,a\vee 0+b\vee 0]$ and $[0,-(a\wedge 0+b\wedge 0)]$ have $\mcal{L}$-dimension $0$. The interval $[a\vee 0, a\vee 0+b\vee 0]$ has $\mcal{L}$-dimension $0$ because it is isomorphic to $[0,b\vee 0]$. So, since $[0,a\vee 0]$ has $\mcal{L}$-dimension $0$, so does $[0,a\vee 0+b\vee 0]$. Similarly, one can show $[0,-(a\wedge 0+b\wedge 0)]$ has $\mcal{L}$-dimension $0$. So $C_{\mcal{L}}$ is a subgroup.

By definition, for all $a\in C_{\mcal{L}}$, $a\wedge 0\in C_{\mcal{L}}$. So $C_{\mcal{L}}$ is an $\ell$-subgroup by \cite[6.1]{Darnel}. Finally, if $a\leq b\in C_{\mcal{L}}$ and $a\leq c\leq b$ then $0\leq c-a\leq b-a$. Since $b-a\in C_\mcal{L}$ and $(b-a)\vee 0=b-a$, $[0,c-a]=[0,(c-a)\vee 0]$ has $\mcal{L}$-dimension $0$. Thus $c-a\in C_\mcal{L}$. Therefore, since $a\in C_{\mcal{L}}$, $c\in \mcal{C}_\mcal{L}$ as required. So $C_{\mcal{L}}$ is a convex $\ell$-subgroup.

We now prove the final two claims.  Suppose $a\in C_\mcal{L}^+$. There exists $0=a_0<a_1<\ldots <a_n=a$ such that $[a_i,a_{i+1}]\in\mcal{L}$ for $0\leq i\leq n-1$. For each $0\leq i\leq n-1$, $[a_i,a_{i+1}]$ is isomorphic to $[0,a_{i+1}-a_i]$. So $a=\sum_{i=0}^{n-1}(a_{i+1}-a_i)$ and $[0,a_{i+1}-a_i]\in\mcal{L}$ for $0\leq i\leq n-1$ as required. The second claim follows from the first since, $a=a\vee 0+a\wedge 0$ and by definition, $a\vee 0,-a\wedge 0\in C_{\mcal{L}}$. So, since $a=a\vee 0+a\wedge 0$  and $a\vee 0,-a\wedge 0\geq 0$, it is enough to prove the claim for $a>0$.
%%
%We now prove the final two claims.  Suppose $a\in C_\mcal{L}$. By definition, $a\vee 0,-a\wedge 0\in C_{\mcal{L}}$. So, since $a=a\vee 0+a\wedge 0$  and $a\vee 0,-a\wedge 0\geq 0$, it is enough to prove the claim for $a>0$.  There exists $0=a_0<a_1<\ldots <a_n=a$ such that $[a_i,a_{i+1}]\in\mcal{L}$ for $0\leq i\leq n-1$. For each $0\leq i\leq n-1$, $[a_i,a_{i+1}]$ is isomorphic to $[0,a_{i+1}-a_i]$. So $a=\sum_{i=0}^{n-1}(a_{i+1}-a_i)$ and $[0,a_{i+1}-a_i]\in\mcal{L}$ for $0\leq i\leq n-1$ as required.
\end{proof}

\begin{remark}\label{Lpointsdense}
Let $\Gamma$ be a non-trivial $\ell$-group. If $\Gamma_\infty^+$ has $\mcal{L}$-dimension then for all $a\in \Gamma^+$ non-zero there exists $0< c\leq a$ such that $[0,c]\in\mcal{L}$.
\end{remark}
\begin{proof}
If $\Gamma_\infty^+$ has $\mcal{L}$-dimension then for all $0<a$, there exists $0\leq c_1<c_2\leq a$ such that $[c_1,c_2]\in\mcal{L}$. Then $[c_1,c_2]$ is isomorphic to $[0,c_2-c_1]$. Therefore $0<c_2-c_1\leq a$ and $[0,c_2-c_1]\in\mcal{L}$.
\end{proof}

For any $a\in \Gamma^+$, the interval $[a,\infty]$ is order isomorphic to $\Gamma^+_\infty$. Therefore, if $\Gamma_\infty^+$ has $\mcal{L}$-dimension $0$ then $\Gamma_\infty^+\in \mcal{L}$.

\begin{remark}\label{Ldimtrivial}
Let $\Gamma$ be an $\ell$-group. Then $\Gamma^+_\infty$ has m-dimension $0$ (respectively breadth $0$) if and only if $\Gamma$ is the trivial group (respectively totally ordered).
\end{remark}

\begin{lemma}\label{ldimgroupbasecase}
If $\mcal{L}\text{-}\dim\Gamma_\infty^+\geq 1$ then the map from $\Gamma_\infty^+$ to $(\Gamma/C_{\mcal{L}})_\infty^+$ induced by the quotient map from $\Gamma$ to $\Gamma/C_\mcal{L}$ has congruence kernel $\sim_\mcal{L}$. In particular, $(\Gamma/C_\mcal{L})_\infty^+$ is isomorphic to $\Gamma_\infty^+/\sim_{\mcal{L}}$.
\end{lemma}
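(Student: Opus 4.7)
\medskip

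\noindent\textbf{Proof proposal.} Let $q\colon \Gamma\to\Gamma/C_\mcal{L}$ denote the quotient $\ell$-group homomorphism, which we extend to a map $\Gamma_\infty^+\to (\Gamma/C_\mcal{L})_\infty^+$ by sending $\infty$ to $\infty$. The plan is to prove three things: (i) on the finite part, $q(a)=q(b)$ is equivalent to $a\sim_\mcal{L} b$; (ii) under the standing hypothesis $\mcal{L}\text{-}\dim\Gamma_\infty^+\geq 1$, the top element $\infty$ is alone in its $\sim_\mcal{L}$-class; (iii) the induced map $\Gamma_\infty^+/\sim_\mcal{L}\to(\Gamma/C_\mcal{L})_\infty^+$ is a surjective lattice isomorphism.

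For (i), let $a,b\in\Gamma^+$. If $a\sim_\mcal{L} b$, choose a chain $a\wedge b=c_0\leq c_1\leq\cdots\leq c_n=a\vee b$ with each $[c_i,c_{i+1}]\in\mcal{L}$; by translation, each $[c_i,c_{i+1}]\cong[0,c_{i+1}-c_i]$, so $c_{i+1}-c_i\in C_\mcal{L}^+$ and hence $a\vee b-a\wedge b\in C_\mcal{L}^+$. Since $C_\mcal{L}^+$ is convex, the inequalities $0\leq a-a\wedge b\leq a\vee b-a\wedge b$ and $0\leq a\vee b-a\leq a\vee b-a\wedge b$ give $(a-b)\vee 0,\,-(a-b)\wedge 0\in C_\mcal{L}^+$, whence $a-b\in C_\mcal{L}$ and $q(a)=q(b)$. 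Conversely, if $q(a)=q(b)$ then $a-b\in C_\mcal{L}$, so $a-a\wedge b=(a-b)\vee 0$ and $a\vee b-a=-(a-b)\wedge 0$ both lie in $C_\mcal{L}^+$. By Lemma~\ref{Ldimlgroup}, each decomposes as a sum $d_1+\cdots+d_k$ with $[0,d_i]\in\mcal{L}$; using these decompositions as partial sums, one obtains chains from $a\wedge b$ to $a$ and from $a$ to $a\vee b$ whose consecutive intervals are isomorphic to some $[0,d_i]\in\mcal{L}$. Concatenating yields a chain certifying $a\wedge b\sim_\mcal{L} a\vee b$, hence $a\sim_\mcal{L} b$.

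For (ii), suppose some finite $a\in\Gamma^+$ satisfies $a\sim_\mcal{L}\infty$. Then there is a chain $a=c_0\leq c_1\leq\cdots\leq c_n=\infty$ with each $[c_i,c_{i+1}]\in\mcal{L}$. Let $k$ be minimal with $c_k=\infty$; then $c_{k-1}$ is finite and translation by $-c_{k-1}$ (sending $\infty$ to $\infty$) gives an order isomorphism $[c_{k-1},\infty]\cong \Gamma_\infty^+$. Since $\mcal{L}$ is closed under isomorphism (being closed under quotients by the trivial congruence), this forces $\Gamma_\infty^+\in\mcal{L}$, i.e.\ $\mcal{L}\text{-}\dim\Gamma_\infty^+=0$, contradicting the hypothesis. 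Hence $\infty$ is alone in its class, matching the fact that $q^{-1}(\infty)=\{\infty\}$.

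For (iii), $q\colon\Gamma_\infty^+\to(\Gamma/C_\mcal{L})_\infty^+$ is a lattice homomorphism because the quotient of $\ell$-groups is an $\ell$-homomorphism and $\infty$ is preserved. Surjectivity is immediate from the isomorphism $\Gamma^+/C_\mcal{L}^+\xrightarrow{\sim}(\Gamma/C_\mcal{L})^+$ recalled at the end of Section~\ref{SPreI}, together with $\infty\mapsto\infty$. Combined with (i) and (ii), this shows that the kernel of $q$ on $\Gamma_\infty^+$ is exactly $\sim_\mcal{L}$ and that $q$ factors as an isomorphism $\Gamma_\infty^+/\sim_\mcal{L}\cong(\Gamma/C_\mcal{L})_\infty^+$. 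The main obstacle is (ii): the handling of $\infty$ is the only place where the hypothesis $\mcal{L}\text{-}\dim\Gamma_\infty^+\geq 1$ is really used, and without it the conclusion fails since $\Gamma_\infty^+\in\mcal{L}$ would collapse everything to a single $\sim_\mcal{L}$-class while $(\Gamma/C_\mcal{L})_\infty^+$ retains its top.
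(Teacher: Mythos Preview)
Your proof is correct and follows essentially the same approach as the paper's. The paper's argument is more compressed: it observes that a congruence is determined by its comparable pairs $a\leq b$, and then directly uses the equivalence $a\sim_\mcal{L} b\iff \mcal{L}\text{-}\dim[a,b]=0\iff b-a\in C_\mcal{L}$ (by translation) for finite $a,b$, while handling the pair $(a,\infty)$ via the isomorphism $[a,\infty]\cong\Gamma_\infty^+$. You unfold the same ideas more explicitly via the chain characterisation of $\sim_\mcal{L}$ and Lemma~\ref{Ldimlgroup}; your part (ii) is exactly the paper's observation that $a\sim_\mcal{L}\infty$ would force $\Gamma_\infty^+\in\mcal{L}$.
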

\begin{proof}
A congruence relation $\sim$ on a lattice $L$ is determined by the pairs $(a,b)\in L^2$ such that $a\leq b$ and $a\sim b$. Thus, it is enough to show that for all $a\leq b\in \Gamma_\infty^+$, $a\sim_{\mcal{L}}b$ if and only if $b-a\in C_{\mcal{L}}$ or $a=b=\infty$. Suppose $a\in \Gamma^+$ and $b=\infty$. Then $[a,\infty]$ is isomorphic to $\Gamma^+_\infty$. So $a\sim_\mcal{L}\infty$ if and only if $\Gamma^+_\infty$ has $\mcal{L}$-dimension $0$. So suppose $a,b\in \Gamma^+$ and $a\leq b$. Now $\mcal{L}\text{-}\dim[a,b]=0$ if and only if $\mcal{L}\text{-}\dim[0,b-a]=0$. By definition, $\mcal{L}\text{-}\dim[0,b-a]=0$ if and only if $b-a\in C_\mcal{L}$.
%
%For the second statement it is enough to show that the congruence kernel of the map from $\Gamma_\infty^+$ to $(\Gamma^+/C_{\mcal{L}}^+)_\infty$ is $\sim_\mcal{L}$. For any $a\in \Gamma^+$, $[a,\infty]$ has $\mcal{L}$-dimension $0$ if and only if $\Gamma^+$ has $\mcal{L}$-dimension $0$. So, suppose $a,b\in \Gamma^+$. Then $a-b\in C_{\mcal{L}}$ if and only if $[0,0\vee(b-a)]$ and $[0,0\vee(a-b)]$ have $\mcal{L}$-dimension $0$.
\end{proof}

\begin{definition}
We define $C_{\mcal{L},\alpha}(\Gamma)$ for each $\alpha\in \textnormal{Ord}$. Let $C_{\mcal{L},0}(\Gamma)=\{0\}$. Define $C_{\mcal{L},{\alpha+1}}(\Gamma)$ to be the kernel of the map from $\Gamma\rightarrow \Gamma/C_{\mcal{L},\alpha}(\Gamma)$ composed with the map from $\Gamma/C_{\mcal{L},\alpha}(\Gamma)$ to $(\Gamma/C_{\mcal{L},\alpha}(\Gamma))/C_\mcal{L}(\Gamma/C_{\mcal{L},\alpha}(\Gamma))$. If $\lambda$ is a limit ordinal then define $C_{\mcal{L},\lambda}(\Gamma):=\bigcup_{\alpha<\lambda}C_{\mcal{L},\alpha}(\Gamma)$.
\end{definition}

Whenever $\Gamma$ is clear from the context, we will write $C_{\mcal{L},\alpha}$ rather than $C_{\mcal{L},\alpha}(\Gamma)$.

%OldStyleNotation
%\begin{cor}\label{dimforgroup}
%The $\mcal{L}$-dimension of $\Gamma$ is equal to the first ordinal $\alpha$ such that $(\Gamma/C_{\mcal{L}}^\alpha)_\infty^+$ has $\mcal{L}$-dimension $0$. So $\Gamma^+_\infty$ has $\mcal{L}$-dimension if and only if there exists an ordinal $\alpha$ such that $\Gamma/C_\mcal{L}^\alpha$ is the trivial group. In particular, if $\mcal{L}$ is the set of lattices of size $\leq 2$, then the m-dimension of $\Gamma_\infty^+$ is the first ordinal $\alpha$ such that $C_\mcal{L}^\alpha=\Gamma$.
%\end{cor}

\begin{proposition}\label{dimforgroup1}
For $\alpha\leq \mcal{L}\text{-}\dim\Gamma_\infty^+$, the map induced from $\Gamma_\infty^+$ to $(\Gamma/C_{\mcal{L},\alpha})_\infty^+$ by the quotient map from $\Gamma$ to $\Gamma/C_{\mcal{L},\alpha}$ has congruence kernel $\sim_{\mcal{L},\alpha}$. In particular, for $\alpha\leq \mcal{L}\text{-}\dim\Gamma_\infty^+$,
\[(\Gamma/C_{\mcal{L},\alpha})_\infty^+\cong \Gamma_\infty^+/\sim_{\mcal{L},\alpha}.\] Moreover,
\begin{enumerate}[(i)]
\item $\mcal{L}\text{-}\dim\Gamma_\infty^+$ is the least ordinal $\alpha$ such that $(\Gamma/C_{\mcal{L},\alpha})_\infty^+$ has $\mcal{L}$-dimension $0$, and,
\item if $\two\subseteq \mcal{L}$ then $\Gamma_\infty^+$ has $\mcal{L}$-dimension if and only if there exists $\alpha\in \Ord$ such that $C_{\mcal{L},\alpha}=\Gamma$.
\end{enumerate}
\end{proposition}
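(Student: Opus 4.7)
The plan is to prove the main congruence-kernel identity by transfinite induction on $\alpha\leq \mcal{L}\text{-}\dim\Gamma_\infty^+$, and then to derive (i) and (ii) from it. The base case $\alpha=0$ is vacuous since $C_{\mcal{L},0}=0$ and $\sim_{\mcal{L},0}$ is trivial, so both sides are simply $\Gamma_\infty^+$. For the successor step $\alpha=\gamma+1\leq \mcal{L}\text{-}\dim\Gamma_\infty^+$, the inductive hypothesis identifies $(\Gamma/C_{\mcal{L},\gamma})_\infty^+\cong L_{\mcal{L},\gamma}$; since $\gamma<\mcal{L}\text{-}\dim\Gamma_\infty^+$, the lattice $L_{\mcal{L},\gamma}$ has not yet collapsed to a point, so $(\Gamma/C_{\mcal{L},\gamma})_\infty^+$ has $\mcal{L}$-dimension at least $1$. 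Lemma \ref{ldimgroupbasecase} then applies to $\Gamma/C_{\mcal{L},\gamma}$ and tells us the further quotient by $\sim_\mcal{L}$ is naturally isomorphic to $((\Gamma/C_{\mcal{L},\gamma})/C_\mcal{L}(\Gamma/C_{\mcal{L},\gamma}))_\infty^+=(\Gamma/C_{\mcal{L},\gamma+1})_\infty^+$. Composing the two quotient maps and matching kernels gives $\sim_{\mcal{L},\gamma+1}$ on the source.

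For the limit step $\lambda\leq \mcal{L}\text{-}\dim\Gamma_\infty^+$, both $\sim_{\mcal{L},\lambda}$ and $C_{\mcal{L},\lambda}$ are unions of the earlier data. The inductive hypothesis shows that for $a\leq b$ in $\Gamma^+$, $a\sim_{\mcal{L},\lambda}b$ iff $b-a\in C_{\mcal{L},\gamma}$ for some $\gamma<\lambda$, i.e.\ iff $b-a\in C_{\mcal{L},\lambda}$; and an element of $\Gamma^+$ cannot be identified with $\infty$ before $L_{\mcal{L},\gamma}$ is a point, which by hypothesis does not occur for $\gamma<\lambda$. Surjectivity is immediate, so the kernel of $\Gamma_\infty^+\to(\Gamma/C_{\mcal{L},\lambda})_\infty^+$ is exactly $\sim_{\mcal{L},\lambda}$.

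Part (i) is then a direct consequence: by the identity just proved, $L_{\mcal{L},\alpha}\cong(\Gamma/C_{\mcal{L},\alpha})_\infty^+$ for every $\alpha\leq \mcal{L}\text{-}\dim\Gamma_\infty^+$, and by the definition of $\mcal{L}$-dimension the least $\alpha$ for which $L_{\mcal{L},\alpha}$ has $\mcal{L}$-dimension $0$ is precisely $\mcal{L}\text{-}\dim\Gamma_\infty^+$.

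For (ii), the forward direction uses (i): if $\beta=\mcal{L}\text{-}\dim\Gamma_\infty^+$, then $(\Gamma/C_{\mcal{L},\beta})_\infty^+$ lies in $\mcal{L}$, and since $\mcal{L}$ is closed under sublattices every interval $[0,a]$ in $(\Gamma/C_{\mcal{L},\beta})^+$ is in $\mcal{L}$; hence $\Gamma/C_{\mcal{L},\beta}=C_\mcal{L}(\Gamma/C_{\mcal{L},\beta})$ and $C_{\mcal{L},\beta+1}=\Gamma$. For the converse, assume $C_{\mcal{L},\beta}=\Gamma$ and run the same transfinite induction. Either some $L_{\mcal{L},\alpha}$ with $\alpha\leq\beta$ has already become a point (and we are done), or the induction survives to stage $\beta$, at which point $(\Gamma/C_{\mcal{L},\beta})_\infty^+=\{0,\infty\}$; because $\two\subseteq\mcal{L}$, this two-element lattice lies in $\mcal{L}$, and the induction gives $L_{\mcal{L},\beta}\cong\{0,\infty\}\in\mcal{L}$, so $\mcal{L}\text{-}\dim\Gamma_\infty^+\leq\beta$. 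The main obstacle throughout is tracking when the reduction procedure collapses the top element $\infty$ with a finite one: this cannot happen before $\mcal{L}\text{-}\dim\Gamma_\infty^+$ stages, which is precisely what justifies applying Lemma \ref{ldimgroupbasecase} at each successor stage and forces the restriction $\alpha\leq\mcal{L}\text{-}\dim\Gamma_\infty^+$ in the main identity.
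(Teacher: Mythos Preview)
Your proof is correct and follows essentially the same transfinite-induction strategy as the paper's: base case trivial, successor step via Lemma~\ref{ldimgroupbasecase} applied to $\Gamma/C_{\mcal{L},\gamma}$, limit step by taking unions and observing that no finite element gets identified with $\infty$ before the process terminates. The only cosmetic difference is that the paper works elementwise (showing $a\sim_{\mcal{L},\alpha}b$ iff $b-a\in C_{\mcal{L},\alpha}$ for $a\leq b\in\Gamma^+$), whereas you phrase the same thing as a composition of quotient maps; and for the converse of (ii) the paper argues by contradiction from $\mcal{L}\text{-}\dim=\infty$ while you use a dichotomy, but these are equivalent. One small remark: in the forward direction of (ii) you assert that $(\Gamma/C_{\mcal{L},\beta})_\infty^+$ itself lies in $\mcal{L}$, which is true (it uses the observation just before Remark~\ref{Ldimtrivial} that $[a,\infty]\cong\Gamma_\infty^+$ in any extended positive cone) but is stronger than what is needed---the paper only uses that $(\Gamma/C_{\mcal{L},\beta})_\infty^+$ has $\mcal{L}$-dimension $0$, which already forces every $c+C_{\mcal{L},\beta}$ to be $\sim_\mcal{L}$-equivalent to $0+C_{\mcal{L},\beta}$.
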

\begin{proof}
We write $\sim_\alpha$ instead of $\sim_{\mcal{L},\alpha}$ and $C_\alpha$ instead of $C_{\mcal{L},\alpha}$. As in \ref{ldimgroupbasecase}, in order to prove that $\sim_\alpha$ is the congruence kernel of the induced map it is enough to show, under the hypothesis $\alpha\leq \mcal{L}\text{-}\dim \Gamma_\infty^+$, that for all $a\leq b\in\Gamma^+_\infty$, $a\sim_\alpha b$ if and only if $b-a\in C_\alpha$ or $a=b=\infty$.

Suppose that $a\in\Gamma^+$ and $b=\infty$. If $a\sim_\alpha b=\infty$ then $[a,b]=[a,\infty]\cong \Gamma_\infty^+$. But then $\mcal{L}\text{-}\dim\Gamma_\infty^+<\alpha$ which contradicts our hypothesis. Therefore if $b=\infty$ and $a\sim_\alpha b$ then $a=\infty$.

%{\color{red}Suppose that $a\in\Gamma^+$ and $b=\infty$. If $a\sim_\alpha b=\infty$ then $0\sim_\alpha \infty$ since $[a,b]=[a,\infty]\cong \Gamma_\infty^+$. But $0\sim_\alpha\infty$ implies $\mcal{L}\text{-}\dim\Gamma_\infty^+<\alpha$. So we have show that if $b=\infty$ and $a\sim_\alpha b$ then $a=\infty$.}

We now prove by ordinal induction that for all $a\leq b\in\Gamma^+$, $a\sim_\alpha b$ if and only if $b-a\in C_\alpha$. Since $[a,b]\cong [0,b-a]$ for all $a\leq b\in\Gamma^+$, it's enough to show that for all $c\in\Gamma^+$, $0\sim_\alpha c$ if and only if $c\in C_\alpha$. The base case, $\alpha=0$, is trivial. The limit step follows directly from the definition of $\sim_\alpha$ and $C_\alpha$. Suppose $\alpha=\beta+1$ and that we have proved the statement for $\beta$. Now $0\sim_{\beta+1}c$ if and only if $\mcal{L}\text{-}\dim[0/\sim_{\beta},c/\sim_{\beta}]=0$. By the induction hypothesis, $[0/\sim_{\beta},c/\sim_{\beta}]\cong [0+C_\beta,c+C_\beta]\subseteq (\Gamma/C_\beta)_\infty^+$. By \ref{ldimgroupbasecase}, $\mcal{L}\text{-}\dim[0+C_\beta,c+C_\beta]=0$ if and only if $c+C_\beta\in C_\mcal{L}(\Gamma/C_\beta)$. Hence $0\sim_{\beta+1} c$ if and only if $c\in C_{\beta+1}$.

The statement $(i)$ is a direct consequence of the main statement. For $(ii)$, first suppose $\Gamma_\infty^+$ has $\mcal{L}$-dimension $\alpha$. Then $\mcal{L}\text{-}\dim(\Gamma/C_{\mcal{L},\alpha})_\infty^+=0$. Thus, for all $c\in \Gamma^+$, $c+C_{\mcal{L},\alpha}\sim_\mcal{L} 0+C_{\mcal{L},\alpha}$. Hence $c\in C_{\mcal{L},\alpha+1}$. Therefore $C_{\mcal{L},\alpha+1}=\Gamma$. For the converse, suppose $\mcal{L}\text{-}\dim\Gamma_\infty^+=\infty$. Then $(\Gamma/C_{\mcal{L},\alpha})_\infty^+\cong \Gamma_\infty^+/\sim_{\mcal{L},\alpha}$ for all $\alpha\in\Ord$. Hence, if $C_{\mcal{L},\alpha}=\Gamma$ then $\Gamma_\infty^+/\sim_{\mcal{L},\alpha}\in\two$ which contradicts our assumption. Thus  $C_{\mcal{L},\alpha}\neq\Gamma$ for all $\alpha\in\Ord$.
%We now suppose that $a\leq b\in \Gamma^+$. The base case, $\alpha=0$, is trivial.
%
%
%The base case, $\alpha=0$, is trivial. Suppose $\alpha+1\leq \mcal{L}\text{-}\dim \Gamma_\infty^+$ and that the statement is true for $\alpha$. Then $(\Gamma/C_{\mcal{L},\alpha})_\infty^+\cong \Gamma_\infty^+/\sim_{\mcal{L},\alpha}$ and $1\leq \mcal{L}\text{-}\dim \Gamma_\infty^+/\sim_{\mcal{L},\alpha}$. Let $G=\Gamma/C_{\mcal{L},\alpha}$. Then
%\[(\Gamma_\infty^+/\sim_{\mcal{L},\alpha+1})_\infty^+\cong (G/C_\mcal{L})_\infty^+\cong G_\infty^+/\sim_\mcal{L}\cong \Gamma_\infty^+/\sim_{\mcal{L},\alpha+1}.\] The first and last isomorphisms are due to
\end{proof}

\begin{cor}\label{dimforgroup2}
The m-dimension of $\Gamma_\infty^+$ is the first ordinal $\alpha$ such that $C_{\two,\alpha}=\Gamma$ and the breadth of $\Gamma_\infty^+$ is the first ordinal $\alpha$ such that $\Gamma/C_{\mathbbm{Ch},\alpha}$ is totally ordered.
%
%For $\alpha\leq \mcal{L}\text{-}\dim\Gamma_\infty^+$,
%\[(\Gamma/C_{\mcal{L},\alpha})_\infty^+\cong \Gamma_\infty^+/\sim_{\mcal{L},\alpha}.\] Moreover,
%\begin{enumerate}
%\item $\Gamma_\infty^+$ has $\mcal{L}$-dimension if and only if there exists $\alpha\in \Ord$ such that $C_{\mcal{L},\alpha}=\Gamma$, and
%\item $\mcal{L}\text{-}\dim\Gamma_\infty^+$ is the least ordinal $\alpha$ such that $(\Gamma/C_{\mcal{L},\alpha})_\infty^+$ has $\mcal{L}$-dimension $0$.
%\end{enumerate}
\end{cor}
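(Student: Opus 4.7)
The plan is to obtain the corollary as a direct specialization of Proposition \ref{dimforgroup1} combined with Remark \ref{Ldimtrivial}. First I would observe that the two classes $\two$ and $\Ch$ are each closed under sublattices, quotient lattices, and taking opposites (for $\Ch$, the opposite of a total order is again a total order), so all the hypotheses needed for Proposition \ref{dimforgroup1} are satisfied. By part (i) of that proposition, for either choice $\mcal{L}\in\{\two,\Ch\}$ the $\mcal{L}$-dimension of $\Gamma_\infty^+$ is the least ordinal $\alpha$ for which $(\Gamma/C_{\mcal{L},\alpha})_\infty^+$ has $\mcal{L}$-dimension $0$.

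Next I would apply Remark \ref{Ldimtrivial} to translate the condition ``$\mcal{L}\text{-}\dim = 0$'' into a statement about the quotient group itself. For $\mcal{L}=\two$, $\mdim H^+_\infty=0$ iff $H$ is the trivial group, so the least $\alpha$ with $(\Gamma/C_{\two,\alpha})_\infty^+$ having m-dimension $0$ is exactly the least $\alpha$ with $C_{\two,\alpha}=\Gamma$; this gives the m-dimension claim (and is also consistent with part (ii) of \ref{dimforgroup1} in the $\two\subseteq\mcal{L}$ case). For $\mcal{L}=\Ch$, $\br H^+_\infty=0$ iff $H$ is totally ordered, so the least $\alpha$ with $(\Gamma/C_{\Ch,\alpha})_\infty^+$ having breadth $0$ is exactly the least $\alpha$ with $\Gamma/C_{\Ch,\alpha}$ totally ordered, yielding the breadth claim. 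There is no real obstacle here; the only thing worth noting is that, in the m-dimension case, the hypothesis $\alpha\leq\mcal{L}\text{-}\dim\Gamma_\infty^+$ appearing in \ref{dimforgroup1} is met at the critical ordinal itself, so the identification of congruences is valid precisely when one invokes it.
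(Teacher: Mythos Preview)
Your proof is correct and follows essentially the same approach as the paper, which simply cites Proposition~\ref{dimforgroup1} and Remark~\ref{Ldimtrivial}. Your additional verification that $\two$ and $\Ch$ are closed under sublattices, quotients, and opposites is a reasonable explicit check, but otherwise the argument is identical.
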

\begin{proof} This follows from \ref{dimforgroup1} and \ref{Ldimtrivial}.
%The first two statement follows directly from \ref{Ldimlgroup} and the definition of $\mcal{L}$-dimension. The final statement follows from \ref{Ldimlgroup} and \ref{Ldimtrivial}.
\end{proof}

%\begin{lemma}
%Let $\Gamma$ be an $\ell$-group.
%\begin{enumerate}
%\item The set $C_m$ of $a\in \Gamma^+$ such that $[0,a]$ is finite length (i.e. has m-dimension $0$) is a convex submonoid of $\Gamma^+$.
%\item The set $C_u$ of $a\in \Gamma^+$ such that $[0,a]$ has breadth $0$ is a convex submonoid of $\Gamma^+$.
%\end{enumerate}
%Moreover, if $\Gamma$ is non-trivial then $(\Gamma^+/C_m)_\infty$ (resp. $(\Gamma^+/C_u)_\infty$) is isomorphic to $\Gamma_\infty^+/\sim$ (resp. $\Gamma_\infty^+/\approx$) as lattices.
%\end{lemma}
%\begin{proof}
%Suppose $a,b\in \Gamma^+$ are such that $[0,a]$ and $[0,b]$ are finite length (resp. have breadth $0$). Then $a\leq a+b$ and the interval $[a,a+b]$ is isomorphic to $[0,b]$. Thus $[0,a+b]$ is finite length (resp. has breadth $0$). For any modular lattice $L$, if $a\leq a'\leq b'\leq b$ and $[a,b]$ has finite length (resp. breadth $0$) then $[a',b']$ has finite length (resp. breadth $0$). So both sets are convex.
%
%Suppose $a-b\in C_u$. Then $a\wedge b-b=(a-b)\wedge 0\in C_u$ and $a\wedge b-a=(b-a)\wedge 0\in C_u$. So $0\leq b-a\wedge b,a-a\wedge b\in C_u$. So $[0,b-a\wedge b]$ and $[0,a-a\wedge b]$ have breadth $0$. So $[a\wedge b,b]$ and $[a\wedge b,a]$ have breadth $0$.
%
%Suppose $[a\wedge b,a]$ and $[a\wedge b,b]$ has breadth $0$. Then $a-a\wedge b\in C_u$ and $b-a\wedge b\in C_u$. So $a -b\in C_u$.
%\end{proof}
%

\section{Superdecomposable Pure-injective modules}\label{SSuperDecPIM}
\noindent
Throughout, unless otherwise stated, let $R$ be a B\'ezout domain. Recall that there is a correspondence between the saturated multiplicatively closed subsets of a B\'ezout and the convex $\ell$-subgroups of its value group. We start by defining, \ref{SandT}, the saturated multiplicatively closed subsets of $R$ corresponding to the convex $\ell$-subgroups $C_{\two,\alpha}$ and $C_{\Ch,\alpha}$ of $\Gamma(R)$ for ordinals $\alpha$.

%\begin{definition}
%Let $R$ be a B\'ezout domain. Let $R^0:=R$. For each ordinal $\alpha$, we define rings $R^\alpha$ and multiplicative subsets $S_\alpha$ of $R^\alpha$. For all ordinals $\alpha$, let $S_\alpha$ be the set of $a\in R^\alpha$ such that there exists $c_1,\ldots,c_n\in R^\alpha$ with $a=c_1\cdots c_n$ and $R/c_iR$ is a valuation ring for all $1\leq i\leq n$ and $R^{\alpha+1}:=R^{\alpha}/S_\alpha$. For $\lambda$ a limit ordinal, let $R^\lambda:=\cup_{\alpha<\lambda} R^\alpha$.
%\end{definition}

\begin{remark}\label{irredlatvallat}
Let $a\in R\backslash\{0\}$. Then
\begin{itemize}
\item  $\Gamma(R)\supseteq [0,aR]\in \two$ if and only if $a$ is either irreducible or a unit; and
\item  $\Gamma(R)\supseteq [0,aR]\in\Ch$ if and only if $R/aR$ is a valuation ring.
\end{itemize}
\end{remark}

%\begin{definition}[Attempt 1]\label{SandT}
%Let $R$ be a B\'ezout domain. Define $S(R)$ to be the multiplicatively closed set generated by the units and all irreducible elements of $R$. Define $T(R)$ to be the multiplicatively closed subset generated by the units of $R$ and all $a\in R$ such that $R/aR$ is a valuation ring.   For each ordinal $\alpha$, we define rings $m_\alpha R$ and $u_\alpha R$. Let $m_0 R:=R$ and $u_0R:=R$. For all ordinals $\alpha$, define $m_{\alpha+1}R:=(m_{\alpha}R)_{S(m_{\alpha}R)}$ and define $u_{\alpha+1}R:=(u_\alpha R)_{T(u_{\alpha}R)}$. If $\lambda$ is a non-zero limit ordinal then define $m_\lambda R:=\bigcup_{\alpha<\lambda} m_\alpha R$ and $u_{\lambda}R:=\bigcup_{\alpha<\lambda}u_{\alpha}R$. Finally, let $S_\alpha(R)$ (respectively $T_{\alpha}(R)$) be the set of elements of $R$ which are invertible in $m_\alpha R$ (respectively in $u_\alpha R$).
%\end{definition}

In the next definition we identify all localisations of $R$ with subsets of its field of fractions.
\begin{definition}\label{SandT}
%Let $R$ be a B\'ezout domain.
Define $S(R)$ to be the multiplicatively closed set generated by the units and all irreducible elements of $R$. Define $T(R)$ to be the multiplicatively closed subset generated by the units of $R$ and all $a\in R$ such that $R/aR$ is a valuation ring.   For each ordinal $\alpha$, we define multiplicatively closed sets $S_\alpha(R)$ and $T_\alpha(R)$ by ordinal induction. Let $S_0(R)=T_0(R)$ be the set of units of $R$. Let $S_1(R):=S(R)$ and $T_1(R):=T(R)$. Define \[S_{\alpha+1}(R):=R\cap S(R_{S_{\alpha}(R)}) \ \ \ \text{and}\ \ \ T_{\alpha+1}(R):=R\cap T(R_{T_{\alpha}(R)}).\]
For $\lambda$ a limit ordinal, define
\[S_{\lambda}(R):=\bigcup_{\alpha<\lambda} S_\alpha(R) \ \ \ \text{and}\ \ \ T_{\lambda}(R):=\bigcup_{\alpha<\lambda}T_{\alpha}(R).\]
%rings $m_\alpha R$ and $u_\alpha R$. Let $m_0 R:=R$ and $u_0R:=R$. For all ordinals $\alpha$, define $m_{\alpha+1}R:=(m_{\alpha}R)_{S(m_{\alpha}R)}$ and define $u_{\alpha+1}R:=(u_\alpha R)_{T(u_{\alpha}R)}$. If $\lambda$ is a non-zero limit ordinal then define $m_\lambda R:=\bigcup_{\alpha<\lambda} m_\alpha R$ and $u_{\lambda}R:=\bigcup_{\alpha<\lambda}u_{\alpha}R$. Finally, let $S_\alpha(R)$ (respectively $T_{\alpha}(R)$) be the set of elements of $R$ which are invertible in $m_\alpha R$ (respectively in $u_\alpha R$).
Finally, let
\[
S_\infty(R):=\bigcup_{\alpha\in\Ord}S_\alpha(R)\ \  \text{ and } \  \ T_\infty (R):=\bigcup_{\alpha\in\Ord} T_\alpha(R).
\]
\end{definition}

%ATTEMPT1 REMARK
%Note that for all ordinals $\alpha$, viewing these rings as subrings of the field of fractions of $R$, $m_\alpha R$ and $R_{S_\alpha(R)}$ (respectively $u_\alpha R$ and $R_{T_\alpha(R)}$) are equal. It is also useful to note that if $\lambda$ is a non-zero limit ordinal then $S_\lambda(R)=\bigcup_{\alpha<\lambda}S_{\alpha}(R)$.

%\begin{definition}
%Let $R$ be a B\'ezout domain.
%\begin{enumerate}
%\item Define $S(R)$ to be the multiplicatively closed set generated by the units and irreducible elements of $R$. For each ordinal $\alpha$, we define a ring $m_\alpha R$. Let $m_0 R:=R$. For all ordinals $\alpha$, define $m_{\alpha+1}R:=(m_{\alpha}R)_{S(m_{\alpha}R)}$. If $\lambda$ is a limit ordinal then define $m_\lambda R:=\bigcup_{\alpha<\lambda} m_\alpha R$. Finally, let $S_\alpha(R)$ be the set of elements of $R$ which are invertible in $m_\alpha R$.
%\item Define $T(R)$ to be the multiplicatively closed subset generated by the units of $R$ and all $a\in R$ such that $R/aR$ is a valuation ring.  Let $u_0R:=R$.  For all ordinals $\alpha$, define $u_{\alpha+1}R:=(u_\alpha R)_{T(u_{\alpha}R)}$. If $\lambda$ is a limit ordinal then define $u_{\lambda}R:=\bigcup_{\alpha<\lambda}u_{\alpha}R$. Finally, let $T_{\alpha}(R)$ be the set of elements of $R$ which are invertible in $u_\alpha R$.
%\end{enumerate}
%\end{definition}

\begin{remark}\label{correspmultgroup}
%Let $R$ be a B\'ezout domain.
For each ordinal $\alpha$, $S_\alpha(R)$ and $T_\alpha(R)$ are saturated multiplicatively closed subsets. Moreover,
under the correspondence between saturated multiplicatively closed subsets of $R$ and convex $\ell$-subgroups of $\Gamma(R)$ (see the last paragraph of section \ref{SPreI}),
\begin{enumerate}[(i)]
\item $S_\alpha(R)$ corresponds to $C_{\two,\alpha}$, and
\item $T_\alpha(R)$ corresponds to $C_{\Ch,\alpha}$.
\end{enumerate}
%
%Then $S(R)$ and $T(R)$ are saturated multiplicatively closed sets. Moreover, $S(R)$ corresponds to the convex $\ell$-subgroup of $c\in\Gamma(R)^+$ such that the interval $[0,c]$ is finite length (i.e. m-dimension $0$) and $T(R)$ corresponds to the convex submonoid of $c\in \Gamma(R)^+$ such that the interval $[0,c]$ has breadth $0$.
%%Let $R$ be a B\'ezout domain. Then $S(R)$ and $T(R)$ are saturated multiplicatively closed sets. Moreover, $S(R)$ corresponds to the convex submonoid of $c\in\Gamma(R)^+$ such that the interval $[0,c]$ is finite length (i.e. m-dimension $0$) and $T(R)$ corresponds to the convex submonoid of $c\in \Gamma(R)^+$ such that the interval $[0,c]$ has breadth $0$.
\end{remark}
\begin{proof}We show that for all $a\in R$,
\begin{enumerate}[(i)]
\item $a\in S_\alpha(R)$ if and only if $aR \in C_{\two, \alpha}$, and
\item $a\in T_\alpha(R)$ if and only if  $aR\in C_{\Ch,\alpha}$.
\end{enumerate}
This proves the second statement of the remark and implies that $S_\alpha(R)$ and $T_\alpha(R)$ are saturated.
It follows from \ref{Ldimlgroup} and \ref{irredlatvallat} that $a\in S(R)$ if and only if $aR\in C_\two$, and, $a\in T(R)$ if and only if $aR\in C_{\Ch}$. So the case $\alpha=1$ is true. Since $C_{\two,0}=C_{\Ch,0}=\{0\}$ and $S_0(R)=T_0(R)$ is the set of units of $R$, the case $\alpha=0$ is true. We now prove $(i)$ for all $\alpha$ by ordinal induction (the proof of $(ii)$ is identical).
The limit case follows from the definitions of $C_{\two,\lambda}$ and $S_{\lambda}(R)$ when $\lambda$ is a limit ordinal.
Suppose we have proved the claim for $\alpha$. The the map $\Gamma(R)\rightarrow \Gamma(R_{S_\alpha(R)})$ given by $aR\mapsto aR_{S_\alpha(R)}$ for $a\in Q^\times$ is surjective with kernel $C_{\two,\alpha}(\Gamma(R))$. So, by definition of $C_{\two,\alpha+1}(\Gamma(R))$,  $aR\in C_{\two,\alpha+1}(\Gamma(R))$ if and only if $aR_{S_\alpha(R)}\in C_\two(\Gamma(R_{S_\alpha(R)}))$. By the $\alpha=1$ case, this is true if and only if $a\in S(R_{S_\alpha(R)})$. Hence, by definition of $S_{\alpha+1}(R)$, if and only if $a\in S_{\alpha+1}(R)$ as required.
%We prove the statement for $S_\alpha(R)$. Let $\mcal{L}$ be the class of lattices of size $\leq 2$. We show, by induction, that for all ordinals $\alpha$, $a\in S_\alpha(R)$ if and only if $aR\in C_{\alpha,\mcal{L}}$. Since $S_0(R)$ is the set of units of $R$ and $C_{0,\mcal{L}}=\{0\}$ the statement is true for $\alpha=0$.
%
%For $a\in R$, $[0,aR]\in \two$ if and only if $a$ is a unit or $a$ is irreducible. So, since $C_{\two}$ is generated by $aR$ such that $[0,aR]\in\two$, $C_{\two}$ corresponds to $S(R)$.
%
%
% FINISH
%
%
%
%
%Apart from for $\alpha=1$, that $S_\alpha(R)$ is saturated is a direct consequence of its definition.
%
%
%
%
%That $S(R)$ is saturated follows easily from the fact that irreducible and prime elements coincide for B\'ezout domains. Suppose $ar=c_1\cdots c_n$ and $R/c_iR$ is a valuation ring for $1\leq i\leq n$. Let $g,c_1',a',v,u\in R$ be such that $a=a'g$, $c_1=c_1'g$ and $1=c_1'v+a'w$. Then either $g$ is a unit or $R/gR$ is a valuation ring and $a'(rv+wc_2\cdots c_n)=c_2\cdots c_n$. Hence, by induction on $n$, $a'\in T(R)$. Therefore $a=a'g\in T(R)$.
\end{proof}
%%
%%IT SEEMS THAT $m_\alpha(\Gamma(R)_\infty^+)$ AND $u_\alpha(\Gamma(R)_\infty^+)$ HAVE NOT BEEN DEFINED YES IN PRELIM II.
%The following, which we will need later, is easily proved by induction on ordinals.

\begin{lemma}\label{dimintermsofmult}%Let $R$ be a B\'ezout domain.
\noindent
\begin{enumerate}
%\item For each ordinal $\alpha\leq m\text{-}dim \, \Gamma(R)_\infty^+$, $m_\alpha(\Gamma(R)_\infty^+)$ is isomorphic to $\Gamma(m_\alpha R)_\infty^+$. In particular, the m-dimension of $\Gamma(R)_\infty^+$ is equal to the least ordinal $\alpha$ such that $m_\alpha R$ is a field (equivalently $S_\alpha(R)=R\backslash\{0\}$) and if no such $\alpha$ exists then the the m-dimension of $\Gamma(R)_\infty^+$ is undefined.
\item For each ordinal $\alpha\leq m\text{-}dim \, \Gamma(R)_\infty^+$, \[\Gamma(R)_\infty^+/\sim_{\two,\alpha}\text{ is isomorphic to }\Gamma(R_{S_\alpha(R)})_\infty^+.\] In particular, the m-dimension of $\Gamma(R)_\infty^+$ is equal to the least ordinal $\alpha$ such that $R_{S_\alpha(R)}$ is a field (equivalently $S_\alpha(R)=R\backslash\{0\}$) and if no such $\alpha$ exists then the m-dimension of $\Gamma(R)_\infty^+$ is undefined.
\item For each $\alpha\leq  br\Gamma(R)_{\infty}^+$, \[\Gamma(R)_\infty^+/\sim_{\mathbbm{Ch},\alpha}\text{ is isomorphic to }\Gamma(R_{T_\alpha(R)})_\infty^+.\]In particular, the breadth of $\Gamma(R)^+_\infty$ is equal to the least ordinal $\alpha$ such that $R_{T_\alpha(R)}$ is a valuation ring and if no such $\alpha$ exists then the breadth of $\Gamma(R)_\infty^+$ is undefined.
%\item For each $\alpha\leq  br\Gamma(R)_{\infty}^+$, $u_\alpha(\Gamma(R)_\infty^+)$ is isomorphic to $\Gamma(u_\alpha R)_\infty^+$. In particular, the breadth of $\Gamma(R)^+_\infty$ is equal to the least ordinal $\alpha$ such that $u_\alpha R$ is a valuation domain and if no such $\alpha$ exists then the breadth of $\Gamma(R)_\infty^+$ is undefined.

\end{enumerate}
%For each ordinal $\alpha\leq m\text{-}dim \Gamma(R)_\infty^+$, $m_\alpha(\Gamma(R)_\infty^+)$ is isomorphic to $\Gamma(m_\alpha R)_\infty^+$ and for each $\alpha\leq  br\Gamma(R)_{\infty}^+$, $u_\alpha(\Gamma(R)_\infty^+)$ is isomorphic to $\Gamma(u_\alpha R)_\infty^+$. In particular, the m-dimension of $\Gamma(R)_\infty^+$ is equal to the least ordinal $\alpha$ such that $m_\alpha R=Q$ and the breadth of $\Gamma(R)^+_\infty$ is equal to the least ordinal $\alpha$ such that $u_\alpha R=Q$. If no such $\alpha$ occurs the the m-dim/breadth of $\Gamma(R)_\infty^+$ is undefined.
\end{lemma}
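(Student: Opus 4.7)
The proof is essentially a translation: the two statements follow by combining Proposition \ref{dimforgroup1} with the correspondence catalogued in Remark \ref{correspmultgroup} together with the classical identification, recorded at the end of Section \ref{SPreI}, of the quotient $\Gamma(R)/C$ by a convex $\ell$-subgroup $C$ with $\Gamma(R_S)$ where $S$ is the saturated multiplicatively closed subset corresponding to $C$ (the isomorphism being induced by $aR\mapsto aR_S$).

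For part (1), I would first note that by Remark \ref{correspmultgroup}(i), $S_\alpha(R)$ corresponds to $C_{\two,\alpha}(\Gamma(R))$, so the surjection $aR\mapsto aR_{S_\alpha(R)}$ induces an isomorphism $\Gamma(R)/C_{\two,\alpha}\xrightarrow{\cong}\Gamma(R_{S_\alpha(R)})$. This isomorphism restricts to an order isomorphism of the positive cones and extends to an isomorphism on the extended positive cones by sending $\infty\mapsto\infty$. Now Proposition \ref{dimforgroup1}, applied for $\alpha\leq \mdim\Gamma(R)_\infty^+$, gives $\Gamma(R)_\infty^+/\sim_{\two,\alpha}\cong(\Gamma(R)/C_{\two,\alpha})_\infty^+$, and chaining with the above yields the desired isomorphism $\Gamma(R)_\infty^+/\sim_{\two,\alpha}\cong \Gamma(R_{S_\alpha(R)})_\infty^+$. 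For the ``in particular'' clause, recall that $\mdim\Gamma(R)_\infty^+$ is the least ordinal $\alpha$ for which $\Gamma(R)_\infty^+/\sim_{\two,\alpha}$ has m-dimension $0$. By Remark \ref{Ldimtrivial}, this happens exactly when $\Gamma(R_{S_\alpha(R)})$ is the trivial group, which for a B\'ezout domain is equivalent to $R_{S_\alpha(R)}$ being a field, which in turn is equivalent to $S_\alpha(R)=R\setminus\{0\}$ (since $S_\alpha(R)$ is saturated). Non-existence of such $\alpha$ corresponds to $C_{\two,\alpha}\neq\Gamma(R)$ for all $\alpha$, hence by Proposition \ref{dimforgroup1}(ii) (applicable because $\two\subseteq\two$) to the m-dimension being undefined.

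Part (2) is entirely parallel, using Remark \ref{correspmultgroup}(ii) to identify $\Gamma(R)/C_{\Ch,\alpha}$ with $\Gamma(R_{T_\alpha(R)})$, and then Proposition \ref{dimforgroup1} to identify $\Gamma(R)_\infty^+/\sim_{\Ch,\alpha}$ with $(\Gamma(R)/C_{\Ch,\alpha})_\infty^+$. For the ``in particular'' clause, the breadth of $\Gamma(R)_\infty^+$ is the least $\alpha$ such that $\Gamma(R)_\infty^+/\sim_{\Ch,\alpha}$ has breadth $0$, which by Remark \ref{Ldimtrivial} means $\Gamma(R_{T_\alpha(R)})$ is totally ordered. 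Since a B\'ezout domain has totally ordered value group if and only if its lattice of principal ideals is a chain, i.e.\ if and only if it is a valuation domain, this gives the claimed characterisation. The clause on non-existence of breadth follows from Proposition \ref{dimforgroup1}(ii) since $\two\subseteq\Ch$.

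There is no real obstacle here; the only mildly delicate point is checking that the isomorphisms of $\ell$-groups induced by localisation commute with adjoining $\infty$ (which is immediate because the proper positive cone $\Gamma^+$ maps bijectively onto the proper positive cone of the quotient modulo $C^+$, and $\infty$ is the unique top element on both sides) and that saturation of $S_\alpha(R)$ and $T_\alpha(R)$, established in Remark \ref{correspmultgroup}, is what makes the field/valuation-ring translation of ``$S_\alpha(R)=R\setminus\{0\}$'' and ``$R_{T_\alpha(R)}$ a valuation ring'' go through cleanly.
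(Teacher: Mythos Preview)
Your proof is correct and follows essentially the same approach as the paper's own proof, which simply cites \ref{dimforgroup1}, \ref{dimforgroup2}, and \ref{correspmultgroup}; you have unpacked the one-line argument in detail, citing \ref{Ldimtrivial} and \ref{dimforgroup1}(ii) directly rather than their combination \ref{dimforgroup2}.
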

\begin{proof}
This follows from \ref{dimforgroup1}, \ref{dimforgroup2} and  \ref{correspmultgroup}.
%
%Suppose that $a,b\in R$. We show that $aR\sim bR$ if and only if $aR^1=bR^1$ i.e. there exist $r,s\in S_1$ such that $ar=bs$. Now $aR\sim bR$ if and only if there exists $c_1,\ldots,c_n\in R$, $c_1=\gcd\{a,b\}$, $c_n=\text{lcm}\{a,b\}$, $c_{i+1}\in c_iR$ for $1\leq i <n$ and $c_iR/c_{i+1}R$ is uniserial for $1\leq i <n$. SOMEHOW it follows that $ab/g^2\in S_1$ and hence, since $S$ is saturated, $a/g,b/g\in S$.
\end{proof}

\begin{theorem}\label{superdecvring}\cite[proof of 12.12]{Serialrings}
For $R$ a valuation ring, the following are equivalent.
\begin{enumerate}
\item There exists a superdecomposable pure-injective $R$-module.
\item The lattice of principal ideals of $R$ has a dense suborder.
\end{enumerate}
\end{theorem}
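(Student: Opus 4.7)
My plan is to prove both implications using the special form of pp-$1$-formulae over a valuation ring. Since $R$ is arithmetical and all its finitely generated ideals are principal, Corollary \ref{Specialform} gives that every pp-$1$-formula over $R$ is equivalent to a sum $\sum_i(c_i|x\wedge xd_i=0)$ with $c_i,d_i\in R$. By Theorem \ref{sdtypetopi} it suffices to produce a superdecomposable pp-$1$-type, and distributivity of $\pp_R^1$ reduces superdecomposability of $p$ to the condition that for every $\psi\notin p$ there are $\theta_1,\theta_2\geq \psi$ with $\theta_1,\theta_2\notin p$ and $\theta_1+\theta_2\notin p$.

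For $(2)\Rightarrow(1)$, I would fix a dense subchain $D$ of the (totally ordered) lattice of principal ideals together with a cut $(L,U)$ of $D$ in which $L$ has no maximum and $U$ has no minimum. Using this cut I would define a candidate pp-type $p$ by declaring $c|x\in p$ exactly when $cR$ lies on one prescribed side of the cut, with a compatible clause for the formulas $xd=0$. Given a basic formula $\psi=c|x\wedge xd=0\notin p$, density of $D$ allows me to insert principal ideals $c_1R, c_2R$ strictly between $cR$ and the cut on each ``side,'' yielding $\theta_1=c_1|x\wedge xd=0$ and $\theta_2=c|x\wedge xd_2=0$ both above $\psi$ and outside $p$, whose sum is still cut off by the cut and hence not in $p$. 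Consistency of the filter is then verified via the functional criterion \ref{typesintofun} on $\Gamma(R)_\infty^+$.

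For $(1)\Rightarrow(2)$, I would start with a superdecomposable pp-$1$-type $p$ (which exists by \ref{sdtypetopi}) and read off a dense subchain of principal ideals from the splitting data. Repeated application of the superdecomposability criterion to formulae $c|x\notin p$ with $cR\subseteq J(p)$ produces, for each such $c$, principal ideals $c_1R, c_2R$ strictly between $cR$ and the ideal $J(p)$ (since any $\theta_i\geq c|x$ in the relevant form forces its divisibility generator to properly contain $cR$). Collecting all these generators yields a subchain of principal ideals which, by construction, is order-dense inside $J(p)$ and hence is a dense suborder of the lattice of principal ideals of $R$.

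The main obstacle will be the direction $(2)\Rightarrow(1)$: density of $D$ is a purely order-theoretic input, while a pp-type must satisfy the nontrivial consistency conditions of \ref{typesintofun}, which couple the ``$c|x$'' and ``$xd=0$'' generators through hybrid formulas such as $\exists y(yc=x\wedge yd=0)$. The heart of Puninski's argument in \cite[12.12]{Serialrings} is arranging the cut so that these consistency conditions are automatically satisfied (this is where one has to handle zero-divisors, as $R$ need not be a domain), so that the density of $D$ then translates exactly into the splitting property characterising superdecomposability.
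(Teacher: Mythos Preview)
First, note that the paper does not give its own proof of this theorem; it is quoted from \cite[proof of 12.12]{Serialrings}. So there is no in-paper argument to compare against, and your sketch must be judged on its own.

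There is a genuine gap in your $(2)\Rightarrow(1)$. In the distributive case the superdecomposability criterion requires, for each $\psi\notin p$, witnesses $\theta_1,\theta_2\geq\psi$ with $\theta_1,\theta_2\notin p$ and $\theta_1+\theta_2\in p$ (the displayed line in Section~\ref{PreII} reads ``$\notin p$'', but this is a typo --- compare the proof of Lemma~\ref{Bezpp1}, where the sum is explicitly shown to lie \emph{in} $p$). Your construction aims for $\theta_1+\theta_2\notin p$, which is the trivial condition (take $\theta_1=\theta_2=\psi$). More seriously, your proposed $\theta_1=c_1|x\wedge xd=0$ and $\theta_2=c|x\wedge xd_2=0$ each move only one parameter along a chain; since in a valuation ring both $\{c|x\}_c$ and $\{xd=0\}_d$ are totally ordered, such $\theta_1,\theta_2$ are comparable and their join is just the larger one, so it cannot land in $p$ while both $\theta_i$ stay out. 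Puninski's construction genuinely exploits the two-dimensional structure of $\pp_R^1$: one must vary the divisibility and the annihilator parameter \emph{simultaneously and in opposite directions} to produce incomparable $\theta_1,\theta_2\notin p$ whose join jumps into $p$. A single cut in a dense chain of principal ideals does not by itself specify such a type; translating density into this two-parameter branching is the real content of the argument.

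For $(1)\Rightarrow(2)$ there is a parallel gap: the witnesses $\theta_i$ supplied by superdecomposability are arbitrary pp-formulae, not of the form $c'|x$, so your claim that ``any $\theta_i\geq c|x$ forces its divisibility generator to properly contain $cR$'' is unjustified. One route that does work is to show that if the principal-ideal chain is scattered then $\pp_R^1$ has width (this is essentially Puninski's analysis for uniserial rings), and then invoke Ziegler's theorem; but that is a different argument from the one you outline. Finally, Theorem~\ref{typesintofun} is stated only for Pr\"ufer \emph{domains}, so appealing to it for a valuation ring with zero-divisors needs separate justification.
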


\begin{lemma}\label{widthnotmdimvalgroup}
Suppose that $\Gamma(R)_\infty^+$ has breadth but $\Gamma(R)_\infty^+$ does not have m-dimension. There exists $a\in R_{S_\infty(R)}$ such that $R_{S_\infty(R)}/aR_{S_\infty(R)}$ is a valuation ring whose lattice of principal ideals is dense. In particular, $R_{S_\infty(R)}$ has a superdecomposable pure-injective module.
\end{lemma}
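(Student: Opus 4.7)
The plan is to work inside the localization $R' := R_{S_\infty(R)}$. By Remark~\ref{correspmultgroup}, $S_\infty(R)$ corresponds to the convex $\ell$-subgroup $C := \bigcup_{\alpha \in \Ord} C_{\two,\alpha}(\Gamma(R))$ of $\Gamma(R)$, so $\Gamma(R') \cong \Gamma(R)/C$ as $\ell$-groups. Since $\Gamma(R)^+_\infty$ has breadth and $\Gamma(R')^+_\infty$ is a quotient lattice of it, the extended positive cone $\Gamma(R')^+_\infty$ also has breadth. Because $\Gamma(R)^+_\infty$ does not have m-dimension, Proposition~\ref{dimforgroup1}(ii) gives $C \neq \Gamma(R)$, so $\Gamma(R')$ is non-trivial. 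The increasing chain $\{C_{\two,\alpha}\}_{\alpha \in \Ord}$ of convex $\ell$-subgroups of the set $\Gamma(R)$ must stabilize at some ordinal $\alpha_0$ with $C_{\two,\alpha_0} = C_{\two,\alpha_0+1} = C$, and the definition of $C_{\two,\alpha_0+1}$ then forces $C_\two(\Gamma(R')) = \{0\}$.

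Next I would apply Remark~\ref{Lpointsdense} with $\mcal{L} = \Ch$: since $\Gamma(R')^+_\infty$ has breadth and $\Gamma(R')$ is non-trivial, there is some $c > 0$ in $\Gamma(R')^+$ with $[0,c] \in \Ch$, i.e.\ totally ordered. The key observation is that $[0,c]$ is in fact densely ordered. Indeed, if $d_1 < d_2$ were a covering pair in $[0,c]$, then $[0, d_2 - d_1] \cong [d_1, d_2] \in \two$, forcing the non-zero element $d_2 - d_1$ to lie in $C_\two(\Gamma(R')) = \{0\}$, a contradiction.

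Writing $c = aR'$ for a non-zero non-unit $a \in R'$, Remark~\ref{irredlatvallat} gives that $R'/aR'$ is a valuation ring, and its lattice of principal ideals is order-isomorphic to the densely totally ordered set $[0, aR']$, so this lattice serves as its own dense suborder. Theorem~\ref{superdecvring} then produces a superdecomposable pure-injective $R'/aR'$-module. Since $R' \twoheadrightarrow R'/aR'$ is a surjective ring homomorphism, restriction of scalars is fully faithful and preserves pure-injectivity (as recorded in Section~\ref{SPreI}); any $R'$-module decomposition of an $R'/aR'$-module coincides with an $R'/aR'$-module decomposition (because $a$ acts as zero), so superdecomposability transfers too, yielding the desired superdecomposable pure-injective $R'$-module.

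The main obstacle lies in the stabilization claim at the end of the first paragraph. Establishing $C_\two(\Gamma(R')) = \{0\}$ relies crucially on the fact, built into Lemma~\ref{ldimgroupbasecase} and Proposition~\ref{dimforgroup1}, that the iterative definition of $C_{\two,\alpha}$ in $\Gamma(R)$ mirrors the $\mcal{L}$-dimension iteration performed on successive quotients; once this is in hand, the remaining steps are essentially formal.
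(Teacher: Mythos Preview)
Your proof is correct and follows the same approach as the paper's: both use that $\Gamma(R_{S_\infty(R)})$ is a non-trivial quotient of $\Gamma(R)$ with no simple intervals (hence dense), inherits breadth, and therefore contains a proper totally ordered interval $[0,c]$ yielding a valuation ring $R'/aR'$ with dense principal-ideal lattice, to which Theorem~\ref{superdecvring} applies. You spell out the stabilization argument for $C_\two(\Gamma(R'))=\{0\}$ and the restriction-of-scalars step more explicitly than the paper does, but the underlying route is identical.
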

\begin{proof}
Since $\Gamma(R)_\infty^+$ does not have m-dimension, $\Gamma(R_{S_\infty(R)})$ is a non-trivial $\ell$-group such that all proper intervals are dense. Since $\Gamma(R)_\infty^+$ has breadth and $\Gamma(R_{S_\infty(R)})_\infty^+$ is a quotient of $\Gamma(R)_\infty^+$, $\Gamma(R_{S_\infty(R)})_\infty^+$ also has breadth. Therefore there exists $a\in R_{S_\infty(R)}\backslash \{0\}$ such that the interval $[0,aR_{S_\infty(R)}]$ in $\Gamma(R_{S_\infty(R)})_\infty^+$ is proper and totally ordered. Therefore $R_{S_\infty(R)}/aR_{S_\infty(R)}$ is a valuation ring whose lattice of principal ideals is dense. So, by \ref{superdecvring}, $R_{S_\infty(R)}/aR_{S_\infty(R)}$ has a superdecomposable pure-injective module.
%
%
%By \ref{comparingdims} there exists $\alpha\leq \br \Gamma(R)_\infty^+$ and an interval $[c,d]\subseteq u_\alpha\Gamma(R)^+_{\infty}$ such that $[c,d]$ is totally ordered and contains a dense chain. So $[0,d-c]$ is totally ordered and contains a dense chain. Since $u_\alpha\Gamma(R)_\infty^+$ is isomorphic to $\Gamma(u_\alpha R)_\infty^+$, there exists $a\in u_\alpha R$ such that $[0,au_\alpha R]$ is totally ordered and contains a dense chain. Therefore $u_\alpha R/au_\alpha R$ is a valuation ring whose lattice of principal ideals has a dense chain. So, by \ref{superdecvring}, there is a superdecomposable pure-injective $u_\alpha R/au_\alpha R$ and hence a superdecomposable pure-injective $R$-module.
\end{proof}

%\begin{lemma}\label{widthnotmdimvalgroup}
%Suppose that $\Gamma(R)_\infty^+$ has breadth but $\Gamma(R)_\infty^+$ does not have m-dimension. There exists $\alpha\leq \text{br}\, \Gamma(R)_\infty^+$ and $a\in u_\alpha R$ such that $u_\alpha R/au_\alpha R$ is a valuation ring whose lattice of principal ideals has a dense suborder. In particular, $u_\alpha R/au_\alpha R$ has a superdecomposable pure-injective modules.
%\end{lemma}
%\begin{proof}
%By \ref{comparingdims} there exists $\alpha\leq \br \Gamma(R)_\infty^+$ and an interval $[c,d]\subseteq u_\alpha\Gamma(R)^+_{\infty}$ such that $[c,d]$ is totally ordered and contains a dense chain. So $[0,d-c]$ is totally ordered and contains a dense chain. Since $u_\alpha\Gamma(R)_\infty^+$ is isomorphic to $\Gamma(u_\alpha R)_\infty^+$, there exists $a\in u_\alpha R$ such that $[0,au_\alpha R]$ is totally ordered and contains a dense chain. Therefore $u_\alpha R/au_\alpha R$ is a valuation ring whose lattice of principal ideals has a dense chain. So, by \ref{superdecvring}, there is a superdecomposable pure-injective $u_\alpha R/au_\alpha R$ and hence a superdecomposable pure-injective $R$-module.
%\end{proof}

\begin{lemma}\label{Bezpp1}
%Let $R$ be a B\'ezout domain. Suppose that for all non-units $a\in R$, $R/aR$ is not a valuation ring. The pp-type of $1\in R$ is superdecomposable.
Let $R$ be a Pr\"ufer domain. Suppose that for all proper finitely generated ideals $A\lhd R$, $R/A$ is not a valuation ring. The pp-type of $1\in R$ is superdecomposable.
\end{lemma}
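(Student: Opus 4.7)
The plan is to show $p:=\pp_R(1)$ is superdecomposable by verifying directly that no $\psi\notin p$ is large in $p$. Since $\pp_R^1$ is distributive and $p$ is a filter, the identity $\sigma\wedge\theta_1+\sigma\wedge\theta_2=\sigma\wedge(\theta_1+\theta_2)$, combined with the fact that $\sigma\wedge(\theta_1+\theta_2)\in p$ iff both $\sigma,\theta_1+\theta_2\in p$, collapses ``$\psi$ is not large'' to the condition: there exist $\theta_1,\theta_2\geq\psi$ with $\theta_1,\theta_2\notin p$ and $\theta_1+\theta_2\in p$. So the task becomes: for each $\psi\notin p$, produce such a pair $\theta_1,\theta_2$.

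Next I would put $\psi$ into the normal form of \ref{Specialform}, writing $\psi=\bigwedge_k(xA_k=0+B_k|x)$, and compute $(xA=0+B|x)(R)=\ann_R(A)+B$. Since $R$ is a domain, this equals $B$ when $A\neq 0$ and equals $R$ when $A=0$. Hence $\psi\notin p$ forces some conjunct $\psi_0=xA=0+B|x$ with $A\neq 0$ and $B\subsetneq R$ a proper finitely generated ideal; since any $\theta_i\geq\psi_0$ also dominates $\psi$, it is enough to handle $\psi_0$.

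The crux is to apply the hypothesis to the proper finitely generated ideal $B$: $R/B$ is arithmetical but not a valuation ring, so it has two non-comparable principal ideals $aR/B,bR/B$. Feeding $a,b$ into \ref{arithdef}(\ref{arithdefalpha}) yields $\alpha\in R/B$ with $a\alpha=br$ and $b(1-\alpha)=as$, and both $\alpha$ and $1-\alpha$ must be non-units of $R/B$: if $\alpha$ were a unit, $a\alpha=br$ would give $a\in bR/B$, contradicting non-comparability, and symmetrically for $1-\alpha$. Lifting $\alpha$ and $1-\alpha$ to elements $c_1,c_2\in R$ gives $B+c_iR\subsetneq R$ (as $c_i$ is a non-unit mod $B$), while $c_1+c_2-1\in B$ yields $B+c_1R+c_2R=R$. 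The formulas $\theta_i:=xA=0+(B+c_iR)|x$ now do the job: $\psi_0\leq\theta_i$ because $B\subseteq B+c_iR$; the computation $\theta_i(R)=\ann_R(A)+(B+c_iR)=B+c_iR\not\ni 1$ shows $\theta_i\notin p$; and $\theta_1+\theta_2$ is equivalent to $xA=0+R|x$, hence to the top formula $x=x$, which lies in $p$ trivially.

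The main obstacle is the single non-lattice-theoretic step of extracting, from the hypothesis ``$R/B$ is not a valuation ring'', a concrete algebraic witness: a pair of non-units of $R/B$ summing to $1$. Everything else reduces to routine manipulation with the sum/annihilator normal form of \ref{Specialform} and the order relations from \ref{Zgbasicmanip}.
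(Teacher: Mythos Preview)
Your proof is correct and follows essentially the same route as the paper: reduce to a single conjunct $\psi_0=xA=0+B|x\notin p$ (so $A\neq 0$ and $B\subsetneq R$), use the hypothesis on $R/B$ to manufacture two proper finitely generated ideals $D_1,D_2\supseteq B$ with $D_1+D_2=R$, and set $\theta_i:=xA=0+D_i|x$. The only variation is in how $D_1,D_2$ are produced: the paper takes incomparable finitely generated $B',C'\supseteq B$ and passes to the ideal quotients $(B':B'+C')$ and $(C':B'+C')$, whereas you work element-wise in $R/B$ via \ref{arithdef}(\ref{arithdefalpha}) to obtain non-units $\alpha,1-\alpha$ summing to $1$ and lift them---both methods yield the same conclusion.
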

\begin{proof}
Suppose that $A\lhd R$ is a proper finitely generated ideal. Since $R/A$ is not a valuation ring, there exist finitely generated ideals $B',C'\supseteq A$ which are
incomparable by inclusion. Let $B=(B':B'+C')$ and $C=(C':B'+C')$. Then $B,C\supseteq A$ and, by \ref{arithdef}, $B+C=R$. Moreover, $B$ and $C$  are incomparable since $B'$ and $C'$ are incomparable.

Let $p$ be the pp-type of $1\in R$. Suppose $A,E\lhd R$ are finitely generated ideals and $A|x+xE=0\notin p$. Then $A\neq R$ and $E$ is non-zero. Take $B,C\lhd R$ as in the previous paragraph. Then $A|x\leq B|x$ and $A|x\leq C|x$ and, by \ref{commringsumintpp}, $B|x+C|x$ is equivalent to $R|x$.
Thus $B|x+xE=0\geq A|x+xE=0$ and $C|x+xE=0\geq A|x+xE=0$, and $B|x+xE=0+C|x+xE=0\in p$. Since $B,C\neq R$, $B|x+xE=0,C|x+xE=0\notin p$. So, since all $\psi\in\pp_R^1$ are of the form $\bigwedge_{i=1}^n(A_i|x+xE_i=0)$, $p$ is superdecomposable.
\end{proof}

\begin{proposition}\label{widthvalimp1superdec}
%Suppose that $\Gamma(R)^+_\infty$ does not have breadth. Then there exists $\alpha$ such that the pp-type of $1$ in $u_\alpha R$ is superdecomposable. In particular, there exists a superdecomposable pure-injective $R$-module.
Suppose that $\Gamma(R)^+_\infty$ does not have breadth. Then the pp-type of $1$ in $R_{T_\infty(R)}$ is superdecomposable. In particular, there exists a superdecomposable pure-injective $R$-module.
\end{proposition}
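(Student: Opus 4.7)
The strategy is to apply Lemma~\ref{Bezpp1} to the localised domain $R_{T_\infty(R)}$ and then transport the resulting superdecomposable pure-injective back along the ring epimorphism $\iota:R\to R_{T_\infty(R)}$. The heart of the matter is that, by construction, no further valuation-ring quotient can arise after inverting $T_\infty(R)$.

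First, I would record the key fixed-point property of $T_\infty(R)$. Since $(T_\alpha(R))_{\alpha\in\Ord}$ is an ascending chain of subsets of the set $R$, it must stabilise at some ordinal $\beta$, and the equality $T_{\beta+1}(R)=T_\beta(R)$ rewrites as $T_\infty(R)=R\cap T(R_{T_\infty(R)})$. Combined with the hypothesis that $\Gamma(R)^+_\infty$ has no breadth, Lemma~\ref{dimintermsofmult}(2) applied at this $\beta$ also rules out $R_{T_\infty(R)}$ being a valuation ring, so it admits proper nonzero finitely generated ideals, all of which are principal since localisations of B\'ezout domains remain B\'ezout.

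Next, I would verify the hypothesis of Lemma~\ref{Bezpp1} for $R_{T_\infty(R)}$: if $aR_{T_\infty(R)}$ were a proper ideal with $R_{T_\infty(R)}/aR_{T_\infty(R)}$ a valuation ring, one may clear the denominator to assume $a\in R$; then $a\in T(R_{T_\infty(R)})\cap R=T_\infty(R)$ by the fixed-point identity, forcing $aR_{T_\infty(R)}=R_{T_\infty(R)}$, which contradicts properness. Lemma~\ref{Bezpp1} then yields that the pp-type $p$ of $1\in R_{T_\infty(R)}$, viewed as an $R_{T_\infty(R)}$-module, is superdecomposable.

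For the ``in particular'' clause, Theorem~\ref{sdtypetopi} promotes $p$ to a superdecomposable pure-injective $R_{T_\infty(R)}$-module $N(p)$, which I would then restrict to $R$ along $\iota$. Pure-injectivity is preserved by restriction of scalars along the localisation epimorphism, as recorded in Section~\ref{SPreI}. Superdecomposability descends too: any $R$-module direct summand $N$ of $N(p)$ is automatically an $R_{T_\infty(R)}$-submodule, because each $t\in T_\infty(R)$ acts as an automorphism on $N(p)$ and the usual splitting chase (injectivity is immediate; if $tm=n\in N$ and $m=n_1+n_2$ along the decomposition $N(p)=N\oplus N'$, then $tn_2=0\in N'$, forcing $n_2=0$ and $tn_1=n$) shows it remains an automorphism on $N$. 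Hence an indecomposable $R$-summand of $N(p)$ would produce an indecomposable $R_{T_\infty(R)}$-summand, contradicting superdecomposability over $R_{T_\infty(R)}$. The only delicate points in the argument are the fixed-point identity for $T_\infty(R)$ and the descent of superdecomposability along restriction of scalars; everything else is a direct application of the cited lemmas.
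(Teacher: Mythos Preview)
Your proof is correct and follows essentially the same route as the paper: both reduce to Lemma~\ref{Bezpp1} by showing that no proper principal ideal of $R_{T_\infty(R)}$ has a valuation-ring quotient, the paper phrasing this via the value group (no nonzero $a$ with $[0,a]$ a chain in $\Gamma(R_{T_\infty(R)})_\infty^+$) and you via the equivalent fixed-point identity $T_\infty(R)=R\cap T(R_{T_\infty(R)})$. Your explicit descent of superdecomposability along restriction of scalars is more than the paper writes down (it leaves this implicit), but it is correct; a shorter alternative is to invoke fullness of restriction along the epimorphism $R\to R_{T_\infty(R)}$, so that any idempotent $R$-endomorphism of $N(p)$ is already an $R_{T_\infty(R)}$-endomorphism.
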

\begin{proof}
Since $\Gamma(R)^+_\infty$ does not have breadth, for all $a\in \Gamma(R_{T_\infty(R)})_\infty^+$, if $[0,a]$ is a chain then $a=0$. Therefore, for all $a\in R_{T_\infty(R)}$, if $R_{T_\infty(R)}/aR_{T_\infty(R)}$ is a valuation ring then $a$ is a unit in $R_{T_\infty(R)}$. So the result follows from \ref{Bezpp1}.
\end{proof}

\begin{theorem}\label{vgroupmdimsuperdec}
Let $S$ be a Pr\"ufer domain.  If $\Gamma(S)^+_\infty$ does not have m-dimension then there is a superdecomposable pure-injective $S$-module.
\end{theorem}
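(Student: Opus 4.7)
The plan is to reduce the claim to the B\'ezout case via the Jaffard-Kaplansky-Ohm theorem and the transfer theorem \ref{BeztoPruf}, and then to split the B\'ezout case according to whether $\Gamma(S)_\infty^+$ has breadth.

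By Jaffard-Kaplansky-Ohm there is a B\'ezout domain $R$ with $\Gamma(R) \cong \Gamma(S)$ as $\ell$-groups, and the transfer theorem \ref{BeztoPruf} supplies a lattice isomorphism $\lambda:\pp_S^1 \to \pp_R^1$. By \ref{sdtypetopi} the existence of a superdecomposable pure-injective $S$-module is equivalent to the existence of a superdecomposable pp-$1$-type over $S$; because $\pp_S^1$ is distributive (see \cite[3.1]{EH}), superdecomposability of a pp-type is a property purely of the lattice $\pp_S^1$ together with the filter corresponding to the type (the simplified criterion recorded after \ref{sdtypetopi}). Hence $\lambda$ transports superdecomposable pp-types over $R$ to superdecomposable pp-types over $S$, and it suffices to exhibit a superdecomposable pure-injective $R$-module.

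In the B\'ezout setting, $\Gamma(R)_\infty^+$ also fails to have m-dimension, and I split into two sub-cases. If $\Gamma(R)_\infty^+$ does not have breadth, then \ref{widthvalimp1superdec} directly produces a superdecomposable pure-injective $R$-module. Otherwise $\Gamma(R)_\infty^+$ has breadth but not m-dimension, and \ref{widthnotmdimvalgroup} produces a superdecomposable pure-injective $R_{S_\infty(R)}$-module $M$; I then view $M$ as an $R$-module via restriction of scalars along the ring epimorphism $R \to R_{S_\infty(R)}$.

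The only thing still to check is that this restriction preserves both pure-injectivity and superdecomposability. Pure-injectivity is preserved by restriction along a ring epimorphism by the discussion at the end of section \ref{SPreI}. For superdecomposability, since $R \to R_{S_\infty(R)}$ is a ring epimorphism, $\Mod\text{-}R_{S_\infty(R)}$ sits as a full subcategory of $\Mod\text{-}R$, so every $R$-linear idempotent on $M$ is automatically $R_{S_\infty(R)}$-linear; consequently every $R$-direct summand of $M$ is an $R_{S_\infty(R)}$-direct summand, and the absence of non-zero indecomposable $R_{S_\infty(R)}$-summands of $M$ passes to $R$. This yields the required superdecomposable pure-injective $R$-module, which then transfers back to $S$ as described in the first paragraph. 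No step constitutes a real obstacle; the heavy lifting sits in \ref{widthvalimp1superdec}, \ref{widthnotmdimvalgroup} and \ref{BeztoPruf}, and the present argument is essentially a packaging of these.
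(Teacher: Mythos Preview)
Your proof is correct and follows essentially the same route as the paper: reduce to the B\'ezout case via Jaffard--Kaplansky--Ohm and the transfer theorem, then split according to whether $\Gamma(R)_\infty^+$ has breadth, invoking \ref{widthvalimp1superdec} and \ref{widthnotmdimvalgroup} respectively, and finally transfer the superdecomposable pp-type back through the lattice isomorphism. You are in fact more explicit than the paper in justifying why the superdecomposable pure-injective $R_{S_\infty(R)}$-module remains superdecomposable and pure-injective over $R$; the paper simply asserts this step.
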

\begin{proof}
Suppose that $R$ is a B\'ezout domain. If $\Gamma(R)^+_\infty$ does not have breadth then $R$ has a superdecomposable pure-injective module by \ref{widthvalimp1superdec}. So, suppose that $\Gamma(R)^+_\infty$ has breadth but not m-dimension, then $R$ has a superdecomposable pure-injective module by \ref{widthnotmdimvalgroup}.

Now suppose $S$ is a Pr\"ufer domain. Take $R$ a B\'ezout domain such that $\Gamma(R)\cong \Gamma(S)$. By \ref{BeztoPruf}, $\pp_S^1$ is isomorphic to $\pp_R^1$ as a lattice. If $\Gamma(S)_\infty^+$ does not have m-dimension then neither does $\Gamma(R)_\infty^+$. By what we have just proved, there is a superdecomposable pure-injective $R$-module. Therefore, by \ref{sdtypetopi}, there exists a superdecomposable pp-type in $\pp_R^1$. So, since $\pp_S^1$ is isomorphic to $\pp_R^1$, $\pp_S^1$ has a superdecomposble pp-type. Hence there is a superdecomposable pure-injective $S$-module.
\end{proof}

\section{Calculating the breadth}\label{SCalcBreadth} Throughout, unless otherwise stated, $R$ is a B\'ezout domain.
The main purpose of this section is to show that the breadth of $\pp_R^1$ is equal to the m-dimension of $\Gamma(R)_\infty^+$ when $R$ is a Pr\"ufer domain. The main work is in
Proposition \ref{breadthcollapseloc}, which implies the statement for B\'ezout domains. We then use our transfer theorem \ref{BeztoPruf} to extend the result to Pr\"ufer domains.

%\begin{proposition}[{\color{red}Is $\approx_0$ defined? No.}]\label{breadthcollapseloc}
%Let $R$ be a B\'ezout domain which is not a field and let $S$ be the multiplicative subset of $R$ generated by $c\in R$ such that $cR$ is a maximal ideal. The surjective lattice homomorphism
%\[\pi_S:\pp_R^1\rightarrow \pp_{R_S}^1\] has congruence kernel $\approx_0$, i.e. for all $\phi,\psi\in\pp_R^1$, $\pi_S(\phi)=\pi_S(\psi)$ if and only if the interval $[\phi\wedge\psi,\phi+\psi]$ has breadth $0$.
%\end{proposition}

\begin{proposition}\label{breadthcollapseloc}
Let $R$ be a B\'ezout domain which is not a field and let $S:=S(R)$. The surjective lattice homomorphism
\[\pi_S:\pp_R^1\rightarrow \pp_{R_S}^1\] induced by restriction of scalars along the epimorphism $R\rightarrow R_S$ is such that for all $\phi,\psi\in\pp_R^1$, $\pi_S(\phi)=\pi_S(\psi)$ if and only if the interval $[\phi\wedge\psi,\phi+\psi]$ has breadth $0$.
\end{proposition}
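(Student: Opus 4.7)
The plan is to identify the congruence kernel of $\pi_S$ on $\pp_R^1$ with $\sim_{\Ch}$. Since $\pi_S$ is a lattice homomorphism, $\pi_S(\phi)=\pi_S(\psi)$ iff $\pi_S(\phi\wedge\psi)=\pi_S(\phi+\psi)$; and by the definition of $\sim_\Ch$ together with the remark following the $\mcal{L}$-dim definition, the interval $[\phi\wedge\psi,\phi+\psi]$ has breadth $0$ iff $\phi\wedge\psi\sim_\Ch\phi+\psi$, equivalently $\phi\sim_\Ch\psi$ (since $\sim_\Ch$ is a lattice congruence). Thus the proposition is equivalent to the equality of congruences $\ker\pi_S=\sim_\Ch$ on $\pp_R^1$.

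To prove this equality I would translate to closed subsets of $\Zg(R)$. The congruence $\ker\pi_S$ corresponds to $\Zg(R_S)\subseteq\Zg(R)$ via the identification $\phi(N_R)=\pi_S(\phi)(N_{R_S})$ for $R_S$-modules $N$, and $\sim_\Ch$ corresponds to a closed subset $\mcal{C}_\Ch$ by \ref{closedsetforLdim}. So it suffices to show $\Zg(R_S)=\mcal{C}_\Ch$.

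For $\Zg(R_S)\subseteq\mcal{C}_\Ch$, it is enough to show that $\pi_S$ collapses every chain interval of $\pp_R^1$. Writing $\alpha\leq\beta$ in the special form of \ref{Specialform} and translating inequalities via \ref{Zgbasicmanip} and \ref{orderppflaarith} into ideal-theoretic conditions, one uses the distributivity of $\pp_R^1$ (from $R$ being arithmetical) together with the chain hypothesis to argue that the ``difference'' between $\alpha$ and $\beta$ is supported at irreducibles of $R$. Since every irreducible lies in $S$ and hence acts bijectively on any $N\in\Zg(R_S)$, this difference evaluates trivially, giving $\alpha(N)=\beta(N)$.

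For $\mcal{C}_\Ch\subseteq\Zg(R_S)$, take $N=N(I,J)\in\mcal{C}_\Ch$ using \ref{ALLNIJ}. By \ref{AttIJ}, $N\in\Zg(R_S)$ iff $\Att N=I^\#\cup J^\#$ is disjoint from $S$; assuming an irreducible $p\in\Att N\cap S$, I would derive a contradiction by exhibiting a $\sim_\Ch$-equivalent pair separated by $N$. Since $[0,pR]\in\two$ in $\Gamma(R)$ by \ref{irredlatvallat}, the constraint $p\in D_i$ coming from \ref{Zgbasicmanip} (applied to $C_i|x\wedge xD_i=0\leq xp=0$) forces $D_i\in\{R,pR\}$, so every $\phi\leq xp=0$ simplifies to $C|x\wedge xp=0$ for a single f.g.\ ideal $C$. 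This structural simplification is used to establish that $[x=0,xp=0]$ and, by Prest duality, $[p|x,x=x]$ have breadth~$0$, hence $p|x\sim_\Ch x=x$ and $xp=0\sim_\Ch x=0$; but $p\in\Att N$ means $N$ distinguishes one of these pairs, the desired contradiction. The hardest step is the breadth-$0$ verification for these basic intervals when $R/pR$ is not itself a valuation domain, together with the structural claim in paragraph three that the ``difference'' across an arbitrary chain interval is supported on irreducibles; I would expect to handle these by a tiling argument exploiting the B\'ezout property to refine the special-form decomposition along chains of irreducible divisors.
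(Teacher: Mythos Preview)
Your overall framework is correct and matches the paper: reduce to showing $\Zg(R_S)=\mcal{C}_\Ch$ as closed subsets of $\Zg(R)$, then prove both inclusions. However, you have the difficulty of the two directions reversed, and the genuinely hard direction is left as a vague sketch.

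For $\mcal{C}_\Ch\subseteq\Zg(R_S)$, which you flag as hardest, the paper's Lemma~\ref{xxcxchain} settles it cleanly: for $c$ irreducible, $[c|x,x=x]\cong\Gamma(R_{cR})_\infty^+$, which is \emph{totally ordered} because $R_{cR}$ is a valuation domain. So these intervals are chains outright, not merely of breadth $0$, and your worry about ``$R/pR$ not a valuation domain'' is misplaced (for $p$ irreducible $pR$ is maximal, so $R/pR$ is a field anyway; the relevant ring is the localisation $R_{pR}$). This direction is then immediate: any $N\notin\Zg(R_S)$ opens one of these chain intervals, so $N\notin\mcal{C}_\Ch$.

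The hard direction is $\Zg(R_S)\subseteq\mcal{C}_\Ch$, i.e.\ showing that every chain interval in $\pp_R^1$ is collapsed by $\pi_S$. Your ``the difference is supported at irreducibles\ldots tiling argument'' does not engage with what actually makes this work. The paper first reduces (via \ref{chainsumint} and the special forms) to a single interval of the shape $[(awg|x+xh=0)\wedge g|x\wedge xdwh=0,\ g|x\wedge xdwh=0]$ with $aR+dR=R$, and then proves the key technical Lemma~\ref{trivialorchain}: if $w$ is neither a unit nor irreducible, one \emph{explicitly constructs} two incomparable formulae in that interval (using a factorisation $w=uv$ with $u,v$ non-units), so the interval cannot be a chain. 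Hence a chain forces $w$ to be a unit in $R_S$, whence the interval collapses there by \ref{Zgbasicmanip}. Your proposal lacks this construction of incomparable elements, which is the substantive content; without it the argument does not go through.
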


In order to prove the proposition we need to show:
\begin{enumerate}
\item If $[\psi,\phi]$ is totally ordered then $[\pi_S(\psi),\pi_S(\phi)]$ is the trivial interval.
\item If $\psi\leq \phi$ and $\pi_S(\phi)=\pi_S(\psi)$ then $[\psi,\phi]$ has has breadth $0$.
\end{enumerate}

\begin{remark}\label{chainsumint}
Let $L$ be a modular lattice, $a_1,\ldots,a_n\in L$ and $b_1,\ldots, b_m\in L$. If $[(\bigsqcup_{i=1}^na_i)\sqcap\bigsqcap_{j=1}^mb_j,\bigsqcup_{i=1}^n a_i ]$ is a chain then $[b_j\sqcap a_i,a_i]$ is a chain for all $1\leq i\leq n$ and $1\leq j\leq m$.
\end{remark}
%\begin{proof}
%If $[(\bigsqcup_{i=1}^na_i)\sqcap\bigsqcap_{j=1}^mb_j,\bigsqcup a_i ]$ is a chain then $[b_j\sqcap \bigsqcup_{i=1}^na_i,\bigsqcup_{i=1}^na_i]$ is a chain for all $1\leq j\leq m$. Since $L$ is modular $[b_j\sqcap \bigsqcup_{i=1}^na_i,\bigsqcup_{i=1}^na_i]$ is isomorphic to $[b_j,\bigsqcup_{i=1}^n a_i\sqcup b_j]$. So $[b_j,a_i\sqcup b_j]$ is a chain for all $1\leq i\leq n$ and $1\leq j\leq m$.
%\end{proof}

\begin{lemma}\label{trivialorchain}
Suppose $R$ is not a field. Let $a,d,g,h,w\in R$ with $g,h\neq 0$ be such that $aR+dR=R$. If the interval \[[(awg|x+xh=0)\wedge g|x\wedge xdwh=0, g|x\wedge xdwh=0],\] is trivial then $w$ is a unit and if it is a chain then either $w$ is a unit of $R$ or $w$ is irreducible.
\end{lemma}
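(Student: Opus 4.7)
The proof splits into two parts, one for each conditional.

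For the \emph{trivial case}: since $\phi_1 \le \phi_2$ is automatic, triviality amounts to $\phi_2 \le \phi_1$, and the common conjuncts $g|x \wedge xdwh=0$ on both sides reduce this to $g|x \wedge xdwh = 0 \le awg|x + xh = 0$. By \ref{Zgbasicmanip} applied with $A = hR$, $B = awgR$, $C = gR$, $D = dwhR$, this is equivalent to $ghR \subseteq awghR + gdwhR = ghw(aR + dR)$, which by the hypothesis $aR + dR = R$ simplifies to $ghR \subseteq ghwR$. Since $gh \ne 0$ in the domain $R$, cancellation yields $R \subseteq wR$, i.e.\ $w$ is a unit.

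For the \emph{chain case}, I argue by contrapositive. Assuming $w$ is neither a unit nor irreducible, write $w = w_1 w_2$ with $w_1, w_2$ both non-units, and consider
\[
\sigma := (aw_1 g|x + xh = 0) \wedge g|x \wedge xdwh = 0, \qquad \sigma' := (awg|x + xw_1 h = 0) \wedge g|x \wedge xdwh = 0.
\]
Both lie in $[\phi_1, \phi_2]$: $\sigma \ge \phi_1$ since $w_1 \mid w$ gives $awg|x \le aw_1 g|x$; $\sigma' \ge \phi_1$ since $xh=0 \le xw_1 h = 0$; and each is clearly $\le \phi_2$ (it is $\phi_2$ conjoined with an additional formula).

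I will show $\sigma$ and $\sigma'$ are incomparable. Using distributivity of $\pp_R^1$ (valid as $R$ is arithmetical) together with $aw_1 g|x, awg|x \Rightarrow g|x$ and $xh = 0, xw_1 h = 0 \Rightarrow xdwh = 0$ (the latter because $w_1 h \mid dwh$, as $dwh = w_1 h \cdot dw_2$), both formulae simplify to
\[
\sigma = (aw_1 g|x \wedge xdwh=0) + (xh=0 \wedge g|x), \qquad \sigma' = (awg|x \wedge xdwh=0) + (xw_1 h=0 \wedge g|x).
\]
Since a sum lies below a formula iff each summand does, $\sigma \le \sigma'$ decomposes into two sub-inequalities; the one for the summand $xh=0 \wedge g|x$ is trivial, because $xh=0 \le xw_1 h = 0$ places it below the matching summand of $\sigma'$. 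The remaining inequality, $aw_1 g|x \wedge xdwh=0 \le \sigma'$, is handled by \ref{orderppflaarith}: the ideal sum that appears, $awgR + agdwR + aw_1 g dwh R$, collapses via $aR + dR = R$ (so that $agwR + agdwR = agw(R + dR) = agwR$) to $agwR$, and the condition becomes $aw_1 gR \subseteq awgR$, i.e.\ $w \mid w_1$, equivalently $w_2$ is a unit. A symmetric calculation shows $\sigma' \le \sigma$ reduces to $w_1$ being a unit. Since neither $w_1$ nor $w_2$ is a unit, $\sigma$ and $\sigma'$ are incomparable, and so $[\phi_1, \phi_2]$ is not a chain. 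The main technical obstacle is the ideal bookkeeping in the application of \ref{orderppflaarith}, but repeated use of $aR + dR = R$ to collapse expressions of the form $K + dK$ to $K$ makes the reductions routine.
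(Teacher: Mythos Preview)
Your approach is essentially the paper's: the pair $\sigma,\sigma'$ is exactly the paper's $\phi_1\wedge\phi,\,\phi_2\wedge\phi$ (with $v=w_1$, $u=w_2$), and you verify incomparability via \ref{orderppflaarith} rather than by exhibiting free realisations as the paper does. That alternative verification is perfectly legitimate, and your ideal computation for $\sigma\leq\sigma'$ is correct (though the appeal to $aR+dR=R$ is misplaced: $awgR+adwgR=awg(R+dR)=awgR$ is trivial).

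There is, however, a genuine gap. Your reduction ``$aw_1gR\subseteq awgR$, i.e.\ $w_2$ is a unit'' silently cancels $aw_1g$, which requires $a\neq 0$ and $w_1\neq 0$. If $a=0$ (forced when $d$ is a unit) then $aw_1gR=awgR=0$ and the inclusion is vacuous, so $\sigma\leq\sigma'$ holds automatically and your pair is comparable; likewise if $w=0$ and you happen to pick $w_1=0$. The paper deals with this explicitly: it chooses the factorisation $w=uv$ with $v\neq 0$ (possible since $R$ is not a field), and it assumes $a\neq 0$, deferring the case $a=0$ (equivalently $d\neq 0$) to Prest's duality, which interchanges the roles of $(a,g)$ and $(d,h)$ and reverses the interval. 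Your ``symmetric calculation'' for $\sigma'\leq\sigma$, which reduces to $gR\subseteq gw_1R$, does survive $a=0$, but that only shows $\sigma'\nleq\sigma$; you still need $\sigma\nleq\sigma'$, and that is exactly what fails. Adding the reduction to $a\neq 0$ via duality (and choosing $w_1\neq 0$ when $w=0$) fixes the argument.
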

\begin{proof}
The first statement follows from \ref{Zgbasicmanip} because $(dwhR:h)+(awgR:g)=w(dR+aR)$.

The condition $aR+dR=R$ implies that either $a\neq 0$ or $d\neq 0$. We will prove the second claim of the lemma under the assumption that $a\neq 0$ (Prest's duality interchanges this case with the case where $d\neq 0$).

Suppose that $w$ is not a unit and $w$ is not irreducible. There exists non-units $u,v\in R$ with $v\neq 0$ such that $w=uv$. Let $\phi$ be $g|x\wedge xduvh=0$ and $\psi$ be $auvg|x+xh=0$. Let $\phi_1$ be $avg|x+xh=0$ and let $\phi_2$ be $auvg|x+xvh=0$.

Note that $\phi_1,\phi_2\geq \psi$. Hence it is enough to show that $\phi_1\wedge\phi$ and $\phi_2\wedge\phi$ are incomparable elements of $\pp^1_R$.
Using the distributivity of $\pp_R^1$, it is easy to see that $\phi_1\wedge \phi$ is
\[avg|x\wedge xduvh=0+ g|x\wedge xh=0\] and $\phi_2\wedge \phi$ is
\[auvg|x\wedge xduvh=0 +g|x\wedge xvh=0.\]

\noindent
\textbf{Claim 1:} $avg|x\wedge xduvh=0\nleq \phi_2$

\noindent
Let $I:=avgduvhR$ and $M:=R/I$. In $M$, $avg+I$ satisfies $avg|x\wedge xduvh=0$. Now $\phi_2(M)$ is equal to
\[(avgR\cap (I:duvh)+gR\cap(I:vh))+I=(auvgR+avgduR)+I=auvgR+I.\] Since $u$ is not a unit and $auvgR\supseteq I$, $avg+I\notin auvgR+I=\phi_2(M)$.

\smallskip
\noindent
\textbf{Claim 2:} $g|x\wedge xvh=0\nleq\phi_1$

\noindent
Let $I:=gvhR$ and $M:=R/I$. In $M$, $g+I$ satisfies $g|x\wedge xvh=0$. Now $\phi_1(M)$ is equal to
\[(avgR\cap (gvhR:duvh)+gvR)+I=gvR+I.\] Since $v$ is not a unit and $gvR\supseteq I$, $g +I\notin gvR+I=\phi_1(M)$.

\smallskip
\noindent
By claim $1$, $\phi_1\wedge\phi\nleq \phi_2$ and by claim $2$, $\phi_2\wedge\phi\nleq \phi_1$.
\end{proof}

%{\color{red}\begin{lemma}\label{notchain}
%Let $g,h\in R\backslash\{0\}$. The interval $[xh=0\wedge g|x,g|x]$ is a chain if and only if $R$ is a field.
%\end{lemma}
%\begin{proof}
%The interval $[xh=0\wedge g|x,g|x]$ is isomorphic to $[xgh=0,x=x]$. Suppose $a\in R\backslash\{0\}$ is not a unit. Let $\phi_1$ be $a|x+xgh=0$ and $\phi_2$ be $xgha=0$. Then $\phi_1(R)=aR$ and $\phi_2(R)=0$. So $\phi_2\ngeq\phi_1$. On the other hand $\phi_2(R/ghaR)=R/ghaR$ and $\phi_1(R/ghaR)=aR+ghaR$. So $\phi_1\ngeq\phi_2$. Hence $\phi_1$ and $\phi_2$ are incomparable.
%\end{proof}
%}

\begin{lemma}
Suppose $R$ is not a field. If $[\psi\wedge \phi,\phi]$ is a chain then $[\pi_S(\psi\wedge \phi),\pi_S(\phi)]$ is the trivial interval.
\end{lemma}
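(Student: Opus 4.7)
The plan is to reduce to an atomic configuration of the formulae and then invoke \ref{trivialorchain}. By the first half of \ref{Specialform}, write $\phi = \sum_i \phi_i$ with $\phi_i = C_i|x \wedge xD_i = 0$; by the second half, write $\psi = \bigwedge_j \psi_j$ with $\psi_j = xA_j = 0 + B_j|x$. Then \ref{chainsumint} forces each interval $[\phi_i \wedge \psi_j, \phi_i]$ to be a chain. Since $\pi_S$ is a lattice homomorphism, establishing $\pi_S(\phi_i) \leq \pi_S(\psi_j)$ in each atomic case will give $\pi_S(\phi) = \sum_i \pi_S(\phi_i) \leq \bigwedge_j \pi_S(\psi_j) = \pi_S(\psi)$, hence $\pi_S(\phi \wedge \psi) = \pi_S(\phi)$.

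Having reduced to $\phi = cR|x \wedge xdR = 0$ and $\psi = xaR = 0 + bR|x$ for $a,b,c,d \in R$ (with the degenerate zero cases handled separately), use \ref{Zgbasicmanip} inside the B\'ezout ring $R_S$ to rephrase the target inequality: $\pi_S(\phi) \leq \pi_S(\psi)$ is equivalent to $(bR_S : cR_S) + (dR_S : aR_S) = R_S$. Computing the ideal quotients via the principal B\'ezout structure, this reduces to the arithmetic assertion that
\[ w := \gcd\!\left(\tfrac{b}{\gcd(b,c)},\, \tfrac{d}{\gcd(d,a)}\right) \]
is a unit in $R_S$, i.e., $w \in S$, i.e., $w$ is a product of units and irreducibles of $R$.

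This arithmetic conclusion is precisely the output of \ref{trivialorchain}: a chain hypothesis on an interval of the form $[(a_L w g|x + xhR=0) \wedge (g|x \wedge xd_L w h R = 0),\, g|x \wedge xd_L w h R = 0]$ with $a_L R + d_L R = R$ yields that $w$ is a unit or irreducible, hence in $S$. Thus one matches the B\'ezout factorizations of $a,b,c,d$ (writing $b = \gcd(b,c) w \alpha$, $d = \gcd(d,a) w \delta$ with $\gcd(\alpha,\delta) = 1$) to the parameters of \ref{trivialorchain} to ensure the hypothesis transfers from $[\phi \wedge \psi, \phi]$ to the invoked interval.

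The principal obstacle is this parameter matching: the form $g|x \wedge xd_L whR = 0$ demanded by \ref{trivialorchain} forces divisibility relations ($a \mid d$ and $c \mid b$) that need not hold outright for general $(a,b,c,d)$. The workaround is either (i) to absorb the discrepant common factors $\gcd(b,c)$ and $\gcd(d,a)$ by passing to a suitable sub-interval of $[\phi \wedge \psi, \phi]$ (exploiting the modular-lattice isomorphism $[\phi \wedge \psi, \phi] \cong [\psi, \phi \vee \psi]$), or (ii) to argue contrapositively: if $w \notin S$, choose a maximal ideal $\mfrak{m}$ of $R$ disjoint from $S$ and containing $w$; then $R_\mfrak{m}$ is a valuation domain whose value group contains no atoms, so $w$ factors as $uv$ with $u,v$ non-units in $R_\mfrak{m}$, and the explicit recipe in the proof of \ref{trivialorchain} exhibits incomparable pp-$1$-formulae in $[\phi \wedge \psi, \phi]$, contradicting the chain hypothesis.
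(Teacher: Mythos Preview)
Your overall strategy matches the paper's: reduce via \ref{Specialform} and \ref{chainsumint} to the atomic case, identify the element $w$ that must land in $S$, and invoke \ref{trivialorchain}. Your computation of $w$ as $\gcd\!\bigl(b/\gcd(b,c),\,d/\gcd(d,a)\bigr)$ is correct and coincides (after translating notation) with the paper's $w=\gcd(a',d')$.

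The gap is that you stop at the ``principal obstacle'' rather than resolving it, and neither workaround is actually carried out. The paper's resolution is simpler than either of your suggestions: there is no need to pass to a sub-interval or to argue contrapositively via a carefully chosen maximal ideal. Instead, one rewrites $\psi\wedge\phi$ so that the parameters line up directly with \ref{trivialorchain}. With $\phi = g|x \wedge xf=0$ and $\psi = c|x + xb=0$ (in the paper's letters), set $a'gR := cR\cap gR$, $hR := bR+fR$, and $f = d'h$. Then, using distributivity and \ref{idealslattemb},
\[
\psi\wedge\phi \;=\; (a'g|x + xh=0)\wedge g|x \wedge xf=0,
\]
so one may \emph{replace} $\psi$ by $\psi' := a'g|x + xh=0$ without changing the interval $[\psi\wedge\phi,\phi]$. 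Now $\phi = g|x\wedge xd'h=0$ and $\psi' = a'g|x + xh=0$ are exactly in the shape demanded by \ref{trivialorchain} once one writes $a'=aw$, $d'=dw$ with $aR+dR=R$. The conclusion $w\in S$ then gives $(d'hR_S:hR_S)+(a'gR_S:gR_S)=wR_S=R_S$, whence $\pi_S(\phi)\leq\pi_S(\psi)$ by \ref{Zgbasicmanip}. This rewriting is the missing step; your workaround~(i) gestures at it but does not execute it, and workaround~(ii) introduces an unjustified claim (that a maximal ideal containing $w$ and disjoint from $S$ exists with atom-free localized value group) that is both unnecessary and would itself require proof.
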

\begin{proof}
By \ref{Specialform}, we may assume $\phi$ is of the form $\sum_{i=1}^n g_i|x\wedge xf_i=0$ and $\psi$ is of the form $\bigwedge_{j=1}^n c_j|x+xb_j=0$. Now if  $[\pi_S(\psi_j\wedge \phi_i),\pi_S(\phi_i)]$ is a trivial interval for all $1\leq i\leq n$ and $1\leq j\leq m$ then so is $[\pi_S(\psi\wedge \phi),\pi_S(\phi)]$. So, by the \ref{chainsumint}, it is enough to prove the statement of the lemma for $\phi$ of the form $g|x\wedge xf=0$ and $\psi$ of the form $c|x+xb=0$.

Let $a',h,d'\in R$ be such that $a'gR=cR\cap gR$, $bR+fR=hR$ and $f=d'h$. Then $\psi\wedge \phi$ is equivalent to
\[(a'g|x+xh=0)\wedge g|x\wedge xf=0\] because $c|x\wedge g|x$ is equivalent to $a'g|x$ and $xb=0\wedge xf=0$ is equivalent to $xh=0$. Since taking intersections and sums of ideals commutes with localising, this equivalence also holds for $\phi$ and $\psi$ viewed as formulae over $R_S$. So, we may assume, $\phi$ is $g|x\wedge xd'h=0$ and $\psi$ is $a'g|x+xh=0$.

Let $w,a,d\in R$ be such that $wR=a'R+d'R$, $a'=aw$ and $d'=dw$. If $w\neq 0$ this implies $aR+dR=R$. If $w=0$ then we may choose $a=d=1$, so $aR+dR=R$. If $g=0$ then $\phi$ is equivalent to $x=0$ and if $h=0$ then $\psi$ is equivalent to $x=x$. Thus there is no harm in assuming that $g,h\neq 0$. We are now exactly in the situation of \ref{trivialorchain}.
Therefore, if $[\psi\wedge \phi,\phi]$ is a chain then either $w$ is a unit or $w$ is irreducible. But then $w$ is a unit in $R_S$. Therefore $(d'hR:h)+(a'gR:g)=R$. Hence, by \ref{Zgbasicmanip}, $[\pi_S(\psi\wedge \phi),\pi_S(\phi)]$ is the trivial interval.
%
%{\color{blue}By \ref{Specialform}, we may assume $\phi$ is of the form $\sum_{i=1}^n c_i|x\wedge xd_i$ and $\psi$ is of the form $\bigwedge_{j=1}^n a_j|x+xb_j$. Now if  $[\pi_S(\psi_j\wedge \phi_i),\pi_S(\phi_i)]$ is a trivial interval for all $1\leq i\leq n$ and $1\leq j\leq m$ then so is $[\pi_S(\psi\wedge \phi),\pi_S(\phi)]$. So, by the \ref{chainsumint}, it is enough to prove the statement of the lemma for $\phi$ of the form $c|x\wedge xd=0$ and $\psi$ of the form $a|x+xb=0$.
%
%Let $g,h,d'\in R$ be such that $cgR=cR\cap aR$, $bR+dR=hR$ and $d=d'h$. Then $\psi\wedge \phi$ is equivalent to
%\[(cg|x+xh=0)\wedge c|x\wedge xd'h=0\] because $a|x\wedge c|x$ is equivalent to $cg|x$ and $xb=0\wedge xd=0$ is equivalent to $xh=0$. Since taking intersections and sums of ideals commutes with localising, this equivalence also holds for $\phi$ and $\psi$ viewed as formulae over $R_S$.
%
%Let $w\in R$ be such that $wR=gR+d'R$. If $w=0$ then $d=g=0$. So $\phi$ is $c|x$ and $\psi\wedge \phi$ is $c|x\wedge xh=0$. It follows from \ref{notchain} that $[\psi\wedge \phi,\phi]$ is a chain if and only if it is the trivial interval. So we now assume that $w\neq 0$.
%
%If $c=0$ or $h=0$ then $[\psi\wedge \phi,\phi]$ is the trivial interval. So we assume $c,h\neq 0$. It now follows from \ref{trivialorchain} that if $[\psi\wedge \phi,\phi]$ is a chain then either $w$ is a unit or $w$ is irreducible. But then $w$ is a unit in $R_S$. Hence, by \ref{trivialorchain}, $[\pi_S(\psi\wedge \phi),\pi_S(\phi)]$ is the trivial interval.}
%
%
\end{proof}

%We will give two proofs of (2). One is short and slick but the other, at least in theory, given $\phi\geq \psi$ with $\pi_S(\phi)=\pi_S(\psi)$ explicitly produces $\phi=\sigma_n\geq \sigma_{n-1}\geq \ldots \sigma_0=\psi$ such that $[\sigma_i,\sigma_{i+1}]$ is totally ordered for all $0\leq i<n$. Both proofs rely on the next lemma.

It is easy to see that a non-zero element $w$ of a B\'ezout domain $R$ is irreducible if and only if $wR$ is maximal.

\begin{lemma}\label{xxcxchain}
Let $c\in R$ be irreducible and let $\mfrak{m}:=cR$. Then $[c|x,x=x]$ is isomorphic to $\Gamma(R_{\mfrak{m}})^+_\infty$ and $[x=0,xc=0]$ is isomorphic to $(\Gamma(R_{\mfrak{m}})^+_\infty)^{op}$.
\end{lemma}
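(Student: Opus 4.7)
The second isomorphism follows immediately from the first: Prest's duality $D:\pp_R^1 \to \pp_R^1$ is an anti-isomorphism sending $c|x$ to $xc=0$ and $x=x$ to $x=0$, so it restricts to an anti-isomorphism $[c|x,x=x] \to [x=0,xc=0]$, which yields $[x=0,xc=0]\cong(\Gamma(R_\mathfrak{m})_\infty^+)^{op}$ from the first claim. I therefore focus on the first claim. Set $V:=R_\mathfrak{m}$; since $R$ is B\'ezout and $c$ is irreducible, $\mathfrak{m}=cR$ is maximal and $V$ is a valuation domain with maximal ideal $cV$. My plan is to factor the claimed isomorphism as $\Gamma(V)_\infty^+ \xrightarrow{\Phi} [c|x,x=x]_V \xleftarrow{\pi_\mathfrak{m}} [c|x,x=x]_R$ and show that each arrow is a lattice isomorphism.

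For the localisation step, observe that for any maximal ideal $\mathfrak{n}\neq\mathfrak{m}$ of $R$ we have $c\notin\mathfrak{n}$, so $c$ is a unit in $R_\mathfrak{n}$; hence $\pi_\mathfrak{n}(c|x)=(x=x)$ and $\pi_\mathfrak{n}$ collapses the whole interval $[c|x,x=x]_R$ to a point. Combined with the cover $\Zg(R)=\bigcup_\mathfrak{n}\Zg(R_\mathfrak{n})$ from Lemma \ref{Zglocglob}, this shows that an element of $[c|x,x=x]_R$ is determined by its image under $\pi_\mathfrak{m}$, giving injectivity. Surjectivity onto $[c|x,x=x]_V$ is routine: any $\bar\phi\in[c|x,x=x]_V$ can be lifted to some $\phi'\in\pp_R^1$ by clearing denominators in its defining matrices, after which $\phi:=\phi'+c|x$ lies in $[c|x,x=x]_R$ and satisfies $\pi_\mathfrak{m}(\phi)=\bar\phi+c|x=\bar\phi$.

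Over $V$, define $\Phi:\Gamma(V)_\infty^+\to[c|x,x=x]_V$ by $\Phi(aV):=c|x+xa=0$, so that $a=1$ gives $\Phi(V)=c|x$ and $a=0$ gives $\Phi(\infty)=x=x$. Well-definedness and order-preservation are immediate, and the lattice-homomorphism property follows from distributivity of $\pp_V^1$ together with the valuation-domain identities $xa=0+xb=0\equiv x\ell=0$ (where $\ell$ generates $aV\cap bV$) and $xa=0\wedge xb=0\equiv xg=0$ (where $g$ generates $aV+bV$). For injectivity, if $aV\subsetneq bV$ write $a=bk$ for some non-unit $k\in cV$; evaluating at the module $V/aV$ gives $\Phi(aV)(V/aV)=V/aV$ while $\Phi(bV)(V/aV)=(cV+kV)/aV\subsetneq V/aV$, since $c,k\in cV$.

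The essential step is surjectivity of $\Phi$. Given $\phi\in[c|x,x=x]_V$, write $\phi=\sum_{i=1}^n (a_i|x\wedge xb_i=0)$ using the special form of Corollary \ref{Specialform}. The crucial observation, exploiting that $cV$ is the \emph{unique} maximal ideal of $V$, is that each non-zero $a_i\in V$ is either a unit (whence $a_i|x\wedge xb_i=0\equiv xb_i=0$) or lies in $cV$ (whence $a_i|x\leq c|x$, so the summand is absorbed by $c|x$). Setting $I_u:=\{i:a_i\in V^\times\}$ and letting $b$ generate $\bigcap_{i\in I_u}b_iV$, the analysis gives $\phi\leq c|x+\sum_{i\in I_u}xb_i=0=c|x+xb=0$; the reverse inequality follows from $\phi\geq c|x$ together with $\phi\geq xb_i=0$ for each $i\in I_u$, forcing $\phi=\Phi(bV)$. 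The main conceptual hurdle is spotting this collapse: once one sees that maximality of $cV$ makes every non-unit of $V$ divisible by $c$, the totally ordered cone $\Gamma(V)_\infty^+$ emerges from $[c|x,x=x]_V$.
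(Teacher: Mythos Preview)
Your proof is correct and reaches the same normal form $c|x+xb=0$ as the paper, but by a genuinely different route. The paper works entirely over $R$: it uses the \emph{conjunction} special form $\phi=\bigwedge_i(d_i|x+xb_i=0)$, evaluates in the module $R$ to get $\bigcap_i d_iR=\phi(R)\supseteq cR$, forcing each $d_i$ to be a unit or an associate of $c$, and then establishes the order relation by a free-realisation computation in $R/bR$ that produces the condition $b'\in bR_{\mathfrak m}$ directly---so localisation appears only at the very end. You instead front-load the localisation by proving $[c|x,x=x]_R\cong[c|x,x=x]_V$ via the cover $\Zg(R)=\bigcup_{\mathfrak n}\Zg(R_{\mathfrak n})$, and then work over the valuation domain $V$ using the \emph{sum} special form and the fact that every non-unit of $V$ lies in $cV$; your injectivity check by evaluation at $V/aV$ replaces the paper's free-realisation argument. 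Your approach is more modular and makes the role of localisation explicit, while the paper's is slightly shorter because it does not verify the interval isomorphism $\pi_{\mathfrak m}$ as a separate step. One minor remark: your injectivity argument, combined with order-preservation and the fact that $\Gamma(V)_\infty^+$ is totally ordered, does give an order-embedding, but it is worth saying this explicitly rather than leaving it implicit.
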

\begin{proof}
Suppose $\phi\geq c|x$. Let $d_i,b_i\in R$ be such that $\phi$ is equivalent to $\bigwedge_{i=1}^n(d_i|x+xb_i=0)$. Since $\phi\geq c|x$ and $R$ is an integral domain, $\bigcap_{i=1}^nd_iR=\phi(R)\supseteq cR$. So $c\in d_iR$ for all $1\leq i\leq n$. Therefore, for all $1\leq i\leq n$, either $d_i$ is a unit and hence $d_i|x$ is equivalent to $x=x$, or, $d_iR=cR$ and hence $d_i|x$ is equivalent to $c|x$. So we may assume $\phi$ is equivalent to  $\bigwedge_{i=1}^n(c|x+xb'_i=0)$ for some $b_i'\in R$. Since $\pp_R^1$ is distributive, $\bigwedge_{i=1}^n(c|x+xb'_i=0)$ is equivalent to $c|x+\bigwedge_{i=1}^nxb'_i=0$. Let $b\in R$ be such that $bR=\sum_{i=1}^nb_i'R$. By \ref{commringsumintpp}, $\bigwedge_{i=1}^nxb'_i=0$ is equivalent to $xb=0$. Therefore $\phi$ is equivalent to $c|x+xb=0$.

Now we show that $c|x+xb=0\leq c|x+xb'=0$ if and only if $b'\in bR_{\mfrak{m}}$. The formula $xb=0$ is freely realised by $1+bR\in R/bR$. Therefore $xb=0\leq c|x+xb'=0$ if and only if $1\in cR+(bR:b')$. Now, $1\in cR+(bR:b')$ if and only if there exist $r,s,\mu\in R$ such that $1=cr+\mu$ and $\mu b'=bs$. So $1\in cR+(bR:b')$ implies $\mu\notin \mfrak{m}$ and hence $b'\in bR_{\mfrak{m}}$. Conversely, if $b'\in bR_{\mfrak{m}}$ then there exists $s,\mu'\in R$ such that $\mu' b'=bs$ and $\mu'\notin\mfrak{m}$. Therefore there exist $r,t\in R$ such that $1=cr+\mu't$. Since $1=cr+\mu't$ and $(\mu' t) b'=b(st)$, we get $1\in cR+(bR:b')$. Thus, we have shown that $xb=0\leq c|x+xb'=0$ if and only if $b'\in bR_{\mfrak{m}}$.

Therefore, the map which sends $bR_{\mfrak{m}}$ to $c|x+xb=0$ is a lattice isomorphism from $\Gamma(R_{\mfrak{m}})^+_\infty$ to $[c|x,x=x]$. The second claim of the lemma follows from the first using Prest's duality.
\end{proof}

\begin{proposition}\label{direction2} %Let $R$ be a B\'ezout domain.
Let $\phi\geq \psi\in\pp_R^1$. If $\pi_S(\phi)=\pi_S(\psi)$ then $[\psi,\phi]$ has breadth $0$.
\end{proposition}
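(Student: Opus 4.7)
The plan is to reduce to a canonical special form and then induct on the number of irreducible factors of a witness element $w$. First, by Corollary~\ref{Specialform} and the distributivity of $\pp_R^1$---together with standard manipulations (transitivity of breadth $0$ across concatenated intervals, and the modular isomorphism $[a \wedge b, b] \cong [a, a \vee b]$)---the problem reduces to showing $[\psi_j \wedge \phi_i, \phi_i]$ has breadth $0$ for single special forms $\psi_j = xA = 0 + B|x$ and $\phi_i = C|x \wedge xD = 0$ satisfying $\pi_S(\psi_j \wedge \phi_i) = \pi_S(\phi_i)$. Applying the change of variables from the preceding unnamed lemma, this becomes $\phi = g|x \wedge xd'h = 0$ and $\psi = a'g|x + xh = 0$. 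Let $w \in R$ generate $a'R + d'R$, and write $a' = aw$, $d' = dw$ with $aR + dR = R$. By \ref{Zgbasicmanip}, the hypothesis $\pi_S(\psi \wedge \phi) = \pi_S(\phi)$ translates to $wR_S = R_S$, i.e., to $w \in S$; hence $w$ is a product of a unit and $n \geq 0$ irreducibles $c_1, \ldots, c_n$.

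We induct on $n$. For the base case $n = 0$, $w$ is a unit, so \ref{Zgbasicmanip} gives $\phi \leq \psi$, and the interval is trivial. For the inductive step, since $\gcd(a, d) = 1$, at least one of $c_1 \nmid a$ or $c_1 \nmid d$ holds; the case $c_1 \nmid d$ is symmetric to the first via Prest's duality (which interchanges the roles of $a', g$ with $d', h$), so we may assume $c_1 \nmid a$. Set $a'_1 := a'/c_1 = a(w/c_1) \in R$ and define the intermediate $\psi_1 := a'_1 g|x + xh = 0$, so $\psi \leq \psi_1$ and $\psi \wedge \phi \leq \psi_1 \wedge \phi \leq \phi$. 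A further application of \ref{Zgbasicmanip} (using $c_1 \nmid a$ to get $aR + dc_1 R = R$) shows that the witness for the pair $(\psi_1, \phi)$ is $w/c_1$, with $n - 1$ irreducible factors; by the inductive hypothesis, $[\psi_1 \wedge \phi, \phi]$ has breadth $0$.

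The main obstacle is showing $[\psi \wedge \phi, \psi_1 \wedge \phi]$ is a chain. This interval is the image of $[a'g|x, a'_1 g|x]$ under the two order-preserving maps $\sigma \mapsto \sigma + (xh = 0)$ and $\tau \mapsto \tau \wedge \phi$, and since the image of a chain under an order-preserving map is again a chain, it suffices to show that $[a'g|x, a'_1 g|x]$ is a chain. Writing $r := a'_1 g \neq 0$, this interval equals $[c_1 r|x, r|x]$. For any indecomposable pure-injective $N \in \Zg(R) \setminus \Zg(R_{c_1 R})$, multiplication by $c_1$ is bijective on $N$ (as $c_1 \notin \Att N$ by \ref{Zglocglob}), so $c_1 r|x$ and $r|x$ already coincide on $N$; consequently every $\theta \in [c_1 r|x, r|x]$ is determined by its restriction to $\Zg(R_{c_1 R})$, making the localization map $\pi_{R_{c_1 R}}$ injective on this interval. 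Its image lies in $[c_1 r|x, r|x]$ inside $\pp_{R_{c_1 R}}^1$, and since $c_1 R$ is maximal and $R_{c_1 R}$ is a valuation domain, this latter interval corresponds---by \ref{xxcxchain} after shifting $r$ to a power of $c_1$ using that $r$ is a unit times a power of $c_1$ in $R_{c_1 R}$---to an interval of $\Gamma(R_{c_1 R})_\infty^+$, which is totally ordered. Hence $[c_1 r|x, r|x]$ is a chain, completing the induction.
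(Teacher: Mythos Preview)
Your inductive strategy is genuinely different from the paper's argument, but the final step contains a real gap. You claim that in $R_{c_1R}$ the element $r$ is ``a unit times a power of $c_1$'', and then invoke Lemma~\ref{xxcxchain}. This assertion is equivalent to $R_{c_1R}$ being a discrete valuation ring. While $R_{c_1R}$ is a valuation domain with \emph{principal} maximal ideal $c_1R_{c_1R}$ (since $c_1R$ is maximal in $R$), that does not force it to be a DVR. Concretely, take $R$ a B\'ezout (in fact valuation) domain with value group $\Z\times\Z$ ordered lexicographically, let $c_1$ have value $(0,1)$ and $r$ have value $(1,0)$; then $R_{c_1R}=R$, yet $c_1^k\mid r$ for every $k$, so $r$ is not a unit times a power of $c_1$. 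Your argument that $[c_1r\,|\,x,\ r\,|\,x]$ is a chain therefore does not go through as written, and it is not clear how to repair it without essentially changing approach. (There is also a smaller point: you assert that the interval $[\psi\wedge\phi,\psi_1\wedge\phi]$ \emph{equals} the image of $[a'g\,|\,x,\ a'_1g\,|\,x]$ under the two maps; this needs surjectivity, which does hold by distributivity of $\pp_R^1$, but you should say so.)

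The paper's proof sidesteps all of this. Instead of factoring $w$ and inducting, it invokes Proposition~\ref{closedsetforLdim}: the interval $[\psi,\phi]$ has breadth $0$ precisely when $\phi(N)=\psi(N)$ for every $N$ in the closed set $\mcal{C}_{\Ch}$. Since $\pi_S(\phi)=\pi_S(\psi)$ already guarantees $\phi(N)=\psi(N)$ for every $N\in\Zg(R_S)$, it suffices to prove $\mcal{C}_{\Ch}\subseteq\Zg(R_S)$. But if $N\notin\Zg(R_S)$ then some irreducible $c$ acts non-bijectively on $N$, so $N$ opens one of the pairs $\left(\nf{x=x}{c|x}\right)$ or $\left(\nf{xc=0}{x=0}\right)$; by Lemma~\ref{xxcxchain} these intervals are chains, hence $N\notin\mcal{C}_{\Ch}$. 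This is a three-line argument that never needs to analyse $[c_1r\,|\,x,\ r\,|\,x]$ at all.
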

\begin{proof}
Let $\mcal{C}_\Ch$ be the closed subset of $\Zg(R)$ consisting of exactly those indecomposable pure-injective $R$-modules $N$ such that if $[\psi,\phi]\subseteq \pp_R^1$ is totally ordered then $\phi(N)=\psi(N)$. By \ref{closedsetforLdim}, for $\phi\geq \psi$, $[\psi,\phi]$ has breadth $0$ if and only if $\phi(N)=\psi(N)$ for all $N\in\mcal{C}_\Ch$. Therefore, the proposition is equivalent to $\Zg(R_S)\supseteq \mcal{C}_\Ch$.

Suppose $N\in\Zg(R)\backslash\Zg(R_S)$. Then some element of $S$ acts non-bijectively on $N$. Hence some irreducible element $c\in R$ acts non-bijectively on $N$. So either $N\in\left(\nf{x=x}{c|x}\right)$ or $N\in \left(\nf{xc=0}{x=0}\right)$. By \ref{xxcxchain}, $[c|x,x=x]$ and $[x=0,xc=0]$ are totally ordered. Therefore $N\notin \mcal{C}.$
\end{proof}

Thus we have now proved \ref{breadthcollapseloc}. We record a corollary which will be used in the next section.

\begin{cor}\label{movetogoodopen}
%Let $R$ be a B\'ezout domain and $N\in\Zg(R)$.
Let $N\in\Zg(R)$.
There exist $\phi\geq \psi\in\pp_R^1$ such that $[\psi,\phi]$ is totally ordered with $N\in\left(\nf{\phi}{\psi}\right)$ if and only if there exists $c\in R$ irreducible such that $N\in\left(\nf{x=x}{c|x}\right)\cup\left(\nf{xc=0}{x=0}\right)$.
\end{cor}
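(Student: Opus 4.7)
The plan is to split into the two directions, with the easy direction coming from the explicit chain description in Lemma \ref{xxcxchain} and the hard direction coming from Proposition \ref{breadthcollapseloc} combined with the description of $\Zg(R_S)$ as a closed subset.

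For the $(\Leftarrow)$ direction, suppose $c \in R$ is irreducible. Since $R$ is B\'ezout, $cR$ is a maximal ideal, so $R_{cR}$ is a valuation domain, whence $\Gamma(R_{cR})_\infty^+$ is totally ordered. By Lemma \ref{xxcxchain} the intervals $[c|x, x=x]$ and $[x=0, xc=0]$ are therefore totally ordered, so taking $(\phi,\psi)=(x=x,c|x)$ or $(xc=0,x=0)$ according to which of the two open sets contains $N$ produces the required pair.

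For the $(\Rightarrow)$ direction, suppose $\phi \geq \psi$ with $[\psi,\phi]$ totally ordered and $N \in (\phi/\psi)$. Set $S := S(R)$. By Proposition \ref{breadthcollapseloc} we have $\pi_S(\phi)=\pi_S(\psi)$. If $N$ were (the restriction of) an $R_S$-module, then $\phi(N)=\pi_S(\phi)(N)=\pi_S(\psi)(N)=\psi(N)$, contradicting $N \in (\phi/\psi)$. Hence $N \notin \Zg(R_S)$. Using the identification of $\Zg(R_S)$ recalled in Section \ref{SPreI} as the complement in $\Zg(R)$ of $\bigcup_{s \in S}\left(\left(\nf{x=x}{s|x}\right)\cup\left(\nf{xs=0}{x=0}\right)\right)$, there exists $s \in S$ such that $N \in \left(\nf{x=x}{s|x}\right)\cup\left(\nf{xs=0}{x=0}\right)$, equivalently $s \in \Att N$.

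The final step is to pass from such an $s$ to an irreducible element of $\Att N$. Since $S=S(R)$ is the multiplicative closure of the units and the irreducible elements of $R$, we can write $s = u c_1 \cdots c_k$ with $u$ a unit and each $c_i$ irreducible. A composition of bijective multiplications is bijective; since $s$ acts non-bijectively on $N$ and $u$ acts bijectively, some $c_i$ must act non-bijectively on $N$. Then $c_i$ is irreducible and $N \in \left(\nf{x=x}{c_i|x}\right)\cup\left(\nf{xc_i=0}{x=0}\right)$, as required. The main (already done) obstacle is Proposition \ref{breadthcollapseloc}; the rest of the argument is a short bookkeeping exercise using properties of $S(R)$ and of the Ziegler spectrum under localisation.
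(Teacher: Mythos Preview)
Your proof is correct and follows essentially the same approach as the paper's: both directions hinge on Proposition~\ref{breadthcollapseloc} to conclude $N\notin\Zg(R_S)$, then factor an element of $S$ into irreducibles to locate $c$, with Lemma~\ref{xxcxchain} supplying the chain for the converse. The only cosmetic difference is that the paper first observes the statement is equivalent to its ``breadth $0$'' version and then handles both directions at once via the equivalence $[\psi,\phi]$ has breadth $0$ iff $(\phi/\psi)\cap\Zg(R_S)=\emptyset$, whereas you argue the two directions separately; the content is the same.
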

\begin{proof}
%{\color{blue}Recall that $[\psi,\phi]$ has breadth $0$ if and only if there exists $\psi=:\phi_0\leq \phi_1\leq\ldots\leq \phi_{n+1}:=\phi$ such that $[\phi_i,\phi_{i+1}]$ is totally ordered for $0\leq i\leq n$. Therefore for $N\in\Zg(R)$, there exists $\phi\geq \psi\in\pp_R^1$ such that $[\psi,\phi]$ is totally ordered with $N\in\left(\nf{\phi}{\psi}\right)$ if and only if there exists $\phi\geq \psi\in\pp_R^1$ such that $[\psi,\phi]$ has breadth $0$ with $N\in\left(\nf{\phi}{\psi}\right)$. Proposition \ref{breadthcollapseloc} implies that for all $\phi\geq \psi\in \pp_R^1$, $[\psi,\phi]$ has breadth $0$ if and only if $\left(\nf{\phi}{\psi}\right)\cap \Zg(R_S)=\emptyset$. Hence, since $\Zg(R_S)$ is closed, here exist $\phi\geq \psi\in\pp_R^1$ such that $[\psi,\phi]$ is totally ordered with $N\in\left(\nf{\phi}{\psi}\right)$ if and only if $N\notin \Zg(R_S)$.}
%
%
Recall that $[\psi,\phi]$ has breadth $0$ if and only if there exist $\psi=:\phi_0\leq \phi_1\leq\ldots\leq \phi_{n+1}:=\phi$ such that $[\phi_i,\phi_{i+1}]$ is totally ordered for $0\leq i\leq n$. Therefore, it is enough to prove the corollary with ``totally ordered'' replaced with ``breadth $0$''. Proposition \ref{breadthcollapseloc} implies that for all $\phi\geq \psi\in \pp_R^1$, $[\psi,\phi]$ has breadth $0$ if and only if $\left(\nf{\phi}{\psi}\right)\cap \Zg(R_S)=\emptyset$. Hence, since $\Zg(R_S)$ is closed, here exist $\phi\geq \psi\in\pp_R^1$ such that $[\psi,\phi]$ has breadth $0$ with $N\in\left(\nf{\phi}{\psi}\right)$ if and only if $N\notin \Zg(R_S)$. Since $S$ is generated by the irreducible elements of $R$, $N\notin \Zg(R_S)$ if and only if there exists an irreducible element $c\in R$ such that $N\in\left(\nf{x=x}{c|x}\right)$ or $N\in \left(\nf{xc=0}{x=0}\right)$.
%
%
%Since $S$ is generated by the irreducible elements of $R$, $N\notin \Zg(R_S)$ if and only if $N\in\left(\nf{x=x}{c|x}\right)\cup\left(\nf{xc=0}{x=0}\right)$.
%
%
%The reverse direction follows from
%
%
%
%
%
%Hence for all $\phi\geq \psi\in \pp_R^1$, $(\phi/\psi)\cap \mcal{C}_\Ch=\emptyset$ if and only if $(\phi/\psi)\cap \Zg(R_S)=\emptyset$. Since $\mcal{C}_\Ch$ and $\Zg(R_S)$ are closed subsets of $\Zg(R)$, this implies they are equal.
%
%Now, $N\notin \Zg(R_S)$ if and only if there exists an irreducible element $c\in R$ such that $N\in\left(\nf{x=x}{c|x}\right)$ or $N\in \left(\nf{xc=0}{x=0}\right)$. See the proof of \ref{direction2} for the forward direction. The reverse direction is clear. Thus, the proposition now follows from the previous paragraph.
%
%
%
%Proposition \ref{breadthcollapseloc} implies that for all $\phi\geq \psi\in \pp_R^1$, $[\psi,\phi]$ has breadth $0$ if and only if $\left(\nf{\phi}{\psi}\right)\cap \Zg(R_S)=\emptyset$. Hence for all $\phi\geq \psi\in \pp_R^1$, $(\phi/\psi)\cap \mcal{C}_\Ch=\emptyset$ if and only if $(\phi/\psi)\cap \Zg(R_S)=\emptyset$. Since $\mcal{C}_\Ch$ and $\Zg(R_S)$ are closed subsets of $\Zg(R)$, this implies they are equal.
%
%Now, $N\notin \Zg(R_S)$ if and only if there exists an irreducible element $c\in R$ such that $N\in\left(\nf{x=x}{c|x}\right)$ or $N\in \left(\nf{xc=0}{x=0}\right)$. See the proof of \ref{direction2} for the forward direction. The reverse direction is clear. Thus, the proposition now follows from the previous paragraph.
\end{proof}

\begin{theorem}\label{calcbreadth}
Let $R$ be a Pr\"ufer domain. The breadth of $\pp_R^1$ is equal to the m-dimension of $\Gamma(R)_\infty^+$. In particular, $\pp_R^1$ has breadth if and only if $\Gamma(R)_\infty^+$ has m-dimension.
\end{theorem}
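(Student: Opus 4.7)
The plan is to apply the transfer theorem~\ref{BeztoPruf} together with the Jaffard-Kaplansky-Ohm theorem to reduce to the case where $R$ is a B\'ezout domain. Given a Pr\"ufer $R$, choose a B\'ezout $R'$ with $\Gamma(R') \cong \Gamma(R)$; then~\ref{BeztoPruf} supplies a lattice isomorphism $\pp_R^1 \cong \pp_{R'}^1$, so $\br \pp_R^1 = \br \pp_{R'}^1$, while $\mdim \Gamma(R)_\infty^+ = \mdim \Gamma(R')_\infty^+$ trivially. Thus it suffices to prove the theorem for B\'ezout domains.

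The core of the argument is the following claim, which I would prove by transfinite induction on $\alpha$: for every ordinal $\alpha \leq \br \pp_R^1$, the restriction-of-scalars map $\pi_{S_\alpha(R)} \colon \pp_R^1 \to \pp_{R_{S_\alpha(R)}}^1$ induces a lattice isomorphism
\[
\pp_R^1/\sim_{\Ch,\alpha} \ \cong \ \pp_{R_{S_\alpha(R)}}^1,
\]
equivalently, $\sim_{\Ch,\alpha}$ coincides with the congruence kernel of $\pi_{S_\alpha(R)}$. The case $\alpha=0$ is trivial. For the successor step to $\alpha+1 \leq \br \pp_R^1$, the induction hypothesis and the bound on the breadth force $R' := R_{S_\alpha(R)}$ to be a B\'ezout domain that is not a field, so \ref{breadthcollapseloc} applies to $R'$ and identifies $\sim_{\Ch}$ on $\pp_{R'}^1$ with the kernel of $\pi_{S(R')}$; since $R_{S_{\alpha+1}(R)} = (R')_{S(R')}$ by the definition of $S_{\alpha+1}(R)$, composition of the two localisations yields the isomorphism at stage $\alpha+1$.

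The main obstacle is the limit step at $\lambda$, where one must show $\bigcup_{\alpha<\lambda} \ker \pi_{S_\alpha(R)} = \ker \pi_{S_\lambda(R)}$. The inclusion $\subseteq$ is automatic since $\pi_{S_\lambda(R)}$ factors through every $\pi_{S_\alpha(R)}$. For the reverse inclusion I would use~\ref{Specialform},~\ref{orderppflaarith} and~\ref{Zgbasicmanip} to reduce $\phi \leq \psi$ over $R_{S_\lambda(R)}$ to a finite conjunction of ideal-theoretic identities of the form $(B:C)+(D:A) = R_{S_\lambda(R)}$; since $R_{S_\lambda(R)} = \bigcup_{\alpha<\lambda} R_{S_\alpha(R)}$ is a directed union, each such identity is witnessed by finitely many elements, all of which already lie in some $R_{S_\alpha(R)}$ with $\alpha < \lambda$. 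Choosing a common $\alpha$ for the finitely many identities then gives $\phi \sim_{\Ch,\alpha} \psi$, as required.

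Granting the claim, the theorem follows quickly. For any B\'ezout domain $R'$, the lattice $\pp_{R'}^1$ is totally ordered if and only if $R'$ is a field: when $R'$ is a field, $\pp_{R'}^1$ reduces to the two-element chain $\{x=x,\, x=0\}$; when $R'$ is not a field and $a \in R'$ is a non-zero non-unit, the formulae $a|x$ and $xa=0$ are incomparable in $\pp_{R'}^1$ because $a|x > xa=0$ in $R'$ whereas $xa=0 > a|x$ in $R'/aR'$. Hence
\[
\br \pp_R^1 \ = \ \min\{\alpha : R_{S_\alpha(R)} \text{ is a field}\},
\]
with both sides being undefined simultaneously. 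By~\ref{dimintermsofmult}(1), the right-hand side is $\mdim \Gamma(R)_\infty^+$, which delivers both the equality of the two dimensions and the equivalence of their existence.
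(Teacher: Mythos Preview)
Your approach is essentially the paper's: reduce to B\'ezout domains via \ref{BeztoPruf}, then show by transfinite induction that the congruence kernel of $\pi_{S_\alpha(R)}$ is $\sim_{\Ch,\alpha}$, using \ref{breadthcollapseloc} at the successor step. Your limit-step argument (reducing $\pi_{S_\lambda}(\phi)\leq\pi_{S_\lambda}(\psi)$ to finitely many ideal identities of the shape $(B:C)+(D:A)=R_{S_\lambda}$ and pulling the witnesses back to some $R_{S_\alpha}$ with $\alpha<\lambda$) is a legitimate alternative to the paper's appeal to compactness of first-order logic; both express the same finitary content. The paper runs the induction up to $\mdim\Gamma(R)_\infty^+$ rather than up to $\br\pp_R^1$, but this is only a reorganisation.

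There is, however, a genuine gap in your concluding step. You assert that $\pp_{R'}^1$ is \emph{totally ordered} if and only if $R'$ is a field, and from this infer $\br\pp_R^1=\min\{\alpha : R_{S_\alpha(R)}\text{ is a field}\}$. But what you actually need at this point is that $\br\pp_{R'}^1=0$ if and only if $R'$ is a field: breadth zero only says the top and bottom are linked by a finite chain of totally ordered intervals, not that the lattice is itself a chain (for instance $\two\times\two$ has breadth $0$ without being totally ordered). Your observation that $a|x$ and $xa=0$ are incomparable for a non-zero non-unit $a$ shows $\pp_{R'}^1$ is not a chain, but does not exclude breadth $0$.

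The repair is short and uses a tool you already invoked: for $R'$ a B\'ezout domain which is not a field, \ref{breadthcollapseloc} gives $\pp_{R'}^1/{\sim_\Ch}\cong\pp_{(R')_{S(R')}}^1$, and the right-hand side always has at least two elements ($x=x$ and $x=0$), so $\br\pp_{R'}^1\geq 1$. The paper sidesteps this issue entirely by arguing the inequality $\br\pp_R^1\geq\mdim\Gamma(R)_\infty^+$ differently: assuming $\beta:=\br\pp_R^1<\mdim\Gamma(R)_\infty^+$, the induction at $\beta+1$ gives $\pp_R^1/{\sim_{\Ch,\beta+1}}\cong\pp_{R_{S_{\beta+1}(R)}}^1$, and the latter is never the one-point lattice, contradicting $\br\pp_R^1=\beta$.
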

\begin{proof}By \ref{BeztoPruf}, it is enough to prove the theorem for $R$ a B\'ezout domain. For each $\alpha\in\Ord$, let $\pi_{S_\alpha(R)}:\pp_R^1\rightarrow \pp_{R_{S_\alpha(R)}}$ be the surjective lattice homomorphism induced by restriction of scalars along the epimorphism $R\rightarrow R_{S_\alpha(R)}$. We prove by ordinal induction that, whenever $\alpha\leq \mdim \Gamma(R)_\infty^+$, the congruence kernel of $\pi_{S_\alpha(R)}$ is $\sim_{\Ch,\alpha}$.
The base case $\alpha=0$ is trivial. Suppose $\alpha=\beta+1$, the statement holds for $\beta$ and $\alpha\leq \mdim \Gamma(R)_\infty^+$. Using \ref{dimintermsofmult}, this implies that $1\leq \mdim \Gamma(R_{S_\beta(R)})_\infty^+$ and hence $R_{S_\beta(R)}$ is not a field. Thus, by \ref{breadthcollapseloc}, the congruence kernel of $\pi_{S_1(R_{S_\beta(R)})}:\pp_{R_{S_\beta(R)}}\rightarrow \pp_{R_{S_{\beta+1}(R)}}$ is $\sim_\Ch$. By the induction hypothesis, $\pi_{S_\beta(R)}$ has congruence kernel $\sim_{\Ch,\beta}$. By definition of $S_{\beta+1}(R)$, $\pi_{S_{\beta+1}(R)}=\pi_{S_1(R_{S_\beta(R)})}\circ\pi_{S_{\beta}(R)}$. Hence $\pi_{S_{\beta+1}(R)}$ has congruence kernel $\sim_{\Ch,\beta+1}$.

Suppose $\lambda$ is a limit ordinal, the statement holds for all $\alpha<\lambda$ and that $\lambda\leq \mdim \Gamma(R)_\infty^+$. Since we can axiomatise the class of $R_{S_\alpha(R)}$-modules for each ordinal $\alpha$ in the language of $R$-modules, compactness\footnote{With a bit more work this argument can be replaced with one using compactness of the Ziegler spectrum.} of first order logic implies that for $\phi,\psi\in \pp_R^1$, $\pi_{S_\lambda(R)}(\phi)=\pi_{S_\lambda(R)}(\psi)$ if and only if $\pi_{S_\alpha(R)}(\phi)=\pi_{S_\alpha(R)}(\psi)$ for some $\alpha<\lambda$.
By definition of $\sim_{\Ch,\lambda}$ at limit steps, it now follows that $\pi_{S_\lambda(R)}$ has congruence kernel $\sim_{\Ch,\lambda}$.

We have shown that  $\pp_R^1/\sim_{\Ch,\alpha}\cong \pp^1_{R_{S_\alpha(R)}}$ for all ordinals $\alpha\leq \text{m-dim}\Gamma(R)_\infty^+$. Recall, \ref{dimintermsofmult}, that if $\alpha=\text{m-dim}\Gamma(R)_\infty^+$ then $R_{S_\alpha(R)}$ is a field. So, in particular, $\pp^1_{R_{S_\alpha(R)}}$ has breadth $0$. Hence $\br \pp_R^1\leq \alpha$.
Now suppose for a contradiction that $\br \pp_R^1=\beta<\text{m-dim}\Gamma(R)_\infty^+$. Then $\beta+1\leq \text{m-dim}\Gamma(R)_\infty^+$. Hence $\pp_R^1/\sim_{\Ch,\beta+1}\cong \pp^1_{R_{S_{\beta+1}(R)}}$. This gives a contradiction since $\pp^1_{R_{S_{\beta+1}(R)}}$ has at least $2$ elements.
\end{proof}

In the next section we will investigate the relationship between the value of the m-dimension of $\pp_R^1$
and the m-dimension of $\Gamma(R)_\infty^+$.
What we have proved in this section is already enough to show, \ref{weakmdim}, that these two dimensions coexist.

Recall that every arithmetical ring $R$ satisfies the isolation condition, see \ref{CBmdimdist}.
This implies that, \cite[10.19]{PrestBluebook}, for all $N\in \Zg(R)$,
\[\CBrank N=\inf\{\mdim [\psi,\phi] \st N\in\left(\phi/\psi\right)\}.\]

\begin{remark}\label{GammappRmdimLB}
Let $R$ be a Pr\"ufer domain. Then $\mdim \Gamma(R)_\infty^+\leq \mdim \pp_R^1$.
\end{remark}
\begin{proof} We argue that $\Gamma(R)_\infty^+$ is a quotient of $\pp_R^1$. Then, by \cite[10.4]{PrestBluebook}, $\mdim \Gamma(R)_\infty^+\leq \mdim\pp_R^1$.
For any coherent ring, so in particular for Pr\"ufer domains, the pp-definable subsets of $R$ as an $R$-module are exactly the finitely generated ideals of $R$ by \cite[14.16]{PrestBluebook}. Thus, if we identify $\infty\in \Gamma(R)_\infty^+$ with the ideal $\{0\}\lhd R$, then the map $\phi\in\pp_R^1\mapsto D\phi({}_RR)$, where $D$ denotes Prest's duality, is a surjective lattice homomorphism.
\end{proof}

\begin{theorem}\label{weakmdim}
Let $R$ be a Pr\"ufer domain. The lattice $\pp_R^1$ has m-dimension if and only if $\Gamma(R)_\infty^+$ has m-dimension.
\end{theorem}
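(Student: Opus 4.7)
The forward direction is immediate from Remark \ref{GammappRmdimLB}, which gives $\mdim\Gamma(R)_\infty^+ \leq \mdim\pp_R^1$, so existence of the right side forces existence of the left. For the reverse direction, assume $\alpha := \mdim\Gamma(R)_\infty^+$ exists. By the transfer theorem \ref{BeztoPruf} together with the Jaffard--Kaplansky--Ohm theorem, I may assume that $R$ is a B\'ezout domain, since $\pp_R^1$ depends only on $\Gamma(R)$. Because $\pp_R^1$ is distributive, Theorem \ref{CBmdimdist} reduces the task to showing that $\Zg(R)$ has Cantor--Bendixson rank.

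The key structural input combines Proposition \ref{breadthcollapseloc} with Proposition \ref{closedsetforLdim}: the congruence $\sim_\Ch$ on $\pp_R^1$ is induced by the closed subspace $\Zg(R_{S(R)}) \subseteq \Zg(R)$, and by Corollary \ref{movetogoodopen} its complement is covered by the open sets $\left(\nf{x=x}{c|x}\right)$ and $\left(\nf{xc=0}{x=0}\right)$ for $c \in R$ irreducible. For $\mfrak{m}=cR$, Lemma \ref{xxcxchain} identifies $[c|x,x=x]$ with $\Gamma(R_\mfrak{m})_\infty^+$ and $[x=0,xc=0]$ with its opposite; since $\Gamma(R_\mfrak{m})_\infty^+$ is a lattice quotient of $\Gamma(R)_\infty^+$, its m-dimension is at most $\alpha$. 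Applying the inequality $\CBrank N \leq \mdim[\psi,\phi]$ for $N \in (\phi/\psi)$ (as in the proof of \ref{GammappRmdimLB}), each of these open pieces has CB rank at most $\alpha$. Using the elementary identity $U^{(\gamma)} = U \cap X^{(\gamma)}$ for $U$ open in $X$, I then obtain $\Zg(R)^{(\alpha+1)} \subseteq \Zg(R_{S(R)})$.

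I would then proceed by transfinite induction on $\alpha$. The base case $\alpha=0$ is trivial ($R$ is a field). At a finite successor stage $\alpha=\beta+1$, Lemma \ref{dimintermsofmult}(1) yields $\mdim\Gamma(R_{S(R)})_\infty^+=\beta<\alpha$, so the inductive hypothesis applied to $R_{S(R)}$ gives CB rank of $\Zg(R_{S(R)})$, and the displayed inclusion then bounds $\CBrank\Zg(R)$. The main obstacle is the limit case (and infinite-successor case), where localising at $S(R)$ alone does not strictly decrease the m-dimension of the value group. The remedy I envisage is to iterate transfinitely through the chain of localisations $R \to R_{S_\beta(R)}$, $\beta\leq\alpha$, using Proposition \ref{dimforgroup1} together with the iterated identity $\pp_R^1/\sim_{\Ch,\beta} \cong \pp^1_{R_{S_\beta(R)}}$ (established in the proof of \ref{calcbreadth}), and to invoke the compactness of $\Zg(R)$ to assemble CB-rank bounds across limit stages. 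The final outcome should be a bound of the form $\CBrank\Zg(R) \leq 2\alpha$, consistent with that announced in the introduction.
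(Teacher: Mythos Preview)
Your ingredients are exactly those of the paper: reduction to B\'ezout via \ref{BeztoPruf}, the identification of the intervals $[c|x,x=x]$ and $[x=0,xc=0]$ with $\Gamma(R_{\mfrak m})_\infty^+$ and its opposite via \ref{xxcxchain}, and the inequality $\CBrank N\le\mdim[\psi,\phi]$ for $N\in(\phi/\psi)$ coming from the isolation condition (this inequality is the displayed formula immediately \emph{before} the theorem, not in the proof of \ref{GammappRmdimLB}). Your inclusion $\Zg(R)^{(\alpha+1)}\subseteq\Zg(R_{S(R)})$ is correct.

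The gap you yourself flag is the organization: induction on $\alpha=\mdim\Gamma(R)_\infty^+$ breaks down at limit ordinals because passing to $R_{S(R)}$ need not strictly lower the m-dimension of the value group. Your proposed remedy---iterating through the tower $R_{S_\beta(R)}$---can indeed be made to work for \emph{existence} of CB rank: one shows by induction on $\beta\le\alpha$ that there is an ordinal $f(\beta)$ with $\Zg(R)^{(f(\beta))}\subseteq\Zg(R_{S_\beta})$, taking $f(\lambda)=\sup_{\beta<\lambda}f(\beta)$ at limits and $f(\beta+1)=f(\beta)+\mdim\Gamma(R_{S_\beta})_\infty^++1$ at successors. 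At $\beta=\alpha$ this yields $\Zg(R)^{(f(\alpha))}\subseteq\{Q\}$. However this recursion only gives $f(\alpha)\le(\alpha+1)\cdot\alpha$, not $\alpha\cdot 2$; the bound you announce requires the separate and sharper analysis of Section~\ref{SCBrank} (Proposition~\ref{upperbound}), so you should not claim it here.

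The paper sidesteps the limit-ordinal issue entirely by \emph{not} inducting on $\alpha$. Instead it proves directly that every nonempty $\Zg(R)^{(\alpha)}$ contains a point of finite CB rank. The device is to set $\beta(\alpha)=\sup\{\gamma\le\mdim\Gamma(R)_\infty^+:\Zg(R)^{(\alpha)}\subseteq\Zg(R_{S_\gamma})\}$; either $\beta(\alpha)$ is the full m-dimension, in which case $\Zg(R)^{(\alpha)}\subseteq\{Q\}$, or some $N\in\Zg(R)^{(\alpha)}$ lies in $\Zg(R_{S_{\beta(\alpha)}})\setminus\Zg(R_{S_{\beta(\alpha)+1}})$, and then the same interval argument via \ref{xxcxchain} applied in $R_{S_{\beta(\alpha)}}$ shows $N$ has CB rank. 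This avoids any transfinite bookkeeping on $\alpha$ and is the cleaner route.
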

\begin{proof} By \ref{BeztoPruf}, it is enough to prove the theorem for B\'ezout domains. The reverse direction follows from \ref{GammappRmdimLB}.
%For the reverse direction, we just need to note that the lattice $\Gamma(R)_\infty^+$ is dual to the lattice of pp-definable subgroups of $R$ as an $R$-module. Hence, if $\pp_R^1$ has m-dimension then so does $\Gamma(R)_\infty^+$.
For the other direction, we show that if $\Gamma(R)_\infty^+$ has m-dimension then $\Zg(R)$ has CB rank. Suppose $\Gamma(R)_\infty^+$ has m-dimension. It is enough to show that for each ordinal $\alpha$, $\Zg(R)^{(\alpha)}$ is either empty or contains a point with CB rank. We set $S_\alpha:=S_\alpha(R)$. For each $\alpha\in\Ord$, let $\beta(\alpha)$ be the supremum of those $\gamma\leq \mdim \Gamma(R)_\infty^+$ such that
\[\Zg(R)^{(\alpha)}\subseteq \Zg(R_{S_{\gamma}}).\] Then
\[\Zg(R)^{(\alpha)}\subseteq \Zg(R_{S_{\beta(\alpha)}}).\] Moreover, for all $\alpha\in\Ord$, either $\beta(\alpha)=\mdim \Gamma(R)_\infty^+$ or there exists $N\in \Zg(R)^{(\alpha)}$ which is not in $\Zg(R_{S_{\beta(\alpha)+1}})$. In the first case, $\Zg(R_{S_{\beta(\alpha)}})=\{Q\}$ where $Q$ denotes the field of fractions of $R$. So, in particular, $\Zg(R)^{(\alpha)}$ has CB rank.

For the second case, it is enough to show that, for every $\beta\leq \mdim \Gamma(R)_\infty^+$, every point in $\Zg(R_{S_\beta})\backslash \Zg(R_{S_{\beta+1}})$ has CB rank in $\Zg(R_{S_\beta})$ because if a point has CB rank in a topological space then it has CB rank in every subspace containing it. Replacing $R$ be $R_{S_\beta}$ it's enough to show that every point in $\Zg(R)\backslash \Zg(R_S)$ has CB rank. If $N\in \Zg(R)\backslash \Zg(R_S)$ then some element of $S$ acts non-invertibly on $N$ and hence some irreducible element $c\in R$ acts non-invertibly on $N$. Therefore $N\in \left(\nf{x=x}{c|x}\right)$ or $N\in\left(\nf{xc=0}{x=0}\right)$.
By \ref{xxcxchain}, $[c|x,x=x]$ is isomorphic to $\Gamma(R_{\mfrak{m}})^+_\infty$ and $[x=0,xc=0]$ is isomorphic to $(\Gamma(R_{\mfrak{m}})^+_\infty)^{op}$ where $\mfrak{m}:=cR$. Since $\Gamma(R)_\infty^+$ has m-dimension and $\Gamma(R_\mfrak{m})_\infty^+$ is a quotient of it, $\Gamma(R_\mfrak{m})_\infty^+$ also has m-dimension. Therefore $[c|x,x=x]$ and $[x=0,xc=0]$ both have m-dimension. So by the comment just before this theorem, $N$ has CB rank as required.
\end{proof}

%\begin{definition}
%Let $R$ be a B\'ezout domain. Define $S_\infty(R):=\bigcup_{\alpha\in \Ord}S_\alpha(R)$
%\end{definition}

The next theorem, which tells us where the superdecomposable pure-injective modules live, is a consequence of \ref{xxcxchain}.
%Recall from \ref{SandT} that $S_\infty(R)=\bigcup_{\alpha\in\Ord}S_\alpha(R)$ .

\begin{theorem}\label{locsuperdecomp}
Let $R$ be a B\'ezout domain. All superdecomposable pure-injective $R$-modules are $R_{S_\infty(R)}$-modules.
\end{theorem}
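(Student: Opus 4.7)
The plan is to prove, by transfinite induction on $\alpha$, that every superdecomposable pure-injective $R$-module $M$ is an $R_{S_\alpha(R)}$-module; the theorem follows since $S_\infty(R)=\bigcup_\alpha S_\alpha(R)$. The non-trivial base case, showing $M$ is an $R_{S_1(R)}$-module, reduces via Proposition~\ref{breadthcollapseloc} to a single key claim: if $\psi\leq\phi$ in $\pp_R^1$ and $[\psi,\phi]$ is totally ordered, then $\phi(M)=\psi(M)$. Indeed, $\pi_{S_1(R)}(\phi)=\pi_{S_1(R)}(\psi)$ is equivalent to $[\phi\wedge\psi,\phi+\psi]$ having breadth $0$, which by definition of $\sim_\Ch$ means there is a finite chain $\phi\wedge\psi=c_0\leq c_1\leq\cdots\leq c_n=\phi+\psi$ with each $[c_i,c_{i+1}]$ totally ordered; iterating the key claim along such a chain gives $\phi(M)=\psi(M)$, and this being the case for all such pairs is precisely the defining condition for $M$ to be an $R_{S_1(R)}$-module.

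For the key claim, suppose for contradiction that $m\in\phi(M)\backslash\psi(M)$. The pure-injective hull $N(\pp_M(m))$ embeds purely into the pure-injective module $M$ and is therefore a direct summand, hence superdecomposable; by~\ref{sdtypetopi}, $p:=\pp_M(m)$ is a superdecomposable pp-type. The distributive-case characterization of superdecomposability (available since $\pp_R^1$ is distributive for an arithmetical ring) applied at $\psi\notin p$ yields $\theta_1,\theta_2\geq\psi$ with $\theta_1,\theta_2\notin p$ and $\theta_1+\theta_2\in p$. Setting $\theta_i':=\theta_i\wedge\phi$, which lies in $[\psi,\phi]$, we have $\theta_i'\leq\theta_i\notin p$, so $\theta_i'\notin p$; and by distributivity of $\pp_R^1$, $\theta_1'+\theta_2'=\phi\wedge(\theta_1+\theta_2)\in p$, since both $\phi$ and $\theta_1+\theta_2$ lie in $p$. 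But $[\psi,\phi]$ is totally ordered, so without loss of generality $\theta_1'\leq\theta_2'$, forcing $\theta_1'+\theta_2'=\theta_2'\in p$ and contradicting $\theta_2'\notin p$.

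The successor step $\alpha\to\alpha+1$: by induction $M$ is an $R_{S_\alpha(R)}$-module, and since $R\to R_{S_\alpha(R)}$ is a ring epimorphism, restriction of scalars preserves and reflects both pure-injectivity and indecomposability, so $M$ is also a superdecomposable pure-injective module over the B\'ezout domain $R_{S_\alpha(R)}$. The base case applied to $R_{S_\alpha(R)}$ shows that $M$ is an $(R_{S_\alpha(R)})_{S(R_{S_\alpha(R)})}$-module, which by the definition $S_{\alpha+1}(R):=R\cap S(R_{S_\alpha(R)})$ coincides with $R_{S_{\alpha+1}(R)}$. The limit step is immediate from $S_\lambda(R)=\bigcup_{\alpha<\lambda}S_\alpha(R)$. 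The one genuinely creative step is the key claim above: meeting the superdecomposability witnesses $\theta_1,\theta_2$ with $\phi$ drags them into the chain $[\psi,\phi]$, where the totally ordered structure is incompatible with the required non-primeness (both $\theta_i'\notin p$ yet $\theta_1'+\theta_2'\in p$); everything else is transfinite bookkeeping exploiting the correspondence between $\sim_{\Ch,\alpha}$ and the localisation tower $S_\alpha(R)$ already set up in the paper.
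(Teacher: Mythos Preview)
Your proof is correct. The overall architecture matches the paper's: reduce to the step from $R_{S_\alpha}$ to $R_{S_{\alpha+1}}$, and there show that a superdecomposable pure-injective cannot open any totally ordered interval in $\pp_R^1$. The paper runs this contrapositively (pick the least $\alpha$ where $M$ fails to be an $R_{S_\alpha}$-module, observe that some irreducible $c$ acts non-invertibly, and then invoke \ref{xxcxchain} together with the external reference \cite[7.3.2]{PSL} to conclude $M$ is not superdecomposable), whereas you argue by forward induction and, crucially, you \emph{prove} the chain-interval lemma yourself: your ``key claim'' is essentially the relevant case of \cite[7.3.2]{PSL}, obtained by meeting the witnesses $\theta_1,\theta_2$ from the distributive superdecomposability criterion with $\phi$ and exploiting that $[\psi,\phi]$ is a chain. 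That is a genuinely nicer, self-contained argument.

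Two minor remarks. First, your reduction via \ref{breadthcollapseloc} is correct but heavier than necessary: once you have the key claim, it suffices to apply it to the specific chains $[c|x,x=x]$ and $[x=0,xc=0]$ from \ref{xxcxchain} for $c$ irreducible, which immediately gives that every irreducible (hence every element of $S_1(R)$) acts invertibly on $M$; this is the route the paper takes. Second, the detour through $N(\pp_M(m))$ is unnecessary: the second sentence of \ref{sdtypetopi} applied directly to $M$ and $m$ already gives that $p=\pp_M(m)$ is superdecomposable. Neither point affects correctness.
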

\begin{proof}
Suppose an $R$-module $M$ is not an $R_{S_\infty(R)}$-module. Then $M$ is not an $R_{S_\alpha(R)}$-module for some ordinal $\alpha$. Let $\beta$ be the least such ordinal. It is easy to see that $\beta$ must be a successor ordinal. So let $\alpha:=\beta-1$. By replacing $R$ be $R_{S_\alpha(R)}$, we may reduce to the case where $M$ is an $R$-module but not an $R_{S(R)}$-module. Therefore, there exists some element of $S(R)$ which does not act invertibly on $M$ and hence there exists some $c\in R$ irreducible such that $c$ does not act invertibly on $M$. Therefore $M\neq (c|x)(M)$ or $(xc=0)(M)\neq 0$. By \ref{xxcxchain}, the intervals $[x=x,c|x]$ and $[xc=0,x=0]$ are both totally ordered. Hence, by \cite[7.3.2]{PSL}, $M$ is not superdecomposable.
\end{proof}

\section{Calculating the Cantor-Bendixson rank of $\Zg(R)$}\label{SCBrank}

Throughout this section, unless otherwise stated, $R$ is a B\'ezout domain. In this section we show, \ref{upperbound}, that if $\Gamma(R)_{\infty}^+$ has m-dimension $\alpha$ then the CB rank of $\Zg(R)$ is bounded above by $\alpha\cdot 2$ and below by $\alpha$. Reinterpreting a result of Puninski will show that if $R$ is a valuation domain and $\Gamma(R)_\infty^+$ has m-dimension $\alpha$ then $\Zg(R)$ has CB rank $\alpha\cdot 2$. Hence, our upper bound is best possible.  We show, \ref{Kdim1}, that if $R$ has Krull dimension\footnote{In this paper Krull dimension refers to the maximal length of chains of prime ideals.} $1$ and $\Gamma(R)_\infty^+$ has m-dimension $\alpha$ then $\Zg(R)$ has CB rank $\alpha$ if $\alpha$ is a limit ordinal and  $\alpha+1$ otherwise.

\begin{definition}
For $\mfrak{p}\lhd R$ a non-zero prime ideal, define the rank of $\mfrak{p}$, $\rk \mfrak{p}$, to be the supremum of the set of ordinals $\alpha$ such that $\mfrak{p}\cap S_\alpha(R)=\emptyset$ if this set is bounded and otherwise $\infty$. If $\Gamma(R)_\infty^+$ has m-dimension $\alpha$ then define $\rk 0=\alpha$.
\end{definition}

Note that if $\mfrak{p}$ is a non-zero prime ideal and $\rk \mfrak{p}\neq \infty$ then the least ordinal $\beta$ such that $S_\beta(R)\cap \mfrak{p}\neq \emptyset$ is never a limit ordinal. Thus, if $\alpha=\rk\mfrak{p}$ then $\mfrak{p}\cap S_\alpha(R)=\emptyset$.

To aid readability, when $R$ is clear from the context, we will sometimes write $S_\alpha$ for the set $S_\alpha(R)$.

\begin{lemma}
Let $\mfrak{p}\lhd R$ be a non-zero prime ideal. Then $\beta=\rk\mfrak{p}<\infty$ if and only if $\mfrak{p}R_{S_\beta}$ is generated by an irreducible element of $R_{S_\beta}$. Moreover, if $\Gamma(R)_\infty^+$ has m-dimension $\alpha$ then $\rk \mfrak{p}\leq \alpha$ for all prime ideals $\mfrak{p}\lhd R$ and, in this situation, $0$ is the unique prime ideal with rank $\alpha$.
\end{lemma}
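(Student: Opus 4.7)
The plan is to prove the biconditional and the moreover clause separately, both by unpacking the definition of $\rk$ through the correspondence between primes of $R$ disjoint from $S_\beta(R)$ and proper non-zero primes of $R_{S_\beta(R)}$.

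I will first observe that $T := \{\gamma : \mfrak{p} \cap S_\gamma(R) = \emptyset\}$ is downward closed (since $S_\gamma(R) \subseteq S_\delta(R)$ whenever $\gamma \leq \delta$); combined with the comment preceding the lemma that the least $\gamma$ with $\mfrak{p} \cap S_\gamma(R) \neq \emptyset$ is never a limit, $\rk \mfrak{p} = \beta < \infty$ becomes equivalent to the conjunction $\mfrak{p} \cap S_\beta(R) = \emptyset$ and $\mfrak{p} \cap S_{\beta+1}(R) \neq \emptyset$. The first condition is standardly equivalent to $\mfrak{p} R_{S_\beta(R)}$ being a proper non-zero prime. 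For the forward direction, I would take $r \in \mfrak{p} \cap R \cap S(R_{S_\beta(R)})$, factor $r$ in $R_{S_\beta(R)}$ as a product of units and irreducibles, and use primality of $\mfrak{p} R_{S_\beta(R)}$ to pick out an irreducible factor $c \in \mfrak{p} R_{S_\beta(R)}$; since irreducibles generate maximal ideals in the B\'ezout domain $R_{S_\beta(R)}$ (as noted just before \ref{xxcxchain}), this forces $\mfrak{p} R_{S_\beta(R)} = cR_{S_\beta(R)}$. For the converse, given $\mfrak{p} R_{S_\beta(R)} = cR_{S_\beta(R)}$ with $c$ irreducible, writing $c = r/s$ with $r \in R$ and $s \in S_\beta(R)$ makes $r$ an associate of $c$ in $R_{S_\beta(R)}$, hence $r \in R \cap S(R_{S_\beta(R)}) = S_{\beta+1}(R)$, while $r = cs \in \mfrak{p} R_{S_\beta(R)} \cap R = \mfrak{p}$ exhibits $r \in \mfrak{p} \cap S_{\beta+1}(R)$.

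For the moreover clause, I would invoke \ref{dimintermsofmult}(1) to deduce from $\mdim \Gamma(R)_\infty^+ = \alpha$ that $R_{S_\alpha(R)}$ is a field, equivalently $S_\alpha(R) = R \setminus \{0\}$, so $\mfrak{p} \cap S_\alpha(R) \neq \emptyset$ for every non-zero prime $\mfrak{p}$. To upgrade this to $\rk \mfrak{p} < \alpha$, I split on whether $\alpha$ is a successor or a limit: for $\alpha = \delta+1$, the fact $\alpha \notin T$ together with $T$ being an initial segment gives $\sup T \leq \delta < \alpha$; for $\alpha$ a limit, the no-limit-minimum remark produces some $\gamma < \alpha$ with $\mfrak{p} \cap S_\gamma(R) \neq \emptyset$, so $T \subsetneq [0,\alpha)$ and again $\sup T < \alpha$. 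Together with the convention $\rk 0 = \alpha$, this simultaneously yields $\rk \mfrak{p} \leq \alpha$ for all primes and the uniqueness of $0$ as the prime attaining equality. The main obstacle is the irreducible-factor extraction in the forward direction of the biconditional, as it is the only step that genuinely interleaves the recursive definition of $S_{\beta+1}(R)$ with the B\'ezout structure of $R_{S_\beta(R)}$; everything else reduces to routine localisation bookkeeping.
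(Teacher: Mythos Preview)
Your proposal is correct and follows essentially the same route as the paper's proof, only with the details made explicit where the paper is terse. In particular, the paper's forward direction is the one-liner ``$\mfrak{p}R_{S_\beta}$ contains, and hence is generated by, an irreducible element of $R_{S_\beta}$'', which is exactly your factor-and-use-primality argument compressed; the paper dismisses the converse as ``follows from the definition of $S_\beta$'', and for the moreover clause simply notes $S_\alpha=R\setminus\{0\}$ without spelling out the successor/limit split you give (that split is harmless but unnecessary, since the no-limit-minimum remark already handles both cases uniformly).
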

\begin{proof}
Suppose $\beta=\rk\mfrak{p}<\infty$. Then $S_\beta\cap \mfrak{p}= \emptyset$. Hence $\mfrak{p}R_{S_\beta}$ is a proper ideal which contains an element of $S_{\beta+1}$. Therefore $\mfrak{p}R_{S_\beta}$ contains, and hence is generated by, an irreducible element of $R_{S_\beta}$. The reverse direction follows from the definition of $S_\beta$.

If $\Gamma(R)_\infty^+$ has m-dimension $\alpha$ then $S_\alpha=R\backslash\{0\}$ and hence the only prime ideal $\mfrak{p}$ with $\mfrak{p}\cap S_\alpha=\emptyset$ is the zero ideal.
\end{proof}

The next remark can be proved by ordinal induction using the definition of $S_\alpha(R)$.
\begin{remark}
Let $\alpha,\beta$ be ordinals. Viewing all rings involved as subrings of the field of fractions of $R$ we have
\[(R_{S_\alpha(R)})_{S_\beta(R_{S_\alpha(R)})}=R_{S_{\alpha+\beta}(R)} \ \ \  \text{ and } \ \ \ S_\beta(R_{S_\alpha(R)})\cap R=S_{\alpha+\beta}(R).\] %and
%$S_\beta(R_{S_\alpha(R)})\cap R=S_{\alpha+\beta}(R)$.
\end{remark}
As a consequence we get the following.
\begin{remark}\label{shiftrk}
%Let $R$ be a B\'ezout domain.
If $\mfrak{p}\lhd R$ is a non-zero prime ideal and $\mfrak{p}\cap S_\alpha=\emptyset$ then $\rk \mfrak{p}$ is equal to $\alpha+\rk \mfrak{p}R_{S_\alpha}$.
\end{remark}
%\begin{proof}
%If $\rk \mfrak{p}R_{S_\alpha}=\infty$ then $\rk \mfrak{p}=\infty$. Suppose $\rk \mfrak{p}R_{S_\alpha}=\beta$ an ordinal and $\mfrak{p}\neq 0$. {\color{red}NEED} $S_\beta(R_{S_\alpha(R)})\cap R=S_{\alpha+\beta}(R)$.
%\end{proof}

\begin{definition}
Let $R$ be a Pr\"ufer domain and let $N$ be an indecomposable pure-injective $R$-module. Define
\begin{enumerate}
\item $\Ass N:=\{r\in R \st mr=0 \text{ for some } m\in N\backslash\{0\} \}$, and
\item $\Div N:=\{r\in R \st r \text{ does not divide } m \text{ for some }m\in N\}$.
\end{enumerate}
Note that, by definition, $\Ass N\cup\Div N=\Att N$.
\end{definition}
\begin{lemma}\label{AssDivNIJ}
Let $R$ be a Pr\"ufer domain and let $(I,J)$ be an admissible pair of weakly prime ideals. Then $I^\#=\Ass N(I,J)$ and $J^\#=\Div N(I,J)$.
\end{lemma}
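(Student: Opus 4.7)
The plan is to exploit the canonical element $m_0 \in N(I,J)$ supplied by Lemma \ref{addpairlemma}, whose pp-type equals $p(I,J)$, so that $m_0 a = 0 \iff a \in I$ and $b \mid m_0 \iff b \notin J$. This element will give both inclusions $I^\# \subseteq \Ass N(I,J)$ and $J^\# \subseteq \Div N(I,J)$ directly, while the reverse inclusions will follow from the indecomposability of $N(I,J)$ together with Lemma \ref{equpairs} and the observation that the $\#$-operation is $\sim$-invariant.

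For $I^\# \subseteq \Ass N(I,J)$, given $r \in I^\#$, pick $s \notin I$ with $rs \in I$. Then $m_0 s$ is non-zero (because $s \notin I$) and $(m_0 s)r = m_0(rs) = 0$, so $r \in \Ass N(I,J)$. For $J^\# \subseteq \Div N(I,J)$, given $r \in J^\#$, pick $s \notin J$ with $rs \in J$. Since $s \mid m_0$, write $m_0 = m_1 s$; then $r \nmid m_1$, since otherwise $m_1 = m_2 r$ would force $m_0 = m_2(rs)$, giving $rs \mid m_0$ and hence $rs \notin J$, a contradiction.

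For the reverse inclusions, I would take $r \in \Ass N(I,J)$, witnessed by $mr = 0$ with $m \in N(I,J)\setminus\{0\}$. The pp-type $q := \pp_{N(I,J)}(m)$ is indecomposable: its pure-injective hull $N(q)$ is a non-zero direct summand of the indecomposable $N(I,J)$, hence equals it. So by Lemma \ref{addpairlemma}, $q = p(I',J')$ for some admissible pair $(I',J')$, and by Lemma \ref{equpairs}, $(I',J') \sim (I,J)$. From $mr = 0$ we get $r \in I'$. One then shows $\sim$-equivalence preserves $\#$: indeed, by Lemma \ref{admissiblepairshift} it suffices to check that $(I:K)^\# = I^\#$ (which reduces to the principal case via the remark below Proposition \ref{eqwkprime}, together with Lemma \ref{locidealeq}) and that $(J_K)^\# = J^\#$, which is Lemma \ref{InverseIdealQuotient}(2). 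Therefore $(I')^\# = I^\#$. Since any weakly prime ideal $I'$ satisfies $I' \subseteq (I')^\#$ (pick $s \notin I'$ and observe $I' \subseteq (I':s) \subseteq (I')^\#$), we conclude $r \in I^\#$. The argument for $\Div N(I,J) \subseteq J^\#$ is completely parallel, via $r \nmid m$ giving $r \in J' \subseteq (J')^\# = J^\#$.

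The main obstacle is not in any single step but in setting up the right framework: once one recognises that arbitrary non-zero elements of $N(I,J)$ carry pp-types of the form $p(I',J')$ with the same $\#$-invariants as $(I,J)$, the inclusions are essentially formal. The slightly delicate point is the $\sim$-invariance of $\#$, which has to be pieced together from the principal-ideal case via localisation at a prime containing $I^\# \cup J^\#$.
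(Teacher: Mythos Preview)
Your proof is correct and follows essentially the same approach as the paper's: both directions use the canonical realiser of $p(I,J)$ for $I^\#\subseteq\Ass N(I,J)$, and for the reverse inclusion both pass from an arbitrary nonzero $m$ to an admissible pair $(I',J')$ with $(I',J')\sim(I,J)$ via Lemma~\ref{equpairs}. The only difference is cosmetic: the paper concludes directly from the two cases in the definition of $\sim$ (either $I'=(I:K)$, whence $a\in(I:K)\subseteq I^\#$ since $K\nsubseteq I$, or $I=(I':K)\supseteq I'\ni a$), whereas you package this as the statement that $\#$ is $\sim$-invariant and then use $I'\subseteq(I')^\#$; the paper also leaves the $\Div$ case implicit, which you spell out.
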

\begin{proof} We prove $\Ass N(I,J)=I^\#$. Suppose $a\in I^\#$. There exists $r\notin I$ such that $ar\in I$. Let $m\in N(I,J)$ realise $p(I,J)$. Then $mr\neq 0$ since $r\notin I$ and $(mr)a=mra=0$ since $ra\in I$. Hence $a\in \Ass N(I,J)$.

Suppose $a\in \Ass N(I,J)$. Let $m\in N(I,J)$ be such that $ma=0$ and let $(I',J')$ be an admissible pair of weakly prime ideals such that $p(I',J')$ is the pp-type of $m$.  By \ref{equpairs}, $(I',J')\sim (I,J)$. Hence $a\in (I:r)=I'$ or $a\in I'\subseteq (I':r)=I$. Therefore $a\in I^\#$.
%
%
%
%By \ref{admissiblepairshift}, for all $K\nsubseteq I$, $((I:K),J_K)$ is admissible and, by \ref{equonadmispair}, $N((I:K),J_K)$ is isomorphic to $N(I,J)$. Therefore $I^\#\subseteq \Ass N(I,J)$. Suppose $m\in N(I,J)$. Then, by \ref{equonadmispair}, either $\ann_R m =(I:K)$ for some finitely generated ideal $K\nsubseteq I$ or $\ann_R m =I_K$ for some finitely generated $K\lhd R$. For all $K\lhd R$, $I_K\subseteq I$ and, by definition of $I^\#$, $(I:K)\subseteq I^\#$. So $\ann_Rm\subseteq I^\#$. Hence $\Ass N(I,J)\subseteq I^\#$.
\end{proof}

%{\color{red}
%Recall that, \cite[5.5.3]{PSL}, any epimorphism $f:R\rightarrow T$ induces an embedding of $\Zg(T)$ into $\Zg(R)$ as a closed subset by restriction of scalars. Throughout this section we will identify $\Zg(R_{S_\alpha})$ with the closed subset of $\Zg(R)$ consisting of those $N\in\Zg(R)$ for which all elements of $S_\alpha$ act invertibly.
%}

\begin{remark}\label{ZgRSrk}
%Let $R$ be a B\'ezout domain.
For any $\alpha\leq \mdim \Gamma(R)_\infty^+$, the set of $N\in\Zg(R)$ with $\rk \Ass N,\rk \Div N\geq \alpha$ is equal to the closed set $\Zg(R_{S_\alpha})$.
\end{remark}

We now take a brief detour to investigate the relationship between ranks of prime ideals of a B\'ezout domain and their CB rank as points in $\Spec^*R$, the inverse space of $\Spec R$, i.e. the space with points the prime ideals of $R$ and a basis of open sets given by $V(a):=\{\mfrak{p}\in \Spec R \st a\in \mfrak{p}\}$ for $a\in R$. It's easy to see that a prime ideal $\mfrak{p}\lhd R$ is isolated in $\Spec^*R$ if and only if $\mfrak{p}$ is maximal and $\mfrak{p}=\rad(aR)$ for some $a\in R$.

\begin{proposition}\label{specCB}
Suppose that $\Gamma(R)^+_\infty$ has m-dimension. For prime ideals $\mfrak{p}\lhd R$, $\rk\mfrak{p}$ is equal to the CB rank of $\mfrak{p}$ in $\Spec^* R$. In particular, if $\Gamma(R)^+_\infty$ has m-dimension then $\Spec^*R$ has CB rank.
\end{proposition}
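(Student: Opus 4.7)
The plan is to prove, by ordinal induction on $\alpha$, the stronger statement that
\[(\Spec^*R)^{(\alpha)}=\{\mfrak{p}\lhd R \text{ prime} \st \rk\mfrak{p}\geq\alpha\}\]
for every $\alpha\leq \mdim\Gamma(R)^+_\infty$. The ``in particular'' clause will then follow: since $\rk\mfrak{p}\leq\mdim\Gamma(R)^+_\infty$ for every prime $\mfrak{p}$, the derivative $(\Spec^*R)^{(\mdim\Gamma(R)^+_\infty+1)}$ is empty, so $\Spec^*R$ has CB rank.

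The key intermediate fact, which I would establish first, is that for each $\beta\leq \mdim\Gamma(R)^+_\infty$, the contraction–extension bijection $\mfrak{p}\mapsto \mfrak{p}R_{S_\beta}$ identifies $\{\mfrak{p}\lhd R \st \mfrak{p}\cap S_\beta=\emptyset\}$ with $\Spec R_{S_\beta}$, and is moreover a homeomorphism from this subspace of $\Spec^*R$ onto $\Spec^*R_{S_\beta}$: the inverse-topology basic opens $V(a)$ of $\Spec^*R$ restrict to $V(a/1)$ in $\Spec^*R_{S_\beta}$, and every $V(a/s)$ in $\Spec^*R_{S_\beta}$ equals $V(a/1)$ since $s$ is a unit there. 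Combined with the induction hypothesis this will let me identify $(\Spec^*R)^{(\beta)}$, as a topological space, with $\Spec^*R_{S_\beta}$ at each stage and thereby compute one more Cantor–Bendixson derivative by analyzing isolated points in the localized ring.

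For the successor step $\alpha=\beta+1$, I would split into cases. If $\mfrak{p}\neq 0$ and $\rk\mfrak{p}\geq\beta$, then by \ref{shiftrk} we have $\rk\mfrak{p}=\beta+\rk\mfrak{p}R_{S_\beta}$, so $\rk\mfrak{p}\geq\beta+1$ iff $\mfrak{p}R_{S_\beta}$ has positive rank in $R_{S_\beta}$; and this is equivalent to $\mfrak{p}R_{S_\beta}$ being non-isolated in $\Spec^*R_{S_\beta}$ by the base case (see below), which in turn is exactly what it means for $\mfrak{p}$ to belong to $(\Spec^*R)^{(\beta+1)}$. For $\mfrak{p}=0$, the zero ideal of $R_{S_\beta}$ is isolated in $\Spec^*R_{S_\beta}$ iff $R_{S_\beta}$ is a field iff (by \ref{dimintermsofmult}) $\mdim\Gamma(R)^+_\infty=\beta$; this matches $\rk 0\geq\beta+1$ iff $\mdim\Gamma(R)^+_\infty\geq\beta+1$ on the nose. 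The limit case is immediate since both $(\Spec^*R)^{(\lambda)}$ and $\{\mfrak{p}\st \rk\mfrak{p}\geq\lambda\}$ are intersections over $\beta<\lambda$ of their predecessors.

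The main obstacle is the underlying base-case claim that, in any B\'ezout domain $R'$ whose value group has m-dimension, a non-zero prime $\mfrak{q}$ is isolated in $\Spec^*R'$ iff $\rk_{R'}\mfrak{q}=0$, i.e.\ iff $\mfrak{q}=cR'$ for some irreducible $c$. One direction is clear: if $\mfrak{q}=cR'$ with $c$ irreducible then $V(c)=\{cR'\}$ since $cR'$ is maximal. Conversely, if $\mfrak{q}$ is isolated then, as recalled just before the proposition, $\mfrak{q}$ is maximal and $\mfrak{q}=\rad(aR')$ for some $a\in R'$. Here I would apply \ref{Lpointsdense} to $\Gamma(R')^+_\infty$ (which inherits m-dimension from $\Gamma(R)^+_\infty$ as a quotient) together with \ref{irredlatvallat}: there is an irreducible $c\in R'$ with $cR'\supseteq aR'$, hence $cR'\supseteq\rad(aR')=\mfrak{q}$, and maximality of $\mfrak{q}$ forces $\mfrak{q}=cR'$. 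This is the pivotal input that lets the localisation machinery from Sections \ref{SDimLgroup}–\ref{SCalcBreadth} feed back into the topology of $\Spec^*R$.
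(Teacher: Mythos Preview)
Your proposal is correct and follows essentially the same route as the paper: both arguments establish by ordinal induction that $(\Spec^*R)^{(\alpha)}=\Spec^*R_{S_\alpha}$ (your formulation $\{\mfrak{p}:\rk\mfrak{p}\geq\alpha\}$ is the same set), with the crucial base case---that a nonzero prime is isolated in $\Spec^*R$ iff it equals $cR$ for some irreducible $c$---proved via the characterisation of isolated points as maximal radicals $\rad(aR)$ together with \ref{Lpointsdense} and \ref{irredlatvallat}. Your treatment is slightly more explicit about the homeomorphism with $\Spec^*R_{S_\beta}$ and the zero ideal, but the substance is identical.
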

\begin{proof}
One can see directly that the proposition holds when $R$ is a field. So, we suppose that $R$ is a B\'ezout domain with non-trivial value group $\Gamma(R)$. Suppose $\mfrak{p}:=\rad(aR)$ is maximal. Since $\Gamma(R)^+_\infty$ has m-dimension and $\Gamma(R)$ is non-trivial, by \ref{Lpointsdense} and \ref{irredlatvallat}, there exists an irreducible element $c\in R$ such that $a\in cR$. Since $cR$ is prime, $\mfrak{p}=\rad(aR)\subseteq cR$. Hence $cR=\mfrak{p}$. Therefore, a prime ideal $\mfrak{p}\lhd R$ is isolated in $\Spec^*R$ if and only if $\mfrak{p}=cR$ for some $c\in R$ irreducible. That is, $\mfrak{p}\lhd R$ is isolated in $\Spec^*R$ if and only if $\rk\mfrak{p}=0$. It now follows from the definition of $\rk\mfrak{p}$ that $(\Spec^*R)^{(1)}$ is equal to $\Spec^*R_S$. By definition of $S_\alpha$, $\Spec^*R_{S_\alpha}=(\spec^*R)^{(\alpha)}$ for all $\alpha\leq \mdim \Gamma(R)_\infty^+$. Moreover, if $\beta=\mdim \Gamma(R)_\infty^+$ then $R_{S_\beta}$ is a field and hence $(\Spec^*R)^{(\beta)}$ has just one point. Therefore $\mfrak{p}$ has CB rank $\alpha$ if and only if $\rk \mfrak{p}=\alpha$.
\end{proof}

On the other hand, it is often the case that $\Spec^*R$ has CB rank but $\Gamma(R)^+_\infty$ does not have m-dimension.
For instance, take a valuation domain $V$ with value group $\R$. Then $\Spec^*V$ is $T_0$ and has only 2 points. Hence $\Spec^* V$ has CB rank despite its value group being dense.

\begin{proposition}
Let $U$ be the multiplicatively closed subset of $R$ generated by those $a\in R$ such that $\rad(aR)$ is maximal. A prime ideal $\mfrak{p}\lhd R$ is isolated in $\Spec^*R$ if and only if $ \mfrak{p}\cap U\neq \emptyset$. In particular, the first Cantor-Bendixson derivative of $\Spec^*R$ is equal to $\Spec^* R_U$.
\end{proposition}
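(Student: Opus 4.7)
The plan is to unpack the definition of being isolated in $\Spec^*R$ and use the B\'ezout hypothesis to see that the basic opens $V(a)$ already form a basis closed under finite intersection, then match $U$-witnesses to the condition ``$\rad(aR)$ is maximal.''

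First I would note that, since $R$ is B\'ezout, for $a,b\in R$ with $aR+bR=cR$ we have $V(a)\cap V(b)=V(c)$, so the family $\{V(a)\mid a\in R\}$ is a genuine basis for $\Spec^*R$. Hence $\mfrak{p}$ is isolated if and only if $\{\mfrak{p}\}=V(a)$ for some $a\in R$. Such an $a$ lies in $\mfrak{p}$, so $\rad(aR)\subseteq \mfrak{p}$; conversely $\rad(aR)$ is the intersection of all primes containing $a$, which by hypothesis is just $\mfrak{p}$, so $\rad(aR)=\mfrak{p}$. Furthermore, if $\mfrak{p}\subseteq\mfrak{m}$ for some maximal ideal $\mfrak{m}$, then $a\in\mfrak{m}$, forcing $\mfrak{m}\in V(a)=\{\mfrak{p}\}$ and thus $\mfrak{p}=\mfrak{m}$. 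So isolation of $\mfrak{p}$ is equivalent to the existence of $a\in\mfrak{p}$ with $\rad(aR)=\mfrak{p}$ and $\mfrak{p}$ maximal. Conversely, as soon as we have $a\in R$ with $\rad(aR)=\mfrak{p}$ maximal, the chain of equalities above gives $V(a)=\{\mfrak{p}\}$, so $\mfrak{p}$ is isolated.

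Next I would deduce the characterisation in terms of $U$. If $\mfrak{p}$ is isolated, the witness $a$ just produced lies in $\mfrak{p}\cap U$, so $\mfrak{p}\cap U\neq\emptyset$. Conversely, suppose $b\in\mfrak{p}\cap U$. Write $b=a_1\cdots a_n$ with each $\rad(a_iR)$ maximal. Since $\mfrak{p}$ is prime, some $a_i\in\mfrak{p}$, and then $\rad(a_iR)\subseteq\mfrak{p}$. But $\rad(a_iR)$ is maximal and $\mfrak{p}$ is proper, so $\rad(a_iR)=\mfrak{p}$; in particular $\mfrak{p}$ is maximal and, by the previous paragraph, isolated.

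For the final clause, recall that $\Spec R_U\to\Spec R$ identifies $\Spec R_U$ with the set of primes $\mfrak{p}\lhd R$ such that $\mfrak{p}\cap U=\emptyset$, and this identification sends the basic opens $V(a/u)\subseteq\Spec^* R_U$ to $V(a)$ restricted to the image; so the subspace topology on $\{\mfrak{p}\mid \mfrak{p}\cap U=\emptyset\}$ inherited from $\Spec^* R$ coincides with the inverse topology on $\Spec^* R_U$. (Moreover, this subset is closed in $\Spec^* R$ as the intersection $\bigcap_{u\in U}(\Spec R\setminus V(u))$ of closed sets.) The first CB derivative of $\Spec^* R$ is the subspace of non-isolated points, and by the equivalence just proved this is exactly $\{\mfrak{p}\in\Spec R\mid \mfrak{p}\cap U=\emptyset\}=\Spec^* R_U$. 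Nothing here is hard; the only mild subtlety is the verification that the basis of $\Spec^*R$ is closed under intersection via the B\'ezout property, which is what lets the single-formula description $V(a)=\{\mfrak{p}\}$ suffice.
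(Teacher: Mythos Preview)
Your proof is correct and follows essentially the same approach as the paper's. The paper's argument is terser: it invokes the characterisation of isolated points stated just before Proposition~\ref{specCB} (that $\mfrak{p}$ is isolated in $\Spec^*R$ iff $\mfrak{p}$ is maximal and $\mfrak{p}=\rad(aR)$ for some $a$), then uses primality of $\mfrak{p}$ to extract a generator of $U$ from any element of $\mfrak{p}\cap U$, exactly as you do. You additionally supply the justification for that characterisation (via the B\'ezout property making $\{V(a)\}$ closed under intersection) and spell out the topological identification of $\Spec^*R_U$ with the non-isolated locus, both of which the paper leaves implicit.
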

\begin{proof}
From the description of the isolated points of $\Spec^*R$ just before \ref{specCB}, if $\mfrak{p}$ is isolated then $U\cap\mfrak{p}\neq \emptyset$. Conversely, if $U\cap\mfrak{p}\neq \emptyset$ then, because $\mfrak{p}$ is prime, there exists $a\in\mfrak{p}$ such that $\rad(aR)$ is maximal. So $\mfrak{p}$ is isolated.
%
%
%For any ring $R$, a prime ideal $\mfrak{p}\lhd R$ is isolated in $\Spec^*R$ if and only if there exists a finitely generated ideal $I\lhd R$ such that $\mfrak{p}$ is the unique prime ideal containing $I$ i.e. $\rad(I)=\mfrak{p}$ and $\mfrak{p}$ is maximal. So, for $R$ a B\'ezout domain, $\mfrak{p}\lhd R$ is isolated if and only if $\mfrak{p}$ is maximal and $\mfrak{p}=\rad(aR)$. So, if $\mfrak{p}$ is isolated then $U\cap\mfrak{p}\neq \emptyset$. Conversely, if $U\cap\mfrak{p}\neq \emptyset$ then, because $\mfrak{p}$ is prime, there exists $a\in\mfrak{p}$ such that $\rad(aR)$ is maximal. So $\mfrak{p}$ is isolated.
\end{proof}

Back with the main thread of this section, our next task is to bound the CB rank of $N\in\Zg_R$ in terms of the rank of $\Ass N$ and $\Div N$.

\begin{lemma}\label{isowkprime}
%Let $R$ be a B\'ezout domain.
If $I\lhd R$ is a weakly prime ideal with $\rk I^\#<\infty$ then there exists $a,c\in R$ with $a\notin I$ and $ac\in I$ such that for all weakly prime $J\lhd R$ with $\rk J^\#\geq \rk I^\#$, if $a\notin J$ and $ac\in J$ then $J=I$.
\end{lemma}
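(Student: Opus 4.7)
Set $\mfrak{p}:=I^\#$ and $\beta:=\rk\mfrak{p}$. The preceding lemma, combined with the fact that in a B\'ezout domain irreducibles generate maximal ideals, furnishes $c_0\in R$ such that $c_0R_{S_\beta}=\mfrak{p}R_{S_\beta}$ and $c_0$ is irreducible in $R_{S_\beta}$. Localising further, $R_\mfrak{p}$ is a valuation ring whose maximal ideal is $c_0R_\mfrak{p}$, so $v(c_0)$ is the minimal positive element of $\Gamma(R_\mfrak{p})$. The plan is to take $c:=c_0$ and $a:=c_0^{n-1}$, where $n\geq 1$ is the unique integer satisfying $IR_\mfrak{p}=c_0^nR_\mfrak{p}$.

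The heart of the argument is to show such an $n$ exists. Since $v(c_0)$ is minimal positive, every ideal of $R_\mfrak{p}$ is either a power $c_0^kR_\mfrak{p}$ or is contained in $\bigcap_k c_0^kR_\mfrak{p}$. If the latter held for $IR_\mfrak{p}$, then a direct computation with $(IR_\mfrak{p}:r)=r^{-1}\cdot IR_\mfrak{p}$ (valid in a valuation ring for $r\notin IR_\mfrak{p}$) shows that $(IR_\mfrak{p})^\#\subseteq\bigcap_k c_0^kR_\mfrak{p}$, a prime strictly smaller than $\mfrak{p}R_\mfrak{p}$ (indeed, $c_0\notin c_0^kR_\mfrak{p}$ for $k\geq 2$ since $c_0$ is a non-unit). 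But Remark \ref{hashandloc}(1) applied to $I^\#=\mfrak{p}$ yields $(IR_\mfrak{p})^\#=\mfrak{p}R_\mfrak{p}$, a contradiction. Hence $IR_\mfrak{p}=c_0^nR_\mfrak{p}$ for some $n\geq 1$. With $a:=c_0^{n-1}$ and $c:=c_0$, the conditions $a\notin I$ and $ac\in I$ are immediate from $I=IR_\mfrak{p}\cap R$ (Lemma \ref{formwkprime}).

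For uniqueness, let $J\lhd R$ be weakly prime with $\rk J^\#\geq\beta$, $a\notin J$, and $ac\in J$. Then $c_0\in(J:a)\subseteq J^\#$, and since $c_0$ is irreducible in $R_{S_\beta}$ we have $c_0\in S_{\beta+1}$, so $\rk J^\#\leq\beta$ and thus $\rk J^\#=\beta$. Applying the preceding lemma to $J^\#$, the ideal $J^\#R_{S_\beta}$ is a maximal ideal of $R_{S_\beta}$ containing the irreducible $c_0$, so $J^\#R_{S_\beta}=c_0R_{S_\beta}=\mfrak{p}R_{S_\beta}$, and Lemma \ref{formwkprime} yields $J^\#=\mfrak{p}$. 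Finally, $JR_\mfrak{p}$ is an ideal of $R_\mfrak{p}$ containing $c_0^n$ but not $c_0^{n-1}$; minimality of $v(c_0)$ leaves no ideal strictly between $c_0^nR_\mfrak{p}$ and $c_0^{n-1}R_\mfrak{p}$, so $JR_\mfrak{p}=c_0^nR_\mfrak{p}=IR_\mfrak{p}$ and therefore $J=I$.

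The main obstacle will be the computation in paragraph~2 ruling out $IR_\mfrak{p}\subseteq\bigcap_k c_0^kR_\mfrak{p}$: this requires a careful case analysis of $(IR_\mfrak{p}:r)$ according to whether $r$ is a unit, an associate of some $c_0^k$, or an element of $\bigcap_k c_0^kR_\mfrak{p}$, invoking minimality of $v(c_0)$ to exclude any other possibility.
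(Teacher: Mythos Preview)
There is a genuine gap in your second paragraph. Your dichotomy ``every ideal of $R_\mfrak{p}$ is either some $c_0^kR_\mfrak{p}$ or is contained in $P:=\bigcap_k c_0^kR_\mfrak{p}$'' is correct, but your attempt to rule out $IR_\mfrak{p}\subseteq P$ fails: the assertion that $IR_\mfrak{p}\subseteq P$ forces $(IR_\mfrak{p})^\#\subseteq P$ is simply false. Take $R_\mfrak{p}$ a valuation domain with value group $\Z\times\Z$ ordered lexicographically, $v(c_0)=(0,1)$, and let $IR_\mfrak{p}$ be generated by an element of value $(1,1)$. Then $IR_\mfrak{p}\subseteq P$ (indeed $P$ consists of all elements with first coordinate $\geq 1$), yet choosing $r$ with $v(r)=(1,0)$ gives $r\notin IR_\mfrak{p}$ and $rc_0\in IR_\mfrak{p}$, so $c_0\in(IR_\mfrak{p})^\#$ and $(IR_\mfrak{p})^\#=\mfrak{p}R_\mfrak{p}\not\subseteq P$. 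Thus $IR_\mfrak{p}$ need not be a power of $c_0$, and your choice $a=c_0^{n-1}$ is not available in general.

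The paper avoids this by not insisting that $a$ be a power of $c$. From $(IR_\mfrak{p})^\#=\mfrak{p}R_\mfrak{p}$ (which does follow, via Remark~\ref{hashandloc}(2) rather than~(1)) one gets directly some $a\in R$ with $a\notin IR_\mfrak{p}$ and $ac\in IR_\mfrak{p}$; since $cR_\mfrak{p}$ is maximal there is no ideal strictly between $acR_\mfrak{p}$ and $aR_\mfrak{p}$, so $IR_\mfrak{p}=acR_\mfrak{p}$. With this $a$ your third-paragraph uniqueness argument goes through essentially unchanged: one concludes $J^\#=\mfrak{p}$ as you do, then $acR_\mfrak{p}\subseteq JR_\mfrak{p}\subsetneq aR_\mfrak{p}$ forces $JR_\mfrak{p}=acR_\mfrak{p}=IR_\mfrak{p}$, hence $J=I$. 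So the fix is local: drop the unjustified claim $IR_\mfrak{p}=c_0^nR_\mfrak{p}$ and take $a$ as above.
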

\begin{proof}
Let $\mfrak{p}:=I^\#$ and $\alpha:=\rk \mfrak{p}$. Then $\mfrak{p}R_{S_\alpha}$ is principally generated, say by $c\in R$, and hence so is $\mfrak{p}R_{\mfrak{p}}$. Thus $IR_{\mfrak{p}}=acR_{\mfrak{p}}$ for some $a\in R$. Therefore, by \ref{formwkprime}, $I=(acR_{\mfrak{p}})\cap R$. Clearly, $a\notin I$ and $ac\in I$.

Suppose $a\notin J$ and $ac\in J$. Then $c\in J^\#$. Thus $c\in J^\#\cap S_{\alpha+1}$ since $cR_{S_\alpha}$ is maximal. So $\rk J^\# \leq \alpha=\rk I^\#$. If $\rk J^\#=\alpha$ and $c\in J^\#$ then $J^\#=\mfrak{p}$. Thus $J=(cbR_{\mfrak{p}})\cap R$. Now $ac\in bcR_{\mfrak{p}}$ and $a\notin bcR_{\mfrak{p}}$ implies $acR_{\mfrak{p}}=bcR_{\mfrak{p}}$ since $cR_{\mfrak{p}}$ generates $\mfrak{p}R_{\mfrak{p}}$. Therefore $\rk J^\#\geq \rk I^\#$ implies $I=J$.
\end{proof}

\begin{lemma}\label{isolated}
%Let $R$ be a B\'ezout domain.
A point $N\in\Zg(R)$ is isolated if and only if $\rk \Ass N=\rk \Div N =0$.
\end{lemma}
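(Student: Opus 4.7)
The plan is to identify $N$ with some $N(I, J)$ for an admissible pair of weakly prime ideals via \ref{ALLNIJ}, and to use \ref{AssDivNIJ} to translate the conditions $\rk \Ass N = 0$ and $\rk \Div N = 0$ into the statements that $I^\#$ and $J^\#$ each contain an irreducible element of $R$. Under this hypothesis, \ref{isowkprime} applies to both $I$ and $J$.

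For the sufficiency, I will apply \ref{isowkprime} to $I$ to extract $a_1, c_1 \in R$ with $a_1 \notin I$ and $a_1 c_1 \in I$ which characterise $I$ among all weakly prime ideals of $R$ (the rank hypothesis in \ref{isowkprime} being vacuous since $\rk I^\# = 0$), and analogously produce $a_2, c_2$ from $J$. I then claim that the basic open set
\[\left(\nf{a_2 | x \wedge x a_1 c_1 = 0}{x a_1 = 0 + a_2 c_2 | x}\right)\]
equals $\{N\}$. That $N$ lies in it is immediate from \ref{toponpoint} by verifying the four conditions $a_1 c_1 \in I$, $a_1 \notin I$, $a_2 c_2 \in J$, $a_2 \notin J$. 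Conversely, any $N(I', J')$ in this open set yields, by \ref{toponpoint}, an equivalent admissible pair $(I'', J'')$ satisfying the same four conditions; the characterisation property of \ref{isowkprime} then forces $I'' = I$ and $J'' = J$, so $N(I', J') = N$.

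For the necessity I will argue contrapositively. Assuming (by symmetry in $I$ and $J$) $\rk I^\# > 0$, the maximal ideal of $R_{I^\#}$ is non-principal and $\Gamma(R_{I^\#})^+$ therefore has positive elements of arbitrarily small value. Given any basic open set $\left(\nf{C|x \wedge xD=0}{xA=0 + B|x}\right)$ containing $N$ (in the normal form of \ref{Specialform}), I will assume via \ref{toponpoint} that $(I,J)$ itself witnesses the four inclusion/non-inclusion conditions. Splitting on admissibility, I will then produce a second point $N^* \ne N$ in the same open set by shifting one of the ideals: if $I^\# \subseteq J^\#$, I pick $k \in R$ with $v_{I^\#}(k)$ positive but smaller than the gap from $v_{I^\#}(A)$ to the filter of $I R_{I^\#}$, and set $I^* := (I : kR)$, so that, by \ref{InverseIdealQuotient}, $I^*$ is weakly prime with $(I^*)^\# = I^\#$, the pair $(I^*, J)$ is admissible, and $N^* := N(I^*, J)$ still satisfies the four conditions; the case $J^\# \subsetneq I^\#$ is symmetric, using $\rk J^\# \geq \rk I^\# > 0$ and shifting $J$ instead.

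The main obstacle will be proving $(I^*, J) \nsim (I, J)$ (respectively $(I, J^*) \nsim (I, J)$), since this is what forces $N^* \ne N$ and defeats the putative isolation. This requires a careful case analysis of the two clauses in the definition of $\sim$: clause (i) would force a negative-valuation shift of $I$ in $R_{I^\#}$, which is impossible; clause (ii) demands a finitely generated $K$ with $v_{I^\#}(K) = v_{I^\#}(k) > 0$, forcing every generator of $K$ to lie in $I^\# \subseteq J^\#$ and hence $(J : K) \supsetneq J$, contradicting the required $J = (J : K)$. The symmetric branch uses $J^\# \subsetneq I^\#$ to block analogous $K$. Finally, the degenerate cases where $I$ or $J$ vanishes need a small variant, replacing the infinitesimal shift with a jump to a non-zero weakly prime ideal of the form $c^m R$ where $c$ is an irreducible disjoint from the given data.
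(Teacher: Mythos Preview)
Your sufficiency argument (the backward direction) is correct and is exactly what the paper does: apply \ref{isowkprime} to each of $I$ and $J$, form the basic open set
\[
\mcal{W}=\left(\nf{b|x\wedge xac=0}{xa=0+bd|x}\right),
\]
check $N(I,J)\in\mcal{W}$ via \ref{toponpoint}, and use the uniqueness clause of \ref{isowkprime} (which is vacuous on the rank side since $\rk I^\#=\rk J^\#=0$) to force any other $N(I',J')\in\mcal{W}$ to coincide with $N$.

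Your necessity argument, however, has a genuine gap. The step ``$\rk I^\#>0$ implies the maximal ideal of $R_{I^\#}$ is non-principal, so $\Gamma(R_{I^\#})^+$ has positive elements of arbitrarily small value'' is false. The condition $\rk I^\#>0$ says only that $I^\#$ is not of the form $cR$ for $c$ irreducible in $R$; it says nothing about $I^\#R_{I^\#}$. For a concrete counterexample, take $R$ a valuation domain with value group $\Z^2$ ordered lexicographically and let $I^\#=\mfrak{p}$ be the height-one prime. Then $\rk\mfrak{p}=1>0$, yet $\Gamma(R_{\mfrak{p}})\cong\Z$, so $\mfrak{p}R_{\mfrak{p}}$ is principal and there is a \emph{smallest} positive value. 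If the basic open set happens to have $v_{\mfrak{p}}(A)$ and $v_{\mfrak{p}}(D)$ pinning $IR_{\mfrak{p}}$ exactly (gap $1$ on both sides), there is no $k$ with $0<v_{\mfrak{p}}(k)$ small enough to produce an $I^*\neq I$ with $(I^*)^\#=I^\#$ still satisfying $A\nsubseteq I^*$ and $D\subseteq I^*$. Your shift-within-$I^\#$ construction therefore does not always produce a second point, and the contrapositive argument collapses.

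The paper takes a completely different route for this direction. It does not attempt to manufacture a second point in an arbitrary neighbourhood. Instead it assumes $N$ \emph{is} isolated, invokes the isolation condition (\ref{CBmdimdist}) to get an isolating minimal pair, and then applies \ref{movetogoodopen} to place $N$ in $\left(\nf{x=x}{c|x}\right)$ or $\left(\nf{xc=0}{x=0}\right)$ for some irreducible $c$. From there, using that $N$ is pp-uniserial together with the explicit description of $[c|x,x=x]$ from \ref{xxcxchain}, one reads off that $c$ lies in both $\Ass N$ and $\Div N$, whence both have rank $0$. This avoids any need to control $R_{I^\#}$.
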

\begin{proof}
%{\color{red}THE REVERSE DIRECTION SHOULD BE EASY THE FORWARD DIRECTION IS NOT GOING WELL. If $R$ is a field then the statement is trivial. So assume $R$ is not a field. Since the isolation condition holds for $R$, if $N$ is isolated then $N$ is isolated by an open set of the form
%\[BLAH\]
%Then $g\in \Ass N$ and $h\in \Div N$. Therefore, it is enough to show that $g$ and $h$ are irreducible. Suppose $g$ is not irreducible. If $g=0$ then something. So $g=g_1g_2$ and $g_1,g_2$ are both non-zero and non-units. If $g_1,g_2\in \Ass N$}
%
%
Since the isolation condition holds, see \ref{CBmdimdist}, if $N$ is isolated then it is isolated by a minimal pair. So, in particular, $N\in\left(\nf{\phi}{\psi}\right)$ where $[\psi,\phi]$ is totally ordered. Hence, by \ref{movetogoodopen}, $N\in \left(\nf{x=x}{c|x}\right)$ or $N\in \left(\nf{xc=0}{x=0}\right)$ for some irreducible $c\in R$. We leave the case where $N\in \left(\nf{xc=0}{x=0}\right)$ to the reader. Suppose $N\in \left(\nf{x=x}{c|x}\right)$.  Then $c\in \Div N$. We need to show $c\in \Ass N$. Let $m\in N$ be such that $m\notin Nc$. By \cite[9.11]{PrestBluebook}, there exists a minimal pair $\sigma/\tau$ such that $m\in\sigma(N)$ and $m\notin \tau(N)$. Let $\phi:=\sigma+c|x$ and $\psi:=\tau+c|x$. Since $N$ is pp-uniserial, $m\in \phi(N)$ and $m\notin \psi(N)$. So $N\in (\nf{\phi}{\psi})$ and $\nf{\phi}{\psi}$ is a minimal pair since $\nf{\sigma}{\tau}$ is. By the proof of \ref{xxcxchain}, if $c|x\leq \phi$ then $\phi$ is of the form $c|x+xd=0$ and the map from $\Gamma(R_{\mfrak{m}})_\infty^+\rightarrow [c|x,x=x]$ which sends $dR_{\mfrak{m}}$ to $c|x+xd=0$ is a lattice isomorphism. An interval $[dR_{\mfrak{m}},deR_{\mfrak{m}}]$ in $\Gamma(R_{\mfrak{m}})_\infty^+$ for $d\neq 0$ is simple if and only if $eR_{\mfrak{m}}=cR_{\mfrak{m}}$. Therefore $\phi$ is $c|x+xdc=0$ and $\psi$ is $c|x+xd=0$ for some non-zero $d\in R_{\mfrak{m}}$. Therefore $c\in \Ass N$. Hence $\rk \Ass N=\rk \Div N=0$.

Suppose $N=N(I,J)$ where $(I,J)$ is an admissible pair of weakly prime ideals and $\rk\Ass N=\rk \Div N=0$. Then $\rk I^\#=\rk\Ass N=0$ and $\rk J^\#=\rk\Div N=0$. So, by \ref{isowkprime}, there exists $a,b,c,d$ such that $a\notin I$, $ac\in I$, $b\notin J$, $bd\in J$ and for all weakly prime ideals $K\lhd R$, if $a\notin K$ and $ac\in K$ then $K=I$ and if $b\notin K$ and $bd\in K$ then $K=J$. Let
\[\mcal{W}:=\left(\nf{b|x\wedge xac=0}{xa=0+bd|x}\right).\] By \ref{toponpoint}, $N(I,J)\in \mcal{W}$.

Now, if $N'\in \mcal{W}$ then, by \ref{toponpoint}, $N'=N(I',J')$ for an admissible pair of weakly prime ideals $(I',J')$ such that $a\notin I'$, $ac\in I'$, $b\notin J'$ and $bd\in J'$. Therefore, since $\rk I^\#=\rk J^\#=0$, by \ref{isowkprime}, $I=I'$, $J=J'$ and hence $N=N(I,J)=N(I',J')=N'$ as required.
\end{proof}

\begin{cor}\label{isolnotfield}
Let $R$ be a B\'ezout domain which is not a field. If $N\in\Zg(R)$ is isolated then there exists $c\in R$ irreducible such that $\Ass N=\Div N=cR$. In particular, $\Zg(R_S)\subseteq \Zg(R)^{(1)}$.
\end{cor}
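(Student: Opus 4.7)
The plan is to combine Lemma \ref{isolated} with the identification $\Ass N(I,J)=I^\#$ and $\Div N(I,J)=J^\#$ from Lemma \ref{AssDivNIJ}. First I would use \ref{ALLNIJ} to write $N=N(I,J)$ for some admissible pair of weakly prime ideals. By \ref{isolated}, both $\rk I^\#$ and $\rk J^\#$ equal $0$. Since $R$ is not a field, neither $I^\#$ nor $J^\#$ can be the zero ideal: indeed, $\rk 0$ equals $\mdim\Gamma(R)^+_\infty$ (when defined), and this is $0$ exactly when $\Gamma(R)$ is trivial, i.e.\ when $R$ is a field. So $I^\#$ and $J^\#$ are \emph{non-zero} primes, and the characterisation of rank immediately after the definition of $\rk$ tells us that each equals $cR$ for some irreducible element of $R=R_{S_0}$.

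Next I would exploit admissibility of $(I,J)$: $I^\#$ and $J^\#$ are comparable by inclusion. Since in a B\'ezout domain an ideal generated by an irreducible is maximal, comparability of two such ideals forces them to coincide. Hence $I^\#=J^\#=cR$ for a single irreducible $c\in R$, which (after invoking \ref{AssDivNIJ} one more time) yields $\Ass N=\Div N=cR$ as required.

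For the final clause I would argue contrapositively. If $N\in\Zg(R)$ is isolated, then by the first part there is an irreducible $c\in R$ with $c\in\Ass N\subseteq\Att N$, so $c$ acts non-bijectively on $N$. But $c\in S=S(R)$, so $N$ is not an $R_S$-module and therefore $N\notin\Zg(R_S)$ (using the identification of $\Zg(R_S)$ with the closed subset of $\Zg(R)$ on which every element of $S$ acts bijectively, recalled in Section~\ref{SPreI}). Every point of $\Zg(R_S)$ is therefore non-isolated in $\Zg(R)$, i.e.\ $\Zg(R_S)\subseteq\Zg(R)^{(1)}$.

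There is no real obstacle here; the only minor subtlety is the bookkeeping of ruling out $I^\#=0$ or $J^\#=0$ using the convention $\rk 0=\mdim\Gamma(R)^+_\infty$ and the assumption that $R$ is not a field. Everything else is a direct application of the two preceding lemmas together with the fact that irreducibles generate maximal ideals in a B\'ezout domain.
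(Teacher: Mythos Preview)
Your proposal is correct and is essentially what any reader would reconstruct: the paper states this as an immediate corollary of Lemma~\ref{isolated} without giving a separate proof, and your argument via \ref{ALLNIJ}, \ref{AssDivNIJ}, the rank characterisation, and admissibility is exactly the intended route. The only cosmetic point is that your treatment of the possibility $I^\#=0$ (or $J^\#=0$) is slightly informal when $\mdim\Gamma(R)^+_\infty$ is undefined; you could simply note that in that case $\rk 0$ is undefined, so the equation $\rk\Ass N=0$ from Lemma~\ref{isolated} already forces $\Ass N\neq 0$.
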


Recall that the foundation rank $\fdrk a$ of an element $a$ of an ordered set $(A,\leq)$ is defined inductively as follows: $\fdrk a=0$ if and only if there are no elements $b>a$, and, for any ordinal $\alpha$, $\fdrk a \leq \alpha$ if for all $b>a$, $\fdrk b<\alpha$.

\begin{proposition}\label{upperbound}
Suppose that $\Gamma(R)_{\infty}^+$ has m-dimension. The CB rank of $N\in \Zg(R)$ is less than or equal to the foundation rank of $(\rk \Ass N,\rk \Div N)$ in $\text{Ord}^2$ ordered coordinatewise. In particular, if $\Gamma(R)_{\infty}^+$ has m-dimension $\alpha$ then $\CBrank(\Zg(R))\leq \alpha\cdot 2$.
%In particular, if $\Gamma(R)_{\infty}^+$ has m-dimension $\alpha$ then $\Zg(R)$ has CB rank $\leq \alpha\cdot2$.
\end{proposition}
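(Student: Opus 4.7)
The plan is to prove the main bound by transfinite induction on $\gamma := \fdrk(\rk \Ass N, \rk \Div N)$; the ``in particular'' clause then follows from the routine ordinal calculation that the foundation rank of any $(a,b) \in [0,\alpha]^2$ under the coordinatewise order is at most $\alpha \cdot 2$. The base case $\gamma = 0$ forces $\rk \Ass N = \rk \Div N = 0$, so Lemma \ref{isolated} immediately yields $\CBrank(N) = 0$. For the inductive step it suffices to produce a basic open neighborhood $\mcal{U}$ of $N$ in $\Zg(R)$ such that every $N' \in \mcal{U} \setminus \{N\}$ satisfies $(\rk \Ass N', \rk \Div N') < (\rk \Ass N, \rk \Div N)$ strictly in the coordinatewise order, since by the recursive definition of $\fdrk$ such $N'$ then have foundation rank strictly less than $\gamma$, and hence CB rank strictly less than $\gamma$ by the inductive hypothesis, isolating $N$ in $\Zg(R)^{(\gamma)}$.

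Writing $\mfrak{p} = \Ass N$, $\mfrak{q} = \Div N$, $\rho = \rk \mfrak{p}$ and $\sigma = \rk \mfrak{q}$, I would build $\mcal{U}$ in two stages modeled on the proof of Lemma \ref{isolated}. Stage one enforces the rank bound: when $\rho < \mdim \Gamma(R)_\infty^+$ the ideal $\mfrak{p}R_{S_\rho}$ is a rank-zero prime of $R_{S_\rho}$, hence principally generated by an irreducible of $R_{S_\rho}$ that lifts to some $s \in \mfrak{p} \cap S_{\rho+1}$. Then $N \in (xs=0/x=0)$; every $N' = N(I',J')$ in this open set has $s \in (I')^\#$ and thus $\rk \Ass N' \leq \rho$, and combining \ref{formwkprime} with the maximality of $sR_{S_\rho}$ as a proper ideal shows that whenever $\rk \Ass N' = \rho$ one actually has $\Ass N' = \mfrak{p}$. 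Proceed symmetrically with $s' \in \mfrak{q} \cap S_{\sigma+1}$ and the open set $(x=x/s'|x)$; the edge cases $\rho = \mdim \Gamma(R)_\infty^+$ or $\sigma = \mdim \Gamma(R)_\infty^+$ force $\mfrak{p} = 0$ (respectively $\mfrak{q} = 0$) and thereby $I = 0$ (respectively $J = 0$), so the corresponding open may simply be omitted. Let $\mcal{U}_1$ denote the intersection of the opens so produced.

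Stage two invokes Lemma \ref{isowkprime} separately for $I$ and $J$ to obtain $a, c, b, d \in R$ pinpointing $(I,J)$ among admissible pairs of weakly prime ideals of ranks at least $\rho$ and $\sigma$, and I would set
\[
\mcal{U} := \mcal{U}_1 \cap \left(\nf{b|x \wedge xac = 0}{xa = 0 + bd|x}\right).
\]
By \ref{toponpoint}, $N \in \mcal{U}$, and any $N' = N(I', J') \in \mcal{U}$ whose rank pair equals $(\rho, \sigma)$ must, by Stage one, satisfy $\Ass N' = \mfrak{p}$ and $\Div N' = \mfrak{q}$; the uniqueness assertion of \ref{isowkprime}, applied to the admissible pair produced by \ref{toponpoint} (after replacing $(I', J')$ by an equivalent pair if necessary), then forces $I' = I$ and $J' = J$, i.e.\ $N' = N$. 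The main technical point I expect to wrestle with is the upgrade in Stage one from the rank inequality $\rk \Ass N' \leq \rho$ to the ideal equality $\Ass N' = \mfrak{p}$ on the rank-$(\rho,\sigma)$ slice; this is precisely what reduces that slice to the situation of Lemma \ref{isolated} and lets Stage two close the induction.
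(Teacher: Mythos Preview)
Your proposal is correct and follows the same inductive strategy as the paper's proof, including the same base case via Lemma~\ref{isolated} and the same Stage-two open set
\[
\mcal{W}=\left(\nf{b|x\wedge xac=0}{xa=0+bd|x}\right)
\]
built from Lemma~\ref{isowkprime}. The one substantive difference is that your Stage one is unnecessary. Lemma~\ref{isowkprime} already packages the dichotomy you are trying to manufacture: for any weakly prime $K$ with $a\notin K$ and $ac\in K$, either $K=I$ outright or $\rk K^{\#}<\rk I^{\#}$. Thus every $N'=N(K,L)\in\mcal{W}$ (with the pair supplied by \ref{toponpoint}) automatically falls into one of the four cases $(K,L)=(I,J)$, $\rk K^{\#}<\rk I^{\#}$, $\rk L^{\#}<\rk J^{\#}$, or both strict; the last three already have strictly smaller foundation rank, and the first gives $N'=N$. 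The paper's proof uses only $\mcal{W}$ and this dichotomy. So the ``main technical point'' you flag---upgrading from the rank inequality to the ideal equality on the top slice---is exactly what Lemma~\ref{isowkprime} hands you, and the auxiliary opens $(xs=0/x=0)$ and $(x=x/s'|x)$ can simply be dropped.
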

\begin{proof}
We show that for all $\beta\in \text{Ord}$, if $\beta$ is the foundation rank of $(\rk \Ass N,\rk \Div N)$ then $N\notin \Zg(R)^{(\beta+1)}$. If $\beta=0$ then $\rk\Ass N=\rk \Div N=0$. By \ref{isolated}, $N$ is isolated in $\Zg(R)$ and so $N\notin \Zg(R)^{(1)}$. Suppose we have proved the statement for all $\beta'<\beta$. We want to show that $N$ is isolated in $\Zg(R)^{(\beta)}$. By \ref{ALLNIJ} and \ref{AssDivNIJ}, $N=N(I,J)$ for an admissible pair $(I,J)$ of weakly prime ideals with $I^\#=\Ass N$ and $J^\#=\Div N$. Let $a,c\in R$ (respectively $b,d\in R$) be such that $a\notin I$ and $ac\in I$ (respectively $b\notin J$ and $bd\in J$), and, such that for all weakly prime $K\lhd R$ with $\rk K^\#\geq \rk I^\#$ (respectively $\rk K^\#\geq \rk J^\#$), if $a\notin K$ and $ac\in K$ (respectively $b\notin K$ and $bd\in K$) then $K=I$ (respectively $K=J$). Such $a,b,c,d\in R$ exist by \ref{isowkprime}.

We now show that if $N\in \Zg(R)^{(\beta)}$ then it is isolated by
\[\mcal{W}:= \left(\nf{b|x\wedge xac=0}{xa=0+bd|x}\right)\]
in $\Zg(R)^{(\beta)}$. Suppose $N'\in\mcal{W}$. Then, by \ref{toponpoint}, $N'=N(K,L)$ for $(K,L)$ an admissible pair of weakly prime ideals with $a\notin K$, $ac\in K$, $b\notin L$ and $bd\in L$. Then, by definition of $a,b,c,d$, either
\begin{enumerate}
\item $I=K$ and $J=L$,
\item $\rk K^\#<\rk I^\#$ and $J=L$,
\item $K=I$ and $\rk J^\#<\rk L^\#$, or
\item $\rk K^\#<\rk I^\#$ and $\rk J^\#<\rk L^\#$.
\end{enumerate}
In the final $3$ cases the foundation rank of $(\rk \Ass N',\rk \Div N')=(\rk K^\#,\rk L^\#)$ is strictly less that the foundation rank of $(\rk \Ass N,\rk \Div N)$. So, $N'\notin \Zg(R)^{(\beta)}$. In the first case $N=N(I,J)=N(K,L)=N'$.
\end{proof}

The upper bound for the CB rank of the Ziegler spectrum in the previous proposition also holds for Pr\"ufer domains by \ref{BeztoPruf}. By \ref{GammappRmdimLB}, the m-dimension of $\Gamma(R)_\infty^+$ is a lower bound for the m-dimension of $\pp_R^1$. Hence, by \ref{CBmdimdist}, also a lower bound for $\CBrank(\Zg(R))$.

\begin{cor}\label{CBrankbounds}
Let $R$ be a Pr\"ufer domain. If $\Gamma(R)_\infty^+$ has m-dimension $\alpha$ then \[\alpha\leq \CBrank(\Zg(R))\leq \alpha\cdot 2.\]
\end{cor}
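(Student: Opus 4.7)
The plan is that this corollary is essentially a packaging of results already established in the paper, so the proof should be short and largely bookkeeping.

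For the upper bound, I would first invoke the Jaffard--Kaplansky--Ohm theorem to pick a B\'ezout domain $R'$ with $\Gamma(R')\cong \Gamma(R)$, so in particular $\mdim \Gamma(R')_\infty^+ = \alpha$. By the transfer theorem \ref{BeztoPruf}, there is a homeomorphism $\Zg(R)\cong \Zg(R')$ induced by the $\ell$-group isomorphism, so in particular $\CBrank(\Zg(R)) = \CBrank(\Zg(R'))$. Now Proposition \ref{upperbound} gives $\CBrank(\Zg(R'))\leq \alpha\cdot 2$, which yields the desired upper bound.

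For the lower bound, I would observe that since $R$ is Pr\"ufer (hence arithmetical) the lattice $\pp_R^1$ is distributive. Therefore Theorem \ref{CBmdimdist} applies and gives $\CBrank(\Zg(R)) = \mdim \pp_R^1$. Combining this with Remark \ref{GammappRmdimLB}, which says $\mdim \Gamma(R)_\infty^+ \leq \mdim \pp_R^1$, immediately yields $\alpha \leq \CBrank(\Zg(R))$.

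There is essentially no obstacle: the corollary is a direct assembly of \ref{upperbound}, \ref{BeztoPruf}, \ref{GammappRmdimLB} and \ref{CBmdimdist}. The only small point worth flagging is that Proposition \ref{upperbound} is stated for B\'ezout domains, which is why the reduction via Jaffard--Kaplansky--Ohm and the transfer theorem is needed for the upper bound; but both tools are available and the reduction is of the same kind used in \ref{vgroupmdimsuperdec}, \ref{calcbreadth} and \ref{weakmdim}.
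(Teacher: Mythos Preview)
Your proposal is correct and matches the paper's own argument essentially verbatim: the upper bound is obtained from Proposition~\ref{upperbound} via the transfer theorem~\ref{BeztoPruf} (together with Jaffard--Kaplansky--Ohm to produce a B\'ezout domain with the same value group), and the lower bound comes from combining \ref{GammappRmdimLB} with \ref{CBmdimdist}.
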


We now reinterpret a result of Puninski, which computes the CB ranks of Ziegler spectra of valuation domains, to show that the upper bound in the previous corollary is the best possible bound on the CB rank of $\Zg(R)$ in terms of the m-dimension of $\Gamma(R)_{\infty}^+$.

Let $S$ be a ring. For each ordinal $\alpha$, we define an ideal $J(\alpha)(S)\lhd S$ by ordinal induction. Define $J(0)(S)$ to be the Jacobson radical of $S$ and $J(\alpha+1)(S):=\bigcap_{n\in\N} J(\alpha)(S)^n$. If $\beta$ is a limit ordinal then let $J(\beta)(S):=\bigcap_{\alpha<\beta}J(\alpha)(S)$.

\begin{theorem}\cite[2.1 \& 3.6]{PunCB}
Let $V$ be a valuation domain. The Ziegler spectrum of $V$ has CB rank if and only if there exists an ordinal $\alpha$ such that $J(\alpha)(V)=0$. Moreover, the CB rank of $\Zg(V)$ is $\alpha\cdot 2$ if and only if $\alpha$ is the least ordinal such that $J(\alpha)(V)=0$.
\end{theorem}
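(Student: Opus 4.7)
The plan is to express the Jacobson-style filtration $\{J(\alpha)(V)\}$ in value-group terms and then reduce the theorem to the machinery developed earlier in this paper.

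First I would translate the operation $I\mapsto\bigcap_n I^n$ on ideals of $V$ into a corresponding operation on filters of $\Gamma(V)_\infty^+$. Under the bijection $I\mapsto\val(I)$ of Section~\ref{SPreI}, ideal multiplication corresponds to Minkowski addition (in additive notation on the value group) of the associated filters; setting $\mcal{F}_\alpha:=\val(J(\alpha)(V))$ yields $\mcal{F}_0=\Gamma(V)_{>0}^+\cup\{\infty\}$, while $\mcal{F}_{\alpha+1}$ is the upward closure of $\bigcap_{n\in\N}(n\cdot\mcal{F}_\alpha)$ in $\Gamma(V)_\infty^+$ and $\mcal{F}_\lambda=\bigcap_{\alpha<\lambda}\mcal{F}_\alpha$ at limits. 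Because $\Gamma(V)$ is totally ordered, each $\mcal{F}_\alpha$ is exactly the set of elements of $\Gamma(V)_\infty^+$ strictly above the positive cone of a convex $\ell$-subgroup $D_\alpha\subseteq\Gamma(V)$.

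Second I would prove by ordinal induction that $D_\alpha=C_{\two,\alpha}(\Gamma(V))$, where $C_{\two,\cdot}$ is the filtration from \ref{dimforgroup1}. The base case is immediate. At a successor step, \ref{Ldimlgroup} identifies $C_{\two,1}^+$ as the submonoid generated by the minimal positive elements of $\Gamma(V)$, i.e.\ the $g$ with $[0,g]\in\two$; and passing from $\mcal{F}_\alpha$ to $\mcal{F}_{\alpha+1}$ adjoins to $D_\alpha$ precisely the elements archimedean-comparable to a minimal positive element of $\Gamma(V)/D_\alpha$. After identifying $\Gamma(V)/C_{\two,\alpha}(\Gamma(V))$ with $\Gamma(V_{S_\alpha(V)})$ via \ref{correspmultgroup} and \ref{dimintermsofmult}, this matches the generation of $C_{\two,\alpha+1}/C_{\two,\alpha}$. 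The limit step is automatic. Combined with \ref{dimforgroup1}(ii) and \ref{Ldimtrivial}, this shows that the least $\alpha$ with $J(\alpha)(V)=0$ is precisely $\mdim\Gamma(V)_\infty^+$; together with \ref{weakmdim} and \ref{CBmdimdist} this proves the first equivalence of the theorem.

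For the numerical equality $\CBrank(\Zg(V))=\alpha\cdot 2$ with $\alpha=\mdim\Gamma(V)_\infty^+$, the upper bound is \ref{CBrankbounds}. For the matching lower bound, it suffices by \ref{ALLNIJ} and \ref{AssDivNIJ} to exhibit a point $N(I,J)$ of CB rank at least $\alpha\cdot 2$; the natural candidate is $N(0,0)$, corresponding to the fraction field $Q$ of $V$, which has $(\rk\Ass,\rk\Div)=(\rk 0,\rk 0)=(\alpha,\alpha)$ and realises the extremal foundation rank $\fdrk(\alpha,\alpha)=\alpha\cdot 2$ in $\Ord^2$ ordered coordinatewise. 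The plan is to show by transfinite induction on $\beta\leq\alpha\cdot 2$ that $Q\in\Zg(V)^{(\beta)}$: for successor $\beta=\delta+1$, one finds in every basic open neighbourhood of $Q$ a further point $N(I',J')\neq Q$ with $\fdrk(\rk(I')^\#,\rk(J')^\#)\geq\delta$, relying on \ref{Lpointsdense} applied in the value groups of the intermediate localizations $V_{S_\beta(V)}$ together with the $\sim$-flexibility of \ref{admissiblepairshift} to adjust $(I',J')$ without changing isomorphism class.

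The hard part will be this inductive construction of witnesses $N(I',J')$ inside every basic open neighbourhood of $Q$ (and, more generally, inside every neighbourhood of any point whose foundation rank one wishes to saturate). One must control both $\rk(I')^\#$ and $\rk(J')^\#$ simultaneously from within a prescribed basic open set, and this requires producing weakly prime ideals of arbitrarily prescribed rank $<\alpha$ that avoid a given finitely generated ideal and contain another; the necessary existence results are supplied by \ref{formwkprime} and \ref{InverseIdealQuotient} applied at each stage of the filtration $\{S_\beta(V)\}_{\beta<\alpha}$ from Section~\ref{SDimLgroup}.
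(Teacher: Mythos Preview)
This theorem is not proved in the paper: it is quoted from Puninski \cite{PunCB} and used as a black box. What the paper \emph{does} prove, in the corollary immediately following, is that the least ordinal $\alpha$ with $J(\alpha)(V)=0$ coincides with $\mdim\Gamma(V)_\infty^+$; it does this by showing $S_\alpha(V)=V\setminus J(\alpha)(V)$ for every $\alpha$ via an elementary induction using \ref{BetterdefJalpha}. Your first two paragraphs carry out essentially the same identification, phrased on the value-group side as $D_\alpha=C_{\two,\alpha}(\Gamma(V))$ rather than on the ring side; the two formulations are equivalent through \ref{correspmultgroup}, so this portion of your proposal is correct and matches what the paper actually does.

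The substantive content of the quoted theorem, however, is the equality $\CBrank(\Zg(V))=\alpha\cdot 2$, and in particular the lower bound. The paper makes no attempt to prove this; it simply invokes Puninski. Your proposal tries to supply an independent argument, but what you have written is a plan rather than a proof: you identify $Q=N(0,0)$ as the candidate point of maximal rank and state that one must, in every basic neighbourhood of $Q$ (and inductively of intermediate points), produce other indecomposables of prescribed foundation rank. You yourself flag this as ``the hard part'' and leave it undone. The difficulty is genuine: \ref{upperbound} gives only $\CBrank N\leq\fdrk(\rk\Ass N,\rk\Div N)$, and nothing in this paper establishes the reverse inequality for valuation domains. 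The tools you list (\ref{formwkprime}, \ref{InverseIdealQuotient}, \ref{admissiblepairshift}, \ref{Lpointsdense}) let you manufacture admissible pairs with specified $(\rk I^\#,\rk J^\#)$, but they do not by themselves show that the resulting points lie in the required Cantor--Bendixson derivatives; one still needs to control the topology, not just the combinatorics of pairs. Puninski's original argument proceeds quite differently, through a direct analysis of minimal pairs and the Ziegler topology over a valuation domain. As written, then, your proposal reproduces the paper's corollary but does not close the gap on the theorem itself.
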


We will show that the least ordinal $\alpha$ such that $J(\alpha)(V)=0$ is equal to $\mdim \Gamma(V)_\infty^+$. In order to do this we need a lemma.

\begin{lemma}\label{BetterdefJalpha}
Let $V$ be a valuation domain and $\alpha\in \Ord$. Then $J(\alpha)(V)$ is prime and $J(\alpha+1)(V)=J(1)(V_{J(\alpha)(V)})$ as subsets of the field of fractions of $V$.
\end{lemma}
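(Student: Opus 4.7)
The plan is to argue by transfinite induction on $\alpha$, simultaneously establishing primality of $J(\alpha)(V)$ and, at successor stages, the equality $J(\alpha+1)(V)=J(1)(V_{J(\alpha)(V)})$ viewed as subsets of the field of fractions $Q$ of $V$. For primality: the base case $\alpha=0$ holds because $V$ is local, so its Jacobson radical is the unique maximal ideal; the limit case follows because the ideals $J(\beta)(V)$ for $\beta<\lambda$ form a descending chain of primes (by induction), and an intersection of a chain of primes is prime; and the successor case reduces to the classical fact that, in a valuation domain, $\bigcap_n\mfrak{p}^n$ is prime whenever $\mfrak{p}$ is.

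For the equality at a successor stage, set $\mfrak{p}:=J(\alpha)(V)$. The inclusion $\bigcap_n\mfrak{p}^n\subseteq \bigcap_n(\mfrak{p}V_\mfrak{p})^n$ is immediate, both being viewed as subsets of $Q$. For the reverse, observe that $\mfrak{q}^*:=\bigcap_n(\mfrak{p}V_\mfrak{p})^n$ is a prime of $V_\mfrak{p}$, so by the standard correspondence between primes of $V_\mfrak{p}$ and primes of $V$ contained in $\mfrak{p}$ one has $\mfrak{q}^*=\mfrak{q}V_\mfrak{p}$ for a unique prime $\mfrak{q}\subseteq\mfrak{p}$ of $V$. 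A key subsidiary claim is that $\mfrak{q}V_\mfrak{p}=\mfrak{q}$ as subsets of $Q$: given $q/s$ with $q\in\mfrak{q}$ and $s\notin\mfrak{p}$, totality of the ideal lattice of $V$ forces $sV\supseteq\mfrak{p}\supseteq\mfrak{q}$, so $q=sv$ for some $v\in V$, and primality of $\mfrak{q}$ then gives $v\in\mfrak{q}$. It therefore suffices to show $\mfrak{q}\subseteq\bigcap_n\mfrak{p}^n$.

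For that final inclusion, split on whether $\mfrak{q}\subsetneq\mfrak{p}$ or $\mfrak{q}=\mfrak{p}$. In the strict case, fix $p\in\mfrak{p}\setminus\mfrak{q}$; for any $q\in\mfrak{q}$ and any $n$, totality forces $qV\subseteq p^nV$ (the other comparability would give $p^n\in\mfrak{q}$, hence $p\in\mfrak{q}$ by primality), so $q=p^n v$ with $v\in V$, and primality of $\mfrak{q}$ forces $v\in\mfrak{q}\subseteq\mfrak{p}$, placing $q$ in $\mfrak{p}^{n+1}\subseteq\mfrak{p}^n$. If instead $\mfrak{q}=\mfrak{p}$, then $\mfrak{p}V_\mfrak{p}=\mfrak{q}^*$ is idempotent in $V_\mfrak{p}$; for each $p\in\mfrak{p}$, write $p/1=(a/s_1)(b/s_2)$ with $a,b\in\mfrak{p}$ and $s_i\notin\mfrak{p}$, clear denominators to get $p\cdot s_1s_2=ab$ in $V$, use that $s_1s_2\notin\mfrak{p}$ together with totality to write $a=s_1s_2\cdot v$ for some $v\in V$ (with $v\in\mfrak{p}$ by primality), and conclude $p=vb\in\mfrak{p}^2$; iterating yields $\mfrak{p}=\mfrak{p}^n$ for all $n\geq 1$, so $\mfrak{p}\subseteq\bigcap_n\mfrak{p}^n$. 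The delicate step I expect to be precisely this case $\mfrak{q}=\mfrak{p}$, where idempotence of the maximal ideal of $V_\mfrak{p}$ must be lifted to idempotence of $\mfrak{p}$ in $V$; everything else is routine bookkeeping with the ideal theory of valuation domains.
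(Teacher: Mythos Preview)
Your argument is correct, but it is considerably more involved than the paper's. The paper observes the single fact that in a valuation domain, for every prime ideal $\mfrak{p}$ one has $\mfrak{p}V_{\mfrak{p}}=\mfrak{p}$ as subsets of $Q$; from this it follows that $s\mfrak{p}=\mfrak{p}$ for every $s\notin\mfrak{p}$, hence $s\mfrak{p}^n=\mfrak{p}^n$, hence $(\mfrak{p}V_{\mfrak{p}})^n=\mfrak{p}^nV_{\mfrak{p}}=\mfrak{p}^n$ for all $n$, and the desired equality of intersections is then literal. You actually prove this same fact (your ``subsidiary claim'' $\mfrak{q}V_{\mfrak{p}}=\mfrak{q}$), but you apply it only to the contraction $\mfrak{q}$ of $\bigcap_n(\mfrak{p}V_{\mfrak{p}})^n$ rather than to $\mfrak{p}$ itself; as a result you then need the case split $\mfrak{q}\subsetneq\mfrak{p}$ versus $\mfrak{q}=\mfrak{p}$ and the separate idempotence-lifting argument. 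Had you applied your subsidiary claim with $\mfrak{q}=\mfrak{p}$ at the outset, both cases would evaporate. The paper also handles primality slightly more efficiently by quoting that $\bigcap_n I^n$ is prime for \emph{any} ideal $I$ of a valuation domain, avoiding the need to assume inductively that $J(\alpha)(V)$ is prime before forming $J(\alpha+1)(V)$.
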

\begin{proof}
The intersection of a set of prime ideals of a valuation domain $V$ is a prime ideal and, \cite[Ch II 1.3(c)]{MONND}, for any ideal $I\lhd V$, $\bigcap_{n\in \N} I^n$ is prime. Therefore $J(\alpha)(V)$ is prime for all $\alpha\in \Ord$. Since $V$ is a valuation domain, for all prime ideals $\mfrak{p}\lhd V$, $\mfrak{p}V_{\mfrak{p}}=\mfrak{p}$ as subsets of the field of fractions of $V$. Therefore for all $\alpha\in \Ord$,
\[J(\alpha+1)(V)=\bigcap_{n\in\N}J(\alpha)(V)^n=\bigcap_{n\in\N}(J(\alpha)(V)V_{J(\alpha)(V)})^n=J(1)(V_{J(\alpha)(V)}).\]
\end{proof}

\begin{cor}
Let $V$ be a valuation domain such that $\Gamma(V)_\infty^+$ has m-dimension $\alpha$. Then $\Zg(V)$ has CB rank $\alpha\cdot 2$.
\end{cor}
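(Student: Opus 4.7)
The plan is to combine the Puninski theorem stated just above with a transfinite induction identifying, for each $\beta\le\alpha:=\mdim\Gamma(V)^+_\infty$, the prime ideal $J(\beta)(V)$ with the prime ideal $\mfrak{p}_\beta:=V\setminus S_\beta(V)$ associated (via the order-reversing bijection between saturated multiplicatively closed sets and primes of a valuation domain) to the multiplicatively closed set $S_\beta(V)$ from \ref{SandT}. Once this identification is in place, \ref{dimintermsofmult} tells us that $\alpha$ is the least ordinal such that $V_{S_\alpha(V)}$ is a field, equivalently $\mfrak{p}_\alpha=0$, equivalently $J(\alpha)(V)=0$; Puninski's theorem then yields $\CBrank\Zg(V)=\alpha\cdot 2$.

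The core step is the following base lemma: \emph{if $V$ is a non-field valuation domain and $\Gamma(V)^+_\infty$ has m-dimension, then $\mfrak{m}(V)$ is principal and $J(1)(V)=\mfrak{p}_1$}. Principality follows from \ref{Lpointsdense} and \ref{irredlatvallat}: together these produce an irreducible $c\in V$, and in a valuation domain any irreducible must generate the maximal ideal. Writing $\mfrak{m}=cV$ one checks that $S_1(V)=\{uc^n:u\in V^\times,\,n\ge 0\}$, whence $\mfrak{p}_1=\bigcap_n c^n V\cup\{0\}=\bigcap_n\mfrak{m}^n=J(1)(V)$.

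With the base lemma in hand, I would carry out transfinite induction on $\beta\le\alpha$. The base case is $J(0)(V)=\mfrak{m}=\mfrak{p}_0$; at a limit ordinal $\lambda$ both sides equal $\bigcap_{\beta<\lambda}J(\beta)(V)=V\setminus\bigcup_{\beta<\lambda}S_\beta(V)$. For a successor $\beta=\gamma+1\le\alpha$, \ref{BetterdefJalpha} gives $J(\gamma+1)(V)=J(1)(V_{J(\gamma)(V)})$, and the inductive hypothesis yields $V_{J(\gamma)(V)}=V_{\mfrak{p}_\gamma}=V_{S_\gamma(V)}$. By \ref{dimforgroup1} the value group of this localisation is $\Gamma(V)/C_{\two,\gamma}$, a quotient of $\Gamma(V)^+_\infty$; it is nontrivial since $\gamma<\alpha$, and it inherits m-dimension from $\Gamma(V)^+_\infty$. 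Applying the base lemma to $V_{S_\gamma(V)}$ then equates $J(1)(V_{S_\gamma(V)})$ with the prime of $V_{S_\gamma(V)}$ complementary to $S_1(V_{S_\gamma(V)})$, which pulls back to $\mfrak{p}_{\gamma+1}$ inside $V$.

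The main obstacle is precisely the successor step: I must ensure that the hypotheses of the base lemma persist at every stage $\gamma<\alpha$, i.e.\ that the quotient $\Gamma(V)/C_{\two,\gamma}$ remains nontrivial and still has m-dimension. Nontriviality is immediate from $\gamma<\alpha=\mdim\Gamma(V)^+_\infty$ together with \ref{dimforgroup1}, and preservation of m-dimension under the quotient is routine (the quotient map is a surjective lattice homomorphism onto $\Gamma(V)^+_\infty/\!\sim_{\two,\gamma}$). Once these are checked, the identifications $J(\beta)(V)=\mfrak{p}_\beta$ propagate automatically through the induction, and the corollary follows.
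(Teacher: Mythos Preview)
Your proposal is correct and follows essentially the same route as the paper: both arguments prove that $S_\beta(V)=V\setminus J(\beta)(V)$ by transfinite induction, using \ref{BetterdefJalpha} at successor steps and the definitions at limit steps, and then invoke \ref{dimintermsofmult} together with Puninski's theorem.

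The only difference is in how the $\beta=1$ step is handled. The paper proves the identification $S_1(V)=V\setminus J(1)(V)$ for \emph{arbitrary} valuation domains, which forces a case split according to whether $J(0)(V)^2=J(0)(V)$ (in which case there are no irreducibles and both sides stay at $S_0$, $J(0)$) or $J(0)(V)^2\neq J(0)(V)$ (in which case $\mfrak m$ is principal). You instead carry the hypothesis ``$\Gamma^+_\infty$ has m-dimension and is nontrivial'' through the induction and use \ref{Lpointsdense} to force an irreducible at every stage, so your base lemma only treats the principal case. This is a legitimate simplification, since the corollary assumes m-dimension to begin with and m-dimension passes to the quotients $\Gamma(V)/C_{\two,\gamma}$; the paper's version is marginally stronger (it gives $S_\beta(V)=V\setminus J(\beta)(V)$ for all $\beta$ regardless of whether m-dimension exists), but that extra strength is not needed here. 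One cosmetic point: your $\bigcap_n c^nV\cup\{0\}$ should simply be $\bigcap_n c^nV$, since $0$ already lies in the intersection.
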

\begin{proof}
We show that for all $\alpha\in\Ord$, $S_\alpha(V)=V\backslash J(\alpha)(V)$. The result then follows from Puninski's result, since, by \ref{dimintermsofmult}, the m-dimension of $\Gamma(V)_\infty^+$ is the least ordinal $\alpha$ such that $S_\alpha(V)=V\backslash\{0\}$.

By definition, $S_0(V)$ is the set of units of $V$ and, since $V$ is local, $J(0)(V)$ is the set of non-units of $V$. So $S_0(V)=V\backslash J(0)(V)$.

If $J(0)(V)^2=J(0)(V)$ then either $J(0)(V)=0$ or $J(0)(V)$ is not finitely generated. In either case, $J(0)(V)=J(1)(V)$ and $S_0(V)=S_1(V)$ since $V$ has no irreducible elements. Therefore $V\backslash J(1)(V)=S_1(V)$.

If $J(0)(V)^2\neq J(0)(V)$ then $J(0)(V)=cV$ for some non-zero $c\in V$. Since $J(0)(V)$ is maximal, $c$ is irreducible. Then $a\notin J(1)(V)$ if and only if $a\in V\backslash c^nV$ for some $n\in\N$ if and only if $a=c^mu$ for some $m\in \N_0$ and $u$ a unit. Since $J(0)(V)$ is maximal, $c$ is the unique irreducible element of $V$ and hence $a\in S_1(V)$ if and only if $a=uc^m$ for some $m\in \N_0$ and unit $u$. Therefore $V\backslash J(1)(V)=S_1(V)$.

So we have proved that $S_\alpha(V)=V\backslash J(\alpha)(V)$ for $\alpha =1$. We now proceed by ordinal induction.
It follows from the definitions of $J(\alpha)(V)$ and $S_\alpha(V)$ that if $\lambda$ is a limit ordinal and $S_\alpha(V)=V\backslash J(\alpha)(V)$ for all $\alpha<\lambda$ then $S_\lambda(V)=V\backslash J(\lambda)(V)$.

Suppose $S_\alpha(V)=V\backslash J(\alpha)(V)$. Then
\[S_{\alpha+1}(V)=S_{1}(V_{S_\alpha(V)})\cap V=[V_{S_\alpha(V)}\backslash J(1)(V_{S_\alpha(V)})]\cap V=V\backslash J(\alpha +1)(V).\]
The first equality is by definition of $S_{\alpha+1}(V)$, the second follows from the claim with $\alpha=1$ and the third follows from \ref{BetterdefJalpha}.
%
%We just need to show that for $V$ a valuation domain, $J(\alpha)(V)=0$ if and only if $C_\alpha=\Gamma(V)$. It is easy to see that for each $\alpha$, $J(\alpha)(V)$ is prime. To do this we just need to note that the convex subgroup $\Gamma(V)_\alpha$ corresponds to the saturated multiplicative subset $V\backslash J(\alpha)(V)$. If $J(0)(V)$ is not principal then $J(0)(V)=J(0)(V)^2$ and hence $J(1)(V)=J(0)(V)$. If $J(0)(V)$ is principal with generator $c$ then $a\notin J(1)(V)$ implies $a=c^nr$ for some $r\notin J(0)(V)$. Now $J(1)(V_{J(\alpha)})\cap V=J(\alpha+1)(V)$. The result follows.
\end{proof}

We now calculate the CB rank of $\Zg(R)$ in the extreme opposite case, that is, when $R$ is a Pr\"ufer domain of Krull dimension $1$. The first case is straight forward.

\begin{remark}\label{mdim1}
Let $R$ be a Pr\"ufer domain. If $\mdim\Gamma(R)_\infty^+=1$ then $R$ is a Dedekind domain which is not a field. Hence, if $\mdim\Gamma(R)_\infty^+=1$ then $\Zg(R)$ has CB rank $2$.
\end{remark}
\begin{proof}
We just need to show that $R$ is Noetherian. Since $\mdim\Gamma(R)_\infty^+=1$, $\Gamma(R)$ is generated by elements $a\in \Gamma(R)^+$ such that $[0,a]$ is a simple interval. Thus, every multiplication prime filter of $\Gamma(R)_\infty^+$ is either $\{\infty\}$ or contains such an element. In the second case, the filter is of the form $\{b\in\Gamma(R)_\infty^+ \st b\geq a\}$. It follows that all prime ideals of $R$ are finitely generated and hence that $R$ is Noetherian. It is well known, see for instance \cite[5.2.4]{PSL}, that the Ziegler spectra of Dedekind domains have CB rank $2$.
\end{proof}

\begin{remark}
If $R$ is a Pr\"ufer domain of Krull dimension $1$ such that $\Gamma(R)^+_\infty$ has m-dimension then for all maximal ideals $\mfrak{m}\lhd R$,  $R_{\mfrak{m}}$ is a discrete valuation domain.
\end{remark}
\begin{proof}
Since $\Gamma(R_\mfrak{m})_\infty^+$ is a quotient of $\Gamma(R)_\infty^+$, it has m-dimension and hence $\mfrak{m}R_{\mfrak{m}}$ is a principal ideal. Therefore, since $R$ has Krull dimension $1$, $R_\mfrak{m}$ is a discrete valuation domain.
\end{proof}

\begin{lemma}\label{ptskr1}
Let $R$ be a Pr\"ufer domain of Krull dimension $1$ and such that $\Gamma(R)^+_\infty$ has m-dimension. For all $N\in\Zg(R)$, either
\begin{enumerate}
\item $\Ass N=\Div N$;
\item $\Ass N=0$ and hence $N=N(0,\Div N)$; or
\item $\Div N=0$ and hence $N=N(\Ass N,0)$.
\end{enumerate}
\end{lemma}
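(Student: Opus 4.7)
The plan is to represent $N = N(I, J)$ for an admissible pair of weakly prime ideals via \ref{ALLNIJ}, so that $\Ass N = I^\#$ and $\Div N = J^\#$ by \ref{AssDivNIJ}. The trichotomy will then be forced by whether $I^\#$ and $J^\#$ vanish, combined with the Krull dimension $1$ hypothesis.

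First I would establish the auxiliary claim: for a weakly prime ideal $K$ of the domain $R$, $K^\# = 0$ if and only if $K = 0$. The forward direction uses $(0:r) = 0$ for $r \neq 0$ in a domain. For the reverse, pick $0 \neq a \in K$ and some $r \notin K$ (possible by properness of $K$); then $ar \in K$ exhibits $0 \neq a \in (K:r) \subseteq K^\#$. Admissibility of $(I, J)$ gives $I^\# \subseteq J^\#$ or $J^\# \subseteq I^\#$. If both are nonzero, they are nonzero prime ideals, hence maximal under the Krull dimension $1$ hypothesis, and the inclusion forces $I^\# = J^\#$: this is case (1). If $I^\# = 0$, the claim yields $I = 0$, whence $N = N(0, J)$ with $\Ass N = 0$: case (2); symmetrically for $J^\# = 0$ one gets case (3).

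For the ``hence'' clauses, one must upgrade the representative $(0, J)$ of $N$ to $(0, J^\#) = (0, \Div N)$, that is, show $(0, J) \sim (0, J^\#)$. By the remark immediately preceding the lemma, $R_{J^\#}$ is a discrete valuation domain, so $J R_{J^\#} = (\pi^n)$ for some $n \geq 1$ with uniformizer $\pi$, which we may take in $R$. Setting $K = (\pi^{n-1})$, one computes $(J:K) R_{J^\#} = (\pi) = J^\# R_{J^\#}$, and since $(J:K)$ is weakly prime with $(J:K)^\# = J^\#$, Lemma \ref{formwkprime} gives $(J:K) = J^\#$ globally. Combined with $(0:K) = 0$ (as $K \neq 0$ in a domain), this is precisely the second form of the $\sim$ relation, so $(0, J) \sim (0, J^\#)$, and \ref{equpairs} concludes $N(0, J) \cong N(0, \Div N)$. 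Case (3) is handled symmetrically.

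The only delicate point is the use of Krull dimension $1$ to force two comparable nonzero primes to coincide; everything else is bookkeeping inside the structure theory for $N(I, J)$ already developed in Section 2. The m-dimension hypothesis enters only through the DVR-ness of the local rings $R_\mfrak{p}$, and only for the ``hence'' refinements rather than the trichotomy itself.
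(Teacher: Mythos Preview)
Your proof is correct and follows essentially the same approach as the paper's. The paper derives the trichotomy from the fact that $\Ass N \cup \Div N = \Att N$ is prime (forcing one of $\Ass N$, $\Div N$ to contain the other) rather than from admissibility of a representing pair, but these are equivalent observations; your auxiliary claim that $K^\#=0$ iff $K=0$ makes explicit a step the paper leaves implicit, and your argument for the ``hence'' clauses via the DVR structure of $R_{J^\#}$ and the relation $\sim$ is identical to the paper's.
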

\begin{proof}
Since $\Ass N,\Div N$ are prime ideals and $\Ass N\cup \Div N=\Att N$ is a prime ideal, it follows directly from the assumption that $R$ has Krull dimension $1$ that either $\Ass N=\Div N$, $\Ass N=0$ or $\Div N=0$.
%
%For each maximal ideal $\mfrak{m}$, $\Gamma(R_\mfrak{m})_\infty^+$ is a quotient of $\Gamma(R)^+_\infty$ and hence also has m-dimension. Therefore the unique maximal ideal of $R_\mfrak{m}$ is principally generated. So, since $R_\mfrak{m}$ has Krull dimension $1$, $R_\mfrak{m}$ is a discrete valuation domain.

Suppose $N$ is an indecomposable pure-injective $R$-module and $\Ass N=0$. Then $N=N(0,I)$. Now either $\Div N=0$ or $\mfrak{m}:=\Div N$ is a maximal ideal. In the first case $I=0$. In the second case, since $R_\mfrak{m}$ is a discrete valuation domain, all non-zero ideals of $R_\mfrak{m}$ are of the form $c^nR_\mfrak{m}$ for some $c\in R$. Therefore $I=c^nR_\mfrak{m}\cap R$. It is easy to check that $(I:c^{n-1})=cR_\mfrak{m}\cap R=\mfrak{m}$. Therefore, by \ref{equpairs}, $N(0,I)\cong N(0,\mfrak{m})$ as required.
\end{proof}

For each ordinal $\alpha$ with $\text{m-dim}\Gamma(R)_\infty^+\geq \alpha+1$, define
\[\Delta_\alpha(R):=\{N(\mfrak{m},0),N(0,\mfrak{m}) \st \mfrak{m}\lhd R \text{  is a maximal ideal and } \rk \,\mfrak{m}=\alpha\}.\]

The next remark follows from \ref{shiftrk} and \ref{ZgRSrk}.
\begin{remark}\label{shiftDelta}
Let $R$ be a B\'ezout domain of Krull dimension $1$. For all ordinals $\alpha$ with $\mdim \Gamma(R)_\infty^+\geq \alpha+1$, viewing $\Zg(R_{S_\alpha})$ as a subspace of $\Zg(R)$,
\[\Delta_\alpha(R)=\Delta_0(R_{S_\alpha}).\]
\end{remark}

We now compute the first two Cantor-Bendixson derivatives of $\Zg(R)$. This will form the base case for an inductive description of the higher Cantor-Bendixson derivatives in \ref{CBrankpointkr1}.

\begin{lemma}\label{basecase}
Let $R$ be a B\'ezout domain of Krull dimension $1$ and such that $\Gamma(R)_\infty^+$ has m-dimension.
\begin{itemize}
\item [$(1)_{0,0}$] If $\mdim \Gamma(R)_\infty^+\geq 1$ then
\[\Zg(R)^{(1)}=\Zg(R_{S_1})\cup\Delta_0(R).\]
\item [$(1)_{0,1}$] If $\mdim \Gamma(R)_\infty^+\geq 2$ then $\Zg(R)^{(2)}=\Zg(R_{S_1})^{(1)}$.
\end{itemize}
\end{lemma}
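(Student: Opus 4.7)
The plan is to base both statements on the classification \ref{ptskr1} of indecomposable pure-injectives over a Krull-dimension-$1$ Pr\"ufer domain, together with the characterisation \ref{isolated} of isolated points of $\Zg(R)$ as those with $\rk\Ass N=\rk\Div N=0$, and the identification $\Zg(R_{S_1})=\{N\mid\rk\Ass N,\rk\Div N\geq 1\}$ from \ref{ZgRSrk}.

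For $(1)_{0,0}$ I run through the cases of \ref{ptskr1}. If $\Ass N=\Div N$ (equal to $0$ or to a common maximal $\mfrak{m}$), the convention $\rk 0=\mdim\Gamma(R)_\infty^+\geq 1$ means $N$ is non-isolated precisely when it lies in $\Zg(R_{S_1})$. If exactly one of $\Ass N,\Div N$ is $0$ and the other is a maximal ideal $\mfrak{m}$, then $N$ is automatically non-isolated (the zero side has rank $\geq 1$), and it lies in $\Zg(R_{S_1})$ when $\rk\mfrak{m}\geq 1$ and in $\Delta_0(R)$ when $\rk\mfrak{m}=0$. Combining gives the claimed equality.

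For $(1)_{0,1}$ the inclusion $\Zg(R_{S_1})^{(1)}\subseteq\Zg(R)^{(2)}$ is immediate, since the subspace topology that $\Zg(R_{S_1})$ inherits from the closed set $\Zg(R)^{(1)}$ agrees with the one it inherits from $\Zg(R)$. For the reverse inclusion I first verify that every $N(\mfrak{m},0)\in\Delta_0(R)$ (with $\mfrak{m}=cR$) is isolated in $\Zg(R)^{(1)}$ by the basic open $(xc=0/x=0)$: any other $M$ in this open has $\Ass M\supseteq cR$, hence equal to $\mfrak{m}$, so by \ref{ptskr1} either $M=N(\mfrak{m},0)$ or $\Ass M=\Div M=\mfrak{m}$; in the latter case both ranks are $\rk\mfrak{m}=0$, so $M$ is isolated in $\Zg(R)$ by \ref{isolated} and hence not in $\Zg(R)^{(1)}$. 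The case $N(0,\mfrak{m})$ is symmetric, via $(x=x/c|x)$.

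The main obstacle is showing every $L\in\Zg(R_{S_1})$ isolated in $\Zg(R_{S_1})$ is isolated in $\Zg(R)^{(1)}$, and this is precisely where $\mdim\Gamma(R)_\infty^+\geq 2$ is needed. Applying \ref{isolated} to the B\'ezout domain $R_{S_1}$ (whose value group now has m-dimension $\geq 1$, so $\rk_{R_{S_1}}0\geq 1$), isolation of $L$ forces $\Ass_{R_{S_1}}L$ and $\Div_{R_{S_1}}L$ to both be maximal in $R_{S_1}$; pulling back to $R$ yields $\Ass_R L=\Div_R L=\mfrak{p}$ for a single maximal ideal $\mfrak{p}\lhd R$ of rank $1$. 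Writing $L=N(I,J)$ with $I^\#=J^\#=\mfrak{p}$ then forces both $I$ and $J$ to be non-zero. Using \ref{toponpoint}, I pick $0\neq D\subseteq I$ and $0\neq B\subseteq J$ to form a basic open $(xD=0/B|x)$ containing $L$; the same lemma prevents this open from meeting $\Delta_0(R)$, since membership of $N(\mfrak{m},0)$ would require $B=0$ and membership of $N(0,\mfrak{m})$ would require $D=0$. Intersecting with an open of $\Zg(R)$ witnessing the isolation of $L$ in $\Zg(R_{S_1})$ then produces the desired isolating open of $L$ in $\Zg(R)^{(1)}$.
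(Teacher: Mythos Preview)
Your proof of $(1)_{0,0}$ is essentially identical to the paper's: both combine \ref{isolated}, \ref{ZgRSrk}, and the trichotomy of \ref{ptskr1} in the same way.

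For $(1)_{0,1}$, your argument is correct and follows the same overall structure as the paper --- show that the points of $\Delta_0(R)$ are isolated in $\Zg(R)^{(1)}$, and that points isolated in $\Zg(R_{S_1})$ remain isolated in $\Zg(R)^{(1)}$ --- but you choose a different separating open for the second part. The paper invokes \ref{isolnotfield} to obtain an element $c\in R$, irreducible in $R_{S_1}$, with $\Ass L=\Div L=cR_{S_1}\cap R$, and then uses
\[
\mcal{W}=\mcal{U}\cap\left(\nf{x=x}{c|x}\right)\cap\left(\nf{xc=0}{x=0}\right),
\]
observing directly that any $M\in\Delta_0(R)$ has $\Ass M=0$ or $\Div M=0$ and hence misses one of the two factors. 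You instead pick any nonzero $D\subseteq I$ and $B\subseteq J$ and use $\left(\nf{xD=0}{B|x}\right)$, excluding $\Delta_0(R)$ via \ref{toponpoint} because the equivalence class of $(\mfrak{m},0)$ (respectively $(0,\mfrak{m})$) consists only of pairs with second (respectively first) coordinate zero. Both approaches work; yours avoids needing the specific irreducible $c$ but trades this for a slightly less transparent application of \ref{toponpoint}. One small expository point: when you conclude $\Ass_R L=\Div_R L$, you should cite \ref{ptskr1} (or \ref{isolnotfield}) rather than \ref{isolated} alone, since \ref{isolated} only gives that both have rank $0$, not that they coincide.
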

\begin{proof}
\medskip

\noindent
$(1)_{0,0}$
By \ref{isolated}, $N\in\Zg(R)$ is isolated if and only if $\rk\Ass N=\rk \Div N=0$. By \ref{ZgRSrk}, $N\in\Zg(R_{S_1})$ if and only if $\rk \Ass N>0$ and $\rk \Div N>0$. Since $\mdim \Gamma(R)_\infty^+\geq 1$, $\rk 0>0$. So, by \ref{ptskr1}, $N\in \Zg(R)^{(1)}\backslash\Zg(R_{S_1})$ if and only if $N=N(0,\mfrak{m})$ or $N=N(\mfrak{m},0)$ for some maximal ideal $\mfrak{m}$ with $\rk\mfrak{m}=0$.

\medskip

\noindent
$(1)_{0,1}$
Using $(1)_{0,0}$, it is enough to show that all points in $\Delta_0(R)$ are isolated in $\Zg(R)^{(1)}$ and that if a point is isolated in $\Zg(R_{S_1})$ then it is isolated in $\Zg(R)^{(1)}$.

Suppose $\mfrak{m}\lhd R$ is a maximal ideal with $\rk\mfrak{m}=0$. There exists $c\in R$ such that $cR=\mfrak{m}$. Now $N\in \left(\nf{x=x}{c|x}\right)$ if and only if $c\in \Div N$ and hence $\Div N=\mfrak{m}$. Therefore, by \ref{ptskr1}, $N\in\left(\nf{x=x}{c|x}\right)$ if and only if either $\Ass N=\Div N =\mfrak{m}$ or $N=N(0,\mfrak{m})$. So $\left(\nf{x=x}{c|x}\right)$ isolates $N(0,\mfrak{m})$ in $\Zg(R)^{(1)}$.

Now suppose that $N\in \Zg(R_{S_1})$ is isolated in $\Zg(R_{S_1})$. Since $\mdim \Gamma(R)_\infty^+\geq 2$, $\mdim \Gamma(R_{S_1})_\infty^+\geq 1$ and hence $R_{S_1}$ is not a field. By \ref{isolnotfield}, $\Ass N=\Div N= cR_{S_1}\cap R$ for some $c\in R$ which is irreducible as an element of $R_{S_1}$. Let $\mcal{U}$ be an open subset of $\Zg(R)$ such that $\mcal{U}\cap \Zg(R_{S_1})=\{N\}$. Let
\[\mcal{W}:=\mcal{U}\cap \left(\nf{x=x}{c|x}\right)\cap\left(\nf{xc=0}{x=0}\right).\] For all $M\in\Delta_0(R)$, either $\Ass M=0$ or $\Div M=0$. So, in particular, either $c\notin \Ass M$ or $c\notin \Div M$. Therefore $\mcal{W}\cap \Delta_0(R)=\emptyset$. Hence $\mcal{W}\cap \Zg(R)^{(1)}=\{N\}$.
\end{proof}

We will now compute the Cantor-Bendixson derivatives of $\Zg(R)$.

\begin{proposition}\label{CBrankpointkr1} Let $R$ be a B\'ezout domain of Krull dimension $1$ such that $\Gamma(R)_\infty^+$ has m-dimension.
Let $\lambda$ be a limit ordinal and $n\in\N_0$.
\begin{itemize}
\item [$(0)_\lambda$] If $\mdim \Gamma(R)_\infty^+\geq \lambda$ then $\Zg(R)^{(\lambda)}=\Zg(R_{S_\lambda})$.
\item [$(1)_{\lambda,n}$] If $\mdim \Gamma(R)_\infty^+\geq \lambda+n+1$ then \[\Zg(R)^{(\lambda+n+1)}=\Zg(R_{S_{\lambda+n}})^{(1)}=\Zg(R_{S_{\lambda+n+1}})\cup\Delta_{\lambda+n}(R).\]

\end{itemize}

\end{proposition}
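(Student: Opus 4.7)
The plan is to prove $(0)_\lambda$ and $(1)_{\lambda,n}$ simultaneously by transfinite induction on the ordinal $\alpha$, exploiting the unique decomposition $\alpha=\lambda+n$ with $\lambda$ a limit ordinal or $0$ and $n\in\N_0$. The base cases $\alpha=1$ and $\alpha=2$ (i.e.\ $(1)_{0,0}$ and $(1)_{0,1}$) are exactly Lemma \ref{basecase}. The key tools that make the induction run are the identities $R_{S_{\lambda+n+1}}=(R_{S_{\lambda+n}})_{S_1}$ and $\Delta_{\lambda+n}(R)=\Delta_0(R_{S_{\lambda+n}})$; the second is \ref{shiftDelta}. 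These allow us to apply Lemma \ref{basecase} to the B\'ezout domain $R_{S_{\lambda+n}}$, whose value group $\Gamma(R_{S_{\lambda+n}})_\infty^+\cong \Gamma(R)_\infty^+/\sim_{\two,\lambda+n}$ (by \ref{dimforgroup1}) has sufficient m-dimension under the standing hypothesis $\mdim\Gamma(R)_\infty^+\geq \lambda+n+2$.

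For the successor step, suppose $(1)_{\lambda,n}$ holds. Then $\Zg(R)^{(\lambda+n+1)}=\Zg(R_{S_{\lambda+n}})^{(1)}$ is a closed subspace of $\Zg(R)$, so the subsequent CB derivative in $\Zg(R)$ coincides with the derivative computed in the closed subspace $\Zg(R_{S_{\lambda+n}})$. Applying $(1)_{0,1}$ to $R_{S_{\lambda+n}}$ yields $\Zg(R_{S_{\lambda+n}})^{(2)}=\Zg(R_{S_{\lambda+n+1}})^{(1)}$, and then applying $(1)_{0,0}$ to $R_{S_{\lambda+n+1}}$ produces
\[\Zg(R_{S_{\lambda+n+2}})\cup\Delta_0(R_{S_{\lambda+n+1}})=\Zg(R_{S_{\lambda+n+2}})\cup\Delta_{\lambda+n+1}(R),\]
which is $(1)_{\lambda,n+1}$.

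For the limit step, let $\lambda$ be a limit ordinal and write $\Zg(R)^{(\lambda)}=\bigcap_{\mu<\lambda}\Zg(R)^{(\mu)}$. For each $\mu<\lambda$, the inductive hypothesis gives $\Zg(R)^{(\mu)}\subseteq \Zg(R_{S_\mu})$: either $\mu$ is limit and $\Zg(R)^{(\mu)}=\Zg(R_{S_\mu})$ by $(0)_\mu$, or $\mu=\mu_0+k+1$ and $\Zg(R)^{(\mu)}=\Zg(R_{S_{\mu_0+k}})^{(1)}\subseteq \Zg(R_{S_\mu})$ by $(1)_{\mu_0,k}$. Hence $\Zg(R)^{(\lambda)}\subseteq \bigcap_{\mu<\lambda}\Zg(R_{S_\mu})$; combining $N\in\Zg(R_{S_\mu})\iff \Att N\cap S_\mu=\emptyset$ with $S_\lambda=\bigcup_{\mu<\lambda}S_\mu$ identifies this intersection with $\Zg(R_{S_\lambda})$. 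Conversely $\Zg(R_{S_\lambda})\subseteq \Zg(R_{S_\mu})\subseteq \Zg(R)^{(\mu)}$ for every $\mu<\lambda$, giving the reverse containment.

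The main bookkeeping obstacle is ensuring that the CB derivative of $\Zg(R_{S_{\lambda+n}})^{(1)}$ taken inside $\Zg(R)$ really agrees with the derivative taken inside $\Zg(R_{S_{\lambda+n}})$; this is immediate from $\Zg(R_{S_{\lambda+n}})$ being a closed subspace of $\Zg(R)$, but must be invoked each time we step from $(1)_{\lambda,n}$ to $(1)_{\lambda,n+1}$. A secondary point to verify carefully is that the m-dimension hypothesis of each step propagates correctly to the localisation $R_{S_{\lambda+n}}$; this is a direct consequence of \ref{dimforgroup1} together with the containment of saturated multiplicatively closed subsets.
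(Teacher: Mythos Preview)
Your overall strategy is correct and essentially the same as the paper's: induction, with the successor step reducing to Lemma~\ref{basecase} applied to $R_{S_{\lambda+n}}$ via \ref{shiftDelta} and \ref{dimforgroup1}. The paper organises the induction slightly differently (outer induction on the limit ordinal $\lambda$, inner induction on $n$) and splits the limit step into the two cases $\lambda=\gamma+\omega$ and ``limit ordinals cofinal in $\lambda$'', but this is cosmetic.

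There is, however, a genuine slip in your limit step. You claim that for successor $\mu=\mu_0+k+1<\lambda$ the inductive hypothesis $(1)_{\mu_0,k}$ gives
\[
\Zg(R)^{(\mu)}=\Zg(R_{S_{\mu_0+k}})^{(1)}\subseteq \Zg(R_{S_\mu}).
\]
This inclusion is false: by the second equality of $(1)_{\mu_0,k}$ one has $\Zg(R_{S_{\mu_0+k}})^{(1)}=\Zg(R_{S_{\mu}})\cup \Delta_{\mu_0+k}(R)$, and the points of $\Delta_{\mu_0+k}(R)$ have $\rk\Div N=\mu_0+k$ or $\rk\Ass N=\mu_0+k$, so by \ref{ZgRSrk} they lie in $\Zg(R_{S_{\mu_0+k}})$ but \emph{not} in $\Zg(R_{S_{\mu_0+k+1}})$. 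Thus $\Zg(R)^{(\mu)}\not\subseteq \Zg(R_{S_\mu})$ in general.

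The fix is immediate and is exactly what the paper does: use the weaker (trivial) inclusion $\Zg(R_{S_{\mu_0+k}})^{(1)}\subseteq \Zg(R_{S_{\mu_0+k}})$ instead. Since $\lambda$ is a limit ordinal, the ordinals $\mu_0+k$ with $\mu_0+k+1<\lambda$ are cofinal in $\lambda$, so
\[
\Zg(R)^{(\lambda)}=\bigcap_{\mu_0+k+1<\lambda}\Zg(R)^{(\mu_0+k+1)}\subseteq \bigcap_{\mu_0+k<\lambda}\Zg(R_{S_{\mu_0+k}})=\Zg(R_{S_\lambda}),
\]
which is the forward containment you need. Your reverse containment $\Zg(R_{S_\lambda})\subseteq \Zg(R_{S_\mu})\subseteq \Zg(R)^{(\mu)}$ is correct as stated.
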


\begin{proof}
We prove the proposition by induction on $\lambda$. First we prove the base case $\lambda=0$. The statement $(0)_0$ is trivial. The first equality of $(1)_{0,0}$ is trivial and we have already proved the second equality in \ref{basecase}. For all $n\in \N$, the second equality in $(1)_{0,n}$ follows from the first using $(1)_{0,0}$ because $\mdim \Gamma(R)_\infty^+\geq n+1$ implies $\mdim \Gamma(R_{S_{\lambda+n}})_\infty^+\geq 1$ and hence
\[\Zg(R_{S_{\lambda+n}})^{(1)}=\Zg(R_{S_{\lambda+n+1}})\cup\Delta_{0}(R_{S_{\lambda+n}})=\Zg(R_{S_{\lambda+n+1}})\cup\Delta_{\lambda+n}(R).\]
The second equality follows from \ref{shiftDelta}.
So we just need to prove the first inequality in $(1)_{0,n}$ for all $n\in\N$. We have already proved the first equality of $(1)_{0,1}$ in \ref{basecase}. Suppose that $(1)_{0,n}$ is true and $\mdim \Gamma(R)_\infty^+\geq (n+1)+1$.
Then
\[\Zg(R)^{((n+1)+1)}=(\Zg(R)^{(n+1)})'=(\Zg(R_{S_n})^{(1)})'=\Zg(R_{S_n})^{(1+1)}=\Zg(R_{S_{n+1}})^{(1)}.\]
The second equality is an application of $(1)_{0,n}$ and the forth equality is an application of $(1)_{0,1}$ applied to $R_{S_n}$ (note that $\mdim\Gamma(R_{S_{n}})_\infty^+\geq 1+1$). So, by induction on $n$ we have proved the first equality in $(1)_{0,n}$ for all $n\in\N_0$ and we have seen that this implies the second equality in $(1)_{0,n}$. This completes the base case.

We now prove $(0)_\lambda$ and $(1)_{\lambda,n}$ for all limit ordinals $\lambda$ and $n\in\N_0$ by induction on $\lambda$. Let $\lambda$ be a limit ordinal and suppose we have shown  $(0)_{\alpha}$, $(1)_{\alpha,n}$ for all limit ordinals $\alpha<\lambda$ and $n\in\N_0$.
In order to prove $(0)_\lambda$, suppose $\mdim \Gamma(R)_\infty^+\geq \lambda$. We split into two cases.

\noindent
\textbf{Case 1:} $\lambda = \gamma+\omega$ for some limit ordinal $\gamma$.
\smallskip

\noindent
Using $(1)_{\gamma,n}$ for $n\in\N_0$,
\[\Zg(R)^{(\lambda)}=\bigcap_{n\in\N}\Zg(R)^{(\gamma+n+1)}=\bigcap_{n\in\N}\Zg(R_{S_{\gamma+n}})^{(1)}\subseteq \Zg(R_{S_\lambda}).\]
We have already seen that if $\mdim \Gamma(R)_\infty^+\geq 1$ then $\Zg(R_S)\subseteq \Zg(R)^{(1)}$. Therefore, since $\mdim \Gamma(R_{S_{\gamma+n}})\geq 1$, $\Zg(R_{S_{\gamma+n+1}})\subseteq \Zg(R_{S_{\gamma+n}})^{(1)}$. Hence $\Zg(R_{S_\lambda})\subseteq \Zg(R_{S_{\gamma+n}})^{(n)}$ for all $n\in\N_0$. So the final inclusion displayed above is actually an equality as required.

\noindent
\textbf{Case 2:} The limit ordinals $\beta<\lambda$ are cofinal in $\lambda$.
\smallskip

\noindent
By the induction hypothesis,
\[\Zg(R)^{(\lambda)}=\bigcap_{\beta<\lambda}\Zg(R)^{(\beta)}=\bigcap_{\beta<\lambda}\Zg(R_{S_\beta})=\Zg(R_{S_\lambda}).\]
This proves $(0)_\lambda$.

In order to prove $(1)_{\lambda,n}$, suppose $\mdim \Gamma(R)_\infty^+\geq \lambda+n+1$. By $(0)_\lambda$,  $\Zg(R)^{(\lambda+n+1)}=\Zg(R_{S_\lambda})^{(n+1)}$. Moreover $\mdim\Gamma(R_{S_\lambda})_\infty^+\geq n+1$. So $(1)_{0,n}$ implies
\[\Zg(R_{S_\lambda})^{(n+1)}=\Zg(R_{S_{\lambda+n}})^{(1)}=\Zg(R_{S_{\lambda+n+1}})\cup\Delta_{n}(R_{S_\lambda})=\Zg(R_{S_{\lambda+n+1}})\cup\Delta_{\lambda+n}(R).\] So we have proved $(1)_{\lambda,n}$ for all $n\in\N_0$.
\end{proof}

\begin{theorem}\label{Kdim1}
Let $R$ be a Pr\"ufer domain of Krull dimension $1$. If $\Gamma(R)_\infty^+$ has m-dimension $\alpha$  then the CB rank of $\Zg(R)$ is $\alpha$ if $\alpha$ is a limit ordinal and $\alpha+1$ otherwise.
\end{theorem}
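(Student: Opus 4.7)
The plan is to reduce to B\'ezout domains and then leverage the detailed structure of the Cantor-Bendixson derivatives already computed in Proposition \ref{CBrankpointkr1}. First, I would observe that being of Krull dimension $1$ is a property of the value group: prime ideals of a Pr\"ufer domain correspond bijectively with multiplication prime filters of $\Gamma(R)_\infty^+$, so if $R$ is Pr\"ufer of Krull dimension $1$, then by the Jaffard-Kaplansky-Ohm theorem I can pick a B\'ezout domain with isomorphic value group, which will automatically also have Krull dimension $1$. The transfer theorem \ref{BeztoPruf} then gives a homeomorphism of Ziegler spectra, so it suffices to prove the statement when $R$ is a B\'ezout domain.

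For the limit case, suppose $\alpha$ is a limit ordinal with $\mdim\Gamma(R)_\infty^+=\alpha$. Applying clause $(0)_\alpha$ of \ref{CBrankpointkr1} gives
\[\Zg(R)^{(\alpha)}=\Zg(R_{S_\alpha(R)}).\]
By \ref{dimintermsofmult}, $R_{S_\alpha(R)}$ is a field (its value group is trivial), so $\Zg(R_{S_\alpha(R)})$ is the single point $\{Q\}$, and $\Zg(R)^{(\alpha+1)}=\emptyset$. Hence the CB rank equals $\alpha$.

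For the successor case, write $\alpha=\lambda+n+1$ with $\lambda$ a limit ordinal (possibly $0$) and $n\in\N_0$. Clause $(1)_{\lambda,n}$ of \ref{CBrankpointkr1} yields
\[\Zg(R)^{(\alpha)}=\Zg(R_{S_{\lambda+n}(R)})^{(1)}.\]
Now $\mdim\Gamma(R_{S_{\lambda+n}(R)})_\infty^+=1$ by \ref{dimintermsofmult}, and so by \ref{mdim1} the ring $R_{S_{\lambda+n}(R)}$ is a Dedekind domain which is not a field, with $\CBrank\Zg(R_{S_{\lambda+n}(R)})=2$. Consequently $\Zg(R)^{(\alpha)}$, being the first derivative of a space of CB rank $2$, has CB rank $1$: it is non-empty and its own first derivative is non-empty and finite, while its second derivative is empty. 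Translating back, $\Zg(R)^{(\alpha+1)}\neq\emptyset$ and $\Zg(R)^{(\alpha+2)}=\emptyset$, so $\CBrank\Zg(R)=\alpha+1$.

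The only genuinely delicate point is the reduction step: one must verify that the transfer theorem preserves the hypothesis of Krull dimension $1$, which hinges on the characterisation of prime ideals of a Pr\"ufer domain via multiplication prime filters of the value group. All the heavy lifting, namely the inductive computation of $\Zg(R)^{(\lambda+n+1)}$ and $\Zg(R)^{(\lambda)}$, has already been carried out in \ref{CBrankpointkr1}, so the remaining argument is a short case distinction on whether $\alpha$ is limit or successor.
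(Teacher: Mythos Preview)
Your proposal is correct and follows essentially the same argument as the paper: reduce to B\'ezout domains via \ref{BeztoPruf} and the Jaffard--Kaplansky--Ohm theorem (noting that Krull dimension $1$ is a value-group property), then split into the limit and successor cases and read off the answer from \ref{CBrankpointkr1} together with \ref{dimintermsofmult} and \ref{mdim1}. The only differences are cosmetic: you spell out why the reduction preserves Krull dimension $1$ and why a first derivative of a space of CB rank $2$ has CB rank $1$, both of which the paper leaves implicit.
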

\begin{proof} By \ref{BeztoPruf}, combined with the Jaffard-Kaplansky-Ohm theorem, it is enough to prove the result for B\'ezout domains.
Suppose $\alpha$ is a limit ordinal. Then, by \ref{CBrankpointkr1}, $\Zg(R)^{(\alpha)}=\Zg(R_{S_\alpha})$. Since $\Gamma(R)_\infty^+$ has m-dimension $\alpha$, $R_{S_\alpha}$ is a field and hence $\Zg(R_{S_\alpha})$ has just one point. Therefore $\Zg(R)$ has CB rank $\alpha$.
Suppose $\alpha=\lambda+n+1$ where $n\in\N_0$ and $\lambda$ is a limit ordinal. Then, by \ref{CBrankpointkr1}, $\Zg(R)^{(\lambda+n+1)}=\Zg(R_{S_{\lambda+n}})^{(1)}$. Since $\Gamma(R)_\infty^+$ has m-dimension $\alpha$, $\Gamma(R_{S_{\lambda+n}})_\infty^+$ has m-dimension $1$. So, by \ref{mdim1}, $\Zg(R_{S_{\lambda+n}})^{(1)}$ has CB rank $1$. Thus $\Zg(R)$ has CB rank $\alpha+1$.
\end{proof}

A B\'ezout domain has Krull dimension $1$ if and only if $\Gamma$ is non-trivial and all multiplication prime filters of $\Gamma^+$ are maximal. In the next subsection, for each ordinal $\alpha$, we construct an $\ell$-group $\Gamma$ such that $\Gamma_\infty^+$ has m-dimension $\alpha$ and all multiplication prime filters of $\Gamma^+$ are maximal. As a consequence, \ref{Krdim1specifiedmdim}, we show that for each ordinal $\alpha$ there exists a B\'ezout domain $R$ with Krull dimension $1$ such that $\Gamma(R)_\infty^+$ has m-dimension $\alpha$.  Therefore we get the following corollary to \ref{Kdim1}.

\begin{cor}For all ordinals $\alpha$ which are not of the form $\lambda+1$ where $\lambda$ is a limit ordinal, there exists a B\'ezout domain whose Ziegler spectrum has CB rank $\alpha$.
\end{cor}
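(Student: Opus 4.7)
The plan is to combine Theorem \ref{Kdim1} with the construction, carried out in the remainder of the paper and referenced above as \ref{Krdim1specifiedmdim}, of a B\'ezout domain of Krull dimension $1$ whose value group has any prescribed m-dimension. Once that construction is in hand, the corollary reduces to a straightforward case analysis on the shape of $\alpha$.

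First I would dispose of $\alpha=0$ separately: any field $R$ is (trivially) a B\'ezout domain and has $\Zg(R)$ consisting of a single point, hence CB rank $0$.

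For the remaining cases, the idea is to pick a B\'ezout domain $R_\beta$ of Krull dimension $1$ with $\mdim \Gamma(R_\beta)_\infty^+ = \beta$ for a suitable $\beta$ and to read off the CB rank from Theorem \ref{Kdim1}: if $\beta$ is a limit ordinal then $\CBrank \Zg(R_\beta) = \beta$, while if $\beta$ is a successor then $\CBrank \Zg(R_\beta) = \beta+1$. Hence the CB ranks achievable this way are exactly the nonzero limit ordinals together with the ordinals of the form $\gamma+1$ where $\gamma$ is itself a successor. Any ordinal $\alpha \geq 1$ which is not of the form $\lambda+1$ with $\lambda$ a (possibly zero) limit ordinal falls into exactly one of the following two classes:
\begin{enumerate}[(i)]
\item $\alpha$ is a nonzero limit ordinal: take $\beta=\alpha$.
\item $\alpha=\gamma+1$ is a successor whose predecessor $\gamma$ is itself a successor: take $\beta=\gamma$.
\end{enumerate}
In either case Theorem \ref{Kdim1} yields $\CBrank \Zg(R_\beta) = \alpha$, which together with the field case covers all ordinals not of the excluded form.

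The only genuine obstacle is the construction \ref{Krdim1specifiedmdim}, which requires producing, for each ordinal $\beta$, an $\ell$-group $\Gamma$ with $\mdim \Gamma_\infty^+ = \beta$ whose positive cone has only maximal multiplication prime filters, and then invoking the Jaffard-Kaplansky-Ohm theorem to realise $\Gamma$ as the value group of a B\'ezout domain; the Krull dimension $1$ condition on this B\'ezout domain is equivalent to the maximality of all multiplication prime filters of $\Gamma^+$. That construction is the content of the next subsection and is independent of the case analysis above.
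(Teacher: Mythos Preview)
Your proposal is correct and follows exactly the route the paper intends: combine \ref{Krdim1specifiedmdim} with Theorem~\ref{Kdim1} and do the obvious case split on the shape of $\alpha$. The paper does not spell out the case analysis, so your explicit treatment of $\alpha=0$ via a field and your observation that the convention must treat $0$ as a limit ordinal (so that $\alpha=1$ is excluded, consistently with \ref{mdim1}) are welcome clarifications rather than a different argument.
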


\section{Lattice ordered abelian groups of continuous functions}\label{Sctsfn}

We consider the $\ell$-group $C(X,\Z)$ of continuous functions from a Boolean space $X$ to $\Z$ equipped with the discrete topology. We show that all multiplication prime filters of $C(X,\Z)^+$ are maximal. Thus any B\'ezout domain with value group $C(X,\Z)$ has Krull dimension $1$. We show that $X$ has CB rank $\alpha$ if and only if $C(X,\Z)^+_\infty$ has m-dimension $\alpha+1$. In order to construct an $\ell$-group $\Gamma$ such that all multiplication prime filters of $\Gamma^+$ are maximal and $\Gamma_\infty^+$ has specified m-dimension $\alpha$, we take a Boolean space with CB rank $\alpha$ and take $\Gamma$ to be the convex $\ell$-subgroup of $C(X,\Z)$ of all $f\in C(X,\Z)$ with $f(x)=0$ for all $x\in X$ of CB rank $\alpha$. %TheFor $X$ a Boolean space with CB rank $\alpha$, the $\ell$-subgroup of $C(X,\Z)$ consisting of continuous functions which are $0$ at all points of CB rank $\alpha$ is an $\ell$-group of Krull dimension $1$ with m-dimension $\alpha$.

A space is \textbf{Boolean} if it is Hausdorff, compact and totally disconnected. Recall that the clopen sets of a Boolean space form a basis of open sets. There is a close connection between Boolean algebras and Boolean spaces given by Stone's duality theorem which gives a contravariant equivalence of categories between the category of Boolean spaces with continuous maps and the category of Boolean algebras. On objects this duality takes a Boolean space $X$ to $\mcal{K}(X)$ its Boolean algebra of clopen sets. In the other direction, it takes a Boolean algebra $B$ to $\text{Prim}(B)$ which is the topological space with points the prime filters of $B$ and basis of open sets of the form $V(b):=\{p\in \text{Prim}(B) \st b\in p\}$. The map \[X\rightarrow \textnormal{Prim}(\mcal{K}(X)) \ \ \ x\mapsto \mcal{P}_x:=\{K\in \mcal{K}(X)\st x\in K\}\] is a homeomorphism with inverse given by taking a prime filter $\mcal{F}\in \textnormal{Prim}(\mcal{K}(X))$ to the unique point of $X$ in $\bigcap_{\mcal{U}\in \mcal{F}}\mcal{U}$. %So, in particular, every prime filter of $\mcal{K}(X)$ is of the form \[\mcal{P}_x:=\{K\in \mcal{K}(X)\st x\in K\}\] for some $x\in X$.

For $f\in C(X,\Z)$, define \[\Supp f:=\{x\in X\st f(x)\neq 0\}.\]

\begin{remark}\label{bspfact}
\noindent
\begin{enumerate}
\item For all $f\in C(X,\Z)$, the image of $f$ is finite and for each $n\in \Z$, $f^{-1}(n)$ is clopen. In particular, $\supp f$ is clopen.
\item If $W_1,\ldots, W_l$ are a partition of $X$ into clopen sets and $a_1,\ldots,a_l\in \Z$ then the function $f$ defined by $f(x):=a_i$ for all $x\in W_i$ is continuous.
\item If $K$ is a clopen subset of $X$ then define
\[\chi_K(x):=\left\{
           \begin{array}{ll}
             1, & \hbox{if $x\in K$;} \\
             0, & \hbox{otherwise.}
           \end{array}
         \right.
\] The function $\chi_K$ is continuous.
\end{enumerate}
\end{remark}

\begin{definition}
For $f\in C(X,\Z)^+$, define $f'\in C(X,\Z)$ by setting $f'(x)=f(x)$ if $f(x)\leq 1$ and $f'(x)=1$ otherwise.
\end{definition}

It follows from \ref{bspfact} that $f'$ is continuous. Note that, if $f\in C(X,\Z)^+$ then $\Supp f'=\Supp f$. Hence, if $f,g\in C(X,\Z)^+$ and $\Supp f=\Supp g$ then $f'=g'$.

We record some properties of $f'$ and $\Supp$ which follow easily from their definitions.

\begin{remark}\label{Suppandf'}
Let $f,g\in C(X,\Z)^+$. Then
\begin{enumerate}
\item there exists $n\in\N$ such that $f'\leq f\leq nf'$;
\item $\Supp f'\subseteq \Supp g'$ if and only if $f'\leq g'$;
\item $\Supp (f\wedge g) =\Supp(f)\cap \Supp(g)$; and
\item $\Supp (f\vee g)=\Supp(f)\cup\Supp(g)$.
\end{enumerate}
\end{remark}

%It is easy to see that for all $f,g\in C(X,\Z)$ the following are equivalent:
%\begin{enumerate}
%\item $\Supp f\subseteq \Supp g$.
%\item $\Supp f'\subseteq \Supp g'$.
%\item $f'\leq g'$.
%\end{enumerate}
%
%It is also useful to note that if $f,g\in C(X,\Z)^+$ then \[\Supp f'\wedge g' =\Supp(f')\cap \Supp(g')\] and
%\[\Supp f'\vee g'=\Supp(f')\cup\Supp(g').\]
%
\begin{lemma}\label{primefilters}
The map
\[\mcal{F}\mapsto B(\mcal{F}):=\{\Supp f\st f\in\mcal{F}\}\] is a bijection between multiplication prime filters of $C(X,\Z)^+$ and prime filters of $\mcal{K}(X)$. In particular, the multiplication prime filters of $C(X,\Z)^+$ are exactly the filters of the form
\[\mcal{F}_x:=\{f\in C(X,\Z)^+ \st f(x)>0\}\] where $x\in X$. Moreover, they are all maximal.
%
%
%The multiplication prime filters of $C(X,\Z)^+$ are exactly the filters of the form
%\[\mcal{F}_x:=\{f\in C(X,\Z) \st f(x)>0\}\] where $x\in X$. Moreover, they are all maximal.
\end{lemma}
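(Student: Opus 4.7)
The plan is to construct an explicit inverse to $B$. Given a prime filter $p$ of $\mcal{K}(X)$, define $\mcal{F}(p) := \{f\in C(X,\Z)^+ \st \Supp f\in p\}\cup\{\infty\}$. I will check that $B$ sends multiplication prime filters to prime filters of $\mcal{K}(X)$, that $\mcal{F}$ sends prime filters of $\mcal{K}(X)$ to multiplication prime filters, and that the two are mutually inverse and order-preserving. Composing with Stone duality then identifies multiplication prime filters with points $x\in X$, and maximality will follow because prime filters of the Boolean algebra $\mcal{K}(X)$ are ultrafilters.

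The key technical ingredient, which unlocks essentially every direction, is the observation that if $\mcal{F}$ is a multiplication prime filter and $f\in\mcal{F}$, then $f'\in\mcal{F}$. To prove this, by \ref{Suppandf'}(1) pick $n\in\N$ with $nf'\geq f$; upward closure of $\mcal{F}$ gives $nf'=f'+\cdots+f'\in\mcal{F}$, and applying the multiplication-prime property $n-1$ times yields $f'\in\mcal{F}$. With this in hand: for $B(\mcal{F})$ to be a filter, closure under meets follows from \ref{Suppandf'}(3) applied to $f_1\wedge f_2$, and upward closure follows because if $\Supp f\subseteq K'\in\mcal{K}(X)$ then $\chi_{K'}\geq f'\in\mcal{F}$, so $\chi_{K'}\in\mcal{F}$ and $K'=\Supp \chi_{K'}\in B(\mcal{F})$. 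Properness comes from $0\notin\mcal{F}$ and $\Supp f=\emptyset$ iff $f=0$. Primeness uses $\Supp(f+g)=\Supp f\cup \Supp g$, which holds for $f,g\geq 0$ because no cancellation occurs.

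That $\mcal{F}(p)$ is a multiplication prime filter is immediate from the same identities: closure under meets from \ref{Suppandf'}(3), upward closure from $\Supp f\subseteq \Supp g$ when $f\leq g$ in $C(X,\Z)^+$, properness from $\emptyset\notin p$, and primeness from $\Supp(f+g)=\Supp f\cup\Supp g$ combined with primeness of $p$. The identities $B(\mcal{F}(p))=p$ (using $\chi_K$ to witness $K\in p$) and $\mcal{F}(B(\mcal{F}))=\mcal{F}$ (the inclusion $\mcal{F}\subseteq \mcal{F}(B(\mcal{F}))$ is trivial; the reverse uses the key lemma: if $\Supp f=\Supp g$ with $g\in\mcal{F}$, then $f'=g'\in\mcal{F}$ and $f\geq f'$) are now routine. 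The same key lemma shows the bijection is an order-isomorphism in both directions, so maximality of every prime filter of the Boolean algebra $\mcal{K}(X)$ translates to maximality of every multiplication prime filter of $C(X,\Z)^+$. Finally, under Stone duality $p=\mcal{P}_x$ corresponds to $x\in X$, and $\mcal{F}(\mcal{P}_x)=\{f\st x\in\Supp f\}=\mcal{F}_x$.

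The main obstacle is the technical lemma $f\in\mcal{F}\Rightarrow f'\in\mcal{F}$: it is what allows us to pass between arbitrary positive continuous functions and characteristic functions of clopens, and it is the one place where the multiplication-prime hypothesis (as opposed to merely the prime filter hypothesis in the lattice $C(X,\Z)^+$) is essential.
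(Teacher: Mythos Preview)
Your proof is correct and follows essentially the same approach as the paper's: both reduce to the Boolean algebra $\mcal{K}(X)$ via $\Supp$ and the replacement $f\mapsto f'$, with the key technical point being that $f\in\mcal{F}$ iff $f'\in\mcal{F}$ for a multiplication prime filter $\mcal{F}$. Your version is somewhat more explicit---you construct the inverse map $p\mapsto\mcal{F}(p)$ and verify both composites, whereas the paper checks injectivity and surjectivity separately (surjectivity by noting each $\mcal{F}_x$ is a multiplication prime filter and invoking Stone duality)---and you spell out the proof of the key lemma and the maximality argument, both of which the paper leaves implicit; but the substance is the same.
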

\begin{proof}
We start by showing that if $\mcal{F}$ is a multiplication prime filter of $C(X,\Z)^+$ then $B(\mcal{F})$ is a prime filter of $\mcal{K}(X)$. Since for all $f\in C(X,\Z)^+$, $f\in\mcal{F}$ if and only if $f'\in\mcal{F}$ and $\supp f=\supp f'$,
\[B(\mcal{F}):=\{\Supp f' \st f'\in \mcal{F}\}.\] So, it follows from \ref{Suppandf'} $(2)$ and $(3)$ that $B(\mcal{F})$ is a filter. If $\supp f=\emptyset$ then $f=0$. Therefore $B(\mcal{F})$ is a proper filter. It now follows from \ref{Suppandf'} $(4)$ that $B(\mcal{F})$ is a prime filter.

That $\mcal{F}_x$ is a multiplication prime filter follows easily from it's definition. Hence, it follows from Stone duality that the map is surjective. In order to show that the map is injective, suppose that $\mcal{F}$ is a multiplication prime filter of $C(X,\Z)^+$ and that $f\in C(X,\Z)^+$ with $\Supp f\in B(\mcal{F})$. There exists $g\in \mcal{F}$ such that $\supp g=\supp f$. Therefore $\supp f'=\supp g'$ and hence $f'=g'$. So $g'=f'\in \mcal{F}$ and hence $f\in\mcal{F}$ as required.
\end{proof}

In this section we are only interested in m-dimension. So we will write $C_\alpha$ for the subgroup $C_{\two,\alpha}(C(X,\Z))$ of $C(X,\Z)$.

\begin{lemma}\label{C1ctsfn1}
If $f\in C(X,\Z)^+$ is such that $[0,f]$ is a simple interval then there exists an isolated point $y\in X$ such that $f(y)=1$ and $f(x)=0$ for all $x\neq y$. In particular $C_1$ is the set of $f\in C(X,\Z)$ with $\Supp f\subseteq X\backslash X^{(1)}$.
\end{lemma}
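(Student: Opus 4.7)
The plan is first to show that if $[0,f]$ is simple in $C(X,\Z)^+$ then $f = \chi_{\{y\}}$ for some isolated $y\in X$. I would start by proving $f$ takes values in $\{0,1\}$: if $f(x_0)\geq 2$ for some $x_0$, then by \ref{bspfact} the set $K:=f^{-1}(\{f(x_0)\})$ is a non-empty clopen subset and $\chi_K$ is continuous; since $\chi_K(x_0)=1<f(x_0)$ while $\chi_K\leq f$ pointwise, $\chi_K$ lies strictly between $0$ and $f$ in $C(X,\Z)^+$, contradicting simplicity. Thus $f=\chi_{\Supp f}$ with $\Supp f$ clopen. Next I would show $\Supp f$ is a singleton: if $y_1\neq y_2$ both lie in $\Supp f$, then since $X$ is Boolean there is a clopen $K'\ni y_1$ with $y_2\notin K'$, so $K'':=K'\cap\Supp f$ is a non-empty proper clopen subset of $\Supp f$, and $\chi_{K''}$ again lies strictly between $0$ and $f$, contradicting simplicity. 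Hence $\Supp f=\{y\}$ is a clopen singleton, so $\{y\}$ is open, i.e.\ $y$ is isolated.

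For the ``in particular'' statement, I plan to apply \ref{Ldimlgroup}, which says $C_1^+$ is generated as a monoid by those $a\in C(X,\Z)^+$ with $[0,a]\in\two$; by the first part these are exactly $0$ together with the functions $\chi_{\{y\}}$ for isolated $y$. Hence $C_1$ is generated as an abelian group by $\{\chi_{\{y\}}\mid y\in X\setminus X^{(1)}\}$, and therefore every $f\in C_1$ has $\Supp f\subseteq X\setminus X^{(1)}$. For the reverse inclusion I would argue that if $\Supp f\subseteq X\setminus X^{(1)}$, then $\Supp f$ is clopen (hence compact) and consists entirely of isolated points, so it is a compact discrete set, hence finite; writing $f=\sum_{y\in\Supp f}f(y)\chi_{\{y\}}$ then expresses $f$ as a $\Z$-linear combination of generators of $C_1$, so $f\in C_1$.

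There is no serious obstacle here. The whole argument rests on the observation that in a Boolean space with $\Z$ discrete, continuous integer-valued functions have clopen level sets, and any proper non-empty clopen subset of $\Supp f$ can be used to manufacture an element of $C(X,\Z)^+$ strictly between $0$ and $f$; combined with \ref{Ldimlgroup} this handles both the classification of atoms and the description of $C_1$.
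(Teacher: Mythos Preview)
Your proposal is correct and follows essentially the same route as the paper: both first force $f$ to be a characteristic function (the paper uses the predefined $f'$ with $0<f'\leq f$ in place of your $\chi_K$, but this is the same idea), then show $\Supp f$ is a singleton via a separating clopen, and finally handle $C_1$ in both directions exactly as you do, invoking \ref{Ldimlgroup} and the finiteness of a compact set of isolated points.
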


\begin{proof} Suppose that $f\in C(X,\Z)^+$ is such that $[0,f]$ is a simple interval.
Then $f\geq f'>0$. So $f=f'$. Suppose that $\Supp f$ contains two distinct elements $x,y$. Since $\mcal{K}(X)$ is a basis and $X$ is Hausdorff, there exists a clopen set $V\subseteq \Supp f$ such that $x\in V$ and $y\notin V$. Then $\chi_V< f$ because $V$ is a proper subset of $\Supp f$ and $0<\chi_V$ because $V\neq \emptyset$. Thus, if $[0,f]$ is a simple interval then $\Supp f$ is a one point clopen set $\{y\}$, $f(y)=1$ and $f(x)=0$ for all $x\neq y$.

If $f\in C_1$ then, by \ref{Ldimlgroup}, $f=\sum_{i=1}^mf_i-\sum_{i=m+1}^nf_i$ where $f_i\in C(X,\Z)^+$ and $[0,f_i]$ and  is a simple interval for $1\leq i\leq n$. It follows from the first part of the lemma that $\Supp f\subseteq\bigcup_{i=1}^n\Supp f_i$ is a subset of $X\backslash X^{(1)}$. Conversely, suppose $\Supp f\subseteq X\backslash X^{(1)}$. Then $\Supp f$ contains only isolated points. So, since $\Supp f$ is compact it is finite. Let $\Supp f=\{x_1,\ldots,x_n\}$. For each $1\leq i\leq n$, define $f_i\in C^+(X,\Z)$ by setting $f_i(x_i)=1$ and $f_i(x)=0$ for all $x\neq x_i$. Now each $f_i\in C_1$ and there exists $m_i\in\Z$ for $1\leq i\leq n$ such that $f=\sum_{i=1}^nm_if_i$. So $f\in C_1$ as required.
\end{proof}

\begin{remark}\label{resclosedset}
Let $X$ a Boolean space and $Y$ a closed subset. The map
\[C(X,\Z)\rightarrow C(Y,\Z), \ \ \ f\mapsto f|_Y,\] induced by restriction to $Y$, is a surjective homomorphism of $\ell$-groups.
\end{remark}
\begin{proof}
It is easy to see that the map is an $\ell$-group homomorphism. So we just need to show it is surjective.

Note $Y$ is Boolean. By Stone duality and since epimorphisms in the category of Boolean algebras are surjective, the map taking $K\in\mcal{K}(X)$ to $K\cap Y\in \mcal{K}(Y)$ is surjective (see \cite[7.6 (b)]{Booleanalg1} for a direct proof). It follows easily that any partition of $Y$ into clopen sets is the intersection with $Y$ of a partition of $X$ into clopen sets.

Take $f\in C(Y,\Z)$. Let $f(Y)=\{a_1,\ldots,a_n\}$ and let $Y_i=f^{-1}(a_i)$ for $1\leq i\leq n$. By the argument in the previous paragraph, there exists clopen sets $X_i$ for $1\leq i\leq n$ which partition $X$ and such that $X_i\cap Y=Y_i$. Define $g\in C(X,\Z)$ by setting $g(x)=a_i$ for $x\in X_i$. Then $g$ is continuous and $g|_Y=f$ as required.
\end{proof}

%Let $X$ a Boolean space and $Y$ a closed subset. Then $Y$ is Boolean. By Stone duality and since epimorphisms in the category of Boolean algebras are surjective, the map taking $K\in\mcal{K}(X)$ to $K\cap Y\in \mcal{K}(Y)$ is surjective (see \cite[7.6 (b)]{Booleanalg1} for a direct proof). It follows easily that any partition of $Y$ into clopen sets is the intersection with $Y$ of a partition of $X$ into clopen sets. Consider the homomorphism of $\ell$-groups
%\[C(X,\Z)\rightarrow C(Y,\Z), \ \ \ f\mapsto f|_Y\] induced by restriction to $Y$. Let $f\in C(Y,\Z)$. Let $f(Y)=\{a_1,\ldots,a_n\}$ and let $Y_i=f^{-1}(a_i)$ for $1\leq i\leq n$. Let the sets $X_i$ for $1\leq i\leq n$ partition $X$ and be such that $X_i\cap Y=Y_i$. Define $g\in C(X,\Z)$ by setting $g(x)=a_i$ for $x\in X_i$. Then $g$ is continuous and $g|_Y=f$.

\begin{lemma}\label{C1ctsfn2}
The restriction map
\[f\in C(X,\Z) \mapsto f|_{X^{(1)}}\in C(X^{(1)},\Z)\] is surjective and has kernel $C_1$. Therefore $C(X,\Z)/C_1$ is isomorphic to $C(X^{(1)},\Z)$.
\end{lemma}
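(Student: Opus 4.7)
The plan is to assemble this lemma directly from the two preceding results, \ref{resclosedset} and \ref{C1ctsfn1}, together with the observation that $X^{(1)}$ is a closed subset of $X$.

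First I would verify that $X^{(1)}$ is closed: its complement $X \setminus X^{(1)}$ is the set of isolated points of $X$, and for each such point $y$ there is (since clopen sets form a basis) a clopen neighbourhood $U_y$ of $y$ with $U_y \cap X^{(1)} = \emptyset$, so $X \setminus X^{(1)}$ is open. With this in hand, \ref{resclosedset} applies to give that restriction $f \mapsto f|_{X^{(1)}}$ is a surjective homomorphism of $\ell$-groups $C(X,\Z) \to C(X^{(1)},\Z)$.

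Next I would identify the kernel. An element $f \in C(X,\Z)$ restricts to the zero function on $X^{(1)}$ if and only if $f(x) = 0$ for every $x \in X^{(1)}$, which is to say $\Supp f \subseteq X \setminus X^{(1)}$. By Lemma \ref{C1ctsfn1}, this last condition characterises membership in $C_1$. Hence the kernel of the restriction map is exactly $C_1$, and the first isomorphism theorem for $\ell$-groups yields the asserted isomorphism $C(X,\Z)/C_1 \cong C(X^{(1)},\Z)$.

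There is no real obstacle here: the only substantive input is Lemma \ref{C1ctsfn1}, which has already done the work of describing $C_1$ in terms of supports. The rest is a straightforward application of Stone-type extension (packaged in \ref{resclosedset}) plus the observation about $X^{(1)}$ being closed.
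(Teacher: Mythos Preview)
Your proposal is correct and follows essentially the same approach as the paper: use \ref{resclosedset} (with $X^{(1)}$ closed) for surjectivity, then identify the kernel via the support characterisation from \ref{C1ctsfn1}. The only difference is that you spell out why $X^{(1)}$ is closed, whereas the paper simply asserts it.
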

\begin{proof}
The restriction map from $C(X,\Z)$ to $C(X^{(1)},\Z)$ is a surjective $\ell$-group homomorphism since $X^{(1)}$ is closed by \ref{resclosedset}.

For $f\in C(X,\Z)$, $f|_{X^{(1)}}=0$ if and only if  $\Supp f\subseteq X\backslash X^{(1)}$. So, by \ref{C1ctsfn1}, the kernel of the restriction map from $C(X,\Z)$ to $C(X^{(1)},\Z)$ is $C_1$.
\end{proof}

\begin{lemma}\label{mdimCXZ}
For all ordinals $\alpha$, $C(X^{(\alpha)},\Z)\cong C(X,\Z)/C_{\alpha}$. In particular, if $X$ has CB rank $\beta$ then $C(X,\Z)_\infty^+$ has m-dimension $\beta+1$.
\end{lemma}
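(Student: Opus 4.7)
The plan is to prove the main isomorphism by transfinite induction on $\alpha$, realising the isomorphism concretely via the restriction map $\rho_\alpha: C(X,\Z) \to C(X^{(\alpha)},\Z)$, $f \mapsto f|_{X^{(\alpha)}}$. By \ref{resclosedset}, each $\rho_\alpha$ is a surjective $\ell$-group homomorphism (since $X^{(\alpha)}$ is closed in $X$), so it suffices to show $\ker \rho_\alpha = C_\alpha$ for every ordinal $\alpha$.

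The base case $\alpha = 0$ is trivial, and $\alpha = 1$ is exactly \ref{C1ctsfn2}. For the successor step, assume $\ker \rho_\alpha = C_\alpha$; then $\rho_\alpha$ induces an isomorphism $C(X,\Z)/C_\alpha \xrightarrow{\sim} C(X^{(\alpha)},\Z)$. Applying \ref{C1ctsfn2} to the Boolean space $X^{(\alpha)}$ (whose first derivative is $X^{(\alpha+1)}$), the restriction map from $C(X^{(\alpha)},\Z)$ to $C(X^{(\alpha+1)},\Z)$ has kernel $C_1(C(X^{(\alpha)},\Z))$. Transporting this back along the induced isomorphism and using the definition of $C_{\alpha+1}$ gives $\ker \rho_{\alpha+1} = C_{\alpha+1}$.

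The main work is in the limit step. Fix a limit ordinal $\lambda$ and assume the result for all $\alpha < \lambda$. The inclusion $C_\lambda = \bigcup_{\alpha<\lambda}C_\alpha \subseteq \ker \rho_\lambda$ is immediate, since $X^{(\lambda)} \subseteq X^{(\alpha)}$ for every $\alpha<\lambda$. For the reverse inclusion, take $f \in \ker \rho_\lambda$. The hard (but short) observation is that $\Supp f$ is clopen by \ref{bspfact}, hence compact, and the relation $\Supp f \cap X^{(\lambda)} = \emptyset$ rewrites as
\[
\Supp f \;\subseteq\; X \setminus \bigcap_{\alpha<\lambda} X^{(\alpha)} \;=\; \bigcup_{\alpha<\lambda}\bigl(X \setminus X^{(\alpha)}\bigr).
\]
The sets on the right form an increasing open cover of the compact set $\Supp f$, so there exists $\alpha<\lambda$ with $\Supp f \subseteq X \setminus X^{(\alpha)}$, i.e.\ $f \in \ker \rho_\alpha = C_\alpha \subseteq C_\lambda$ by the induction hypothesis. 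This completes the induction and yields $C(X^{(\alpha)},\Z)\cong C(X,\Z)/C_\alpha$ for every $\alpha$.

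For the ``in particular'' statement, combine the main isomorphism with the characterisation of m-dimension from \ref{dimforgroup1}(ii) and \ref{Ldimtrivial}: the m-dimension of $C(X,\Z)_\infty^+$ is the least ordinal $\alpha$ with $C_\alpha = C(X,\Z)$, equivalently the least $\alpha$ with $C(X^{(\alpha)},\Z) = 0$, equivalently the least $\alpha$ with $X^{(\alpha)} = \emptyset$. If $\CBrank(X) = \beta$, then $X^{(\beta)}$ is non-empty (and finite, by compactness) while $X^{(\beta+1)} = \emptyset$, so this least ordinal is $\beta + 1$.
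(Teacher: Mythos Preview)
Your proof is correct and follows essentially the same approach as the paper: transfinite induction showing the restriction map $\rho_\alpha$ has kernel $C_\alpha$, with the limit step handled by the compactness argument on $\Supp f$, and the final claim deduced from the characterisation of m-dimension in terms of the $C_\alpha$'s. The only minor point is that for the ``in particular'' you cite \ref{dimforgroup1}(ii), whereas the precise statement you use (that the m-dimension equals the least $\alpha$ with $C_\alpha = \Gamma$) is \ref{dimforgroup2} or equivalently \ref{dimforgroup1}(i) combined with \ref{Ldimtrivial}.
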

\begin{proof}
We show, by induction on ordinals, that the restriction map $f\mapsto f|_{X^{(\alpha)}}$ has kernel $C_\alpha$. Since $X^{(\alpha)}$ is closed, this map is surjective and hence this will prove the first claim.

For $\alpha=0$, $C(X^{(\alpha)},\Z)=C(X,\Z)$ and $C_0=\{0\}$. The successor step follows from \ref{C1ctsfn2}.

%Suppose $\lambda$ is a limit ordinal. Then $f\in C_\lambda$ if and only if $f\in C_\alpha$ for some $\alpha<\lambda$. Therefore $f\in C_\lambda$ if and only if $f|_{X^{(\alpha)}}=0$ for some $\alpha<\lambda$. Thus $f\in C_\lambda$ implies $f|_{X^{(\lambda)}}=0$. Now, if $f|_{X^{(\lambda)}}=0$ then $\Supp f\subseteq \bigcup_{\alpha<\lambda}(X\backslash X^{(\alpha)})$. Since $\Supp f$ is closed and $X$ is compact, $\Supp f$ is compact. Therefore $\Supp f\subseteq X\backslash X^{(\alpha)}$ for some $\alpha<\lambda$. Hence $f|_{X^{(\alpha)}}=0$ for some $\alpha<\lambda$. Thus $f\in C_\alpha$ for some $\alpha<\lambda$ as required.

Suppose $\lambda$ is a limit ordinal. Then $f\in C_\lambda$ if and only if $f\in C_\alpha$ for some $\alpha<\lambda$. Therefore $f\in C_\lambda$ if and only if $f|_{X^{(\alpha)}}=0$ for some $\alpha<\lambda$. Thus $f\in C_\lambda$ implies $f|_{X^{(\lambda)}}=0$. Conversely, suppose $f|_{X^{(\lambda)}}=0$. Then $\Supp f\subseteq \bigcup_{\alpha<\lambda}(X\backslash X^{(\alpha)})$. Since $\Supp f$ is closed and $X$ is compact, $\Supp f$ is compact. Therefore $\Supp f\subseteq X\backslash X^{(\alpha)}$ for some $\alpha<\lambda$. Hence $f|_{X^{(\alpha)}}=0$ for some $\alpha<\lambda$. Thus $f\in C_\alpha$ for some $\alpha<\lambda$ as required.

By \ref{dimforgroup2}, the m-dimension of $C(X,\Z)_\infty^+$ is the least ordinal $\alpha$ such that $C_\alpha=C(X,\Z)$. So, the m-dimension of $C(X,\Z)_\infty^+$ is the least ordinal $\alpha$ such that $C(X,\Z)/C_{\alpha}=\{0\}$. Therefore, the m-dimension of $C(X,\Z)_\infty^+$ is the least ordinal $\alpha$ such that $X^{(\alpha)}=\emptyset$. Since $X$ is compact, $\alpha=\beta+1$ for some ordinal $\beta$ and the CB rank of $X$ is $\beta$ as required.
\end{proof}

Unfortunately this means that the m-dimension of $C(X,\Z)_\infty^+$ is never a limit ordinal. The problem is that, since $X$ is compact, if $X$ has CB rank $\alpha$ then $X^{(\alpha)}$ is non-empty. %The obvious thing to do is to take $X$ with Cantor-Bendixson rank $\alpha$ and then remove the points of CB rank $\alpha$ but this will leave us with a non-compact space $X'$ and $C(X',\Z)$ will become unwieldy.

\begin{definition}
Let $X$ be a Boolean space of CB rank $\alpha$. Define
\begin{enumerate}[(i)]
\item $C^-(X,\Z)$ to be the $\ell$-subgroup of $f\in C(X,\Z)$ such that $f(x)=0$ for all $x\in X^{(\alpha)}$, and
\item $\mcal{K}^{-}(X)$ be the lattice of clopen sets of $X$ which do not contain any point of CB rank $\alpha$.
\end{enumerate}
\end{definition}

Note that $\mcal{K}^-(X)$ is unlikely to be a bounded lattice.

%For $X$ a Boolean space with CB rank $\alpha$, define $C^-(X,\Z)$ to be the $\ell$-subgroup of $f\in C(X,\Z)$ such that $f(x)=0$ for all $x\in X^{(\alpha)}$.
%
%Let $X$ be a Boolean space of Cantor-Bendixson rank $\alpha$. Let $\mcal{K}^{-}(X)$ be the lattice of clopen sets of $X$ which do not contain any point of CB rank $\alpha$. Note that $\mcal{K}^-(X)$ is unlikely to be a bounded lattice.

\begin{lemma}
The prime filters of $\mcal{K}^-(X)$ are of the form
\[\mcal{P}^-_x:=\{K\in \mcal{K}^-(X) \st x\in K\}\] where $x\in X\backslash X^{(\alpha)}$.
Moreover, they are all maximal as filters in $\mcal{K}^-(X)$.
\end{lemma}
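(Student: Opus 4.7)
The plan is to verify the three constituent claims in sequence: that each $\mcal{P}^-_x$ is a prime filter, that each such filter is maximal, and that every prime filter has this form.

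First, that $\mcal{P}^-_x$ is a prime filter is essentially formal once one checks it is non-empty. Since $X^{(\alpha)}$ is closed in the Boolean space $X$ and $x\notin X^{(\alpha)}$, the clopens of $X$ form a basis at $x$, so there is a clopen $U$ with $x\in U$ and $U\cap X^{(\alpha)}=\emptyset$; in particular $U\in\mcal{P}^-_x$. Closure under meets, upward closure inside $\mcal{K}^-(X)$, properness ($\emptyset\notin\mcal{P}^-_x$), and primality then follow directly from the set-theoretic definitions of $\cap$ and $\cup$.

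For maximality, fix $K_0\in\mcal{K}^-(X)\setminus\mcal{P}^-_x$, so $x\notin K_0$. Using the $U$ from the previous paragraph, the set $U\setminus K_0=U\cap(X\setminus K_0)$ is still clopen in $X$, still disjoint from $X^{(\alpha)}$ (since it lies inside $U$), and still contains $x$. Hence $U\setminus K_0\in\mcal{P}^-_x$ while $(U\setminus K_0)\cap K_0=\emptyset$, so adjoining $K_0$ to $\mcal{P}^-_x$ forces the filter to contain $\emptyset$, showing $\mcal{P}^-_x$ is a maximal proper filter of $\mcal{K}^-(X)$.

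For the classification, let $\mcal{F}$ be an arbitrary prime filter of $\mcal{K}^-(X)$ and pick any $K_0\in\mcal{F}$. Since $K_0$ is clopen in $X$ and disjoint from $X^{(\alpha)}$, $K_0$ is itself a Boolean space, and its Boolean algebra $\mcal{K}(K_0)$ of clopen subsets sits inside $\mcal{K}^-(X)$. The trace $\mcal{F}_0:=\mcal{F}\cap\mcal{K}(K_0)$ is a proper prime filter of the Boolean algebra $\mcal{K}(K_0)$, hence an ultrafilter, so by Stone duality applied to $K_0$ we have $\mcal{F}_0=\{K\in\mcal{K}(K_0)\mid x\in K\}$ for a unique $x\in K_0\subseteq X\setminus X^{(\alpha)}$. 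To conclude $\mcal{F}=\mcal{P}^-_x$: for $K\in\mcal{F}$ we have $K\cap K_0\in\mcal{F}_0$, hence $x\in K\cap K_0\subseteq K$; conversely, for $K\in\mcal{P}^-_x$ we have $x\in K\cap K_0\in\mcal{K}(K_0)$, so $K\cap K_0\in\mcal{F}_0\subseteq\mcal{F}$, and $K\in\mcal{F}$ by upward closure. The main obstacle is this last step: $\mcal{K}^-(X)$ is neither bounded nor a Boolean algebra, so Stone duality is not directly available, and the trick is to spot that any single $K_0\in\mcal{F}$ carves out a compact Boolean subspace on which the trace of $\mcal{F}$ becomes a genuine ultrafilter that pins down the point $x$.
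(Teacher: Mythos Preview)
Your proof is correct and takes a genuinely different route from the paper's. For the classification, the paper works \emph{extrinsically}: given a prime filter $\mcal{P}$ of $\mcal{K}^-(X)$, it generates a filter in the full Boolean algebra $\mcal{K}(X)$, invokes the prime ideal theorem (Zorn's lemma) to extend this to a prime filter of $\mcal{K}(X)$ disjoint from the complementary ideal, and then applies Stone duality to $X$ to locate the point $x$. You instead work \emph{intrinsically}: picking any $K_0\in\mcal{F}$ and tracing down to the Boolean algebra $\mcal{K}(K_0)$ lets you apply Stone duality directly to the compact space $K_0$, bypassing the separate appeal to Zorn. Your route is slightly more elementary and self-contained; the paper's has the minor advantage that the extension step is a standard lattice-theoretic fact one can cite. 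For maximality, the paper deduces it only after the classification, by checking that distinct $\mcal{P}^-_x$ are incomparable (implicitly using that every proper filter extends to a prime one), whereas your complement argument with $U\setminus K_0$ establishes maximality directly and independently of the classification, which is cleaner.
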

\begin{proof}
Since $\mcal{K}^-(X)$ is distributive, the complement of a prime filter is an ideal. Let $\mcal{P}$ be a prime filter in $\mcal{K}^-(X)$. Let $\mcal{P}'$ be the filter generated by $\mcal{P}$ in $\mcal{K}(X)$. Then $\mcal{P}'\cap \mcal{K}^-(X)=\mcal{P}$. Let $\mcal{I}$ be $\mcal{K}^-(X)\backslash \mcal{P}$. Since $\mcal{K}^-(X)$ is an ideal of $\mcal{K}(X)$, $\mcal{I}$ is an ideal of $\mcal{K}(X)$.  Therefore $\mcal{P}'\cap \mcal{I}\subseteq \mcal{P}\cap \mcal{I}=\emptyset$. One can now use Zorn's lemma to produce a prime filter $\mcal{F}$ in $\mcal{K}(X)$ such that $\mcal{F}\supseteq \mcal{P}$ and $\mcal{F}\cap \mcal{I}=\emptyset$. So $\mcal{F}\cap \mcal{K}^-(X)=\mcal{P}$. By Stone duality, there exists $x\in X$ such that $\mcal{F}$ is the set of $K\in\mcal{K}(X)$ such that $x\in K$. If $x\in X^{(\alpha)}$ then $\mcal{F}\cap \mcal{K}^-(X)=\emptyset$. So $x\in X\backslash X^{(\alpha)}$ as required.

It remains to show that if $x,y\in X\backslash X^{(\alpha)}$ are non-equal then $\mcal{F}^-_x$ and $\mcal{F}^-_y$ are incomparable. The set of points of CB rank $\alpha$ is closed. So, since $\mcal{K}(X)$ is a basis for $X$, there exists $U\in\mcal{K}(X)$ such that $x,y\in U$ and $U\subseteq X\backslash X^{(\alpha)}$. Since $X$ is Hausdorff, there exists $K\in \mcal{K}(X)$ such that $x\in K$ and $y\notin K$. Then $x\in K\cap U$ and $y\notin K\cap U$. Thus $K\cap U\in \mcal{F}^-_x$ and $K\cap U\notin \mcal{F}^-_y$. Therefore $\mcal{F}^-_x$ is not a subset of $\mcal{F}^-_y$. Symmetrically, one can show that $\mcal{F}^-_y$ is not a subset of $\mcal{F}^-_x$. So $\mcal{F}^-_x$ is maximal for all $x\in X\backslash X^{(\alpha)}$.
\end{proof}

We show that every multiplication prime filter of $C^-(X,\Z)^+$ is the intersection of a multiplication prime filter of $C(X,\Z)$ with $C^-(X,\Z)^+$.

\begin{lemma}\label{removexfilt}
Let $X$ be a Boolean space of CB rank $\alpha$. The map
\[\mcal{F}\mapsto B(\mcal{F}):=\{\Supp f\st f\in\mcal{F}\}\] is a bijection between multiplication prime filters of $C^-(X,\Z)^+$ and prime filters of $\mcal{K}^-(X)$. In particular, the multiplication prime filters of $C^-(X,\Z)^+$ are exactly the filters of the form
\[\mcal{F}^-_x:=\{f\in C^-(X,\Z)^+ \st f(x)>0\}\] where $x\in X\backslash X^{(\alpha)}$. Moreover, they are all maximal.
%
%
%Let $X$ be a Boolean space of Cantor-Bendixson rank $\alpha$. The  multiplication prime filters of $C^-(X,\Z)^+$ are exactly the filters of the form
%\[\mcal{F}^-_x:=\{f\in C(X,\Z) \st f(x)>0\}\] where $x\in X\backslash X^{(\alpha)}$. Moreover, they are all maximal.
\end{lemma}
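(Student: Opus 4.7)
The proof should closely parallel Lemma \ref{primefilters}, with the key observation that for any $f\in C^-(X,\Z)^+$ we have $\Supp f\in\mcal{K}^-(X)$, since $f$ vanishes on $X^{(\alpha)}$. The plan is first to show $B$ is well defined: given a multiplication prime filter $\mcal{F}$ of $C^-(X,\Z)^+$, $B(\mcal{F})$ is an upward-closed subfamily of $\mcal{K}^-(X)$; it is closed under binary intersection and does not contain $\emptyset$ by \ref{Suppandf'}(3) and the properness of $\mcal{F}$; and it is prime by \ref{Suppandf'}(4). The reformulation $B(\mcal{F})=\{\Supp f'\st f'\in\mcal{F}\}$ (valid because $\Supp f=\Supp f'$ and, crucially, $f\in\mcal{F}\iff f'\in\mcal{F}$, using that $f\leq nf'$ for some $n$ together with the multiplication-prime property to push $nf'\in\mcal{F}$ down to $f'\in\mcal{F}$) makes the filter and primeness checks immediate from \ref{Suppandf'}.

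Next I would identify the candidates on the other side. For $x\in X\setminus X^{(\alpha)}$, the set $\mcal{F}^-_x$ is trivially a filter; it is a multiplication prime filter because $(f+g)(x)>0$ forces $f(x)>0$ or $g(x)>0$ in $\N_0$. To compute $B(\mcal{F}^-_x)$, observe that for any clopen $K\subseteq X\setminus X^{(\alpha)}$ containing $x$, the characteristic function $\chi_K$ lies in $C^-(X,\Z)^+$ precisely because $K\in\mcal{K}^-(X)$, and $\chi_K\in\mcal{F}^-_x$ with $\Supp\chi_K=K$; hence $B(\mcal{F}^-_x)=\mcal{P}^-_x$. Combined with the preceding lemma that classifies prime filters of $\mcal{K}^-(X)$ as exactly the $\mcal{P}^-_x$, this yields surjectivity of $B$ and the explicit description of multiplication prime filters as the $\mcal{F}^-_x$.

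For injectivity, suppose $\mcal{F},\mcal{G}$ are multiplication prime filters of $C^-(X,\Z)^+$ with $B(\mcal{F})=B(\mcal{G})$. Given $f\in\mcal{F}$, there exists $g\in\mcal{G}$ with $\Supp g=\Supp f$; then $g'=f'$, and from $g\in\mcal{G}$ the multiplication-prime trick gives $g'=f'\in\mcal{G}$, whence $f\in\mcal{G}$ by upward closure. By symmetry $\mcal{F}=\mcal{G}$. Maximality then transfers along the bijection $B$: since $B$ preserves and reflects inclusions of filters (by the same $f\in\mcal{F}\iff\Supp f\in B(\mcal{F})$ argument), and the prime filters $\mcal{P}^-_x$ of $\mcal{K}^-(X)$ are pairwise incomparable by the preceding lemma, the same holds for the $\mcal{F}^-_x$.

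The only subtle point, and thus the main thing to check carefully, is the equivalence $f\in\mcal{F}\iff f'\in\mcal{F}$; it is not automatic from $\mcal{F}$ being merely a filter but requires the multiplication-prime hypothesis to pass from $nf'\in\mcal{F}$ to $f'\in\mcal{F}$. Once this is in place, every other step is a direct translation of the argument in \ref{primefilters}, with $\mcal{K}(X)$ replaced by $\mcal{K}^-(X)$ and the observation that $\chi_K$ remains in $C^-(X,\Z)$ exactly for $K\in\mcal{K}^-(X)$ providing the needed supply of witnesses.
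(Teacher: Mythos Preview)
Your proposal is correct and follows essentially the same approach as the paper's proof, which simply notes that $f\in C^-(X,\Z)$ iff $f'\in C^-(X,\Z)$, says that well-definedness of $B$ and injectivity follow exactly as in Lemma~\ref{primefilters}, and invokes the preceding lemma on prime filters of $\mcal{K}^-(X)$ for surjectivity and maximality. Your write-up is more detailed and, in particular, makes explicit the one point the paper glosses over: that $f\in\mcal{F}\Leftrightarrow f'\in\mcal{F}$ genuinely uses the multiplication-prime hypothesis (via $f\leq nf'$) rather than just the filter property.
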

\begin{proof}
Note that $f\in C^-(X,\Z)$ if and only if $f'\in C^-(X,\Z)$. That $B(\mcal{F})$ is a prime filter of $\mcal{K}^-(X)$ for $\mcal{F}$ a multiplication prime filter of $C^-(X,\Z)$ and that the map is injective follows exactly as in \ref{primefilters}. That the map is surjective and the rest of the lemma follows from \ref{removexfilt}.
%It is clear that each $\mcal{F}_x^-$ is a multiplication prime subset of $C^-(X,\Z)$. Note that $f\in C^-(X,\Z)$ if and only if $f'\in C^-(X,\Z)$. Exactly as in \ref{primefilters}, one sees that the set $B(\mcal{F})$ of $\Supp f$ such that $f\in \mcal{F}$ is a prime filter of $\mcal{K}^-(X)$. So by \ref{removexfilt}, there exists $x\in X\backslash X^{(\alpha)}$ such that $K\in B^-(\mcal{F})$ if and only if $x\in K$. Therefore $\mcal{F}=\mcal{F}^-_x$. That all multiplication prime filters of $C^-(X,\Z)^+$ are maximal follows from the fact that all prime filters of $\mcal{K}^-(X)$ are maximal.
\end{proof}

\begin{lemma}\label{C-mdim}
Let $X$ be a Boolean space with CB rank $\alpha$. Then $C^-(X,\Z)_\infty^+$ has m-dimension $\alpha$.
%
%The multiplication prime filters of $C(X,\Z)_\infty^+$ not equal to $\{\infty\}$ are exactly the filters of the form
%\[\mcal{F}_x:=\{f\in C^-(X,\Z) \st f(x)>0\}\cup\{\infty\}\] where $x\in X\backslash X^{(\alpha)}$.
\end{lemma}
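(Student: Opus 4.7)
The plan is to prove by transfinite induction on $\beta\leq\alpha$ the formula
\[(\star)\colon\quad C_{\two,\beta}(C^-(X,\Z))=\{f\in C^-(X,\Z):\Supp f\cap X^{(\beta)}=\emptyset\}.\]
Granted $(\star)$, setting $\beta=\alpha$ makes the right-hand side all of $C^-(X,\Z)$ by the very definition of $C^-(X,\Z)$. For $\beta<\alpha$, the derivative sequence $X\supseteq X^{(1)}\supseteq\ldots\supseteq X^{(\alpha)}$ is strictly decreasing (otherwise some non-empty $X^{(\delta)}$ would equal $X^{(\delta+1)}$, making $X^{(\delta)}$ a non-empty perfect subspace and contradicting that $X$ has CB rank), so $X^{(\beta)}\supsetneq X^{(\alpha)}$; picking $x\in X^{(\beta)}\setminus X^{(\alpha)}$ and a clopen $K\ni x$ disjoint from the closed set $X^{(\alpha)}$ yields $\chi_K\in C^-(X,\Z)$ with $\Supp\chi_K\cap X^{(\beta)}\neq\emptyset$, witnessing $C_{\two,\beta}(C^-(X,\Z))\neq C^-(X,\Z)$. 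Appealing to \ref{dimforgroup1}(ii) then gives $\mdim C^-(X,\Z)_\infty^+=\alpha$.

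The base case $\beta=0$ of $(\star)$ is immediate. For the successor step $\beta=\gamma+1$ (with $\gamma<\alpha$, so $X^{(\gamma)}$ has CB rank at least $1$), the inductive hypothesis identifies $C_{\two,\gamma}(C^-(X,\Z))$ with the kernel of the surjection $f\mapsto f|_{X^{(\gamma)}}$ from $C^-(X,\Z)$ onto $C^-(X^{(\gamma)},\Z)$, surjectivity following from \ref{resclosedset} applied to the closed subspace $X^{(\gamma)}\supseteq X^{(\alpha)}$. This yields
\[C^-(X,\Z)/C_{\two,\gamma}(C^-(X,\Z))\cong C^-(X^{(\gamma)},\Z).\]
Since $C^-(X^{(\gamma)},\Z)$ is a convex $\ell$-subgroup of $C(X^{(\gamma)},\Z)$, any interval $[0,g]$ for $g\in C^-(X^{(\gamma)},\Z)^+$ is identical as a lattice in both ambient groups. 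Combining this with \ref{Ldimlgroup} and the characterisation of simple intervals in \ref{C1ctsfn1} applied to $X^{(\gamma)}$ shows $C_1(C^-(X^{(\gamma)},\Z))=\{g:\Supp g\cap X^{(\gamma+1)}=\emptyset\}$. Pulling back along the quotient map, and noting that $\Supp(f|_{X^{(\gamma)}})\cap X^{(\gamma+1)}=\Supp f\cap X^{(\gamma+1)}$ because $X^{(\gamma+1)}\subseteq X^{(\gamma)}$, establishes $(\star)$ for $\beta=\gamma+1$.

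For the limit step $\beta=\lambda$, the inclusion $\subseteq$ in $(\star)$ is immediate from the inductive hypothesis together with $X^{(\lambda)}\subseteq X^{(\gamma)}$ for all $\gamma<\lambda$. For $\supseteq$, if $\Supp f\cap X^{(\lambda)}=\emptyset$ then the clopen (hence compact) set $\Supp f$ is covered by the nested open family $\{X\setminus X^{(\gamma)}\}_{\gamma<\lambda}$, so by compactness $\Supp f\subseteq X\setminus X^{(\gamma)}$ for some $\gamma<\lambda$, placing $f$ in $C_{\two,\gamma}(C^-(X,\Z))\subseteq C_{\two,\lambda}(C^-(X,\Z))$. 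The main technical point is the successor step, where one must verify that the passage between $C(X^{(\gamma)},\Z)$ and its convex $\ell$-subgroup $C^-(X^{(\gamma)},\Z)$ in the computation of $C_1$ loses no information.
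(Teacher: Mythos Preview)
Your proof is correct and follows essentially the same approach as the paper's: both establish by transfinite induction that $C^-(X,\Z)/C^-_{\two,\beta}\cong C^-(X^{(\beta)},\Z)$ (equivalently, that $C^-_{\two,\beta}$ is the kernel of restriction to $X^{(\beta)}$), using convexity of $C^-$ in $C$ to transfer the description of simple intervals from \ref{C1ctsfn1}. Your write-up is somewhat more explicit at the limit step and in verifying strictness for $\beta<\alpha$; the only minor point is that the reference for concluding the exact value of the m-dimension should be \ref{dimforgroup2} rather than \ref{dimforgroup1}(ii), since the latter only asserts existence of the dimension.
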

\begin{proof}
%
%
%
%NOT WORKING
%That each $\mcal{F}_x$ is a multiplication prime filter follows from the same result for $C(X,\Z)$.
%
%Let $\mcal{K}^-(X)$ denote the lattice of clopen subsets of $X$ with $X\cap X^{(\alpha)}=\emptyset$. Clearly $K^-(X)$ is a sublattice of $K(X)$. Suppose $F$ is a prime filter in $\mcal{K}^-(X)$. Let $F^*$ be the set of all $K\in \mcal{K}(X)$ such that there exists $V\in F$ with $K\supseteq V$. It is easy to see that $F^*$ is a filter in $\mcal{K}(X)$.
%
%
%Suppose $\mcal{F}$ is a multiplication prime filter of $C^-(X,\Z)$. Exactly as for $C(X,\Z)$
%
%To show that $C^-(X,\Z)$ has Krull dimension $1$, it is enough to show that all multiplication prime filters of $C^-(X,\Z)_\infty^+$ are intersections of multiplication prime filters $\mcal{F}$ of $C(X,\Z)$ with  $C^-(X,\Z)_\infty^+$. CLASSIFICATION OF MULT PRIME FILTERS OF $C^-$ works same as for $C$.
%
%CALCULATE CB RANK
For $\alpha\in\Ord$, let $C_\alpha^-$ denote the subgroup $C_{\two,\alpha}(C^-(X,\Z))$ of $C^-(X,\Z)$. It is easy to see that $C^-(X,\Z)$ is a convex subgroup of $C(X,\Z)$. Thus, for all $f\in C^-(X,\Z)^+$, $[0,f]$ is simple in $C^-(X,\Z)$ if and only if it is simple in $C(X,\Z)$. Thus, by \ref{Ldimlgroup}, $C_1\cap C^-(X,\Z)=C_1^-$. So it follows from \ref{C1ctsfn2} that the restriction map \[f\in C^-(X,\Z)\mapsto f|_{X^{(1)}}\in C^-(X^{(1)},\Z) \] is surjective with kernel $C_1^-$. As in \ref{mdimCXZ} for $C(X,\Z)$, it follows by ordinal induction that $C^-(X^{(\beta)},\Z)$ is isomorphic to $C^-(X,\Z)/C^-_\beta$. Therefore, by \ref{dimforgroup2}, the m-dimension of $C^-(X,\Z)_\infty^+$ is the least ordinal $\beta$ such that $C^-(X^{(\beta)},\Z)=\{0\}$. For any non-empty Boolean space $Y$ with CB rank, $C^-(Y,\Z)=\{0\}$ if and only if $Y$ has CB rank $0$. Thus $C^-(X^{(\beta)},\Z)=\{0\}$ if and only if $\beta$ is the CB rank of $X$. So, the CB rank of $X$ is equal to the m-dimension of $C^-(X,\Z)_\infty^+$.
%
%
%
%Let $C_1$ (respectively $C_1^-$) denote the subgroup of $C(X,\Z)$ (respectively $C^-(X,\Z)$) generated by $f\in C(X,\Z)^+$ (respectively $f\in C^-(X,\Z)^+$) with $[0,f]$ simple in $C(X,\Z)$ (respectively in  $C^-(X,\Z)$). It is easy to see that $C^-(X,\Z)$ is a convex subgroup of $C(X,\Z)$. Thus, for all $f\in C^-(X,\Z)^+$, $[0,f]$ is simple in $C^-(X,\Z)$ if and only if it is simple in $C(X,\Z)$. Thus $C_1\cap C^-(X,\Z)=C_1^-$. So it follows from \ref{C1ctsfn2} that the restriction map \[f\in C^-(X,\Z)\mapsto f|_{X^{(1)}}\in C^-(X^{(1)},\Z) \] is surjective with kernel $C_1^-$. As in \ref{mdimCXZ} for $C(X,\Z)$, it follows by ordinal induction that $C^-(X^{(\beta)},\Z)$ is isomorphic to $C^-(X,\Z)/C^-_\beta$. Therefore the m-dimension of $C^-(X,\Z)_\infty^+$ is the least ordinal $\beta$ such that $C^-(X^{(\beta)},\Z)=\{0\}$. For any non-empty Boolean space $Y$ with CB rank, $C^-(Y,\Z)=\{0\}$ if and only if $Y$ has CB rank $0$. Thus $C^-(X^{(\beta)},\Z)=\{0\}$ if and only if $\beta$ is the CB rank of $X$. So, the CB rank of $X$ is equal to the m-dimension of $C^-(X,\Z)_\infty^+$.
\end{proof}

\begin{proposition}\label{Krdim1specifiedmdim}
For each ordinal $\alpha$, there exists a B\'ezout domain $R$ with Krull dimension $1$ such that $\Gamma(R)_\infty^+$ has m-dimension $\alpha$.
\end{proposition}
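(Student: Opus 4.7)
The plan is to assemble the proposition from the pieces already developed: Lemma \ref{C-mdim} gives an $\ell$-group of the required m-dimension, Lemma \ref{removexfilt} certifies that all its multiplication prime filters are maximal, and the Jaffard-Kaplansky-Ohm theorem converts such an $\ell$-group into the value group of a B\'ezout domain. The only genuinely new ingredient needed is, for each ordinal $\alpha \geq 1$, the existence of a Boolean space with CB rank exactly $\alpha$. (The case $\alpha = 0$ must be excluded, since m-dimension $0$ forces $\Gamma(R)$ trivial and hence $R$ a field; I will read the statement as implicitly ranging over $\alpha \geq 1$, which is anyway all that is used in the preceding corollary.)

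First I would produce a Boolean space $X_\alpha$ with $\CBrank(X_\alpha) = \alpha$. The standard choice is the ordinal space $X_\alpha := [0,\omega^\alpha]$ with the order topology: it is compact (bounded ordinal), Hausdorff, and has a basis of clopen half-open intervals $(\beta,\gamma]$, hence is Boolean, and a routine transfinite induction identifies $X_\alpha^{(\beta)}$ with the set of ordinals $\leq \omega^\alpha$ divisible by $\omega^\beta$, giving $\CBrank(X_\alpha) = \alpha$. Any reader-preferred construction of a Boolean space of prescribed CB rank works equally well.

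Next, set $\Gamma := C^-(X_\alpha,\Z)$. By Lemma \ref{C-mdim}, $\Gamma_\infty^+$ has m-dimension $\alpha$, and $\Gamma$ is non-trivial because $X_\alpha \setminus X_\alpha^{(\alpha)}$ is a non-empty open set containing some non-empty clopen $K$, and $\chi_K \in \Gamma \setminus \{0\}$. By the Jaffard-Kaplansky-Ohm theorem, there is a B\'ezout domain $R$ with $\Gamma(R) \cong \Gamma$, so $\Gamma(R)_\infty^+$ has m-dimension $\alpha$ and $R$ is not a field.

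It remains to check that $R$ has Krull dimension $1$. As recalled just before the proposition, this is equivalent to $\Gamma(R)$ being non-trivial and every multiplication prime filter of $\Gamma(R)^+$ being maximal; under the correspondence from Section \ref{SPreI}, prime ideals of $R$ correspond to multiplication prime filters of $\Gamma(R)_\infty^+$. Lemma \ref{removexfilt} classifies the multiplication prime filters of $C^-(X_\alpha,\Z)^+$ as the filters $\mcal{F}^-_x$ for $x \in X_\alpha \setminus X_\alpha^{(\alpha)}$ and states that every such filter is maximal, so the condition is met. There is no real obstacle here, since essentially everything has been prepared in Sections \ref{SDimLgroup}--\ref{Sctsfn}; the proof is a short assembly. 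The only place where a tiny verification is needed is the existence of $X_\alpha$, and that is a standard fact about ordinal spaces.
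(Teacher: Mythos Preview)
Your proposal is correct and follows essentially the same route as the paper: choose a Boolean space of CB rank $\alpha$ (the paper cites the ordinal space $\omega^\alpha+1$), form $C^-(X,\Z)$, apply Jaffard--Kaplansky--Ohm, and read off the m-dimension from Lemma~\ref{C-mdim} and Krull dimension~$1$ from Lemma~\ref{removexfilt}. Your remark that the case $\alpha=0$ is degenerate (it would force $R$ to be a field) is a fair observation; the paper's proof glosses over this, but as you note the corollary that uses the proposition only needs $\alpha\geq 1$.
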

\begin{proof} Let $X$ be a Boolean space with CB rank $\alpha$. That such a space exists is well known; for instance, see \cite[4.5]{MayerPierce}, where it is shown that the ordinal $\omega^\alpha+1$ with the interval topology has CB rank $\alpha$.  By the Jaffard-Kaplansky-Ohm theorem, there exists a B\'ezout domain $R$ with $\Gamma(R)\cong C^-(X,\Z)$. By \ref{C-mdim}, $\Gamma(R)_\infty^+$ has m-dimension $\alpha$. The non-zero prime ideals of $R$ are in order preserving bijection with the multiplication prime filters of $\Gamma(R)^+$ which are all maximal by \ref{removexfilt}. So $R$ has Krull dimension $1$ as required.
\end{proof}

%\nocite{*}
\bibliographystyle{alpha}
\bibliography{Transferthm}

\end{document}